\numberwithin{equation}{section}
\theoremstyle{plain}
\newtheorem{theorem}[equation]{Theorem}
\newtheorem{lemma}[equation]{Lemma}
\newtheorem{corollary}[equation]{Corollary}
\newtheorem{proposition}[equation]{Proposition}
\newtheorem{prop}[equation]{Proposition}
\newtheorem*{claim*}{Claim}
\theoremstyle{definition}
\newtheorem{definition}[equation]{Definition}
\newtheorem{notation}[equation]{Notation}
\newtheorem{remark}[equation]{Remark}
\newtheorem*{remark*}{Remark}
\newtheorem*{remarks*}{Remarks}
\newtheorem{example}[equation]{Example}
\newtheorem{hypothesis}[equation]{Hypothesis}
\newcommand{\isom}{\cong}                       
\newcommand{\homeq}{\simeq}                     
\newcommand{\smsh}{\wedge}                      
\newcommand{\binBox}{\mathbin{\Box}}
\newcommand{\Smsh}{\bigwedge}                   
\newcommand{\Wdge}{\bigvee}                     
\newcommand{\R}{\mathbb{R}}                     
\newcommand{\cat}[1]{\mathcal{#1}}              
\newcommand{\Cfin}{\mathsf{C^{fin}}}
\newcommand{\C}{\mathsf{C}}
\newcommand{\D}{\mathsf{D}}
\DeclareMathOperator*{\hocolim}{hocolim}
\DeclareMathOperator*{\colim}{colim}
\DeclareMathOperator*{\holim}{holim}
\newcommand{\Map}{\operatorname{Map} }
\newcommand{\Nat}{\operatorname{Nat} }
\newcommand{\hNat}{\widetilde{\operatorname{Nat}} }
\newcommand{\creff}{\operatorname{cr} }
\newcommand{\Hom}{\operatorname{Hom} }
\newcommand{\Tot}{\operatorname{Tot} }
\newcommand{\Tate}{\operatorname{Tate}}
\newcommand{\hHom}{\widetilde{\operatorname{Hom}} }
\newcommand{\Deltainj}{\Delta_{\mathsf{inj}}}
\newcommand{\hM}{\tilde{\cat{M}}}
\DeclareMathOperator*{\hofib}{hofib}
\DeclareMathOperator*{\thocofib}{thocofib}
\DeclareMathOperator*{\tcof}{tcof}
\DeclareMathOperator*{\Sing}{Sing}
\newcommand{\un}[1]{\underline{#1}}
\newcommand{\sset}{\mathsf{sSet}}            
\newcommand{\based}{\mathsf{Top}_*}
\newcommand{\spectra}{{\mathsf{Sp}}}              
\newcommand{\finspec}{{\mathsf{Sp^{fin}}}}              
\newcommand{\finbased}{{\mathsf{Top^{fin}_*}}}
\newcommand{\symseq}{\mathsf{[\Sigma,Sp]}}
\newcommand{\cTop}{\mathsf{cTop}}
\newcommand{\Top}{\mathsf{Top}}
\newcommand{\weq}{\; \tilde{\longrightarrow} \;}      
\newcommand{\lweq}{\; \tilde{\longleftarrow} \;}      
\newcommand{\epi}{\twoheadrightarrow}           
\newcommand{\into}{\hookrightarrow}
\newcommand{\dual}{\mathbb{D}}                  
\newcommand{\der}{\partial}                     
\newcommand{\ord}[1]{$#1$\textsuperscript{th}}
\begin{document}

\title[A classification of Taylor towers]{A classification of Taylor towers of functors of spaces and spectra}

\author{Gregory Arone}
\author{Michael Ching}

\begin{abstract}
We describe new structure on the Goodwillie derivatives of a functor, and we show how the full Taylor tower of the functor can be recovered from this structure. This new structure takes the form of a coalgebra over a certain comonad which we construct, and whose precise nature depends on the source and target categories of the functor in question. The Taylor tower can be recovered from standard cosimplicial cobar constructions on the coalgebra formed by the derivatives. We get from this an equivalence between the homotopy category of polynomial functors and that of bounded coalgebras over this comonad.

For functors with values in the category of spectra, we give a rather explicit description of the associated comonads and their coalgebras. In particular, for functors from based spaces to spectra we interpret this new structure as that of a divided power right module over the operad formed by the derivatives of the identity on based spaces.
\end{abstract}

\maketitle
\thispagestyle{empty}

Goodwillie's calculus of homotopy functors, developed in \cite{goodwillie:1990,goodwillie:1991,goodwillie:2003}, provides a systematic filtration of any functor (between sufficiently nice model categories) that preserves weak equivalences. If $F: \C \to \D$ is such a functor, then there is a `Taylor tower'
\[ F \to \dots \to P_nF \to P_{n-1}F \to \dots \to P_0F \]
where $F \to P_nF$ is the universal natural transformation from $F$ to an `$n$-excisive' functor. For nice $F$, and for sufficiently highly connected $X \in \C$,  the tower `converges' in the sense that there is a weak equivalence
\[ F(X) \weq \holim_n P_nF(X). \]
The terms $P_nF$ are in general difficult to understand, but Goodwillie gave in \cite{goodwillie:2003} simple descriptions of the `homogeneous layers' of the Taylor tower, that is the fibres
\[ D_nF := \hofib(P_nF \to P_{n-1}F). \]
For a functor $F: \based \to \based$ of based topological spaces, and a finite CW complex $X$, we have
\[ D_nF(X) \homeq \Omega^\infty (\der_nF \smsh (\Sigma^\infty X)^{\smsh n})_{h\Sigma_n} \]
where $\der_nF$ is a spectrum with action of the symmetric group $\Sigma_n$. We think of $\der_nF$ as the `\ord{n} Taylor coefficient' of the functor $F$ expanded around the one-point space $*$, or the `\ord{n} derivative' of $F$ at $*$. Since we only consider Taylor expansions at $*$ we refer to $\der_nF$ just as the `\ord{n} derivative of $F$'.

We use the notation
\[
\der_*F:=(\der_1F, \der_2F, \ldots, \der_nF, \ldots)
\]
for the sequence of derivatives of $F$ together with their symmetric group actions. Thus $\der_*F$ is a symmetric sequence of spectra. Goodwillie's theorem says that the symmetric sequence $\der_*F$ determines the layers in the Taylor tower of $F$.

Our goal in the present paper is to describe additional structure on $\der_*F$, which is sufficient to recover the entire Taylor tower of $F$, rather than just the homogeneous layers. Part of the additional structure was previously studied by the authors in \cite{arone/ching:2011}, where it was shown that $\der_*F$ has the structure of a module over a certain operad $\der_* I$ (namely that formed by the derivatives of the identity on $\based$). The precise type of module structure (left, right or bi-) depends on the source and target categories of $F$.

It is clear however that this module structure does not tell the full story. For example, if $F$ is a functor from and to the category of spectra, then the methods of~\cite{arone/ching:2011} do not endow $\der_*F$ with any additional structure. It is easy to see, though, that the symmetric sequence $\der_*F$ alone does not determine the Taylor tower of $F$, even in this case.

To obtain further structure we use a homotopic version of descent theory as in, for example, \cite{hess:2010}. By this we mean the following. Let $\C$ and $\D$ each stand for either the category $\based$ (of based topological spaces) or $\spectra$ (of spectra). Let $[\C, \D]$ be the category of pointed, finitary (i.e., preserving filtered homotopy colimits) simplicial functors from $\C$ to $\D$. Taking Goodwillie derivatives can be thought of as a functor
\[
\der_*\colon [\C, \D] \longrightarrow \cat{M}
\]
where $\cat{M}$ is either a category of modules over $\der_*I$, or the category of symmetric sequences, depending on $\C$ and $\D$. The following is one of the key observations of this paper: the functor $\der_*$ has a right adjoint, which we denote as
\[
\Phi\colon \cat{M}\to [\C, \D].
\]

It follows in a standard way that the composite $\der_* \Phi$ is a
comonad on $\cat{M}$ and that for any $F \in [\C, \D]$,
$\der_*F$ is a $\der_* \Phi$-coalgebra. We can thus view $\der_*$ as a
functor
\[
\der_*: [\C,\D] \to \der_* \Phi\mbox{-coalg.}
\]
The main result of this paper then says that the Taylor tower of $F$
can be recovered from the coalgebra $\der_*F$ via a cobar
construction. More precisely we have the following.

\begin{theorem} \label{thm:intro1}
For $F \in [\mathsf{C},\mathsf{D}]$ as above, we have
\[ P_nF \homeq \operatorname{cobar}(\Phi,\der_*\Phi,\der_{\leq n}F) \]
where $\der_{\leq n}$ denotes the truncated $\der_* \Phi$-coalgebra
consisting only of the first $n$ derivatives of $F$.

Moreover, if the Taylor tower of $F$ converges then
\[ F \homeq \operatorname{cobar}(\Phi,\der_*\Phi,\der_*F). \]
\end{theorem}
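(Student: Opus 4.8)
The plan is to recognise the two-sided cobar construction as an ordinary cobar resolution for a \emph{monad}, and then to induct up the Taylor tower. Write $C=\der_*\Phi$ for the comonad on $\cat{M}$ and $T=\Phi\der_*$ for the associated monad on $[\C,\D]$. Since $\Phi\,(\der_*\Phi)^k\,\der_*=(\Phi\der_*)^{k+1}=T^{k+1}$, the cosimplicial object underlying $\operatorname{cobar}(\Phi,\der_*\Phi,\der_*F)$ is exactly the standard $T$-resolution $[k]\mapsto T^{k+1}F$, with coface maps given by the unit of $T$, codegeneracies by its multiplication, and coaugmentation $\beta_F\colon F\to\Tot(T^{\bullet+1}F)$ equal to the unit $\eta_F$ in degree $0$; everything is natural in $F$. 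Likewise $\operatorname{cobar}(\Phi,\der_*\Phi,\der_{\leq n}F)=\Tot(T^{\bullet+1}P_nF)$, once one identifies $\der_*P_nF\homeq\der_{\leq n}F$ as $C$-coalgebras. So the first assertion says that $\beta_{P_nF}$ is an equivalence, and the second says $\beta_F$ is an equivalence whenever the Taylor tower of $F$ converges.

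First I would show that $\der_*\beta_F$ is an equivalence. Granting that $\der_*$ commutes with the relevant totalizations — the technical heart, discussed below — we get $\der_*\Tot(T^{\bullet+1}F)\homeq\Tot(\der_*T^{\bullet+1}F)=\Tot\big(C^{\bullet+1}\der_*F\big)$, and under this identification $\der_*\beta_F$ becomes the coaugmentation of the \emph{comonadic} cobar resolution of the $C$-coalgebra $\der_*F$ (whose coaction $\der_*F\to C\der_*F$ is, by construction, $\der_*\eta_F$). That coaugmentation is an equivalence because $C^{\bullet+1}\der_*F$ carries extra codegeneracies built from the counit $C\to\mathrm{id}$, i.e.\ it is a split resolution of $\der_*F$. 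Hence $\der_k\beta_F$, and therefore $D_k\beta_F$ by Goodwillie's classification of homogeneous functors, is an equivalence for every $k$.

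Next I would feed this into an induction. For any natural transformation $\beta\colon F\to G$ that is an equivalence on all Goodwillie derivatives, one gets an equivalence on every finite stage $P_m$: the base case is $P_1=D_1$, and the inductive step applies the five lemma to the map of homotopy fibre sequences $D_mF\to P_mF\to P_{m-1}F$ and $D_mG\to P_mG\to P_{m-1}G$, using that $D_m\beta$ is an equivalence and $P_{m-1}\beta$ is one by induction. Applied with $F$ replaced by $P_nF$, this shows $\beta_{P_nF}$ is an equivalence on $P_n$; to conclude it is an equivalence it then suffices to know that both $P_nF$ and $G:=\Tot(T^{\bullet+1}P_nF)=\operatorname{cobar}(\Phi,\der_*\Phi,\der_{\leq n}F)$ are $n$-excisive. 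This is the second point needing proof: the functor $\Phi$ sends a symmetric sequence (or module) concentrated in degrees $\leq n$ to an $n$-excisive functor — in the spectra-valued case because it is a finite product of functors $X\mapsto(M_k\smsh X^{\smsh k})^{h\Sigma_k}$ with $k\leq n$, each $k$-excisive as a homotopy limit of $k$-excisive functors — so $T$ preserves $n$-excisive functors and hence so does $\Tot(T^{\bullet+1}(-))$. This proves the first displayed formula. For the second, assuming convergence of the Taylor tower,
\[ F\homeq\holim_n P_nF\homeq\holim_n\operatorname{cobar}(\Phi,\der_*\Phi,\der_{\leq n}F)\homeq\operatorname{cobar}\big(\Phi,\der_*\Phi,\holim_n\der_{\leq n}F\big)\homeq\operatorname{cobar}(\Phi,\der_*\Phi,\der_*F), \]
where the last two steps use that $\{\der_{\leq n}F\}_n$ is eventually constant in each degree, so $\operatorname{cobar}$ commutes with its homotopy limit and that limit is $\der_*F$.

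The main obstacle is the step I flagged: that taking Goodwillie derivatives commutes with the totalizations appearing here, i.e.\ $\der_*\Tot(T^{\bullet+1}F)\homeq\Tot(\der_*T^{\bullet+1}F)$. Since $\der_k$ is computed from cross-effects — a finite homotopy limit — followed, in the space-valued case, by a stabilization colimit, this needs genuine convergence and connectivity control on the cosimplicial object $T^{\bullet+1}F$; it is comparatively harmless when the functors involved are $n$-excisive, which is a further reason to prove the polynomial case first and bootstrap the general statement from it under the convergence hypothesis. A secondary issue is to set up the adjunction $\der_*\dashv\Phi$ and the resulting (co)monad and (co)algebra structures at the derived level carefully enough that the extra-codegeneracy argument and the manipulations above are homotopically valid; here one should expect to work throughout with the fat totalization $\hTot$ and with cofibrant/fibrant replacements.
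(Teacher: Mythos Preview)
Your strategy is essentially correct, and the induction up the tower via the fibre sequences $D_mF\to P_mF\to P_{m-1}F$ is exactly what the paper does. But you have placed the hard step in a different location, and your claim that it is ``comparatively harmless when the functors involved are $n$-excisive'' is too optimistic: commuting $\der_*$ (which involves a stabilization colimit) past the fat totalization is not automatic even for polynomial functors, and proving it requires the same key idea the paper uses.

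The paper avoids your obstacle entirely by never applying $\der_*$ to the cobar construction. Instead it applies $P_k$, $P_{k-1}$, and Goodwillie's natural delooping $R_k$ (so that $P_kG\to P_{k-1}G\to R_kG$ is a fibre sequence) \emph{levelwise} to the cosimplicial object $\Phi K^\bullet\der_*F$. Totalization preserves levelwise fibre sequences, so one obtains a map of fibre sequences from $(P_kF,P_{k-1}F,R_kF)$ to the three totalizations, and the induction reduces to showing the $R_k$-map is an equivalence. The key observation is that $R_k$ factors through $\der_*$: there is a functor $\Psi_k\colon\cat{M}\to[\Cfin,\D]$ with $R_kG\simeq R_k\Psi_k\der_*G$ for all $G$. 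Applying this to $G=\Phi K^\bullet\der_*F$ gives $R_k\Psi_k(K^{\bullet+1}\der_*F)$, and the extra copy of $K$ supplies extra codegeneracies, so the coaugmentation from $R_kF=R_k\Psi_k\der_*F$ is an equivalence. No colimit/limit interchange is needed---only that $\Tot$ preserves fibre sequences.

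Your route does work, but to close the gap you flagged you would end up reproducing this $R_k$ argument. Indeed the paper proves exactly your commutation statement later (for bounded coalgebras, as a Barr--Beck hypothesis), and the proof is the same $R_k$-induction run a second time. So your outline is not wrong, just organized so that the main trick is hidden inside an unproved lemma rather than exposed as the mechanism of the induction.
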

These results are proved in Theorem~\ref{thm:main} and Corollary~\ref{cor:truncated}.

We say that a symmetric sequence is `$N$-truncated' if all its terms above the \ord{N} are contractible. We say that it is `bounded' if it is $N$-truncated for some $N$. It turns
out that every bounded $\der_*\Phi$-coalgebra arises, up to
homotopy, as the derivatives of some functor. We therefore have the
following result which is proved in Theorem~\ref{thm:n-excisive}.

\begin{theorem} \label{thm:intro2}
For each integer $N \geq 1$, there is an equivalence between the homotopy category of
pointed finitary $N$-excisive functors $F: \mathsf{C} \to \mathsf{D}$ and that of
$N$-truncated $\der_*\Phi$-coalgebras. Letting $N\to\infty$, there is an equivalence between the homotopy category of all polynomial functors and that of all bounded $\der_*\Phi$-coalgebras.
\end{theorem}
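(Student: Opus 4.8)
The plan is to prove the theorem by showing that the functor $\der_*$ and the cobar construction $\operatorname{cobar}(\Phi,\der_*\Phi,-)$ of Theorem~\ref{thm:intro1} are mutually inverse equivalences, first between the homotopy category of pointed finitary $N$-excisive functors $\C\to\D$ and that of $N$-truncated $\der_*\Phi$-coalgebras, and then, letting $N\to\infty$, between all polynomial functors and all bounded coalgebras. That $\der_*$ does send $N$-excisive functors to $N$-truncated coalgebras is immediate from Goodwillie's description of the homogeneous layers: if $F$ is $N$-excisive then $P_jF\homeq P_NF$ for $j\geq N$, so $D_jF\homeq *$ and hence $\der_jF\homeq *$ for $j>N$. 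For the construction in the other direction I would first check that $\operatorname{cobar}(\Phi,\der_*\Phi,M)$ is $N$-excisive whenever the coalgebra $M$ is $N$-truncated. The crucial point is that $\Phi$ carries $N$-truncated objects to $N$-excisive functors: for $M$ $N$-truncated and arbitrary $F\in[\C,\D]$, the adjunction $\der_*\dashv\Phi$ gives $\Nat(F,\Phi M)\isom\Hom_{\cat{M}}(\der_*F,M)$, and since $M$ is concentrated in degrees $\leq N$ this mapping object depends only on $\der_{\leq N}F$, on which $F\to P_NF$ induces an equivalence; hence precomposition with $F\to P_NF$ induces an equivalence $\Nat(P_NF,\Phi M)\homeq\Nat(F,\Phi M)$ for every $F$, which is to say $\Phi M$ is $N$-excisive. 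Since $\der_*$ of an $N$-excisive functor is $N$-truncated, the comonad $\der_*\Phi$ preserves $N$-truncated coalgebras, so every term $\Phi(\der_*\Phi)^kM$ of the cosimplicial object whose totalization is $\operatorname{cobar}(\Phi,\der_*\Phi,M)$ is $N$-excisive; as $N$-excisive functors are closed under homotopy limits, $\operatorname{cobar}(\Phi,\der_*\Phi,M)$ is $N$-excisive, and since the construction preserves weak equivalences of coalgebras it descends to a functor from $N$-truncated coalgebras to $N$-excisive functors.

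With both functors in hand, one composite is essentially Theorem~\ref{thm:intro1}: for $N$-excisive $F$ one has $F\homeq P_NF$ and $\der_{\leq N}F=\der_*F$, whence $\operatorname{cobar}(\Phi,\der_*\Phi,\der_*F)\homeq P_NF\homeq F$, naturally in $F$. The other composite requires a natural equivalence of $\der_*\Phi$-coalgebras $\der_*\operatorname{cobar}(\Phi,\der_*\Phi,M)\homeq M$ for $N$-truncated $M$. The idea is to commute $\der_*$ past the totalization defining the cobar construction, after which the left-hand side becomes $\operatorname{cobar}(\der_*\Phi,\der_*\Phi,M)$, the canonical cofree resolution of the coalgebra $M$ over its own comonad; the coaction $M\to\der_*\Phi(M)$ and the counit of the comonad then supply the extra codegeneracies exhibiting $M$ as an absolute --- hence homotopy --- totalization of this cosimplicial object, the comparison being automatically a map of coalgebras by naturality. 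This interchange of $\der_*$ with $\Tot$ is the step I expect to be the main obstacle: Goodwillie derivatives are assembled from finite homotopy limits and filtered homotopy colimits, and although the former commute with totalization the latter do not, so a genuine convergence argument is needed. I would handle it using the boundedness of $M$: for $N$-truncated $M$ the cosimplicial objects in play live degreewise among $N$-excisive functors and $N$-truncated coalgebras, and their totalization towers converge with uniformly controlled connectivity, which lets the filtered colimits inside $\der_*$ pass through the $\Tot$. Should this prove stubborn, an alternative is induction on $N$, the case $N=1$ being Goodwillie's classification of linear functors and the inductive step comparing the fibre sequence $D_NF\to P_NF\to P_{N-1}F$ for $F=\operatorname{cobar}(\Phi,\der_*\Phi,M)$ with the filtration of the $N$-truncated coalgebra $M$ by its top degree, again via Theorem~\ref{thm:intro1}.

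Granting the two composites, $\der_*$ and $\operatorname{cobar}(\Phi,\der_*\Phi,-)$ are inverse equivalences between the homotopy categories of $N$-excisive functors and of $N$-truncated coalgebras; in particular $\der_*$ is fully faithful, so a polynomial functor together with all maps between such is determined by the coalgebra of its derivatives, and every $N$-truncated coalgebra is realized. For the final assertion one passes to the limit: the homotopy category of all polynomial functors is the (filtered) union over $N$ of the homotopy categories of $N$-excisive functors, and similarly for bounded coalgebras, and the equivalences just constructed are compatible with increasing $N$ --- an $N$-excisive functor is a fortiori $(N+1)$-excisive, its coalgebra of derivatives remains $N$-truncated, and the cobar formula is unchanged --- so their union gives the desired equivalence between all polynomial functors and all bounded coalgebras.
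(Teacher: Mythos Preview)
Your overall strategy matches the paper's: show that $\der_*$ and the cobar construction are inverse equivalences by verifying the two composites, with the nontrivial one being $\der_*\operatorname{cobar}(\Phi,\der_*\Phi,M)\simeq M$, which reduces to commuting $\der_*$ past the totalization. You correctly flag this interchange as the crux. Your argument that $\Phi$ sends $N$-truncated objects to $N$-excisive functors is essentially the paper's Lemma~\ref{lem:phi}, and your treatment of the first composite via Theorem~\ref{thm:intro1} is exactly right.

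The gap is in how you resolve the interchange $\der_*\Tot\to\Tot\der_*$. Your first suggestion, that boundedness gives ``uniformly controlled connectivity'' allowing the filtered colimits in $\der_*$ to pass through $\Tot$, is not substantiated and is not the argument the paper uses; it is not clear how to make such a connectivity estimate work here. Your fallback induction on $N$ is closer in spirit, but the paper's mechanism is more specific and you do not identify it. The paper (Lemma~\ref{lem:barr-beck-derivs}, mirroring the proof of Theorem~\ref{thm:main}) filters by Goodwillie degree via the fibration sequences $P_k\to P_{k-1}\to R_k$ applied to the cosimplicial object $\Phi K^\bullet A$, and then observes that the delooped layer functor factors as $R_k = R_k\Psi_k\der_*$. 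This factorization is the key trick: it inserts an extra copy of $\der_*\Phi=K$ into the cosimplicial object at the layer, producing extra codegeneracies on both sides of the comparison map, so that the map on layers is automatically an equivalence. Induction on $k$ (up to $N$, using that $\Phi K^\bullet A$ is $N$-excisive) then finishes the job. Without this factorization-through-$\der_*$ observation, neither your connectivity sketch nor your proposed induction has a clear engine driving the inductive step.

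A secondary point: the equivalence $\der_*\operatorname{cobar}(\Phi,\der_*\Phi,M)\simeq M$ is not a strict coalgebra map but only a \emph{derived} one in the sense of the paper's Section~\ref{sec:coalgebras}; the paper handles this via its homotopic Barr--Beck framework (Propositions~\ref{prop:Tot} and~\ref{prop:barr-beck}) rather than by exhibiting an explicit natural inverse. Your phrase ``automatically a map of coalgebras by naturality'' elides this.
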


\begin{remarks*}
We do not know of a model structure on the category of
$\der_*\Phi$-coalgebras and thus the above result does not arise from
a Quillen equivalence. Instead we develop directly a homotopy theory
for coalgebras over a comonad which may be of
independent interest. (See Section~\ref{sec:coalgebras}.)

In reality, we need to replace the comonad $\der_* \Phi$ with a homotopically correct version. To do this we replace $[\C,\D]$ with a Quillen equivalent model category in which every object is cofibrant. The comonad we actually use is then $\der_* uc \Phi$ where $u$ and $c$ form a Quillen equivalence between $[\C,\D]$ and this other category. This is explained in detail in Section~\ref{sec:descent}. With this in mind, we drop $u$ and $c$ from the notation and just write $\der_* \Phi$ for the comonad in question.

Though we only prove our results for functors between the categories $\based$ and $\spectra$, much of our approach relies only on formal properties of the calculus of functors. In principle it should be applicable to any functor for which there is an appropriate notion of Taylor tower and of derivatives.
\end{remarks*}

To make Theorems \ref{thm:intro1} and \ref{thm:intro2} more useful, it is desirable to have an explicit description of the comonad $\der_*\Phi$ and of the category of coalgebras over this comonad. In this paper we partially achieve this goal for functors with values in spectra. We now describe our results about this, so most definitions and statements have two versions: one for functors $\spectra \to \spectra$ and one for functors $\based \to \spectra$.

Let us use the notation $X^{[n]}$ to mean the following: If $X$ is a spectrum then $X^{[n]}=X^{\wedge n}$. If $X$ is a space then $X^{[n]}=X^{\wedge n}/\Delta^n X$ where $\Delta^n X\subset X^{\wedge n}$ is the fat diagonal. For $r \leq n$, there is a functor $K_r$ from the category of $\Sigma_n$-spectra to the category of $\Sigma_r$-spectra given by
\begin{equation}\label{eq: K_r}
K_rA_n \homeq \der_r\left[X \mapsto (A_n \smsh X^{[n]})^{h\Sigma_n}\right].
\end{equation}
Here $X$ may live either in $\spectra$ or $\based$ depending on the type of functors being considered.

It turns out that the constructions $K_r$ encode all the information about the comonad $\der_*\Phi$, at least if we restrict our attention to truncated symmetric sequences. More specifically, for $r \leq s \leq n$ there is a natural $\Sigma_r$-equivariant map
\[ \delta_{r,s}: K_r A_n \to K_r K_s A_n \]
and for each $r$, a $\Sigma_r$-equivariant map
\[ \epsilon_r: K_r A_r \to A_r \]
that together reflect the comonad structure on $\der_* \Phi$. These maps are associative and unital in an appropriate sense. We prove the following result (Lemma~\ref{lem:derAn}), which encodes the $\der_*\Phi$-coalgebra structure on $\der_*F$ in terms of the individual maps $\delta_{r,s}$ and $\epsilon_r$.

\begin{theorem}\label{thm:spectra-valued}
Let $\C$ be either $\based$ or $\spectra$. Let $F\colon \C\longrightarrow \spectra$ be a functor. For each $r\le n$ there is a $\Sigma_r$-equivariant map
\[ \theta_{r,n}: \der_rF \to K_r \der_n F.\]
Moreover, for each $r \le s \le n$, the following diagram commutes
\[ \begin{diagram}
  \node{\der_r F} \arrow{e,t}{\theta_{r,s}} \arrow{s,l}{\theta_{r,n}} \node{K_r \der_s F} \arrow{s,r}{K_r\theta_{s,n}} \\
  \node{K_r \der_n F} \arrow{e,t}{\delta_{r,s}} \node{K_r K_s \der_n F}
\end{diagram} \]
and for each $r$ the following composite is the identity
\[
\der_rF \stackrel{\theta_{r,r}}{\longrightarrow} K_r\der_rF\stackrel{\epsilon_r}{\longrightarrow}\der_rF.
\]
The Taylor tower of $F$ can then be recovered from the symmetric sequence $\der_*F$ and the maps $\theta_{r,n}$.
\end{theorem}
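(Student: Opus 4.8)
The plan is to deduce Theorem~\ref{thm:spectra-valued} from the abstract framework of Theorem~\ref{thm:intro1} by identifying the comonad $\der_*\Phi$, when restricted to truncated symmetric sequences of spectra, with the structure encoded by the $K_r$ and the maps $\delta_{r,s}$, $\epsilon_r$. The first step is to compute $\der_*\Phi$ explicitly in the spectrum-valued case. Recall that $\Phi$ is the right adjoint to $\der_*$, so for a symmetric sequence $A_*$ the functor $\Phi(A_*)$ is characterized by $[\C,\D](G, \Phi(A_*)) \isom \cat{M}(\der_*G, A_*)$. Applying this with $G$ ranging over representable-type functors $X \mapsto (B \smsh X^{[n]})^{h\Sigma_n}$ (which are, up to homotopy, the building blocks of $n$-homogeneous functors $\C \to \spectra$), and using that the derivatives of such a functor are concentrated in degree $n$, one reads off that the $r$\textsuperscript{th} derivative of $\Phi(A_*)$ is built from the $K_r A_n$ for $n \geq r$. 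Assembling these over $n$, the comonad $\der_*\Phi$ applied to a symmetric sequence with top term $A_n$ produces, in degree $r$, (a homotopy limit of) terms $K_r A_n$; the comultiplication of the comonad then unwinds, via the natural equivalence~\eqref{eq: K_r} and the identification of $\der_r$ of a composite, into precisely the maps $\delta_{r,s}\colon K_r A_n \to K_r K_s A_n$, and the counit into $\epsilon_r\colon K_r A_r \to A_r$. This is the content of Lemma~\ref{lem:derAn}, to which the theorem defers.

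Given this identification, the maps $\theta_{r,n}$ are nothing but the components of the coalgebra structure map $\der_*F \to \der_*\Phi(\der_*F)$ guaranteed by Theorem~\ref{thm:intro1}: in degree $r$ this structure map lands in the assembly of the $K_r \der_n F$, and projecting to the $n$\textsuperscript{th} factor yields $\theta_{r,n}$. The asserted commuting square is then a restatement, component by component, of the coassociativity of the coalgebra structure map (the diagram $\der_*F \to \der_*\Phi\der_*F \rightrightarrows \der_*\Phi\der_*\Phi\der_*F$), once one tracks the two composites into the degree-$r$, level-$(s,n)$ piece of $\der_*\Phi\der_*\Phi\der_*F \homeq K_r K_s \der_n F$; one route uses the comultiplication $\delta_{r,s}$ and the other uses the internal coalgebra map $\theta_{s,n}$, i.e.\ $K_r$ applied to a structure map of $\der_*F$ in degree $s$. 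Likewise, the claim that $\epsilon_r \comp \theta_{r,r} = \mathrm{id}$ is exactly the counit axiom for the coalgebra $\der_*F$, read off in degree $r$. The final sentence — that the Taylor tower is recovered from $\der_*F$ together with the $\theta_{r,n}$ — follows by combining the previous two points: the $\theta_{r,n}$ determine the full $\der_*\Phi$-coalgebra structure on the truncations $\der_{\leq n}F$ (by Lemma~\ref{lem:derAn}), and then Theorem~\ref{thm:intro1} gives $P_nF \homeq \operatorname{cobar}(\Phi, \der_*\Phi, \der_{\leq n}F)$, with $F \homeq \holim_n P_nF$ when the tower converges.

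The main obstacle is the first step: establishing that the abstractly-defined comonad $\der_*\Phi$ really is computed, on truncated symmetric sequences, by the explicit $K_r$ together with $\delta_{r,s}$ and $\epsilon_r$ — in other words, proving Lemma~\ref{lem:derAn}. The delicate points there are (i) showing that $\der_r\Phi(A_*)$ has the claimed form, which requires understanding $\Phi$ well enough to compute derivatives of its values, and in particular controlling the homotopy-limit assembly of the various $K_r A_n$ over $n \geq r$ and checking it is a genuine (levelwise) equivalence rather than only a map; (ii) verifying the compatibility of the comonad's comultiplication with the equivalences~\eqref{eq: K_r}, which involves chasing Goodwillie derivatives through a composite of functors and invoking the chain rule / the behavior of derivatives under composition in the form developed in \cite{arone/ching:2011}; and (iii) the bookkeeping of symmetric-group equivariance throughout — each $K_r A_n$ carries a residual $\Sigma_r$-action, and one must ensure $\delta_{r,s}$, $\epsilon_r$ and all the identifications are $\Sigma_r$-equivariant, and that the associativity/unitality of these maps (as claimed just before the theorem) matches the comonad axioms on the nose. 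Once Lemma~\ref{lem:derAn} is in hand, everything else is a formal consequence of the coalgebra axioms and Theorem~\ref{thm:intro1}.
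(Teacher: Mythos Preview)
Your overall strategy is right and matches the paper: the maps $\theta_{r,n}$ are components of the $K$-coalgebra structure map $\der_*F \to K(\der_*F)$, the square is coassociativity read in one component, the identity $\epsilon_r\circ\theta_{r,r}=\mathrm{id}$ is the counit axiom, and recovery of the Taylor tower is Theorem~\ref{thm:intro1} (i.e.\ Theorem~\ref{thm:main} and Corollary~\ref{cor:truncated}).

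Where you go astray is in identifying the ``main obstacle''. You treat Lemma~\ref{lem:derAn} as requiring one to compute $\der_r\Phi(A_*)$ explicitly and then match it against an independently-defined $K_r$, invoking chain rules and equivariance bookkeeping. In the paper it is the other way around: $K_r A_n$ is \emph{defined} (Definition~\ref{def:derAn}) to be $(KA_n)_r$, the $r$-th term of the abstract comonad $K=\der_*\Phi$ applied to the symmetric sequence concentrated in degree $n$, and $\delta_{r,s}$, $\epsilon_r$ are \emph{defined} as the corresponding components of the comultiplication and counit of $K$. With those definitions Lemma~\ref{lem:derAn} is essentially a tautology: $\theta_{r,n}$ is the composite $A_r \xrightarrow{\theta} K(A)_r \to K(A_n)_r = K_r A_n$, the second map being induced by the projection $A \to A_n$ of symmetric sequences, and the diagrams are literal restatements of the coalgebra axioms. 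No computation of $\Phi$, no chain rule, and no separate equivariance argument is needed (equivariance is automatic since everything happens in $\symseq$).

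The only non-formal input is the product decomposition $K(A)_r \homeq \prod_{n=r}^{N} K_r(A_n)$ for $N$-truncated $A$, which is what makes the $\theta_{r,n}$ jointly \emph{determine} the coalgebra map and hence justifies the last sentence of the theorem. That decomposition is Lemma~\ref{lem:K-limits} ($K$ preserves finite homotopy limits), not Lemma~\ref{lem:derAn}. The explicit description~\eqref{eq: K_r} from the introduction is a separate, later calculation (Propositions~\ref{prop:derAn-spsp} and~\ref{prop:norm}) and plays no role in the proof of Theorem~\ref{thm:spectra-valued} itself.
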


Note that~\eqref{eq: K_r} describes the construction $K_r$ only up to homotopy. The choice of model for $K_r$ and the maps $\delta_{r,s}$ matters, because for Theorem~\ref{thm:spectra-valued} one needs a model for which the maps $\delta_{r,s}$ are strictly associative and unital. At the same time one would like to have a model that is as simple and explicit as possible. In this paper we do give a strictly associative model for the $K_r$ and $\delta_{r,s}$, but it is not really explicit. On the other hand, we also give more explicit models for $K_r$ (in Proposition~\ref{prop:derAn-spsp} for the $[\spectra, \spectra]$ case and in Proposition~\ref{prop:norm} for the $[\based, \spectra]$ case), but these models are only associative up to homotopy. We have another approach to these constructions, based on modules over pro-operads, that we believe yields models that are both explicit and strictly associative. (See Remark \ref{rem:pro-operad}.) For reasons of space we leave these results to a subsequent paper.

For a functor $F: \based \to \spectra$, the $\der_*\Phi$-coalgebra structure on $\der_*F$ takes a fairly simple form which is worth noting here. The following is Proposition~\ref{prop:norm}.
\begin{theorem}\label{thm:spaces-spectra}
The derivatives $\der_*F$ of a functor $F: \based \to \spectra$ form a right module over the operad $\der_*I$. They also are equipped with maps $\theta_{r,n}$ that make the following diagrams commute up to homotopy:
\[ \begin{diagram}
  \node[2]{ \left[ \prod_{\un{n} \epi \un{r}} \Map(\der_{n_1}I \smsh \dots \smsh \der_{n_r}I, \der_nF) \right]_{h\Sigma_n}} \arrow{s,r}{N} \\
  \node{\der_rF} \arrow{ne,t}{\theta_{r,n}} \arrow{e,b}{\psi_{r,n}}
    \node{ \left[ \prod_{\un{n} \epi \un{r}} \Map(\der_{n_1}I \smsh \dots \smsh \der_{n_r}I, \der_nF) \right]^{h\Sigma_n}.}
\end{diagram} \]
Here the maps $\psi_{r,n}$ are determined by the right $\der_*I$-module structure on $\der_*F$, and $N$ is the usual norm map from homotopy orbits to homotopy fixed points.
\end{theorem}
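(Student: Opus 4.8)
My plan splits the statement into its two genuinely new pieces. The first assertion — that $\der_*F$ is a right module over the operad $\der_*I$ — is one of the main results of \cite{arone/ching:2011}, so I would simply quote it; the maps $\psi_{r,n}$ are then the adjoints of the module‑structure maps, $\Sigma_n$‑equivariantly assembled into homotopy fixed points. The maps $\theta_{r,n}$, together with their coassociativity and counit relations, are already supplied by Theorem~\ref{thm:spectra-valued}. So what is left is (i) producing the explicit homotopy‑orbit model for $K_r$ appearing in the diagram, and (ii) identifying the composite $N\comp\theta_{r,n}$ with $\psi_{r,n}$. First I would establish (i) by exploiting that $\Sigma_n$ acts on $X^{[n]}$ freely away from the basepoint; then I would deduce (ii) by a diagram chase.

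\textbf{Step 1 (the model for $K_r$).} For a pointed space $X$, the non‑basepoint points of $X^{[n]}=X^{\smsh n}/\Delta^n X$ are exactly the configurations of $n$ pairwise distinct, non‑basepoint coordinates, on which $\Sigma_n$ acts freely. Hence $A_n\smsh\suspec X^{[n]}$ is a Borel‑$\Sigma_n$‑free spectrum for every $X$, its Tate construction is contractible, and the norm map
\[
(A_n\smsh X^{[n]})_{h\Sigma_n}\weq(A_n\smsh X^{[n]})^{h\Sigma_n}
\]
is an equivalence, naturally in $X$. The two functors of $X$ therefore have the same derivatives, so by~\eqref{eq: K_r} one has $K_rA_n\homeq\der_r[X\mapsto(A_n\smsh X^{[n]})_{h\Sigma_n}]$. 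Next I would use that the functor $\der_r$ from $[\based,\spectra]$ to $\Sigma_r$‑spectra preserves homotopy colimits — each of $\creff_r$, the variable‑wise linearizations, and the extraction of the coefficient spectrum commutes with homotopy colimits of spectrum‑valued functors, using stability to interchange the finite homotopy limits involved with colimits — to conclude $K_rA_n\homeq\big(A_n\smsh\der_r[\suspec(-)^{[n]}]\big)_{h\Sigma_n}$. It then remains to compute the $\Sigma_r\times\Sigma_n$‑spectrum $\der_r[\suspec(-)^{[n]}]$. Decomposing $(\bigvee_{i\in S}X_i)^{[n]}$ according to the fibres of a function $\un n\to S$ and using that $\suspec$ is strong monoidal, the construction of $\creff_r$ picks out exactly the surjective functions, giving
\[
\creff_r[\suspec(-)^{[n]}](X_1,\dots,X_r)\homeq\bigvee_{f\colon\un n\epi\un r}\suspec\big(X_1^{[n_1]}\smsh\cdots\smsh X_r^{[n_r]}\big),\qquad n_i=|f^{-1}(i)|.
\]
Linearizing in each variable and invoking the identification $\der_1[\suspec(-)^{[m]}]\homeq\dual(\der_mI)$ of $\Sigma_m$‑spectra — the standard partition‑complex description of the derivatives of the identity, set up in the earlier sections — then yields $\der_r[\suspec(-)^{[n]}]\homeq\bigvee_{f\colon\un n\epi\un r}\dual(\der_{n_1}I)\smsh\cdots\smsh\dual(\der_{n_r}I)$, with $\Sigma_n$ permuting the summands (the stabilizer of $f$ being $\Sigma_{n_1}\times\cdots\times\Sigma_{n_r}$) and $\Sigma_r$ permuting $\un r$. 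Smashing with $A_n$, replacing the finite wedge by the equal finite product, and using $A_n\smsh\dual(\der_{n_1}I\smsh\cdots\smsh\der_{n_r}I)\homeq\Map(\der_{n_1}I\smsh\cdots\smsh\der_{n_r}I,A_n)$ for these dualizable spectra, gives the model in (i).

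\textbf{Step 2 (the norm square).} Transport $\theta_{r,n}$ along the identification of Step~1. The component of the right $\der_*I$‑action map indexed by a surjection $f\colon\un n\epi\un r$ is a $\Sigma_{n_1}\times\cdots\times\Sigma_{n_r}$‑equivariant map $\der_rF\smsh\der_{n_1}I\smsh\cdots\smsh\der_{n_r}I\to\der_nF$; by equivariance of the module structure these adjoint to a single $\Sigma_n$‑equivariant map $\der_rF\to\prod_{f}\Map(\der_{n_1}I\smsh\cdots\smsh\der_{n_r}I,\der_nF)$ (source with trivial $\Sigma_n$‑action), and $\psi_{r,n}$ is the resulting map into the homotopy fixed points. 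I claim $N\comp\theta_{r,n}$ is induced by the same underlying $\Sigma_n$‑equivariant map. Indeed, $\theta_{r,n}$ is $\der_r$ of the $n$‑th component of the unit $F\to\Phi(\der_*F)$ of the adjunction $\der_*\dashv\Phi$, which by construction of $\Phi$ is the map $F(X)\to(\der_nF\smsh X^{[n]})^{h\Sigma_n}$ adjoint to the identity of $\der_nF$; chasing this through the equivalences of Step~1 identifies the pre‑norm map $\der_rF\to\big(\prod_f\Map(\der_{n_1}I\smsh\cdots\smsh\der_{n_r}I,\der_nF)\big)_{h\Sigma_n}$ with the homotopy‑orbit assembly of that same family, and $N$ is by definition the comparison of the two assemblies. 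Since a homotopy‑commutative square into homotopy fixed points is pinned down up to homotopy by its underlying $\Sigma_n$‑equivariant data together with the canonical coherences, the diagram commutes up to homotopy; the final assertion of the theorem is the specialization of Theorem~\ref{thm:spectra-valued}.

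\textbf{The hard part.} The crux is the equivariant bookkeeping in Step~1: one must carry the $\Sigma_n$‑action — with its point‑stabilizers $\Sigma_{n_1}\times\cdots\times\Sigma_{n_r}$ and the representation twists built into $\dual(\der_mI)$ — through the cross‑effect decomposition and the variable‑wise linearizations, so that the resulting identification is honestly $\Sigma_n$‑ and $\Sigma_r$‑equivariant and the comparison map in the square is visibly the norm. This rests on the input $\der_1[\suspec(-)^{[m]}]\homeq\dual(\der_mI)$ as $\Sigma_m$‑spectra, which repackages the partition‑complex (equivalently, bar‑construction) machinery; granting that, the remainder is formal. A second, routine point of care is that all the homotopy‑(co)limit interchanges, and the claim that $\der_r$ preserves homotopy colimits, must be carried out in the cofibrant model of $[\C,\D]$ flagged in the introduction, so that $K_r$ and the comonad $\der_*\Phi$ are the homotopically correct objects.
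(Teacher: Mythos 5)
The proposal reaches the right result but takes a genuinely different route to the key intermediate, Proposition~\ref{prop:topsp-KrAn}. The paper establishes the identification $K_r A_n \homeq \bigl[\prod_{\un{n} \epi \un{r}} \Map(\der_{n_1}I \smsh \dots \smsh \der_{n_r}I, A_n)\bigr]_{h\Sigma_n}$ via the bar-construction and Koszul-duality model $M(X)=\dual B(\suspec X^{\smsh *},\mathsf{Com},1)$ for the derivatives of representables, the coend formula $\der_*F = M(X)\smsh_X F(X)$, and Lemmas~\ref{lem:topsp1}, \ref{lem:topsp2} and \ref{lem:free-finite}, which together show that the specific maps $m$ of~(\ref{eq:m}) in the square~(\ref{eq:norm}) are equivalences. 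You instead compute $K_rA_n$ hands-on as $\der_r$ of the cofree functor $X\mapsto (A_n\smsh X^{[n]})_{h\Sigma_n}$ via cross-effects and multilinearization, feeding in the partition-complex input $\der_1[\suspec(-)^{[m]}]\homeq\dual(\der_mI)$; this is more elementary and runs parallel to the $\spectra\to\spectra$ case (Proposition~\ref{prop:derAn-spsp}), and the sanity checks you do carry out (free $\Sigma_n$-action on $X^{[n]}$, the cross-effect decomposition over surjections, the stabilizers) are all correct. What the paper's route buys is that it produces the identification together with the explicit comparison maps $m$, which then drop directly into the six-node diagram in the proof of Proposition~\ref{prop:norm}, where the composite of the left-hand maps is the identity by the triangle identity for $(\der_*,\Phi)$ and the bottom map is $\psi_{r,n}$ by construction. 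Your Step~2 asserts that $N\comp\theta_{r,n}$ and $\psi_{r,n}$ ``have the same underlying $\Sigma_n$-equivariant data'' after transport, but that is precisely the nontrivial claim: one must verify that your cross-effect identification of $K_r\der_nF$ is compatible with the right $\der_*I$-module structure on $\der_*F$, which in the paper is defined through $M(X)$ and its cooperad comodule maps, not through cross-effects. That compatibility is the content of the paper's diagram chase and of Lemma~\ref{lem:topsp1}. So the gap to close is not only the equivariant bookkeeping in Step~1 (which you rightly flag), but also the matching of your cross-effect model against the coend/Koszul-dual definition of the module structure in Step~2; calling the remainder ``formal'' undersells it.
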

The structure on $\der_*F$ described in Theorem~\ref{thm:spaces-spectra} can be called a \emph{divided power right $\der_*I$-module} (at least up to homotopy), with the lifts $\theta_{r,n}$ forming the analogue of a divided power structure on a commutative algebra.

\begin{example}
We demonstrate our theory here by classifying $2$-excisive functors with values in $\spectra$. These cases are well-known, and can be figured out by hand quite easily, but serve to illustrate the simplest cases of Theorems \ref{thm:spectra-valued} and \ref{thm:spaces-spectra}. Later in the paper we give a similar description of $3$-excisive functors.

Let us consider the $[\spectra, \spectra]$ case first.
It is not difficult to show that in this case $K_1(A_2)$ is equivalent to the Tate construction $\Tate_{\Sigma_2}(A_2)$. It follows that the first two derivatives of a functor $F\colon \spectra \to \spectra$ are connected by a map
\[
\der_1F \longrightarrow \Tate_{\Sigma_2} (\der_2F).
\]
Since at this level there are no compatibility conditions, this map completely determines the quadratic part of $F$. To put it another way: homotopy classes of $2$-excisive functors from $\spectra$ to $\spectra$ with prescribed derivatives $A_1$ and $A_2$ are in bijective correspondence with homotopy classes of maps $A_1 \rightarrow \Tate_{\Sigma_2}(A_2)$.

In the $[\based, \spectra]$ case, the right $\der_*I$-module structure on $\der_*F$ amounts to a map
\[
\der_1F \longrightarrow \Map(\partial_2I, \partial_2F)^{h\Sigma_2}.
\]
Since $\partial_2I\simeq S^{-1}$, this is the same thing as a map
\[
\der_1F \longrightarrow  (\partial_2F \wedge S^1)^{h\Sigma_2}.
\]
Theorem~\ref{thm:spaces-spectra} then says that this map lifts, over the norm, to a map of the form
\[
\der_1F \longrightarrow (\partial_2F\wedge S^1)_{h\Sigma_2}.
\]
Again there are no compatibility conditions so this map completely determines the quadratic part of $F$. Thus homotopy classes of $2$-excisive functors from $\based$ to $\spectra$ with prescribed derivatives $A_1$ and $A_2$ are in bijective correspondence with homotopy classes of maps
$A_1 \rightarrow (A_2\wedge S^1)_{h\Sigma_2}$.
\end{example}

\begin{remark}
Our results for functors from $\spectra$ to $\spectra$ overlap with those of McCarthy \cite{mccarthy:2001}. Here is one way to see the connection. It follows from McCarthy's work on dual calculus that there are homotopy pullback squares of the following form.
\[ \begin{diagram}
  \node{P_nF(X)} \arrow{e} \arrow{s} \node{(\der_nF \smsh X^{\smsh n})^{h\Sigma_n}} \arrow{s} \\
  \node{P_{n-1}F(X)} \arrow{e} \node{\Tate_{\Sigma_n}(\der_nF \smsh X^{\smsh n}).}
\end{diagram} \]
For an explicit construction of this square see Kuhn (\cite[1.9]{kuhn:2004}) or Chaoha (\cite[Theorem 3.1]{chaoha:2004}).
These squares give an inductive description of the data needed to reconstruct the Taylor tower. Our maps $\theta_{r,n}$ in the $\spectra$ to $\spectra$ case can be derived from McCarthy's pullback square: they are given by taking the \ord{r} derivative of the bottom horizontal map. What is new here is the explicit description of the compatibility conditions satisfied by these maps.

Conversely, it is possible to use our approach to derive McCarthy' pullback square, and also extend it to functors from $\based$ to $\spectra$. This is done in Corollary~\ref{cor:universal}.
\end{remark}

Our general theory also applies to functors that take values in based spaces. In each such case (either from spectra to spaces, or spaces to spaces) there is a comonad $\der_*\Phi$ (on the category of left $\der_*I$-modules, or $\der_*I$-bimodules, respectively) that acts on the derivatives of such a functor, and such that the Taylor tower of the functor can be recovered from this action. In these cases, however, it appears to be much harder to give a completely explicit description of what it means to be a coalgebra over $\der_*\Phi$. We do offer, in Section \ref{sec:top}, a classification of $2$-excisive functors in each of these settings.

Note that while we have far less to say about the space-valued case, we view the fact that our general theory does apply to it as one of the most significant results of the paper. The complexity we encounter in trying to calculate the comonad $\der_* \Phi$ reflects structure on the derivatives of space-valued functors that bears further study.

\subsection*{Outline of the paper}

Our main result provides an equivalence between a homotopy category of functors and that of coalgebras over a certain comonad. In section \ref{sec:coalgebras} we describe the homotopy theory for such coalgebras that we have in mind, and we construct the relevant homotopy category. Section \ref{sec:descent} is concerned with our version of homotopic descent theory. Here we prove a homotopic version of the Barr-Beck Comonadicity Theorem for a Quillen adjunction.

In section \ref{sec:taylor} we prove our main result classifying Taylor towers. Then in section \ref{sec:sp} we focus on spectrum-valued functors and derive some general results about them. In section~\ref{sec:specspec} we give an explicit (but only up to homotopy) description of the coalgebra structures for functors from $\spectra$ to $\spectra$, and in section~\ref{sec:topspec} we do the same for functors from $\based$ to $\spectra$. Finally, in section \ref{sec:top}, we show that our general theory also applies to functors with values in based spaces.

\subsection*{Acknowledgements}

We wish to acknowledge our debt to Kathryn Hess's paper on homotopic descent~\cite{hess:2010} which provided the initial idea for this approach to analyzing the calculus of functors. As usual, we benefited greatly along the way from conversations with Bill Dwyer. We also thank Haynes Miller for pointing us towards Radulescu-Banu's work on the construction of a cofibrant replacement comonad. The second author's joint work with Emily Riehl \cite{ching/riehl:2014} allowed for a substantial simplification of section 2, and the second author would like to thank John E. Harper for useful conversations concerning section 1. The first author was supported by NSF grant DMS-0968221 and the second author by NSF grant DMS-1144149.

\section{Homotopy theory for coalgebras over comonads} \label{sec:coalgebras}

In this section we consider a comonad $K$ defined on some category $\cat{B}$ that is equipped with a homotopy theory. Our goal is to describe a homotopy theory for the category of coalgebras over $K$. In our case these homotopy theories are described in terms of an enrichment in topological spaces.

Our approach is to define, for each pair of coalgebras, $A$ and $A'$, a suitable space of `derived' coalgebra maps from $A$ to $A'$. These mapping spaces have a composition that is associative up to coherent homotopies and so determine a topologically-enriched $A_\infty$-category whose objects are the $K$-coalgebras. There is an associated homotopy category (in which composition is strictly associative) given by taking path components of the mapping spaces. We show that a derived coalgebra map from $A$ to $A'$ is invertible in this homotopy category if and only if its underlying map $A \to A'$ is an isomorphism in the homotopy category associated to the topological category $\cat{B}$.

Previous work has established such homotopy theories in a different way. Model structures on particular categories of coalgebras have been studied by various authors starting with Quillen's work \cite{quillen:1969} on rational homotopy theory. Hess and Shipley \cite{hess/shipley:2014} have given general conditions under which coalgebras over a comonad on a model category inherit a model structure in which the cofibrations and weak equivalences are detected in the underlying category.

In the quasi-categorical setting, Riehl and Verity \cite{riehl/verity:2013} have described quasi-categories of algebras over a (homotopy coherent) monad. Their results have dual versions for coalgebras over a comonad. Lurie's approach to monads \cite[4.7]{lurie:2014} also seems likely to dualize to the comonad case.

\begin{notation} \label{not:enrich}
For this section, we let $\cat{B}$ be a category enriched in the category $\Top$ of compactly generated weak Hausdorff spaces. For objects $A,A'$ in $\cat{B}$, we write $\Hom(A,A')$ for the space of maps from $A$ to $A'$. Throughout this section, any functor $K: \cat{B} \to \cat{B}$ is assumed to be topologically-enriched, so that there are maps of spaces
\[ \Hom(A,A') \to \Hom(KA,KA') \]
for any $A,A' \in \cat{B}$.
\end{notation}

\begin{definition} \label{def:comonad}
A \emph{comonad} $K$ on the category $\cat{B}$ consists of a (topologically-enriched) functor
\[ K: \cat{B} \to \cat{B}, \]
together with (enriched) natural transformations
\[ \delta: K \to KK; \quad \epsilon: K \to 1_{\cat{B}} \]
such that the following diagrams commute
\[ \begin{diagram}
  \node{K} \arrow{e,t}{\delta} \arrow{s,l}{\delta} \node{KK} \arrow{s,r}{K\delta} \\ \node{KK} \arrow{e,t}{\delta K} \node{KKK}
\end{diagram}, \quad
\begin{diagram}
  \node{K} \arrow{e,t}{\delta} \arrow{se,=} \arrow{s,l}{\delta} \node{KK} \arrow{s,r}{K\epsilon} \\ \node{KK} \arrow{e,b}{\epsilon K} \node{K}
\end{diagram} \]
A \emph{$K$-coalgebra} consists of an object $A \in \cat{B}$ together with a morphism
\[ \theta: A \to KA \]
such that the following diagrams commute in $\cat{B}$
\[ \begin{diagram}
  \node{A} \arrow{e,t}{\theta} \arrow{s,l}{\theta} \node{KA} \arrow{s,r}{K\theta} \\ \node{KA} \arrow{e,t}{\delta_A} \node{KKA}
\end{diagram}, \quad
\begin{diagram}
  \node{A} \arrow{e,t}{\theta} \arrow{se,=} \node{KA} \arrow{s,r}{\epsilon_A} \\ \node[2]{A}
\end{diagram} \]
For $K$-coalgebras $A$ and $A'$, a \emph{(strict) morphism of $K$-coalgebras from $A$ to $A'$} is a morphism
\[ f: A \to A' \]
in $\cat{B}$ such that the following diagram commutes
\[ \begin{diagram}
  \node{A} \arrow{e,t}{f} \arrow{s,l}{\theta_A} \node{A'} \arrow{s,r}{\theta_{A'}} \\ \node{KA} \arrow{e,t}{Kf} \node{KA'}
\end{diagram} \]
\end{definition}

We specify homotopical information for the category of $K$-coalgebras first by means of an enrichment over the category $\cTop$ of \emph{cosimplicial spaces} with respect to the `box product'. This is a symmetric monoidal structure due to Batanin \cite{batanin:1998} and used to great effect by McClure and Smith in their proof of the Deligne Conjecture \cite{mcclure/smith:2002}.

\begin{definition} \label{def:css-hom}
Let $A$ and $A'$ be $K$-coalgebras in $\cat{B}$. We define a cosimplicial space $\Hom^{\bullet}_{K}(A,A')$ by
\[ \Hom^{\bullet}_{K}(A,A') := \Hom(A,K^{\bullet}A') \]
with coface maps $\delta^i: \Hom(A,K^m A') \to \Hom(A,K^{m+1}A')$ given
\begin{itemize}
  \item for $i = 0$, by the composite
  \[ \Hom(A,K^m A') \to \Hom(KA,K^{m+1}A') \to \Hom(A,K^{m+1}A') \]
  where the first map comes from the topological enrichment of $K$, and the second from the $K$-coalgebra structure on $A$;
  \item for $i = 1,\dots,m$, by applying the comultiplication map $\delta: K \to KK$ to the \ord{i} copy of $K$ in $K^m A'$;
  \item for $i = m+1$, by the $K$-coalgebra structure on $A'$.
\end{itemize}
and codegeneracy maps $\sigma^j: \Hom(A,K^m A') \to \Hom(A,K^{m-1}A')$ given
\begin{itemize}
  \item for $j = 0,\dots,m-1$, by applying the counit $\epsilon: K \to 1_{\cat{B}}$ to the \ord{j+1} copy of $K$.
\end{itemize}
\end{definition}

\begin{definition} \label{def:box}
For two cosimplicial spaces $X^\bullet,Y^\bullet$, we define $(X^\bullet \binBox Y^\bullet)$ to be the cosimplicial space given by
\[ (X^\bullet \binBox Y^\bullet)^m := \colim \left( \coprod_{p+q = m-1} X^p \times Y^q \rightrightarrows \coprod_{p+q = m} X^p \times Y^q \right). \]
The two maps in this coequalizer are, respectively, $(\delta^{p+1},1)$ and $(1,\delta^0)$. The coface maps on $X^\bullet \binBox Y^\bullet$ are given by
\[ \delta^i = \begin{cases} (\delta^i,1) & i = 0,\dots,p+1; \\ (1,\delta^{i-p-1}) & i = p+1,\dots,m+1, \end{cases} \]
and the codegeneracy maps by
\[ \sigma^j = \begin{cases} (\sigma^j,1) & j = 0,\dots,p-1; \\ (1,\sigma^{j-p}) & j = p,\dots,m-1. \end{cases} \]
\end{definition}

\begin{proposition}[Batanin, \cite{batanin:1998}] \label{prop:box}
The construction $\binBox$ is a monoidal product on the category $\cTop$ of cosimplicial spaces with unit given by the constant cosimplicial space $*$.
\end{proposition}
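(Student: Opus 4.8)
The plan is to verify directly that $(\csSet, \binBox, *)$ satisfies the axioms of a symmetric monoidal category, following Batanin. The work naturally splits into: (i) checking that $X^\bullet \binBox Y^\bullet$, with the claimed coface and codegeneracy operators, really is a cosimplicial simplicial set; (ii) constructing the associativity, unit, and symmetry natural isomorphisms; and (iii) verifying the pentagon and triangle coherence identities together with the hexagon for the symmetry. The key conceptual point, which I would isolate first, is a clean description of the $m$-cosimplices: an element of $(X^\bullet \binBox Y^\bullet)^m$ is (a simplicial set of) pairs $(x,y)$ with $x \in X^p$, $y \in Y^q$, $p+q=m$, modulo the relation identifying $(\delta^{p+1}x, y) \in X^{p+1}\times Y^q$ with $(x,\delta^0 y)\in X^p \times Y^{q+1}$. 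Thinking of this as ``gluing $X^\bullet$ to $Y^\bullet$ along a shared coface'' makes all subsequent checks transparent; in particular one sees that each class has a canonical representative (e.g.\ push all the way into one factor until blocked), which is the technical device that makes the coherence verifications finite.

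First I would check well-definedness in (i): the coface formulas in Definition~\ref{def:box} have overlapping ranges at $i=p+1$, and one must confirm that $(\delta^{p+1},1)$ and $(1,\delta^0)$ agree after passing to the coequalizer --- but that is precisely the defining relation. One then verifies the cosimplicial identities for $\binBox$ by splitting into cases according to whether the indices land in the $X$-range, the $Y$-range, or straddle the boundary; each case reduces to a cosimplicial identity already holding in $X^\bullet$ or in $Y^\bullet$, or to a compatibility with the coequalizer relation. The same bookkeeping handles the mixed coface/codegeneracy and degeneracy/degeneracy identities. This step is routine but the most tedious; I would organize it via the canonical-representative description so that only finitely many boundary cases need explicit attention.

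Next, for (ii): associativity $(X^\bullet \binBox Y^\bullet) \binBox Z^\bullet \isom X^\bullet \binBox (Y^\bullet \binBox Z^\bullet)$ follows because both sides have as $m$-cosimplices triples $(x,y,z)$ with $x\in X^p, y\in Y^q, z\in Z^r$, $p+q+r=m$, modulo the two relations ``$X$-$Y$ gluing'' and ``$Y$-$Z$ gluing'' --- the iterated $\binBox$ imposes these one at a time, but the resulting quotient is manifestly symmetric in how the two relations are applied, giving the isomorphism, which is visibly natural and compatible with the coface/codegeneracy structure. For the unit: $* \binBox Y^\bullet$ has $m$-cosimplices given by pairs $(*, y)$ with $y \in Y^q$ and $0+q = m$ (the component $p=0$ is forced since $*^p = *$), so the only surviving term is $q=m$, and one checks the relation is trivial, yielding $* \binBox Y^\bullet \isom Y^\bullet$ (similarly on the right). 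The symmetry $X^\bullet \binBox Y^\bullet \isom Y^\bullet \binBox X^\bullet$ sends the class of $(x,y)\in X^p\times Y^q$ to the class of $(y,x) \in Y^q \times X^p$; one must check this respects coface maps, which it does after matching the two pieces of the case split in Definition~\ref{def:box} under the reindexing $i \leftrightarrow m+1-i$ (so the ``$X$-part first'' and ``$Y$-part first'' conventions swap correctly), and that squaring gives the identity.

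Finally (iii): the pentagon for associativity reduces, on $m$-cosimplices, to the statement that the two ways of imposing three gluing relations on a quadruple $(x,y,z,w)$ agree --- which is immediate since all relations commute --- and the triangle and hexagon reduce similarly to trivial identifications of canonical representatives. The main obstacle, such as it is, is purely organizational rather than mathematical: managing the overlapping index ranges at the block boundaries in the cosimplicial identities of step (i) without sign or off-by-one errors. Once the canonical-representative description is in place, every step is a finite case check, and the coherence diagrams commute essentially on the nose; I would present the $m$-cosimplex description as a lemma up front and then treat (ii) and (iii) quite briefly, citing Batanin~\cite{batanin:1998} for the details of (i).
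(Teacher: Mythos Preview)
The paper gives no proof of this proposition; it simply attributes the statement to Batanin. So there is no ``paper's approach'' to compare against beyond the citation.

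Your proposal, however, contains two genuine problems.

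First, and most seriously, the symmetry. The box product $\binBox$ is \emph{not} symmetric monoidal, only monoidal---McClure and Smith say so explicitly, and the word ``symmetric'' in the proposition appears to be a slip in the paper (nothing in the paper ever uses a braiding, and the operad $\mathsf{A}$ of Definition~\ref{def:operad} is, correctly, declared non-symmetric). Your proposed symmetry $(x,y)\mapsto(y,x)$ cannot be a map of cosimplicial objects: on a representative in $X^p\times Y^q$ the coface $\delta^0$ of $X^\bullet\binBox Y^\bullet$ acts on the $X$-factor, whereas on the swapped representative in $Y^q\times X^p$ the coface $\delta^0$ of $Y^\bullet\binBox X^\bullet$ acts on the $Y$-factor, and these do not match. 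The fix you suggest, reindexing $i\leftrightarrow m+1-i$, is not available: there is no natural involution on $\Delta$ exchanging $\delta^0$ with $\delta^{m+1}$, so this reindexing is not an operation on cosimplicial objects at all. The asymmetry is intrinsic---$\binBox$ is modelled on ordinal concatenation, which carries no braiding.

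Second, your unit argument is wrong as written. You say ``the component $p=0$ is forced since $*^p=*$'', but $*^p$ is a one-point set for \emph{every} $p$, so the coproduct $\coprod_{p+q=m} *^p\times Y^q$ contains a copy of $Y^q$ for each $0\le q\le m$, not just $Y^m$. The coequalizer relation is likewise not trivial: it is exactly what identifies these copies with one another (via iterated $\delta^0$ on one side or $\delta^{q+1}$ on the other). A correct argument picks canonical representatives in one extreme component and then verifies that the induced cosimplicial structure agrees with that of $Y^\bullet$.

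The remainder of your outline---the well-definedness checks for the cosimplicial identities on $X^\bullet\binBox Y^\bullet$, and the associativity via triples $(x,y,z)$ modulo two commuting gluing relations---is along the right lines and would constitute the substance of a proof that $(\csSet,\binBox,*)$ is monoidal.
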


To see that the cosimplicial spaces in Definition \ref{def:css-hom} form part of a $\cTop$-enriched category, we have to describe the composition and identity morphisms.

\begin{definition} \label{def:css-comp}
Let $A,A',A''$ be $K$-coalgebras. We define a composition map
\[ \mu: \Hom^\bullet_{K}(A,A') \binBox \Hom^\bullet_{K}(A',A'') \to \Hom^\bullet_{K}(A,A'') \]
via the composites
\[ \Hom(A,K^pA') \times \Hom(A',K^qA'') \to \Hom(A,K^pA') \times \Hom(K^pA',K^{p+q}A'') \to \Hom(A,K^{p+q}A''). \]
where the first map uses the topological enrichment of $K$ and the second is composition in the topological category $\cat{B}$.

We also define an identity map
\[ \iota: * \to \Hom^\bullet_{K}(A,A) \]
via the composites
\[ \iota_m: * \to \Hom(A,A) \to \Hom(K^mA,K^mA) \to \Hom(A,K^mA) \]
where the first map picks out the identity morphism on $A$, the second comes from the topological enrichment of $K$, and the last is given by iterating the $K$-coalgebra structure on $A$.
\end{definition}

\begin{proposition} \label{prop:css-enrich}
The maps $\mu$ and $\iota$ of \ref{def:css-comp} determine a category enriched in $(\mathsf{cTop},\binBox,*)$ whose objects are the $K$-coalgebras. We denote this category by $\cat{B}_K$.
\end{proposition}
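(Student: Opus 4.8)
The plan is to verify directly the three axioms of a category enriched in $(\csSet,\binBox,*)$: that $\mu$ and $\iota$ are well-defined morphisms of cosimplicial simplicial sets, that $\mu$ is associative with respect to the associativity isomorphism of $\binBox$, and that $\iota$ is a two-sided unit with respect to the unit isomorphisms. The only genuinely structural point is the well-definedness of $\mu$ — in particular, that it descends past the coequalizer appearing in Definition~\ref{def:box}. Everything else will reduce to the fact that the ``underlying'' composition formula is strictly associative and unital, together with Batanin's Proposition~\ref{prop:box}.

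First I would record that, on representatives, the composition of Definition~\ref{def:css-comp} is given by
\[ (f\colon A \to K^pA',\; g\colon A' \to K^qA'') \longmapsto K^p g \circ f \colon A \to K^{p+q}A'', \]
and the unit by $\iota_m(*) = \theta^{(m)}\colon A \to K^m A$, where $\theta^{(m)}$ is any iterate of the $K$-coalgebra structure on $A$ (all such iterates coincide by the coalgebra axioms — this is the coalgebra analogue of the well-definedness of iterated monad actions, and it is exactly what makes $\iota$ a map of cosimplicial simplicial sets, the coface and codegeneracy checks being the identities $\delta^i\theta^{(m)} = \theta^{(m+1)}$ and $\sigma^j\theta^{(m+1)} = \theta^{(m)}$ forced by coassociativity and counitality). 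Granting these formulas, associativity of $\mu$ at the level of representatives is the strict identity
\[ K^{p+q}h\circ K^pg\circ f = K^p(K^qh\circ g)\circ f, \]
which holds by functoriality of $K$, and the unit conditions hold because $\theta^{(0)} = \mathrm{id}$ and $K^p(\mathrm{id}) = \mathrm{id}$; since $\binBox$ is itself strictly associative and unital (Proposition~\ref{prop:box}), these representative-level statements upgrade to the required equalities of morphisms in $\csSet$.

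The key step is to check that $\mu$ coequalizes the two maps $(\delta^{p+1},1)$ and $(1,\delta^0)$ of Definition~\ref{def:box}. Unwinding Definition~\ref{def:css-hom}: the top coface of $\Hom^\bullet_K(A,A')$ postcomposes $f\colon A\to K^pA'$ with $K^p(\theta_{A'})$, while the zeroth coface of $\Hom^\bullet_K(A',A'')$ sends $g\colon A'\to K^qA''$ to $Kg\circ\theta_{A'}$; applying $\mu$ to each yields, respectively,
\[ K^{p+1}g \circ K^p(\theta_{A'}) \circ f \qquad\text{and}\qquad K^p(Kg\circ\theta_{A'})\circ f, \]
which agree since $K^{p+1}g = K^p(Kg)$ — both encode the insertion of the coaction of the \emph{middle} coalgebra $A'$. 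Hence $\mu$ factors through the coequalizer defining $\binBox$ at each cosimplicial level (and at each such level it is automatically a map of simplicial sets, being assembled from the simplicial enrichment of $K$ and composition in $\cat{B}$). A parallel and entirely mechanical unwinding then shows $\mu$ is compatible with every coface and codegeneracy: the operators arising from the coalgebra structures on the outer objects $A$ and $A''$ are matched using functoriality of $K$ (e.g.\ $K^{p+1}g\circ Kf = K(K^pg\circ f)$), and the ``interior'' operators, which insert the comultiplication $\delta\colon K\to KK$ or the counit $\epsilon\colon K\to 1_{\cat{B}}$ into a single copy of $K$, are transported across $\mu$ by naturality of $\delta$ and $\epsilon$ together with the simplicial enrichment of $K$. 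The unit axiom for the enriched category is then checked on the representative term ($p=0$) of $*\binBox\Hom^\bullet_K(A,A')$ and of $\Hom^\bullet_K(A,A')\binBox *$, where $\mu(\mathrm{id},-) $ and $\mu(-,\mathrm{id})$ are the identity.

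I expect the main obstacle to be purely organizational rather than mathematical: keeping careful track of which copy of $K$ in $K^m(-)$ each structure map acts on, and matching the indices of the coface and codegeneracy operators on $X^\bullet\binBox Y^\bullet$ — which switch from the $X$-factor to the $Y$-factor at position $p+1$ within the summand $X^p\times Y^q$ — with those on $\Hom^\bullet_K(A,A'')$. Once the composition formula and the coequalizer property above are in place, the remainder is a bounded list of short diagram chases with no conceptual difficulty.
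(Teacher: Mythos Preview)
The paper does not supply a proof of this proposition; it is stated and then immediately followed by Remark~\ref{rem:ss-hom}, so the verification is left to the reader. Your argument is correct and fills in exactly the details one would expect: the only nontrivial check is that $\mu$ coequalizes the two maps $(\delta^{p+1},1)$ and $(1,\delta^0)$ defining $\binBox$, which you reduce to the identity $K^{p+1}g \circ K^p(\theta_{A'}) \circ f = K^p(Kg \circ \theta_{A'}) \circ f$, and the remaining compatibility with cofaces, codegeneracies, associativity, and units is the routine bookkeeping you describe.
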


\begin{remark} \label{rem:ss-hom}
The $\cTop$-enriched category $\cat{B}_K$ has an underlying topological category with the same objects, and with the space of morphisms from $A$ to $A'$ given by
\[ \Hom_{K}(A,A') := \Hom_{\cTop}(*,\Hom^\bullet_{K}(A,A')). \]
The resulting topological category is the usual category of $K$-coalgebras. In particular, the points in $\Hom_{K}(A,A')$ are the strict $K$-coalgebra morphisms $A \to A'$ in the sense of Definition \ref{def:comonad}.
\end{remark}

There is another way to construct mapping spaces in a $\cTop$-enriched category, which is by taking totalizations of the cosimplicial spaces instead of their strict limits. As we see below this produces a topological category for which composition is not strictly associative, but is associative up to higher coherent homotopies. It is this approach that yields the homotopy theory we are interested in.

It is convenient to use the `restricted' (or `fat') totalization throughout this paper since this has the correct homotopy type without requiring a Reedy fibrant replacement.

\begin{definition} \label{def:tot}
The \emph{restricted totalization} of a cosimplicial space $X^\bullet$ is the space
\[ \Tot(X^\bullet) := \Hom_{\Deltainj}(\Delta^\bullet,X^\bullet) \]
where $\Deltainj$ is the subcategory of the simplicial indexing category consisting only of the injective morphisms. The restricted totalization is homotopy invariant, and is equivalent to the ordinary totalization of a Reedy fibrant replacement.
\end{definition}

\begin{definition} \label{def:derived-ss-hom}
Let $A,A'$ be $K$-coalgebras in $\cat{B}$. The \emph{space of derived $K$-coalgebra maps from $A$ to $A'$} is the restricted totalization
\[ \hHom_{K}(A,A') := \Tot \Hom^\bullet_{K}(A,A') = \Hom_{\Deltainj}(\Delta^\bullet,\Hom(A,K^\bullet A')). \]
The points in this space are no longer strict $K$-coalgebra maps, but commute with the coalgebra structures only up to higher coherent homotopies.
\end{definition}

\begin{definition} \label{def:derived-coalgebra-map}
A \emph{derived $K$-coalgebra map from $A$ to $A'$} is a point in the space $\hHom_{K}(A,A')$. Explicitly, such an $f$ consists of a collection of morphisms
\[ f_n: \Delta^n \to \Hom(A,K^n A') \]
satisfying some compatibility conditions. In particular, there is a morphism
\[ f_0: A \to A' \]
in the underlying category of the topological category $\cat{B}$, and the map $f_1: \Delta^1 \to \Hom(A,KA')$ provides a homotopy between the two composites in the square
\[ \begin{diagram}
  \node{A} \arrow{e} \arrow{s} \node{A'} \arrow{s} \\
  \node{KA} \arrow{e} \node{KA'}
\end{diagram} \]
The maps $f_n$ can be viewed as a set of higher coherent homotopies that generalize this description of $f_1$.
\end{definition}

There is no single well-defined and associative composition for derived $K$-coalgebra maps, but we can compose them up to the action of the following $A_\infty$-operad.

\begin{definition} \label{def:operad}
We define a (non-symmetric) operad in $\Top$ by taking, for $n \geq 0$,
\[ \mathsf{A}_n := \Hom_{\Deltainj}(\Delta^\bullet, (\Delta^\bullet)^{\binBox n}). \]
This is the `coendomorphism operad' of the cosimplicial space $\Delta^\bullet$ with respect to the box product. McClure and Smith prove in \cite[3.5]{mcclure/smith:2004} that $\mathsf{A}_n$ is contractible for each $n \geq 0$.

For $n \geq 0$ and for $K$-coalgebras $A_0,\dots,A_n$ we define natural composition maps
\begin{equation} \label{eq:comp} \mathsf{A}_n \times \hHom_{K}(A_0,A_1) \times \dots \times \hHom_{K}(A_{n-1},A_n) \to \hHom_{K}(A_0,A_n) \end{equation}
by the composites
\[ \begin{split}
    \Hom_{\Deltainj}(\Delta^\bullet, &(\Delta^\bullet)^{\binBox n}) \times \Hom_{\Deltainj}(\Delta^\bullet,\Hom^{\bullet}_{K}(A_0,A_1)) \times \dots \times \Hom_{\Deltainj}(\Delta^\bullet,\Hom^{\bullet}_{K}(A_{n-1},A_n)) \\
        &\to \Hom_{\Deltainj}(\Delta^\bullet, (\Delta^\bullet)^{\binBox n}) \times \Hom_{\Deltainj}((\Delta^\bullet)^{\binBox n}, \Hom^\bullet_{K}(A_0,A_1) \binBox \dots \binBox \Hom^\bullet_{K}(A_{n-1},A_n)) \\
        &\to \Hom_{\Deltainj}(\Delta^\bullet, \Hom^\bullet_{K}(A_0,A_1) \binBox \dots \binBox \Hom^\bullet_{K}(A_{n-1},A_n)) \\
        &\to \Hom_{\Deltainj}(\Delta^\bullet, \Hom^\bullet_{K}(A_0,A_n))
\end{split} \]
where the final map is given by iterating the map $\mu$ of Definition \ref{def:css-comp}.
\end{definition}

\begin{proposition} \label{prop:operad}
The maps (\ref{eq:comp}) determine a topological-$A_{\infty}$-category whose objects are the $K$-coalgebras in $\cat{B}$ and whose morphism spaces are the spaces $\hHom_{K}(A,A')$.
\end{proposition}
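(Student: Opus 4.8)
The plan is to recognize the statement as a special case of a general principle of McClure and Smith~\cite{mcclure/smith:2004}: the fat totalizations of the hom-objects of any category enriched in $(\csSet,\binBox,*)$ assemble into a simplicial $A_\infty$-category whose $A_\infty$-structure is governed by the coendomorphism operad $\mathsf{A}$ of the cosimplicial simplicial set $\Delta^\bullet$. We apply this to the $\csSet$-enriched category $\cat{B}_K$ of Proposition~\ref{prop:css-enrich}. Since $\hHom_K(A,A')=\Tot\Hom^\bullet_K(A,A')$ by Definition~\ref{def:derived-ss-hom}, the resulting $A_\infty$-category is precisely the one in the statement, and the composition maps~\eqref{eq:comp} are precisely those produced by this general construction.

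The first step is to check that $\mathsf{A}$ really is a non-symmetric operad. Its composition $\mathsf{A}_n\times\mathsf{A}_{k_1}\times\cdots\times\mathsf{A}_{k_n}\to\mathsf{A}_{k_1+\cdots+k_n}$ sends $(\phi;\psi_1,\dots,\psi_n)$ to the composite
\[ \Delta^\bullet\to(\Delta^\bullet)^{\binBox n}\to(\Delta^\bullet)^{\binBox k_1}\binBox\cdots\binBox(\Delta^\bullet)^{\binBox k_n}\cong(\Delta^\bullet)^{\binBox(k_1+\cdots+k_n)}, \]
where the first map is $\phi$, the second is $\psi_1\binBox\cdots\binBox\psi_n$, and the last is an instance of the associativity constraint of $\binBox$; the operad unit is $\mathrm{id}_{\Delta^\bullet}\in\mathsf{A}_1$. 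The operad axioms then follow from functoriality of $\binBox$ in each variable, associativity of composition of $\Deltainj$-natural transformations, and the coherence of the symmetric monoidal structure $(\csSet,\binBox,*)$ of Proposition~\ref{prop:box}. That each $\mathsf{A}_n$ is contractible, so that $\mathsf{A}$ is an $A_\infty$-operad, is~\cite[3.5]{mcclure/smith:2004}, as already recalled in Definition~\ref{def:operad}.

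The second step is to verify that the maps~\eqref{eq:comp} are compatible with this operad structure --- i.e.\ that the diagrams expressing associativity and unitality of an algebra over $\mathsf{A}$ commute --- so that the $\hHom_K(A,A')$ are the morphism spaces of a simplicial $A_\infty$-category. Unwinding the four-step composite of Definition~\ref{def:operad}, the associativity diagrams reduce to three ingredients: the associativity isomorphism of $\binBox$ (used to rebracket both the iterated box powers $(\Delta^\bullet)^{\binBox n}$ and the iterated box products of the $\Hom^\bullet_K$'s, compatibly), associativity of composition of $\Deltainj$-natural transformations, and associativity of the enriched composition $\mu$ of $\cat{B}_K$ from Proposition~\ref{prop:css-enrich}. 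The unit axioms reduce likewise to the unit constraints for $\binBox$ and for the composition $\mu$ and identity $\iota$ of Definition~\ref{def:css-comp}.

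The main obstacle is organizational rather than conceptual. The composition~\eqref{eq:comp} interleaves the operad elements $\phi\in\mathsf{A}_n$ (viewed as reindexings $\Delta^\bullet\to(\Delta^\bullet)^{\binBox n}$) with $n$-fold box products of derived coalgebra maps and with the structure map $\mu$, and checking operad-associativity amounts to pushing the associativity isomorphism of $\binBox$ through several layers of nested box products while tracking how it interacts with $\mu$; this is exactly the bookkeeping carried out in~\cite{mcclure/smith:2004}. The cleanest way to finish is therefore to phrase the constructions of Definitions~\ref{def:box}, \ref{def:css-comp} and~\ref{def:operad} so that the McClure--Smith result applies to $\cat{B}_K$ with no change; alternatively, one simply checks the finitely many coherence diagrams by hand, the unit axioms being immediate from the data already built into Definitions~\ref{def:box} and~\ref{def:css-comp}.
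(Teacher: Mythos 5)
Your proposal is correct and takes essentially the same approach as the paper: the paper's proof is a one-line observation that the axioms follow formally from the construction of the maps~\eqref{eq:comp} and the fact that $\mathsf{A}$ is the coendomorphism operad of $\Delta^\bullet$, and you simply spell out what that formal verification amounts to. The extra detail (checking that $\mathsf{A}$ is a non-symmetric operad and tracing the associativity/unitality diagrams through $\binBox$, $\mu$, and $\iota$) is an honest unpacking of the paper's argument rather than a different route.
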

\begin{proof}
An $A_\infty$-category consists of generalized composition maps of the form (\ref{eq:comp}) that satisfy associativity and unit conditions with respect to the operad structure on the $A_\infty$ operad $\mathsf{A}$. In our case these conditions follow formally from the construction of the maps (\ref{eq:comp}) and the fact that $\mathsf{A}$ is the coendomorphism operad on the object $\Delta^\bullet$.
\end{proof}

As for an ordinary topological category, we can associate a homotopy category to any topological $A_\infty$-category by taking path components of the mapping objects.

\begin{definition} \label{def:homotopy}
The \emph{homotopy category of $K$-coalgebras} has as its objects the $K$-coalgebras in $\cat{B}$, and the morphisms from $A$ to $A'$ are the path components of the mapping space $\hHom_{K}(A,A')$. We denote the morphism sets in this homotopy category by
\[ [A,A']_{K} := \pi_0 \hHom_{K}(A,A') = \pi_0 \Tot \Hom(A,K^\bullet A'). \]
Composition and identities are given by applying $\pi_0$ to the maps (\ref{eq:comp}) with $n = 2$ and $n = 0$ respectively. The associativity and unit conditions follow from the associativity of the maps (\ref{eq:comp}) and the fact that $\mathsf{A}_n$ is contractible for all $n$.
\end{definition}

We now identify those derived coalgebra maps that are invertible in this homotopy category. It turns out to be precisely those whose underlying map is invertible in the homotopy category associated to the topological category $\cat{B}$.

\begin{proposition} \label{prop:weqs}
The derived $K$-coalgebra map $f:A \to A'$ induces an isomorphism in the homotopy category of $K$-coalgebras if and only if the underlying map $f_0: A \to A'$ induces an isomorphism in the homotopy category of the topological category $\cat{B}$.
\end{proposition}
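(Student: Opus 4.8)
The plan is to prove both implications, with the substantive content lying in the "only if" direction. The "if" direction is the more formal half: given that $f_0 : A \to A'$ is a simplicial homotopy equivalence in $\cat{B}$, I would produce a homotopy inverse as a derived $K$-coalgebra map. The key point is that the cosimplicial simplicial set $\Hom^\bullet_K(A,A') = \Hom(A, K^\bullet A')$ is, levelwise, functorial in both variables through simplicial homotopy equivalences in $\cat{B}$ (because $K$ is simplicially enriched and $\Hom(-,-)$ takes simplicial homotopy equivalences to simplicial homotopy equivalences of Kan complexes). Hence a homotopy inverse $g_0 : A' \to A$ in $\cat{B}$ induces a levelwise homotopy equivalence $\Hom^\bullet_K(A',A) \to \Hom^\bullet_K(A,A)$ (post-composition) realizing that $g_0$ "is" a $0$-simplex whose composite with $f$ is homotopic, through the $A_\infty$-structure of Proposition \ref{prop:operad}, to the identity derived coalgebra map $\iota$. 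I would phrase this by observing that fat totalization $\Tot$ preserves levelwise homotopy equivalences of objectwise-fibrant cosimplicial simplicial sets, so $\Tot$ of the post- and pre-composition maps are homotopy equivalences, and then track the identity element.

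For the "only if" direction, suppose $f$ becomes invertible in the homotopy category, with inverse represented by some derived map $g : A' \to A$. Then $g \circ f$ and $f \circ g$ (composed via the $A_\infty$-operad $\mathsf{A}$) are each connected by a path in the respective $\hHom_K$ to the identity derived coalgebra map. I would first reduce to a statement about the underlying maps by applying the "evaluation at the $0$-simplex" / bottom-level map
\[ \hHom_K(A,A') = \Tot \Hom(A,K^\bullet A') \longrightarrow \Hom(A, K^0 A') = \Hom(A,A'), \]
which is a map of simplicial sets sending a derived coalgebra map to its underlying map $f_0$ and sending the operadic composition (\ref{eq:comp}) to ordinary composition in $\cat{B}$ (this compatibility is exactly what the $i=0$ coface and the $\mu$-composition were built to give at cosimplicial level $0$). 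Under this map, the homotopy $g \circ f \simeq \iota$ in $\hHom_K(A,A)$ maps to a homotopy $g_0 \circ f_0 \simeq 1_A$ in $\Hom(A,A)$, i.e. a path in the Kan complex $\Hom(A,A)$ from $g_0 f_0$ to $\mathrm{id}_A$; similarly $f_0 g_0 \simeq 1_{A'}$. A path in $\Hom(-,-)$ between two vertices is precisely a simplicial homotopy in $\cat{B}$ (here I use that $\cat{B}$ is enriched in Kan complexes so $\pi_0 \Hom$ is the set of simplicial homotopy classes). Hence $f_0$ is a simplicial homotopy equivalence.

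The step I expect to be the main obstacle — really the only place requiring care — is checking that the "bottom-level" map $\Tot \Hom^\bullet_K(A,A') \to \Hom(A,A')$ is compatible with the $A_\infty$-composition in the sense needed: namely that composing derived maps and then evaluating at level $0$ agrees (up to the already-understood contractible ambiguity of $\mathsf{A}$) with evaluating at level $0$ and then composing in $\cat{B}$. This is not automatic from the formalism of Definition \ref{def:operad}; it requires unwinding that the level-$0$ part of $\mu$ (Definition \ref{def:css-comp}) is precisely ordinary composition $\Hom(A,A') \times \Hom(A',A'') \to \Hom(A,A'')$, that the level-$0$ part of $\iota$ picks out $\mathrm{id}$, and that the operad $\mathsf{A}$ acts on the constant-at-level-$0$ part trivially (which follows since $(\Delta^\bullet)^{\binBox n}$ at level $0$ is a point). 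Once that compatibility lemma is in hand the rest is bookkeeping. I would state it as a small lemma — "evaluation at cosimplicial degree $0$ is a lax functor from $\cat{B}_K^{A_\infty}$ to $\cat{B}$, strictly on $\pi_0$" — and then both implications follow by passing to $\pi_0$ and using that isomorphisms in $\Ho(\cat{B})$ (the homotopy category of the Kan-complex-enriched category $\cat{B}$) are exactly the simplicial homotopy equivalences.
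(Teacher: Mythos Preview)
Your ``only if'' direction is correct and in fact simpler than the paper's argument. You observe that evaluation at cosimplicial degree $0$ gives a map of simplicial sets $\hHom_K(A,A') \to \Hom(A,A')$ that is strictly compatible with composition and identities (immediate from Definitions~\ref{def:box} and~\ref{def:css-comp} at level $0$, since $(\Delta^\bullet)^{\Box n}$ is a point in degree $0$ and $\mu_0$ is ordinary composition), so an inverse $g$ in the homotopy category yields simplicial homotopies $g_0 f_0 \simeq 1_A$ and $f_0 g_0 \simeq 1_{A'}$. The paper instead runs the Yoneda argument through to the end and specializes to cofree coalgebras $A'' = KX$: the extra codegeneracies in $\Hom(-, K^\bullet KX)$ collapse the totalization to $\Hom(-, X)$, reducing to $f_0^*: \Hom(A',X) \to \Hom(A,X)$ being a $\pi_0$-isomorphism for all $X$. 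Your direct projection is the more economical route here.

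Your ``if'' direction, however, has a genuine gap, and the instinct that it is the ``more formal half'' is misleading --- in the paper this is the direction that requires the real technical work. The problem is that a homotopy inverse $g_0: A' \to A$ in $\cat{B}$ does \emph{not} induce a cosimplicial map on $\Hom^\bullet_K$ in either variable: pre-composition with $f_0$ (or $g_0$) fails to commute with the coface $\delta^0$, which uses the $K$-coalgebra structure on the source, and post-composition with $K^\bullet g_0$ fails to commute with $\delta^{m+1}$, which uses the structure on the target. Since fat totalization needs all coface maps, your levelwise equivalences do not pass to $\Tot$, and there is no evident way to promote $g_0$ to a derived coalgebra map or to ``track the identity element'' through a map that does not exist. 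The paper circumvents this with a Yoneda argument: it shows that $f^*: [A', A'']_K \to [A, A'']_K$ is a bijection for all $A''$ by factoring $f^*$ as $\beta^* \circ \alpha^*$, where $\beta: \Delta^\bullet \Box \Hom^\bullet_K(A', A'') \to \Hom^\bullet_K(A, A'')$ is a genuine cosimplicial map built from $\mu$ and the \emph{full} derived map $f$ (not just $f_0$). The crux is Lemma~\ref{lem:ij}, that $\Delta^\bullet \Box X^\bullet \to X^\bullet$ is a levelwise equivalence; this lets one compare $\beta$ levelwise with the bare pre-composition $f_0^*$ via a commutative triangle involving the section $i$, and conclude that $\beta$ is a levelwise equivalence when $f_0$ is a simplicial homotopy equivalence. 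You would need an analogous device --- some honest cosimplicial replacement for the non-cosimplicial $f_0^*$ --- to complete the argument.
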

\begin{proof}
The map $f$ induces an isomorphism in the homotopy category of $K$-coalgebras if and only if, for any $K$-coalgebra $A''$, composition with $f$ induces a bijection
\[ f^*: [A',A'']_{K} \arrow{e,t}{\isom} [A,A'']_{K}. \]
In order to analyze this condition, we fix a point in the space $\mathsf{A}_2$ by means of an isomorphism of cosimplicial spaces
\[ \alpha: \Delta^\bullet \arrow{e,t}{\isom} \Delta^\bullet \binBox \Delta^\bullet \]
as described in McClure-Smith \cite[3.5]{mcclure/smith:2004}. This choice then fixes a composition operation for derived $K$-coalgebra maps.

The map $f^*$ can then be described in the following way. It is given by taking the path components of a map of spaces of the form
\[ \dgTEXTARROWLENGTH=3em \Tot \Hom(A', K^\bullet A'') \arrow{e,t}{\alpha^*} \Tot (\Delta^\bullet \binBox \Hom(A', K^\bullet A'')) \arrow{e,t}{\beta^*} \Tot \Hom(A, K^\bullet A''). \]
The first map $\alpha^*$ is determined by $\alpha$ and is the composite
\[ \dgTEXTARROWLENGTH=4em \begin{split} \Hom_{\Deltainj}(\Delta^\bullet, X^\bullet)
    &\arrow{e,t}{\Delta^\bullet \binBox -} \Hom_{\Deltainj}(\Delta^\bullet \binBox \Delta^\bullet, \Delta^\bullet \binBox X^\bullet) \\
    &\arrow{e,t}{\alpha} \Hom_{\Deltainj}(\Delta^\bullet, \Delta^\bullet \binBox X^\bullet)
\end{split} \]
where $X^\bullet = \Hom^\bullet_{K}(A',A'')$. The map $\beta^*$ is induced by taking totalizations of the following map of cosimplicial spaces
\[ \beta: \Delta^\bullet \binBox \Hom(A',K^\bullet A'') \arrow{e,t}{f} \Hom(A,K^\bullet A') \binBox \Hom(A',K^\bullet A'') \arrow{e,t}{\mu} \Hom(A,K^\bullet A''). \]
We first show that $\alpha^*$ is a weak equivalence. For this we need the following lemma.

\begin{lemma} \label{lem:ij}
Let $X^{\bullet}$ be any cosimplicial space, and let $j: \Delta^{\bullet} \binBox X^{\bullet} \to X^{\bullet}$ be the map induced by $\Delta^\bullet \to *$. Then $j$ is a levelwise weak equivalence of cosimplicial spaces.
\end{lemma}
\begin{proof}
For each $m \geq 0$, we define
\[ i: X^m \to [\Delta^\bullet \binBox X^\bullet]^m \]
by the sequence
\[ X^m \isom \Delta^0 \times X^m \into \coprod_{p+q=m} \Delta^p \times X^q \to [\Delta^\bullet \binBox X^\bullet]^m. \]
Then $ji$ is the identity on $X^m$. For each $p$, there is a `straight-line' homotopy
\[ h: \Delta^1 \times \Delta^p \to \Delta^p \]
between the identity and the constant map to the `terminal' vertex of $\Delta^p$ (i.e. the image of the map $\Delta^0 \to \Delta^p$ induced by the map $[0] \to [p]$ in $\Deltainj$ given by $0 \mapsto p$). These together induce a homotopy
\[ \Delta^1 \times [\Delta^\bullet \binBox X^\bullet]^m \to [\Delta^\bullet \binBox X^\bullet]^m \]
between the identity map and $ij$. Thus $j$ is a levelwise weak equivalence.
\end{proof}

Now post-compose $\alpha^*$ with the weak equivalence
\[ j_*: \Hom_{\Deltainj}(\Delta^\bullet, \Delta^\bullet \binBox X^\bullet) \to \Hom_{\Deltainj}(\Delta^\bullet, X^\bullet). \]
The resulting map
\[ \Hom_{\Deltainj}(\Delta^\bullet,X^\bullet) \to \Hom_{\Deltainj}(\Delta^\bullet, X^\bullet) \]
is that induced by the composite
\[ \Delta^\bullet \arrow{e,tb}{\isom}{\alpha} \Delta^\bullet \binBox \Delta^\bullet \arrow{e,tb}{\sim}{j} \Delta^\bullet \]
and so is also a weak equivalence. Thus, $\alpha^*$ is a weak equivalence.

It now follows that $f$ induces an isomorphism in the homotopy category if and only if the map $\beta^*$ is a $\pi_0$-isomorphism for all $A''$.

Suppose first that $f_0$ induces an isomorphism in the homotopy category of $\cat{B}$. Then the map $f_0^*$ in the following diagram of spaces is a weak homotopy equivalence for each $m \geq 0$:
\[ \begin{diagram}
  \node{\Hom(A',K^m A'')} \arrow{e,tb}{i}{\sim} \arrow{se,tb}{\sim}{f_0^*} \node{[\Delta^\bullet \binBox \Hom(A',K^\bullet A'')]^m} \arrow{s,r}{\beta^m} \\
  \node[2]{\Hom(A,K^m A'')}
\end{diagram} \]
Here $i$ is the weak equivalence of Lemma~\ref{lem:ij}. The above diagram commutes and so we deduce that $\beta^m$ is a weak equivalence for all $m$. Therefore the induced map $\beta^*$ is a weak equivalence, so in particular a $\pi_0$-isomorphism.

Conversely, suppose that $\beta^*$ is a $\pi_0$-isomorphism for all $K$-coalgebras $A''$. Then, in particular, taking $A''$ to be the cofree coalgebra $KX$ for $X \in \cat{B}$, we have diagrams of spaces
\[ \begin{diagram}
    \node{\Tot(\Delta^\bullet \binBox \Hom(A',K^\bullet KX))} \arrow{s,lr}{j}{\sim} \arrow{e,t}{\beta^*} \node{\Tot \Hom(A,K^\bullet KX)} \arrow[2]{s,r}{\sim} \\
    \node{\Tot \Hom(A',K^\bullet KX)} \arrow{s,r}{\sim} \\
    \node{\Hom(A',X)} \arrow{e,t}{f_0^*} \node{\Hom(A,X)}
\end{diagram} \]
where the bottom-left and right-hand vertical maps are induced by the extra codegeneracies in the coaugmented cosimplicial objects $\Hom(-,K^\bullet KX)$. Since $\beta^*$ is a $\pi_0$-isomorphism, it follows that $f_0^*$ is also, i.e. that $f_0$ induces bijections
\[ [A',X] \arrow{e,t}{\isom} [A,X] \]
for all $X \in \cat{B}$, and hence induces an isomorphism in the homotopy category.
\end{proof}

\begin{remark}
There are, of course, alternative ways to construct a homotopy category of $K$-coalgebras. One could, for example, start with the ordinary category $\cat{B}_K$ whose morphisms are the strict $K$-coalgebra maps, and invert those morphisms that are equivalences in $\cat{B}$. It is not clear in general if this should give the same homotopy category. The particular category we describe above, however, is what appears in our version of homotopic descent theory, to which we now turn.
\end{remark}

In the remainder of the paper, we work with categories enriched in simplicial sets rather than topological spaces. The results of this section can easily be transported to that context using the geometric realization and singular simplicial set functors. The following proposition summarizes the situation.

\begin{proposition} \label{prop:simplicial}
Let $\cat{B}$ be a category enriched in simplicial sets and let $K: \cat{B} \to \cat{B}$ be a simplicially-enriched comonad. Then there is a simplicial-$A_\infty$-category $\cat{B}_K$ whose objects are the $K$-coalgebras, and with simplicial mapping spaces given by the singular simplicial sets
\begin{equation} \label{eq:simplicial} \hHom_{K}(A,A') := \Sing \Tot(|\Hom_{\cat{B}}(A,K^\bullet A')|). \end{equation}
Let $f:A \to A'$ be a derived $K$-coalgebra map for which $f_0:A \to A'$ arises from a morphism in the simplicial category $\cat{B}$. Then $f$ induces an isomorphism in the associated homotopy category of $K$-coalgebras if and only if $f_0$ induces an isomorphism in the homotopy category of $\cat{B}$.
\end{proposition}
\begin{proof}
We can make $\cat{B}$ into a topologically-enriched category by using the geometric realization functor to define mapping spaces:
\[ \Hom^{\Top}_{\cat{B}}(X,X') := |\Hom_{\cat{B}}(X,X')|. \]
The simplicially-enriched functor $K: \cat{B} \to \cat{B}$ is then also enriched with respect to this topological structure. The theory of this section now applies. In particular, Proposition~\ref{prop:operad} gives us a topological-$A_\infty$-category whose objects are the $K$-coalgebras and whose mapping spaces are the totalizations of the cosimplicial spaces
\[ \Hom^{\Top}_{\cat{B}}(A,K^\bullet A') = |\Hom_{\cat{B}}(A,K^\bullet A')| \]
and where composition is controlled by the $A_\infty$-operad $\mathsf{A}$ of Definition~\ref{def:operad}. Applying the singular simplicial set functor to $\mathsf{A}$ and to these mapping spaces, we obtain a simplicial-$A_\infty$-category with simplicial sets of maps given by (\ref{eq:simplicial}) and with composition controlled by an $A_\infty$-operad in simplicial sets. The homotopy category of $K$-coalgebras is then given by applying $\pi_0$ to the simplicial sets $\hHom_{K}(A,A')$ which is of course equivalent to applying $\pi_0$ to the mapping spaces before taking singular simplicial sets.

The process of forming a topological category from a simplicial category via geometric realization changes the notion of `morphism'. A morphism $X \to X'$ in the topological category $\cat{B}$ is a point in the geometric realization $|\Hom_{\cat{B}}(X,X')|$. Such a point may or may not arise from a vertex in the simplicial set $\Hom_{\cat{B}}(X,X')$, that is from a morphism in the simplicial category $\cat{B}$. However, the homotopy categories associated to the simplicial and topological enrichments of $\cat{B}$ are the same. (We can apply $\pi_0$ either before or after geometric realization.)

So let $f:A \to A'$ be a derived $K$-coalgebra map for which the underlying map $f_0:A \to A'$ \emph{does} arise from a morphism in the simplicial category $\cat{B}$. Then it follows from \ref{prop:weqs} that $f$ induces an isomorphism in the homotopy category of $K$-coalgebras if and only if $f_0$ induces an isomorphism in the homotopy category of $\cat{B}$.
\end{proof}

\section{Descent for Quillen adjunctions} \label{sec:descent}

In this section we develop a homotopical version of descent theory for studying Quillen adjunctions between model categories. Associated to any adjunction $F: \cat{A} \rightleftarrows \cat{B}: G$, there is a `descent theory' that compares the category $\cat{A}$ with the category $\cat{B}_K$ of coalgebras over the comonad $K = FG$. Classically, this was developed by Grothendieck to study the extension/restriction of scalars adjunction associated to a ring homomorphism.

Recently, interest has developed in `homotopic' descent theory. (See, for example, Hess~\cite{hess:2010} and Lurie~\cite[4.7]{lurie:2014}.) Now $\cat{A}$ and $\cat{B}$ are categories with some notion of homotopy, preserved by the adjunction, and the question is whether the categories $\cat{A}$ and $\cat{B}_K$ are equivalent in some homotopical sense.

Here we develop part of such a theory for Quillen adjunctions satisfying the following conditions.

\begin{hypothesis} \label{hyp:adjunction}
We assume that $\cat{A}$ and $\cat{B}$ are simplicial model categories and that we have a simplicially-enriched Quillen adjunction
\begin{equation} \label{eq:adj-FG} F: \cat{A} \rightleftarrows \cat{B}: G. \end{equation}
We assume further that
\begin{enumerate}
  \item either:
  \begin{enumerate}
    \item all objects in $\cat{A}$ are cofibrant; or
    \item $\cat{A}$ is Quillen equivalent to a combinatorial model category;
   \end{enumerate}
   \item all objects in $\cat{B}$ are fibrant.
\end{enumerate}
The point of condition (1b) is that it allows us to replace $\cat{A}$ with a model category in which every object is cofibrant: see Theorem~\ref{thm:models} below. Note that all standard examples of cofibrantly generated model categories are known to be Quillen equivalent to a combinatorial model category. (See \cite{rosicky:2009} for more on the connection.) In particular, this includes the categories of functors between based spaces and spectra that we will use in this paper.

Condition (2) can be relaxed to include any model category $\cat{B}$ that admits a simplicial Quillen equivalence to a model category in which all objects are fibrant: see Remark~\ref{rem:nikolaus}. The categories we have in mind for $\cat{B}$ already satisfy this condition so we do not need any greater generalization here.
\end{hypothesis}

The comonad directly associated to the adjunction (\ref{eq:adj-FG}), that is $FG$, is not typically well-behaved with respect to the homotopy theory. For example, if $G$ does not take values in cofibrant objects, there is no general reason why $FG$ should preserve weak equivalences. In order to get a homotopically meaningful descent theory, we replace the adjunction (\ref{eq:adj-FG}) with one that does not have these problems.

\begin{theorem} \label{thm:models}
Under the conditions of Hypothesis~\ref{hyp:adjunction} there exists a simplicial model category $\cat{A}_c$ in which every object is cofibrant, and a simplicial Quillen equivalence (with left adjoint $u$):
\[ u: \cat{A}_c \rightleftarrows \cat{A} : c. \]
\end{theorem}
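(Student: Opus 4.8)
The plan is to treat the two cases of Hypothesis~\ref{hyp:adjunction}(1) separately. Case (1a) is trivial: if every object of $\cat{A}$ is already cofibrant we take $\cat{A}_c := \cat{A}$ with $u = c = \mathrm{id}_{\cat{A}}$, which is tautologically a simplicial Quillen equivalence. For case (1b) we may first replace $\cat{A}$ by a Quillen-equivalent combinatorial model category, and so assume from the outset that $\cat{A}$ is a combinatorial simplicial model category; the strategy is then to realize $\cat{A}_c$ as the category of coalgebras over a cofibrant-replacement comonad on $\cat{A}$.

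Concretely, I would first produce a functorial, $\sSet$-enriched cofibrant-replacement \emph{comonad} $Q$ on $\cat{A}$ — obtained by running the enriched algebraic small object argument on a set of generating cofibrations for $\cat{A}$, in the spirit of Radulescu-Banu's cofibrant-replacement comonad — together with its coassociative comultiplication $\delta\colon Q \to QQ$ and counit $\epsilon\colon Q \to \mathrm{id}_{\cat{A}}$, where each $\epsilon_X\colon QX \to X$ is a natural acyclic fibration with $QX$ cofibrant. The pushout-product axiom for $\cat{A}$ is what makes the small object argument produce a simplicially enriched $Q$. I would then set $\cat{A}_c := \cat{A}^Q$, the category of $Q$-coalgebras, with $u\colon \cat{A}^Q \to \cat{A}$ the forgetful functor and $c\colon \cat{A} \to \cat{A}^Q$ the cofree-coalgebra functor $X \mapsto (QX,\delta_X)$; these are adjoint, $u \dashv c$, and $uc = Q$. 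Since $\cat{A}$ is locally presentable and $Q$ is accessible, $\cat{A}^Q$ is again locally presentable, and by the results of Hess--Shipley \cite{hess/shipley:2014} on coalgebras over a comonad it carries a model structure whose cofibrations and weak equivalences are created by $u$. The remaining points are then formal: every $Q$-coalgebra $(X,\theta)$ is cofibrant, since the coalgebra axiom $\epsilon_X\theta = \mathrm{id}_X$ exhibits $X$ as a retract of the cofibrant object $QX$; the functor $u$ is left Quillen by construction, it creates (hence reflects) weak equivalences, and the derived counit $ucY = QY \xrightarrow{\epsilon_Y} Y$ is a weak equivalence, which together imply that $u \dashv c$ is a Quillen equivalence; and, because $Q$ is $\sSet$-enriched, $\cat{A}^Q$ is tensored and cotensored over $\sSet$, $u \dashv c$ is a simplicial adjunction, and the pushout-product axiom for $\cat{A}^Q$ is inherited through $u$ from that of $\cat{A}$. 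Thus $\cat{A}_c$ is a simplicial model category in which every object is cofibrant, with a simplicial Quillen equivalence $u \dashv c$ to $\cat{A}$.

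The main obstacle is the single external input used above: verifying the hypotheses of Hess--Shipley (equivalently, the acyclicity condition for left-inducing a model structure along $u$) for the comonad $Q$. I expect this to go through because $\cat{A}^Q$ has a good supply of fibrant objects — the cofree coalgebras $cY$ for fibrant $Y \in \cat{A}$ — together with a functorial fibrant replacement transported from $\cat{A}$, which is precisely the kind of condition under which such model structures exist; making this precise, along with the subsidiary point that a functorial cofibrant replacement really can be rigidified to an honest simplicial comonad, is the technical heart of the matter. There is a more hands-on but weaker alternative — presenting $\cat{A}$ \`a la Dugger as a Bousfield localization of simplicial presheaves with the projective model structure, then comparing with the injective localization, in which all objects are automatically cofibrant — but the injective-versus-projective comparison is a right, not left, Quillen functor, so it only yields a zig-zag rather than the single left Quillen equivalence asserted here, which is why the coalgebra construction is preferable.
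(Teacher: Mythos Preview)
Your approach is essentially the same as the paper's: case~(1a) is handled identically, and for case~(1b) the paper simply cites Ching--Riehl~\cite{ching/riehl:2014}, whose construction is exactly the one you sketch --- coalgebras over a simplicially-enriched cofibrant-replacement comonad built via Garner's algebraic small object argument (see the Remark immediately following the Theorem). One small discrepancy: you appeal to Hess--Shipley for the left-induced model structure on $Q$-coalgebras, whereas the paper attributes this step directly to~\cite{ching/riehl:2014}; the latter establishes the model structure in this specific setting without needing to verify the general Hess--Shipley acyclicity hypothesis, so your ``main obstacle'' is already handled there. Also be slightly careful with your reduction ``replace $\cat{A}$ by a Quillen-equivalent combinatorial model category'': the hypothesis does not assert that this equivalence or the combinatorial target is simplicial, so composing with it may not directly yield a \emph{simplicial} Quillen equivalence back to the original $\cat{A}$ --- the paper sidesteps this by citing~\cite{ching/riehl:2014} for the general statement and only invoking the explicit comonadic description when $\cat{A}$ is itself combinatorial simplicial.
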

\begin{proof}
When every object of $\cat{A}$ is cofibrant, we can of course take $(u,c)$ to be the identity Quillen equivalence. Under condition (1b) of \ref{hyp:adjunction}, the existence of such an equivalence is due to Ching and Riehl \cite{ching/riehl:2014}.
\end{proof}

\begin{remark}
When $\cat{A}$ is itself a combinatorial simplicial model category (as opposed to merely Quillen equivalent to one) we can be more concrete about the category $\cat{A}_c$. In this case, it can be taken to be the category of coalgebras over a simplicially-enriched cofibrant replacement comonad for $\cat{A}$, as constructed via the simplicial version of Garner's small object argument. That there is a suitable model structure on $\cat{A}_c$ is the content of \cite{ching/riehl:2014}.
\end{remark}

\begin{definition} \label{def:K-comonad}
We now fix a choice of the Quillen equivalence provided by Theorem~\ref{thm:models} and consider the composite of the two adjunctions (with left adjoints on top):
\begin{equation} \label{eq:adjs}
\begin{xy}
  (0,0)*+{\cat{A}_c}="a";(15,0)*+{\cat{A}}="b";(30,0)*+{\cat{B}}="c";
  {\ar^{u}@<0.5ex>"a";"b"};{\ar^{c}@<0.5ex>"b";"a"};
  {\ar^{F}@<0.5ex>"b";"c"};{\ar^{G}@<0.5ex>"c";"b"};
\end{xy}
\end{equation}
Let $(F',G')$ be the functors in this composed Quillen adjunction, i.e. $F' = Fu$ and $G' = cG$. We then view $(F',G')$ as a homotopically well-behaved version of the original adjunction $(F,G)$. In particular, notice that $F'$ and $G'$ both preserve all weak equivalences.

We define a comonad $K : \cat{B} \to \cat{B}$ by
\[ K = F'G' \]
with comultiplication/counit maps coming from the unit/counit for the adjunction. The comonad $K$ is also simplicial because it a composite of simplicial functors.
\end{definition}

\begin{remark} \label{rem:nikolaus}
Nikolaus shows in \cite{nikolaus:2011} that any cofibrantly generated model category $\cat{B}$ in which trivial cofibrations are monomorphisms admits a Quillen equivalence
\[ \cat{B} \rightleftarrows \cat{B}^f \]
where every object in the model category $\cat{B}^f$ is fibrant. Specifically, $\cat{B}^f$ is the category of algebras over a fibrant replacement monad constructed from Garner's small object argument. A simplicial version of Nikolaus's argument could be used to relax condition (2) of Hypothesis~\ref{hyp:adjunction}. In this case we would take the comonad $K$ to be defined on $\cat{B}^f$ and to be that associated to the composite of all three adjunctions
\[ \cat{A}_c \rightleftarrows \cat{A} \rightleftarrows \cat{B} \rightleftarrows \cat{B}^f. \]
The theory developed in the remainder of this section would then be unchanged, except that it would provide a comparison between the category $\cat{A}_c$ and the coalgebras for a comonad on $\cat{B}^f$ instead of on $\cat{B}$.
\end{remark}

Our plan is now to develop the theory of Section~\ref{sec:coalgebras} for the comonad $K$. In order to apply this, however, we need to have an underlying category that is enriched in \emph{fibrant} simplicial sets. We therefore restrict to the subcategory of cofibrant objects in $\cat{B}$.

\begin{definition}
Let $\tilde{\cat{B}}$ denote the full subcategory of cofibrant objects in $\cat{B}$. Since every object in the simplicial model category $\cat{B}$ is fibrant, it follows that the category $\tilde{\cat{B}}$ is enriched in fibrant simplicial sets. Moreover, the comonad $K$ restricts to a comonad
\[ K : \tilde{\cat{B}} \to \tilde{\cat{B}}. \]
Applying the simplicial version of the theory of Section~\ref{sec:coalgebras} (see Proposition~\ref{prop:simplicial}), we obtain a simplicial $A_\infty$-category $\tilde{\cat{B}}_{K}$ whose objects are the $K$-coalgebras that are cofibrant in $\cat{B}$, and with simplicial mapping objects given by (\ref{eq:simplicial}). Associated to $\tilde{\cat{B}}_{K}$ is a homotopy category which we refer to as the \emph{homotopy category of $K$-coalgebras}.
\end{definition}

\begin{definition} \label{def:K-coalgebras}
For $X \in \cat{A}_c$, the object $F'X$ has a canonical $K$-coalgebra structure induced by the unit of the adjunction $(F',G')$ in the usual way. Moreover, $F'X = FuX$ is cofibrant in $\cat{B}$ and so $F'$ lifts to a functor
\[ F': \cat{A}_c \to \tilde{\cat{B}}_{K}. \]
This is a functor on the level of strict categories where the right-hand side is the usual category of $K$-coalgebras. We are more interested, however, in thinking of the target of $F'$ as a simplicial $A_\infty$-category. In particular, $F'$ induces natural maps of simplicial sets
\begin{equation} \label{eq:hom-maps} \Hom_{\cat{A}_c}(X,X') \to \hHom_{K}(F'X,F'X') \end{equation}
which commute with the relevant composition maps.
\end{definition}

\begin{definition} \label{def:ho}
If $f: X \to X'$ is a weak equivalence in $\cat{A}_c$ then the induced map $F'(f)$ is a derived (in fact, strict) map of $K$-coalgebras whose underlying map is a weak equivalence in $\cat{B}$. By Proposition~\ref{prop:weqs}, we therefore get an induced functor on the level of homotopy categories which we denote by $\mathsf{Ho}(F)$:
\[ \mathsf{Ho}(F): \mathsf{Ho}(\cat{A}) \isom \mathsf{Ho}(\cat{A}_c) \to \mathsf{Ho}(\tilde{\cat{B}}_{K}). \]
\end{definition}

Our version of homotopic descent theory is concerned with the extent to which $\mathsf{Ho}(F)$ is an equivalence of categories. We start by describing how to recover an object in $\cat{A}$ from a $K$-coalgebra using a cobar construction.

\begin{definition} \label{def:cobar}
Let $A$ be a $K$-coalgebra. The \emph{cosimplicial cobar construction} on $A$ is the cosimplicial object in $\cat{A}_c$ given by
\[ \mathfrak{C}^\bullet(A) = G'(F'G')^\bullet A = G'K^\bullet A. \]
The coface maps $\delta^i: G'(F'G')^m A \to G'(F'G')^{m+1}A$ are given
\begin{itemize}
  \item for $i = 0,\dots,m$, by the unit $1 \to G'F'$ applied before the \ord{i+1} copy of $G'$;
  \item for $i = m+1$ by the $K$-coalgebra structure on $A$;
\end{itemize}
and the codegeneracy $\sigma^j: G'(F'G')^m A \to G'(F'G')^{m-1}A$ is given
\begin{itemize}
  \item for $j = 0,\dots,m-1$ by applying the counit $F'G' \to 1$ to the \ord{j+1} copy of $F'G'$.
\end{itemize}
We then define the \emph{cobar construction} on a $K$-coalgebra $A$ to be the object of $\cat{A}_c$ given by
\[ \mathfrak{C}(A) := \Tot(\mathfrak{C}^\bullet(A)). \]
This is the (restricted) totalization of a cosimplicial object in $\cat{A}_c$ formed using the cotensoring of $\cat{A}_c$ over simplicial sets. Since the cosimplicial object $\mathfrak{C}^\bullet(A)$ is levelwise fibrant, the cobar construction $\mathfrak{C}(A)$ is a fibrant object in $\cat{A}_c$.
\end{definition}

\begin{definition} \label{def:complete}
For $X \in \cat{A}_c$, there is a canonical map
\[ \eta_{X} : X \to \mathfrak{C}(F'X) = \Tot(G'(F'G')^\bullet F'X) \]
induced by the map
\[ X \to G'F'X \]
which is a coaugmentation for the cosimplicial cobar construction on $F'X$.

We refer to the object $\mathfrak{C}(F'X)$ as the \emph{$F'$-completion} of $X$. If $\eta_X$ is a weak equivalence, we say that $X$ is \emph{$F'$-complete}.
\end{definition}

\begin{definition} \label{def:complete-F}
We say that a fibrant object $X \in \cat{A}$ is \emph{$F$-complete} when the corresponding object $cX \in \cat{A}_c$ is $F'$-complete. In that case we have a zigzag of equivalences
\[ X \lweq ucX \weq u\mathfrak{C}(F'cX) \]
so that $X$ can be recovered, up to weak equivalence, from the $K$-coalgebra structure on $F'cX$.
\end{definition}

We now show that the functor $\mathsf{Ho}(F)$ of Definition~\ref{def:ho} embeds the homotopy category of $F$-complete objects in $\cat{A}$ into the homotopy category of $K$-coalgebras. This follows from the following result.

\begin{proposition} \label{prop:descent}
For objects $X,X' \in \cat{A}_c$ with $X'$ fibrant and $F'$-complete, the natural map
\[ \Hom_{\cat{A}_c}(X,X') \to \hHom_{K}(F'X,F'X'), \]
is a weak equivalence of simplicial sets.
\end{proposition}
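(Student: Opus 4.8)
The plan is to reduce the statement to the case of cofree coalgebras and then exploit the extra codegeneracies present in that case. First I would consider the cosimplicial simplicial set $\Hom(X, K^\bullet F'X')$, whose fat totalization is $\hHom_K(F'X, F'X')$ by Definition~\ref{def:derived-ss-hom}. Since $X'$ is assumed fibrant and $F'$-complete, the coaugmentation $\eta_{X'}\colon X' \to \mathfrak{C}(F'X') = \Tot(G'K^\bullet F'X')$ is a weak equivalence in $\cat{A}_c$. Because $X$ is cofibrant (every object of $\cat{A}_c$ is) and $\cat{A}_c$ is a simplicial model category, applying the simplicial mapping functor $\Hom_{\cat{A}_c}(X,-)$ to $\eta_{X'}$ gives a weak equivalence of simplicial sets
\[ \Hom_{\cat{A}_c}(X,X') \weq \Hom_{\cat{A}_c}(X, \Tot(G'K^\bullet F'X')). \]
Now I would commute the mapping functor past the totalization. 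Since $\Tot$ is a (fat) limit and $\Hom_{\cat{A}_c}(X,-)$ preserves limits, the right-hand side is $\Tot\, \Hom_{\cat{A}_c}(X, G'K^\bullet F'X')$. By the adjunction $(F',G')$, this is $\Tot\, \Hom_{\cat{B}}(F'X, K^\bullet F'X')$, which is exactly $\hHom_K(F'X, F'X')$ by definition. One must check that under these identifications the resulting map agrees with the natural map \eqref{eq:hom-maps}; this is a diagram chase tracing the coface maps of the cobar construction (Definition~\ref{def:cobar}) against those of $\Hom^\bullet_K$ (Definition~\ref{def:css-hom}) through the adjunction, and it comes down to the compatibility of the coalgebra structure map on $F'X'$ with the coaugmentation.

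Actually, to run this cleanly I would argue levelwise first. For each fixed $X'$ fibrant, and for the \emph{cofree} coalgebra $K Y = F'G'Y$ (with $Y$ cofibrant in $\cat{B}$), the cosimplicial object $\Hom(X, K^\bullet KY) = \Hom_{\cat{B}}(F'X, K^\bullet KY) \cong \Hom_{\cat{A}_c}(X, G'K^\bullet KY)$ has extra codegeneracies: $G'K^\bullet KY$ is the cobar construction on the cofree coalgebra, which is split, so its totalization is weakly equivalent to $\Hom_{\cat{A}_c}(X, G'Y)$, and the natural map from $\Hom_{\cat{A}_c}(X, G'Y)$ is a weak equivalence onto this totalization. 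By adjunction $\Hom_{\cat{A}_c}(X, G'Y) \cong \Hom_{\cat{B}}(F'X, Y)$, and for cofree targets the claimed map $\Hom_{\cat{A}_c}(X, G'Y) \to \hHom_K(F'X, KY)$ is therefore a weak equivalence. This handles the cofree case directly from the extra-codegeneracy argument, mirroring the structure already used in the proof of Proposition~\ref{prop:weqs}.

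For the general $F'$-complete $X'$, I would write $X'$ as a (homotopy) limit extracted from the cofree resolution: $F'X'$ is a retract up to homotopy — or more precisely, the object $\mathfrak{C}(F'X')$ built from $X'$ is the totalization of a cosimplicial object each of whose terms is (built from) $G'$ applied to cofree coalgebras, and the completeness hypothesis says $X' \simeq \mathfrak{C}(F'X')$. So I apply $\Hom_{\cat{A}_c}(X, -)$ and $\hHom_K(F'X, -)$ to the whole cosimplicial diagram, use the cofree case levelwise, and then commute totalizations: both sides compute $\Tot$ of the same cosimplicial simplicial set, and $\Tot$ preserves levelwise weak equivalences of objectwise-fibrant diagrams (this last point is exactly the homotopy-invariance of the fat totalization already invoked after Definition~\ref{def:tot}, noting all the $\Hom$-simplicial-sets here are Kan complexes since $\cat{B}$, hence $\cat{A}_c$, is enriched in the appropriate way and the relevant objects are fibrant/cofibrant). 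Assembling the zigzag
\[ \Hom_{\cat{A}_c}(X,X') \weq \Hom_{\cat{A}_c}(X, \mathfrak{C}(F'X')) \isom \Tot\,\Hom_{\cat{A}_c}(X, G'K^\bullet F'X') \isom \Tot\,\Hom_{\cat{B}}(F'X, K^\bullet F'X') = \hHom_K(F'X, F'X') \]
gives the result.

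The main obstacle I anticipate is not the cofree case — that is a formal extra-degeneracy argument — but verifying that the composite equivalence constructed this way is genuinely the \emph{natural} map \eqref{eq:hom-maps} induced by the functor $F'\colon \cat{A}_c \to \tilde{\cat{B}}_K$, rather than merely some abstract equivalence with the same source and target. This requires carefully matching the coface maps: the $i=0,\dots,m$ faces of the cobar construction (unit of $(F',G')$) with the $i=1,\dots,m$ faces of $\Hom^\bullet_K$ (comultiplication $\delta$) plus the $i=0$ face (coalgebra structure on the source, which here is the cofree structure on $F'X$), and the final $i=m+1$ face (coalgebra structure on $F'X'$). Keeping the adjunction isomorphisms and the simplicial enrichment maps of $K$ coherent through all of this is the delicate bookkeeping. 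A secondary technical point is ensuring the hypotheses of Hypothesis~\ref{hyp:adjunction} — in particular that $F'$ and $G'$ preserve all weak equivalences and that every object of $\cat{A}_c$ is cofibrant and every object of $\cat{B}$ fibrant — are used precisely where needed so that all the $\Hom$-spaces in sight have the correct homotopy type and $\Tot$ behaves homotopically.
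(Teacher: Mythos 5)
Your first paragraph is exactly the paper's proof: rewrite the natural map as the composite of $\eta_*$ applied via $\Hom_{\cat{A}_c}(X,-)$ (a weak equivalence because $\eta_{X'}$ is a weak equivalence between fibrant objects and $X$ is cofibrant), followed by the isomorphisms that commute $\Hom$ past $\Tot$ and apply the $(F',G')$-adjunction. The subsequent detour through cofree coalgebras and a levelwise extra-codegeneracy argument is unnecessary here — that machinery is what the paper deploys in the proof of Proposition~\ref{prop:weqs}, not Proposition~\ref{prop:descent} — and your worry about matching coface maps is handled by the same routine adjunction bookkeeping the paper tacitly performs when asserting that the composite equals the map of Definition~\ref{def:K-coalgebras}.
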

\begin{proof}
The given map can be written as the composite
\[ \begin{split} \Hom_{\cat{A}_c}(X,X') &\arrow{e,t}{\eta_*} \Hom_{\cat{A}_c}(X,\Tot(G'(F'G')^\bullet F'X')) \\
    &\isom \Tot \Hom_{\cat{A}_c}(X,G'(F'G')^\bullet F'X') \\
    &\isom \Tot \Hom_{\cat{B}}(F'X,(F'G')^\bullet F'X') \isom \hHom_{K}(F'X,F'X')
\end{split} \]
By assumption $\eta$ is a weak equivalence between fibrant objects in $\cat{A}_c$ and so, as every object $X \in \cat{A}_c$ is cofibrant, the first map in this composite is a weak equivalence.
\end{proof}

\begin{corollary} \label{cor:descent}
Under the conditions of \ref{hyp:adjunction} the functor $\mathsf{Ho}(F)$ of Definition~\ref{def:ho} is a fully-faithful embedding of the homotopy category of $F$-complete objects in $\cat{A}$ into the homotopy category of $K$-coalgebras.
\end{corollary}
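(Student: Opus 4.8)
The plan is to obtain this as a direct consequence of Proposition~\ref{prop:descent}: once the relevant hom-sets of the two homotopy categories are correctly identified, the corollary is precisely what Proposition~\ref{prop:descent} says after applying $\pi_0$, so the only work is bookkeeping.

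First I would pin down ``the homotopy category of $F$-complete objects in $\cat{A}$'' as the full subcategory of $\mathsf{Ho}(\cat{A})$ spanned by the fibrant, $F$-complete objects (recall $F$-completeness is only defined for fibrant objects, Definition~\ref{def:complete-F}). For two such objects $X, X'$, I would use the equivalence $\mathsf{Ho}(\cat{A}) \isom \mathsf{Ho}(\cat{A}_c)$ coming from the Quillen equivalence $(u,c)$ of Theorem~\ref{thm:models}; since $c$ is right Quillen and $X, X'$ are fibrant, this equivalence carries $X$ to $cX$ and $X'$ to $cX'$, so that
\[ \mathsf{Ho}(\cat{A})(X,X') \isom \mathsf{Ho}(\cat{A}_c)(cX,cX') \isom \pi_0 \Hom_{\cat{A}_c}(cX,cX'). \]
The last identification uses that $cX$ is cofibrant (every object of $\cat{A}_c$ is, by Theorem~\ref{thm:models}), that $cX'$ is fibrant ($c$ preserves fibrant objects), and that $\cat{A}_c$ is a simplicial model category, so $\Hom_{\cat{A}_c}(cX,cX')$ already computes the derived mapping space.

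Next I would recall from Definition~\ref{def:ho} that $\mathsf{Ho}(F)$ is the composite of the above equivalence with the functor $\mathsf{Ho}(\cat{A}_c) \to \mathsf{Ho}(\tilde{\cat{B}}_{K})$ descended from $F': \cat{A}_c \to \tilde{\cat{B}}_{K}$ (this descent exists by Proposition~\ref{prop:weqs}). Unwinding this, the map that $\mathsf{Ho}(F)$ induces on the hom-set above is exactly $\pi_0$ of the natural map
\[ \Hom_{\cat{A}_c}(cX,cX') \to \hHom_{K}(F'cX,F'cX') \]
of \eqref{eq:hom-maps}, whose target has $\pi_0$ equal to $\mathsf{Ho}(\tilde{\cat{B}}_{K})(F'cX,F'cX')$ by Definition~\ref{def:homotopy}. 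Since $X'$ is fibrant and $F$-complete, $cX'$ is fibrant and $F'$-complete (Definitions~\ref{def:complete-F} and~\ref{def:complete}), so Proposition~\ref{prop:descent} applies and shows the displayed map is a weak equivalence of simplicial sets; taking $\pi_0$ gives a bijection on hom-sets. Thus $\mathsf{Ho}(F)$ restricted to fibrant $F$-complete objects is fully faithful, and since a fully faithful functor is automatically an equivalence onto its replete image (and reflects isomorphisms), this is all that ``fully-faithful embedding'' requires.

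I do not expect a real obstacle: the substance lies entirely in Proposition~\ref{prop:descent}. The points needing care are (i) that $c$, being right Quillen, sends the fibrant object $X'$ to a fibrant object $cX'$, so that the hypotheses of Proposition~\ref{prop:descent} are genuinely met; (ii) the identification of $\mathsf{Ho}(\cat{A})$-morphisms between fibrant objects with $\pi_0$ of a simplicial mapping space in $\cat{A}_c$, which is exactly where the hypothesis that every object of $\cat{A}_c$ is cofibrant is used; and (iii) checking by unwinding Definition~\ref{def:ho} that the hom-set map induced by $\mathsf{Ho}(F)$ coincides with $\pi_0$ of~\eqref{eq:hom-maps}.
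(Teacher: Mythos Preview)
Your proof is correct and follows the same approach as the paper, which simply says the result follows by applying $\pi_0$ to the weak equivalences of Proposition~\ref{prop:descent}. You have spelled out the bookkeeping (passage through the Quillen equivalence $(u,c)$, fibrancy of $cX'$, identification of hom-sets) that the paper leaves implicit.
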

\begin{proof}
This follows by applying $\pi_0$ to the weak equivalences of Proposition \ref{prop:descent}.
\end{proof}

The cobar construction $\mathfrak{C}$ can be viewed as an `up-to-homotopy' right adjoint to the functor
\[ F' : \cat{A}_c \to \tilde{\cat{B}}_{K}. \]
The map $\eta$ of Definition~\ref{def:complete} plays the role of the unit for this adjunction and the completeness condition is therefore a condition that this unit induces an isomorphism on the level of homotopy. We now turn to the dual question of what plays the role of the counit and when this counit induces an isomorphism.

\begin{definition} \label{def:counit}
Let $A$ be a $K$-coalgebra. Then we define a \emph{derived} map of $K$-coalgebras
\[ \epsilon: F'\mathfrak{C}(A) \to A, \]
as follows. It follows from \ref{def:derived-coalgebra-map} that such an $\epsilon$ consists of maps of spaces
\[ \epsilon_n: \Delta^n \to |\Hom_{\cat{B}}(F'\Tot(G'(F'G')^\bullet A),(F'G')^n A)|. \]
We define these to be the composites
\[ \begin{split}
  \Delta^n &\to |\Hom_{\cat{A}_c}(\Tot(G'(F'G')^\bullet A),G'(F'G')^n A)| \\
    &\to |\Hom_{\cat{B}}(F'\Tot(G'(F'G')^\bullet A),F'G'(F'G')^n A)| \\
    &\to |\Hom_{\cat{B}}(F'\Tot(G'(F'G')^\bullet A),(F'G')^n A)|
\end{split} \]
where the first map is the geometric realization of the map adjoint to the projection
\[ \Tot(G'(F'G')^\bullet A) \to \Hom(\Delta^n,G'(F'G')^n A), \]
the second uses the simplicial enrichment of $F'$, and the last involves the counit map
\[ F'G' \to 1. \]
\end{definition}

\begin{lemma}
For a $K$-coalgebra $A$, the maps $\epsilon_n$ of Definition~\ref{def:counit} determine a derived morphism of $K$-coalgebras $\epsilon : F'\mathfrak{C}(A) \to A$.
\end{lemma}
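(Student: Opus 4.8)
The plan is to verify that the collection of maps $\epsilon_n$ assembles into an element of $\hHom_K(F'\mathfrak{C}(A), A) = \Tot \Hom^\bullet_K(F'\mathfrak{C}(A),A)$, i.e. that the $\epsilon_n$ are compatible with the coface maps in the two cosimplicial directions: the `outer' cosimplicial direction coming from $\Delta^\bullet$ (the totalization variable) and the `inner' one coming from $\Hom^\bullet_K(F'\mathfrak{C}(A),A) = \Hom_{\cat B}(F'\mathfrak{C}(A), K^\bullet A)$. Concretely, writing $C := \mathfrak{C}(A) = \Tot(G'(F'G')^\bullet A)$, a derived $K$-coalgebra map is a map of cosimplicial simplicial sets $\Delta^\bullet \to \Hom_{\cat B}(F'C, (F'G')^\bullet A)$ in the category $\Deltainj$; so I must check that the square relating $\epsilon_n$ and $\epsilon_{n+1}$ commutes for each injective coface $\delta^i$ on either side.

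First I would fix notation and unravel the three constituent maps defining $\epsilon_n$ in Definition~\ref{def:counit}: the adjoint-to-projection map $p_n \colon \Delta^n \to \Hom_{\cat{A}_c}(C, G'(F'G')^n A)$, the application of the simplicial enrichment of $F'$, and the counit $F'G' \to 1$ applied to the outermost $F'G'$. The key point is that $p_\bullet$ is, essentially by definition of fat totalization, a map of cosimplicial objects from $\Delta^\bullet$ into the cosimplicial object $\Hom_{\cat{A}_c}(C, G'(F'G')^\bullet A) = \Hom_{\cat{A}_c}(C, \mathfrak{C}^\bullet(A))$, where the coface maps on the target are induced by the $\delta^i$ of the cosimplicial cobar construction $\mathfrak{C}^\bullet(A)$ from Definition~\ref{def:cobar}. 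So the compatibility of $\epsilon_\bullet$ with coface maps reduces to a bookkeeping comparison between the coface maps of $\mathfrak{C}^\bullet(A)$ (unit $1 \to G'F'$ inserted before the $(i{+}1)$st copy of $G'$, for $i \le m$, and the coalgebra structure $\theta\colon A\to K A$ for $i = m{+}1$) and the coface maps of $\Hom^\bullet_K(F'C, A)$ as spelled out in Definition~\ref{def:css-hom} (for $i=0$: precompose with $F'C \to F'C$ via the coalgebra structure on the source $F'C$, which here is built from $\eta$; for $1\le i\le m$: apply $\delta\colon K\to KK$ to the $i$th copy; for $i = m{+}1$: the coalgebra structure on the target $A$).

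The heart of the argument is a standard but slightly fiddly identification: after applying $F'$ and then collapsing the outermost $F'G'$ via its counit, the coface $\delta^i$ for $1\le i\le m$ of $\mathfrak{C}^\bullet(A)$ becomes exactly the comultiplication $\delta = F'(\text{unit})G'$ of $K$ applied to the correct internal copy of $K = F'G'$, the coface $\delta^{m+1}$ becomes the coalgebra structure $\theta\colon A\to KA$ on the target, and the $\delta^0$ coface — the one that gets eaten by the outermost counit — is precisely what produces the $i=0$ coface of $\Hom^\bullet_K(F'C,A)$, namely precomposition with the induced $K$-coalgebra structure map on $F'C$. This last identification is where I expect the main obstacle to lie: one must check that the $K$-coalgebra structure on the source $F'C = F'\mathfrak{C}(A)$ — which, in Definition~\ref{def:K-coalgebras}, is the canonical one coming from the unit of $(F',G')$ — interacts correctly with the totalization and with the extra $G'$ that the $\delta^0$-coface of the cobar construction prepends, so that collapsing with the counit yields exactly the structure map $F'C \to F'G'F'C = KF'C$ used in Definition~\ref{def:css-hom}. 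This is a triangle-identity manipulation together with the observation that $\Tot$ (being $\Hom_{\Deltainj}(\Delta^\bullet,-)$) commutes with the simplicial-mapping-space manipulations in play. Compatibility with codegeneracies is not required since $\hHom_K$ uses the fat totalization over $\Deltainj$, which is a genuine simplification. Having matched the coface maps on both cosimplicial variables, the $\epsilon_n$ define a vertex of $\Tot \Hom^\bullet_K(F'\mathfrak{C}(A),A)$, which by Definition~\ref{def:derived-coalgebra-map} is exactly a derived $K$-coalgebra map $\epsilon\colon F'\mathfrak{C}(A)\to A$, and naturality with respect to strict $K$-coalgebra maps in $A$ follows from naturality of each of the three constituent maps.
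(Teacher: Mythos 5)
Your proposal is correct and takes essentially the same approach as the paper, which simply states that one can check the $\epsilon_n$ commute with the coface maps in the relevant cosimplicial objects; you have unwound that check in detail, correctly locating the triangle identities at $\delta^0$ and the naturality of unit/counit for the remaining cofaces, and correctly noting that only cofaces need checking since the fat totalization over $\Delta_{\mathsf{inj}}$ is used.
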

\begin{proof}
We can check that the $\epsilon_n$ commute in a suitable way with the coface maps in the relevant cosimplicial objects.
\end{proof}

The following result now gives us a condition for $\epsilon$ to induce an isomorphism in the homotopy category.

\begin{prop} \label{prop:Tot}
Let $A$ be a $K$-coalgebra that is cofibrant in $\cat{B}$. Then the derived $K$-coalgebra map $\epsilon: F'\mathfrak{C}(A) \to A$ induces an isomorphism in the homotopy category of $K$-coalgebras if and only if the canonical map
\[ F'\Tot(G'(F'G')^\bullet A) \to \Tot(F'G'(F'G')^\bullet A), \]
associated to the simplicial enrichment of $F'$, is a weak equivalence in $\cat{B}$. (Note that the Tot on the left-hand side is calculated in $\cat{A}_c$ and that on the right-hand side is calculated in $\cat{B}$.)
\end{prop}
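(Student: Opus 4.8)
The plan is to reduce the biconditional to a $2$-out-of-$3$ argument by identifying the underlying map of $\epsilon$ and factoring it through the comparison map appearing in the statement. First I would invoke Proposition~\ref{prop:weqs}: since $\epsilon$ is a derived $K$-coalgebra map between objects of $\tilde{\cat{B}}_{K}$ (note that $F'\mathfrak{C}(A)$ is cofibrant in $\cat{B}$ because $\mathfrak{C}(A)$ is cofibrant in $\cat{A}_c$ and $F' = Fu$ preserves cofibrant objects), it is an isomorphism in the homotopy category of $K$-coalgebras if and only if its underlying map $\epsilon_0 \colon F'\mathfrak{C}(A) \to A$ is a simplicial homotopy equivalence in $\tilde{\cat{B}}$. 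Every object of $\tilde{\cat{B}}$ is both cofibrant and fibrant, so this is in turn equivalent to $\epsilon_0$ being a weak equivalence in $\cat{B}$, and the task becomes to compare $\epsilon_0$ with the canonical map $\phi \colon F'\Tot(G'(F'G')^\bullet A) \to \Tot((F'G')^{\bullet+1}A)$.

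Next I would unwind Definition~\ref{def:counit} at $n=0$. Since $(F'G')^0 = 1_{\cat{B}}$, the map $\epsilon_0$ is $F'$ applied to the projection $\mathrm{pr}_0 \colon \Tot(G'(F'G')^\bullet A) \to G'A$ onto the zeroth cosimplicial level, followed by the comonad counit $\epsilon_K \colon F'G'A \to A$. The comparison map $\phi$ is by construction compatible with projections onto cosimplicial levels, so $\mathrm{pr}_0 \circ \phi = F'(\mathrm{pr}_0)$, and therefore
\[ \epsilon_0 = \big(\, F'\Tot(G'(F'G')^\bullet A) \stackrel{\phi}{\longrightarrow} \Tot((F'G')^{\bullet+1}A) \stackrel{r}{\longrightarrow} A \,\big), \qquad r := \epsilon_K \circ \mathrm{pr}_0. \]
By $2$-out-of-$3$ it now suffices to show that $r$ is always a weak equivalence.

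For this I would note that $(F'G')^{\bullet+1}A = K^{\bullet+1}A$ is the standard cobar resolution of the coalgebra $A$: it is coaugmented by the structure map $\theta \colon A \to KA$ (the coaugmentation identity is one of the defining identities of the coalgebra), and it carries extra codegeneracies given by applying the comonad counit to the outermost copy of $K$ (the extra identity at the bottom being the counit axiom $\epsilon_K \circ \theta = \mathrm{id}_A$). By the standard contracting-homotopy argument for totalizations of coaugmented cosimplicial objects with extra codegeneracies, $\theta$ induces a simplicial homotopy equivalence $A \weq \Tot(K^{\bullet+1}A)$, and the homotopy-inverse retraction produced by this argument is precisely $r = \epsilon_K \circ \mathrm{pr}_0$ (consistently with the evident identity $r \circ \theta = \epsilon_K \circ \theta = \mathrm{id}_A$). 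Combining the three steps completes the proof. The step I expect to be the main obstacle is carrying out this last contracting-homotopy argument in the setting of the \emph{fat} totalization, which by definition ignores codegeneracies: one must check that the extra codegeneracy together with the coface maps of the resolution still assembles into an explicit contraction of $\Tot(K^{\bullet+1}A)$ onto the coaugmentation. Everything else is routine bookkeeping with the definitions of $\epsilon$, of the comparison map $\phi$, and of the cobar construction.
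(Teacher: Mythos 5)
Your proposal follows the paper's own proof step for step: reduce via Proposition~\ref{prop:weqs} to $\epsilon_0$ being a weak equivalence, factor $\epsilon_0$ through the enrichment comparison map, and dispose of the second factor by the extra-codegeneracy argument applied to $F'G'(F'G')^\bullet A$. The concern you flag about the fat totalization ignoring codegeneracies is a genuine subtlety that the paper also elides; the standard resolution is that the extra codegeneracies give a cosimplicial contraction, hence a simplicial homotopy equivalence of cosimplicial objects, and such equivalences are preserved by $\Tot_{\Delta_{\mathsf{inj}}}$ since $\Tot$ is simplicially enriched.
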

\begin{proof}
By Proposition~\ref{prop:weqs}, $\epsilon$ induces an isomorphism in the homotopy category if and only if the map $\epsilon_0$ of Definition~\ref{def:counit} induces an isomorphism in the homotopy category of $\tilde{\cat{B}}$, hence if and only if $\epsilon_0$ is a weak equivalence in the model category $\cat{B}$. The map $\epsilon_0$ can be factored as
\[ F'\Tot(G'(F'G')^\bullet A) \to \Tot(F'G'(F'G')^\bullet A) \weq A \]
where the first map is that associated to the simplicial enrichment of $F'$, and the second is the composite $\Tot(F'G'(F'G')^\bullet A) \to F'G'A \to A$ of the projection to $0$-simplices with the counit for the adjunction. That second map is a weak equivalence since the cosimplicial object $(F'G'(F'G')^\bullet A)$ has extra codegeneracies. Therefore the overall map is a weak equivalence if and only if the first factor is, which is the condition specified in the proposition.
\end{proof}

We are now in a position to give conditions that allow us to identity the image of the embedding of Corollary~\ref{cor:descent}. This is the content of our version of the Homotopic Barr-Beck Theorem.

\begin{prop} \label{prop:barr-beck}
Let $(F,G)$ be a Quillen adjunction as in \ref{hyp:adjunction}. Let $\cat{A}_0$ be a collection of objects in $\cat{A}$ and $\cat{C}_0$ a collection of objects in $\tilde{\cat{B}}_{K}$. Suppose that:
\begin{enumerate}
  \item for every object $X \in \cat{A}_0$, $F'cX \in \cat{C}_0$ and $X$ is $F$-complete;
  \item for every $K$-coalgebra $A \in \cat{C}_0$, $u\mathfrak{C}(A) \in \cat{A}_0$ and the canonical enrichment map
  \[ Fu\Tot(G'(F'G')^\bullet A) \to \Tot(FuG'(F'G')^\bullet A) \]
  is a weak equivalence in $\cat{B}$.
\end{enumerate}
Then the functor $\mathsf{Ho}(F) : \mathsf{Ho}(\cat{A}) \to \mathsf{Ho}(\tilde{\cat{B}}_{K})$ of Definition~\ref{def:ho} restricts to an equivalence between the full subcategories of these homotopy categories determined by the collections of objects $\cat{A}_0$ and $\cat{C}_0$, respectively.
\end{prop}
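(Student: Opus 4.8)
The plan is to deduce Proposition~\ref{prop:barr-beck} by combining the two halves of the descent package already set up: the ``unit'' side (Proposition~\ref{prop:descent}/Corollary~\ref{cor:descent}) and the ``counit'' side (Proposition~\ref{prop:Tot}). The functor $\mathsf{Ho}(F)$ restricted to $\cat{A}_0$ lands in the full subcategory on $\cat{C}_0$ by hypothesis (1), since $F'cX \in \cat{C}_0$ for $X \in \cat{A}_0$. So it remains to show this restricted functor is (i) fully faithful and (ii) essentially surjective onto the full subcategory on $\cat{C}_0$.

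\emph{Fully faithful.} For $X, X' \in \cat{A}_0$, hypothesis (1) tells us $X'$ is $F$-complete, hence $cX' \in \cat{A}_c$ is fibrant and $F'$-complete. Proposition~\ref{prop:descent} applied to $cX, cX'$ then gives that
\[ \Hom_{\cat{A}_c}(cX, cX') \to \hHom_{K}(F'cX, F'cX') \]
is a weak equivalence of simplicial sets; taking $\pi_0$ and using the identification $\mathsf{Ho}(\cat{A}) \isom \mathsf{Ho}(\cat{A}_c)$ shows $\mathsf{Ho}(F)$ is a bijection on hom-sets between objects of $\cat{A}_0$. (One should note, as in Corollary~\ref{cor:descent}, that every object of $\cat{A}_c$ is cofibrant, so $\Hom_{\cat{A}_c}$ already computes the derived mapping space, and similarly $\tilde{\cat{B}}$ is enriched in Kan complexes so $\hHom_K$ is homotopy-invariant.)

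\emph{Essential surjectivity.} Given a $K$-coalgebra $A \in \cat{C}_0$, set $X := u\mathfrak{C}(A)$, which lies in $\cat{A}_0$ by hypothesis (2). I claim $\mathsf{Ho}(F)(X) \isom A$ in $\mathsf{Ho}(\tilde{\cat{B}}_K)$. The derived $K$-coalgebra map $\epsilon: F'\mathfrak{C}(A) \to A$ of Definition~\ref{def:counit} is the candidate isomorphism. By Proposition~\ref{prop:Tot}, $\epsilon$ induces an isomorphism in the homotopy category of $K$-coalgebras precisely when the canonical enrichment map $F'\Tot(G'(F'G')^\bullet A) \to \Tot(F'G'(F'G')^\bullet A)$ is a weak equivalence in $\cat{B}$; and since $F' = Fu$ and $\mathfrak{C}(A) = \Tot(G'(F'G')^\bullet A)$, this is exactly the weak-equivalence condition imposed in hypothesis (2). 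Hence $\epsilon$ is an isomorphism in $\mathsf{Ho}(\tilde{\cat{B}}_K)$. Finally, $F'\mathfrak{C}(A) = F'(cX'')$ for $X'' = \mathfrak{C}(A)$ in $\cat{A}_c$; one needs that $\mathsf{Ho}(F)(u\mathfrak{C}(A))$ agrees with $[F'\mathfrak{C}(A)]$ in $\mathsf{Ho}(\tilde{\cat{B}}_K)$, which holds because $\mathfrak{C}(A)$ is fibrant so the unit $\mathfrak{C}(A) \lweq ucu\mathfrak{C}(A)$ (equivalently, $c$ applied to the equivalence $\mathfrak{C}(A) \weq ??$) is a weak equivalence, and $F'$ preserves weak equivalences. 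Therefore $A$ is in the essential image, completing the proof.

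\emph{The main obstacle} I anticipate is bookkeeping around the shuffle between $\cat{A}$ and $\cat{A}_c$: hypothesis (2) is phrased using $Fu$ and $G'$ directly on $A$, whereas Proposition~\ref{prop:Tot} is phrased for a $K$-coalgebra cofibrant in $\cat{B}$ with the enrichment map for $F'$. Since every $A \in \cat{C}_0 \subseteq \tilde{\cat{B}}_K$ is by definition cofibrant in $\cat{B}$, and $F' = Fu$, $G' = cG$, the two conditions literally coincide, so this is genuinely just unwinding notation rather than a substantive step. The only other point requiring a little care is checking that $\mathsf{Ho}(F)$, a priori only defined via $F': \cat{A}_c \to \tilde{\cat{B}}_K$ post-composed with $\mathsf{Ho}(\cat{A}) \isom \mathsf{Ho}(\cat{A}_c)$, sends the object $u\mathfrak{C}(A) \in \cat{A}$ to the class of $F'\mathfrak{C}(A)$; this is immediate once one observes $c(u\mathfrak{C}(A)) \weq \mathfrak{C}(A)$ in $\cat{A}_c$ (as $\mathfrak{C}(A)$ is fibrant) and $F'$ preserves weak equivalences, so by Proposition~\ref{prop:weqs} the two represent the same object of $\mathsf{Ho}(\tilde{\cat{B}}_K)$.
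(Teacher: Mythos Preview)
Your proposal is correct and follows essentially the same approach as the paper: fully faithfulness via Proposition~\ref{prop:descent}/Corollary~\ref{cor:descent} from hypothesis (1), and essential surjectivity via Proposition~\ref{prop:Tot} from hypothesis (2), taking $X = u\mathfrak{C}(A)$. The paper's own proof is a two-sentence sketch that omits the bookkeeping you carefully spell out about the passage between $\cat{A}$ and $\cat{A}_c$; your added care there (in particular that $\mathfrak{C}(A)$ is fibrant so $c(u\mathfrak{C}(A)) \weq \mathfrak{C}(A)$, and that the enrichment condition in hypothesis (2) literally matches the hypothesis of Proposition~\ref{prop:Tot} once one unwinds $F' = Fu$) is a genuine improvement in exposition but not a different argument.
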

\begin{proof}
By Corollary~\ref{cor:descent}, the first condition implies that $\mathsf{Ho}(F)$ is a fully faithful embedding of the homotopy category of $\cat{A}_0$ into the homotopy category of $\cat{C}_0$. The second condition implies, using Proposition~\ref{prop:Tot}, that every object $A$ of $\cat{C}_0$ is isomorphic in the homotopy category to one of the form $\mathsf{Ho}(F)(X)$ for $X \in \cat{A}_0$ (namely $X = u\mathfrak{C}(A)$) and hence that $\mathsf{Ho}(F)$ is essentially surjective on objects.
\end{proof}

\begin{remark}
We have focused here on \emph{descent} for the Quillen adjunction (\ref{eq:adj-FG}), but our approach can be easily dualized to give a theory of \emph{codescent} in which the category $\cat{B}$ is compared with the category of algebras over the monad $GF$ (or, rather $G'F'$) associated to the adjunction.
\end{remark}

\section{Taylor towers and descent} \label{sec:taylor}

We now turn to Goodwillie calculus and set up a general framework to describe how the Taylor tower of a functor can be recovered from its layers.

\begin{definition}[Spaces and spectra] \label{def:general}
Let $\based$ be the category of based compactly-generated topological spaces, and $\spectra$ the category of $S$-modules of EKMM \cite{elmendorf/kriz/mandell/may:1997}. We refer to the objects of $\spectra$ as \emph{spectra} instead of $S$-modules.

We use the letters $\C$ and $\D$ to denote either of the categories $\based$ and $\spectra$. These are cofibrantly generated pointed simplicial model categories in which every object is fibrant. We use $\Hom(-,-)$ to denote simplicial mapping objects with a subscript $\C$ or $\D$ to denote the underlying category. Since $\C$ and $\D$ are pointed categories, these mapping objects are pointed simplicial sets.

In $\spectra$ we also have internal mapping objects, and we write $\Map(-,-)$ for the spectrum of maps between two spectra.

We write $\Cfin$ for the full subcategory of $\C$ whose objects are the finite cell complexes in $\C$ with respect to the chosen generating cofibrations. In particular, $\Cfin$ is a skeletally small category.
\end{definition}

\begin{definition}[Functor categories] \label{def:functors}
Let $[\Cfin,\D]$ be the category of pointed simplicially-enriched functors $F: \Cfin \to \D$ and simplicial natural transformations. Here \emph{pointed} means that $F(*) = *$. The category $[\Cfin,\D]$ has a projective model structure that is also simplicial, cofibrantly generated and pointed. Note that in each case the resulting model category is Quillen equivalent to a combinatorial model category (for example, by replacing spaces with simplicial sets and EKMM $S$-modules with symmetric spectra based on simplicial sets), and so satisfies condition (1) of Hypothesis~\ref{hyp:adjunction}.

We write $\Nat(F,G)$ for the simplicial set of natural transformations between two pointed simplicial functors $F,G: \Cfin \to \D$. We also write $\hNat(F, G)$ for the simplicial set of derived natural transformations from $F$ to $G$ which can be defined as $\Nat(cF, fG)$, where $c$ and $f$ denote cofibrant and fibrant replacement in $[\Cfin,\D]$, respectively. Note that $\pi_0(\hNat(F, G))$ is the set of maps from $F$ to $G$ in the homotopy category of functors.
\end{definition}

\begin{lemma} \label{lem:homotopy-functor}
Let $F: \Cfin \to \D$ be a pointed simplicial functor. Then $F$ preserves weak equivalences.
\end{lemma}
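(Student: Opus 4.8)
The plan is to exploit the fact that every object of $\Cfin$ is both cofibrant (by definition of $\Cfin$ as the finite cell complexes) and fibrant (since every object of $\C$ is fibrant, as noted in Definition~\ref{def:general}), and that $F$ is simplicially enriched. The standard argument is that a simplicial functor automatically sends simplicial homotopy equivalences to simplicial homotopy equivalences, because it preserves the simplicial structure used to write down a homotopy inverse together with the homotopies. So the real content is to upgrade ``weak equivalence in $\Cfin$'' to ``simplicial homotopy equivalence in $\Cfin$'', and this is exactly where cofibrant-and-fibrant comes in.

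First I would recall that in a simplicial model category, a weak equivalence between objects that are both cofibrant and fibrant is a simplicial homotopy equivalence; this is a standard consequence of Quillen's theory (Whitehead's theorem in a simplicial model category). Since every object of $\Cfin$ is a finite cell complex, hence cofibrant in $\C$, and every object of $\C$ is fibrant, any weak equivalence $f\colon X \weq Y$ in $\Cfin$ is a simplicial homotopy equivalence: there is a map $g\colon Y \to X$ and simplicial homotopies $H\colon \Delta^1_+ \smsh X \to X$ (or the cotensor version, depending on conventions) exhibiting $gf \homeq \mathrm{id}_X$ and $fg \homeq \mathrm{id}_Y$, all living inside $\Cfin$ (or at worst inside $\C$, which is harmless since $F$ is defined on all of $\C$-shaped data through its enrichment). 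Second, because $F$ is a pointed simplicially-enriched functor, applying $F$ to a simplicial homotopy equivalence yields a simplicial homotopy equivalence: $F$ carries the map $g$ to $F(g)$, and carries the homotopy $H$ — which is encoded as a map of simplicial sets $\Delta^1 \to \Hom_{\C}(X,X)$, or equivalently via the tensoring — to a corresponding homotopy $\Delta^1 \to \Hom_{\D}(FX,FX)$ using the enrichment map $\Hom_{\C}(X,X) \to \Hom_{\D}(FX,FX)$. Thus $F(f)$ is a simplicial homotopy equivalence in $\D$, hence in particular a weak equivalence in $\D$.

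Concretely the steps are: (i) observe $X,Y$ cofibrant and fibrant in $\C$; (ii) invoke the simplicial Whitehead theorem to replace the weak equivalence $f$ by a simplicial homotopy equivalence, extracting the explicit homotopy data as maps out of simplicial mapping objects; (iii) push this data through the enrichment of $F$ to get that $F(f)$ is a simplicial homotopy equivalence; (iv) conclude $F(f)$ is a weak equivalence in $\D$. The only mild subtlety — and the step I expect to require the most care — is the bookkeeping in step (ii)/(iii): one must make sure the homotopies are expressed in a form ($\Delta^1$ tensored with, or cotensored into, the objects, or as $1$-simplices of the relevant $\Hom$ simplicial sets) that $F$, being $\sSet$-enriched and pointed, actually preserves; but since both tensors and cotensors by $\Delta^1$ and the mapping-space composition are part of the simplicial structure and $F$ is by hypothesis a simplicial functor, this is routine. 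No convergence, connectivity, or calculus input is needed — this is purely a statement about simplicial functors on a category of cofibrant-fibrant objects.
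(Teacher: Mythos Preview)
Your proposal is correct and follows exactly the same approach as the paper: use that objects of $\Cfin$ are both cofibrant and fibrant so that weak equivalences are simplicial homotopy equivalences, and then observe that a simplicially-enriched functor preserves simplicial homotopy equivalences. The paper's proof is simply a two-sentence version of your argument.
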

\begin{proof}
The objects in $\Cfin$ are both fibrant and cofibrant so a weak equivalence $f:X \weq Y$ in $\Cfin$ has a homotopy inverse $g:Y \to X$. Since $F$ is simplicial, it follows that $Ff$ has $Fg$ as a homotopy inverse and so is also a weak equivalence.
\end{proof}

\begin{definition}[Taylor tower] \label{def:taylor}
Let $F: \Cfin \to \D$ be a pointed simplicial functor. Then there exists a sequence of natural transformations
\[ F \to \dots \to P_nF \to P_{n-1}F \to \dots \to P_1F \to * \]
in which $P_nF: \Cfin \to \D$ is \emph{$n$-excisive} (takes strongly homotopy cocartesian $(n+1)$-cubes in $\Cfin$ to homotopy cartesian cubes in $\D$), and $p_n: F \to P_nF$ is, up to homotopy, the initial natural transformation from $F$ to an $n$-excisive functor. This is the \emph{Taylor tower of $F$} (expanded at the trivial object in $\C$). See \cite{goodwillie:2003} for the details of these constructions.
\end{definition}

\begin{definition}[Derivatives] \label{def:coefficients}
The \emph{layers} of the Taylor tower are the functors
\[ D_nF := \hofib(P_nF \to P_{n-1}F). \]
Goodwillie shows in \cite{goodwillie:2003} that these are of the following form:
\[ D_nF(X) \homeq (\Omega^\infty)(\der_nF \smsh ((\Sigma^\infty) X^{\smsh n}))_{h\Sigma_n} \]
where $\der_nF$ is a spectrum with $\Sigma_n$-action, and $\Omega^\infty$, respectively $\Sigma^\infty$, is present if $\D$, respectively $\C$, is equal to $\based$, and absent if equal to $\spectra$. We refer to $\der_nF$ as the \emph{\ord{n} derivative of $F$}. Note that the above formula determines only the homotopy type of the spectrum $\der_nF$. We choose specific models for these spectra later in this paper.
\end{definition}

In previous work \cite{arone/ching:2011}, we have identified additional structure on the derivatives of a functor of based spaces or spectra. This additional structure is stated in the language of operads which we now briefly recall. Note that all our operads, modules and bimodules are in the category of spectra and do not have a \ord{0} term.

\begin{definition}[Symmetric sequences of spectra, operads and their modules] \label{def:operads}
Let $\symseq$ denote the category of \emph{symmetric sequences of spectra}, that is, functors $\mathsf{\Sigma} \to \spectra$ where $\mathsf{\Sigma}$ is the category of nonempty finite sets and bijections. The category $\symseq$ has a cofibrantly generated pointed simplicial model structure in which fibrations and weak equivalences are detected termwise. All objects are fibrant in this model structure.

For a symmetric sequence $A$, we write $A_n$ for the value of $A$ on the finite set $\{1,\dots,n\}$. The spectrum $A_n$ then has an action of the symmetric group $\Sigma_n$ and $A$ is determined, up to isomorphism, by the sequence $A_1,A_2,A_3,\dots$ together with these actions.

An \emph{operad} consists of a symmetric sequence $P$ together with a unit map $S \to P_1$ (where $S$ is the sphere spectrum) and composition maps
\[ P_r \smsh P_{n_1} \smsh \dots \smsh P_{n_r} \to P_{n_1+\dots+n_r} \]
that are suitably equivariant, associative and unital.

For an operad $P$, a \emph{right $P$-module} consists of a symmetric sequence $A$ with suitable right $P$-action maps
\[ A_r \smsh P_{n_1} \smsh \dots \smsh P_{n_r} \to A_{n_1+\dots+n_r} \]
and a \emph{left $P$-module} consists of a symmetric sequence $M$ with suitable left $P$-action maps
\[ P_r \smsh A_{n_1} \smsh \dots \smsh A_{n_r} \to A_{n_1+\dots+n_r}. \]
For two operads $P,P'$ we also have the notion of a $(P,P')$-bimodule, that is a symmetric sequence with a left $P$-action and right $P'$-action that commute.

Each of the categories of operads, left, right and bi- modules has a projective model structure in which weak equivalences and fibrations are detected in the underlying category of symmetric sequences. These model structures are cofibrantly generated, pointed and simplicial, and every object is fibrant. See \cite[Appendix]{arone/ching:2011} for details.
\end{definition}

\begin{definition}[Truncated symmetric sequences] \label{def:truncation}
A symmetric sequence $A$ is said to be \emph{$N$-truncated} if $A_k = *$ for $k > N$, and we say that  $A$ is \emph{bounded} if it is $N$-truncated for some $N$.

We write $A_{\leq N}$ for the \emph{$N$-truncation} of the symmetric sequence $A$ given by setting equal to $*$ the terms $A_k$ for $k > N$. If $A$ is an operad, module or bimodule then $A_{\leq N}$ inherits this structure and there is a natural map $A \to A_{\leq N}$ that preserves the structure. In fact there is a tower (of symmetric sequences, operads, modules or bimodules)
\[ A \to \dots \to A_{\leq N} \to A_{\leq (N-1)} \to \dots \to A_{\leq 1}. \]
\end{definition}

\begin{example} \label{ex:d_*I}
The derivatives of a pointed simplicial functor $F: \Cfin \to \D$ form a symmetric sequence $\der_*F$.

If $I$ is the identity functor on based spaces, then there is a model for the symmetric sequence of derivatives of $I$ that has an operad structure. This model is formed by the Spanier-Whitehead duals of the partition poset complexes, see \cite{ching:2005}. We denote a cofibrant replacement for this operad by $\der_*I$.

In \cite{arone/ching:2011} the authors constructed models for functors to or from based spaces that are modules over the operad $\der_*I$. For $F: \finbased \to \spectra$, $\der_*F$ is a right $\der_*I$-module, for $F: \finspec \to \based$, $\der_*F$ is a left $\der_*I$-module, and for $F: \finbased \to \based$, $\der_*F$ is a $(\der_*I,der_*I)$-bimodule. These structures allow us to define, in each case, a functor
\[ \der_*: [\Cfin,\D] \to \cat{M} \]
where $\cat{M}$ is the appropriate category of modules or bimodules. When $\C = \D = \spectra$, we take $\cat{M}$ to be the category of symmetric sequences.
\end{example}

One of the key observations of this paper is that this functor $\der_*$ has a right adjoint. This is well-known in the case $\D = \spectra$ but is new when $\D = \based$. We leave the construction of such adjunctions to later sections that deal with each combination of $\C$ and $\D$ in turn. The anxious reader may turn to Proposition \ref{prop:der-sp} (for the case $\D = \spectra$) or Proposition \ref{prop:der-top} (for the case $\D = \based$).

For the remainder of this section we make the following assumption about the existence of the right adjoint to $\der_*$.

\begin{hypothesis} \label{hyp:coefficients}
Let $\C$ and $\D$ each be either of the categories $\based$ or $\spectra$, and let $\cat{M}$ be some category of symmetric sequences, modules or bimodules, as in Definition~\ref{def:operads}. We assume given a simplicial Quillen adjunction
\[ \der_*: [\Cfin,\D] \rightleftarrows \cat{M} : \Phi \]
such that, for cofibrant $F$, the symmetric sequence $\der_*F$ is a natural model for the sequence of Goodwillie derivatives of $F$.
\end{hypothesis}

\begin{remark} \label{rem:M}
As indicated in Example~\ref{ex:d_*I} there is a natural choice for the category $\cat{M}$ of (bi)modules for each combination of choices of the categories $\C$ and $\D$. However, it turns out that the choice of right module structure is irrelevant. For example, when $\D = \spectra$, we can choose $\cat{M}$ to be the category of symmetric sequences regardless of whether $\C$ is $\based$ or $\spectra$. In fact, when we analyze this case in Section~\ref{sec:sp}, it will be convenient to do exactly that. Similarly, when $\D = \based$, we can take $\cat{M}$ to be the category of left $\der_*I$-modules for either choice of $\C$. The underlying reason for this is that the calculation of colimits in $\cat{M}$ is not affected by a choice of right module structure.
\end{remark}

The adjunction $(\der_*,\Phi)$ satisfies the conditions of Hypothesis~\ref{hyp:adjunction} and so we are in a position to apply the descent theory of Section~\ref{sec:descent}. Recall that this involves choosing a simplicial Quillen equivalence
\[ u: [\Cfin,\D]_c \rightleftarrows [\Cfin,\D] : c \]
such that every object in $[\Cfin,\D]_c$ is cofibrant, and then provides a comparison between the category $[\Cfin,\D]$ and the category of $K$-coalgebras, where $K$ is the comonad $\der_* uc \Phi$.

In order to simplify the exposition, we henceforth drop the functors $u$ and $c$ from our notation. The assiduous reader will work out all the places where these should be inserted.

This section has two main results. Theorem~\ref{thm:main} identifies the functors $F: \Cfin \to \D$ that are $\der_*$-complete in the sense of Definition~\ref{def:complete-F}, and hence those which can be recovered from the $K$-coalgebra structure on their derivatives. Theorem~\ref{thm:n-excisive} applies our Homotopic Barr-Beck Theorem (\ref{prop:barr-beck}) to deduce that there is an equivalence between the homotopy category of $N$-excisive functors and that of $N$-truncated $K$-coalgebras. In the proofs of both of these results, the following lemma concerning the right adjoint $\Phi$ plays a key role.

\begin{lemma} \label{lem:phi}
Let $A$ be an $N$-truncated object in $\cat{M}$. Then the functor $\Phi A$ is $N$-excisive.
\end{lemma}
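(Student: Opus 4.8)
The plan is to reduce the statement to a property of the right adjoint $\Phi$ expressed purely in terms of how it interacts with the cross-effect/cube machinery that defines $n$-excision. Recall that a functor $G : \Cfin \to \D$ is $N$-excisive precisely when it sends strongly homotopy cocartesian $(N{+}1)$-cubes to homotopy cartesian cubes. So I would first unwind the adjunction: for any $X \in \Cfin$ we have a natural isomorphism (on mapping spaces, and hence a natural weak equivalence after deriving) $\Hom_{\D}(Y, \Phi A(X)) \cong \Hom_{\cat{M}}(\der_*(\Hom_{\C}(X,-) \smsh Y), A)$ — i.e. $\Phi A$ is computed by mapping out of $A$ from the derivatives of the representable-type functors. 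The key point is that $\Phi$ is built so that its value is controlled by $A$ paired against $\der_*$ of the corepresentable functors, and these derivatives are well understood.

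The second and central step is the observation that, because $A$ is $N$-truncated, only the terms $\der_r$ for $r \le N$ of any functor are ever seen by the pairing $\Hom_{\cat{M}}(\der_* G, A)$. Concretely, $\Hom_{\cat{M}}(\der_* G, A)$ depends (up to equivalence) only on $\der_{\le N} G$, the $N$-truncation of the derivatives of $G$. Now apply this to the functor $G = \Hom_{\C}(X,-) \smsh Y$ appearing above: since $\der_{\le N}$ of a functor is determined by $P_N$ of that functor — indeed $\der_r G \simeq \der_r P_N G$ for $r \le N$ — the value $\Phi A(X)$ is unchanged if we replace the corepresentable functors by their $N$-excisive approximations. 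But $P_N$ of a representable, and more generally any $N$-excisive functor, takes strongly cocartesian $(N{+}1)$-cubes in its variable to cartesian cubes; combined with the fact that $\der_*$ and $\Hom_{\cat{M}}(-,A)$ both preserve the relevant (co)limits up to homotopy, this forces $X \mapsto \Phi A(X)$ to send strongly homotopy cocartesian $(N{+}1)$-cubes to homotopy cartesian cubes in $\D$, i.e. $\Phi A$ is $N$-excisive.

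More carefully, I would run the argument as follows. (i) Identify $\Phi A$ via the adjunction as above and replace everything by derived/cofibrant-fibrant models so that the isomorphism of mapping spaces becomes a homotopically meaningful statement. (ii) Show the pairing against an $N$-truncated $A$ factors through $N$-truncation of derivatives, using that $\cat{M}$ is a category of ($N$-truncated-compatible) symmetric sequences or modules and that $A_k = *$ for $k > N$ kills the contribution of $\der_k$ for $k > N$ — this is a direct cofibrancy/colimit bookkeeping argument of the kind flagged in Remark~\ref{rem:M}. (iii) Use $\der_r(-) \simeq \der_r P_N(-)$ for $r \le N$ together with the fact that $P_N(\Hom_\C(X,-)\smsh Y)$, as an $N$-excisive functor of $X$ (after fixing $Y$), converts strongly cocartesian $(N{+}1)$-cubes into cartesian ones. (iv) Conclude that for a strongly homotopy cocartesian $(N{+}1)$-cube $\mathcal{X}$ in $\Cfin$, the cube $\Phi A(\mathcal{X})$ is homotopy cartesian in $\D$, because mapping into it from any $Y$ yields, naturally in $Y$, a homotopy cartesian cube of spaces (being $\Hom_{\cat{M}}(-,A)$ applied to $\der_{\le N}$ of a cartesian cube of functors), and homotopy cartesianness in $\D$ can be detected by mapping out of a generating set of objects $Y$.

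The main obstacle I anticipate is step (ii)–(iii): making precise the claim that $\Phi A$, for $N$-truncated $A$, ``only sees'' the first $N$ derivatives and hence agrees with $\Phi A$ applied after polynomial approximation. This requires knowing the explicit form of the adjunction $(\der_*, \Phi)$ — in particular that $\der_* \Phi$ has no ``mixing'' that would let the $k$-th term for $k > N$ of some auxiliary symmetric sequence leak into the output — which is exactly why the paper defers the construction of $\Phi$ to later sections. I would expect the honest proof to invoke the specific formula for $\Phi$ (cf. Proposition~\ref{prop:der-sp} or Proposition~\ref{prop:der-top}) at precisely this point, rather than deriving $N$-excision from the abstract adjunction alone; everything else is formal manipulation with cubes, (co)limits, and the compatibility of $\der_*$ with $P_N$.
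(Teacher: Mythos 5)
Your proposal anticipates that the paper's proof must invoke the explicit formula for $\Phi$ (Proposition~\ref{prop:derphisp} or~\ref{prop:derphitop}) and then argue at the level of cubes. In fact the paper's proof is purely formal and uses neither: it proves a Yoneda-style \emph{Claim} that a functor $G$ is $N$-excisive if and only if every map $F \to G$ factors through $p_N \colon F \to P_N F$ in the homotopy category. The ``if'' direction of the Claim is a two-line retraction argument applied to the identity on $G$. The lemma then follows immediately: a map $F \to \Phi A$ corresponds by adjunction to a map $\der_*F \to A$; since $A$ is $N$-truncated this factors through $\der_{\le N}F \simeq \der_*(P_N F)$; transposing back gives the required factorization $F \to P_N F \to \Phi A$. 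This is genuinely different from, and much shorter than, the cube-checking strategy you sketch, and it is the reason the paper can state Lemma~\ref{lem:phi} in the abstract Hypothesis~\ref{hyp:coefficients} setting rather than deferring it.

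Beyond predicting the wrong strategy, your sketch contains a real gap in step~(iii). You write that ``$P_N(\Hom_{\C}(X,-)\smsh Y)$, as an $N$-excisive functor of $X$ (after fixing $Y$), converts strongly cocartesian $(N+1)$-cubes into cartesian ones.'' But $P_N$ here is applied with respect to the \emph{internal} variable (the ``$-$''), because that is the variable in which $\der_*$ is taken and which the adjunction with $\Phi$ references. Being $N$-excisive in the internal variable confers no excision property whatsoever in the external variable $X$, which is the variable $\Phi A$ is a functor of and in which we must prove $N$-excision. So the chain of reasoning ``$P_N$ of the representable is $N$-excisive, hence takes strongly cocartesian cubes in $X$ to cartesian cubes'' is a non-sequitur; the cartesianness of $\Phi A$ applied to a strongly cocartesian $(N+1)$-cube in $\Cfin$ does not follow from what you have written. (For $\D=\spectra$ one \emph{can} salvage an explicit-formula argument by showing each factor $X \mapsto (A_n \smsh X^{[n]})^{h\Sigma_n}$ is $n$-excisive for $n \le N$ and using that a finite product of $N$-excisive functors is $N$-excisive, but that is not what you wrote and it does not extend easily to $\D=\based$, where the paper's abstract proof still applies.)
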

\begin{proof}
We use the following characterization of $N$-excisive functors:
\begin{claim*}
A functor $G: \Cfin \to \D$ is $N$-excisive if and only if any natural transformation $F \to G$ factors, in the homotopy category of $[\Cfin,\D]$, as
\[ F \arrow{e,t}{p_N} P_NF \to G. \]
\end{claim*}
\begin{proof}[Proof of Claim]
The `only if' direction is part of the universal property enjoyed by $P_N$ (see \cite{goodwillie:2003}). For the `if' direction, suppose that $G$ is a functor such that every natural transformation $F \to G$ factors as claimed. Then in particular, the identity natural transformation $G \to G$ factors, in the homotopy category, as
\[ G \arrow{e,t}{p_N} P_NG \arrow{e,t}{\alpha} G. \]
Thus $p_N$ has a left inverse in the homotopy category. But composing the above sequence again with $p_N$, and applying the uniqueness part of the universal property enjoyed by $P_N$, it follows that $p_N \circ \alpha$ is the identity on $P_NG$. Thus $p_N$ also has a right inverse in the homotopy category, so is an equivalence. Hence $G$ is $N$-excisive.
\end{proof}
Now suppose $A$ is $N$-truncated and apply the claim to $\Phi A$. A natural transformation $F \to \Phi A$ corresponds under the adjunction of \ref{hyp:coefficients} to a map $\der_*F \to A$ in $\cat{M}$. Since $A$ is $N$-truncated, this map factors, in the homotopy category of $\cat{M}$, as
\[ \der_*(F) \to \der_*(P_NF) \to A \]
which corresponds, under the Quillen adjunction of \ref{hyp:coefficients} to a factorization
\[ F \to P_NF \to \Phi A. \]
By the Claim, $\Phi A$ is $N$-excisive.
\end{proof}

\begin{definition} \label{def:K}
Under the assumptions of \ref{hyp:coefficients}, let $K$ be the comonad on the category $\cat{M}$ given by
\[ KM := \der_*\Phi M. \]
(Recall that we really mean $K = \der_* uc \Phi$ but are dropping $u$ and $c$ from the notation.) Then, for any pointed simplicial functor $F: \Cfin \to \D$, the symmetric sequence $\der_*F$ (or, really, $\der_*ucF$) has the structure of a $K$-coalgebra in $\cat{M}$. The functor $F$ is \emph{$\der_*$-complete} if the map
\[ F \to \Tot(\Phi K^{\bullet}\der_*F) \]
is a weak equivalence, that is, if $F$ can be recovered from its derivatives together with their $K$-coalgebra structure.
\end{definition}

\begin{theorem} \label{thm:main}
Let $\C$ and $\D$ be either $\based$ or $\spectra$ and $\der_*$ as in \ref{hyp:coefficients}. For a pointed simplicial functor $F: \Cfin \to \D$ the $\der_*$-completion map
\[ \eta: F \to \Tot(\Phi K^{\bullet}\der_*F) \]
is, up to homotopy, a retract of the map
\[ p_{\infty}: F \to \holim_n P_nF \]
associated to the Taylor tower of $F$. In particular, if $F$ is equivalent to the limit of its Taylor tower, then $F$ is $\der_*$-complete.
\end{theorem}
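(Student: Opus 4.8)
The plan is to interpolate between $p_\infty$ and $\eta$ the tower of cobar constructions on the truncations of the coalgebra $\der_*F$, and to show that this tower is, levelwise, a retract of the Goodwillie tower. First I would record two facts about the comonad $K=\der_*\Phi$ on $\cat{M}$. Since an $n$-excisive functor has trivial layers $D_kF$ for $k>n$, Lemma~\ref{lem:phi} shows that $K$ carries $n$-truncated objects to $n$-truncated objects: if $A=A_{\leq n}$ then $\Phi A$ is $n$-excisive, so $\der_*\Phi A=KA$ is $n$-truncated. A small elaboration of the proof of Lemma~\ref{lem:phi} — comparing, via the universal property of $P_n$, a natural transformation $G\to P_n\Phi M$ with one into the $n$-excisive functor $\Phi(M_{\leq n})$ — also shows that $K$ commutes with truncation, $(KM)_{\leq n}\isom K(M_{\leq n})$. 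Consequently each $\der_{\leq n}F$ inherits a $K$-coalgebra structure, the truncation maps $\der_{\leq n}F\to\der_{\leq n-1}F$ are \emph{strict} coalgebra morphisms, and (since $\mathfrak{C}$ is functorial for strict morphisms) we obtain a tower $Q_n:=\mathfrak{C}(\der_{\leq n}F)=\Tot(\Phi K^\bullet\der_{\leq n}F)$ coaugmented by $F$ through the unit $F\to\Phi\der_*F\to\Phi\der_{\leq n}F$. Because the truncation tower of a symmetric sequence is eventually constant in each degree and $\Phi$, $K^\bullet$ and $\Tot$ preserve the limit computing it, the canonical map $\mathfrak{C}(\der_*F)\to\holim_n Q_n$ is an equivalence, compatibly with the coaugmentations, so $\eta$ is identified with $F\to\holim_n Q_n$.

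Next I would show $\eta$ factors through $p_\infty$. Each cosimplicial level $\Phi K^m\der_{\leq n}F$ is $\Phi$ of an $n$-truncated object, hence $n$-excisive by Lemma~\ref{lem:phi}; and $n$-excisive functors are closed under homotopy limits, in particular under the fat totalization $\Tot$, which is a homotopy limit over $\Deltainj$. Thus $Q_n$ is $n$-excisive, and by the universal property defining $P_n$ (Definition~\ref{def:taylor}) the coaugmentation $F\to Q_n$ factors, uniquely up to homotopy, through $p_n\colon F\to P_nF$; uniqueness makes the resulting maps $q_n\colon P_nF\to Q_n$ compatible with the two towers. Passing to homotopy limits gives $r:=\holim_n q_n\colon\holim_n P_nF\to\mathfrak{C}(\der_*F)$ with $r\circ p_\infty\homeq\eta$.

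The hard part is the reverse: producing a section $s$ of $r$, under $F$, with $r\circ s\homeq\mathrm{id}$ — equivalently, exhibiting each $Q_n$ as a retract of the $n$-excisive functor $P_nF$, naturally in $n$ and under $F$. (One cannot simply argue that $q_n$ is itself an equivalence, since computing $\der_* Q_n$ would require commuting $\der_*$ past $\Tot$, which fails for general $F$.) The section should come from the \emph{cofree} side of the comonad: the counit $\epsilon\colon K\to 1_{\cat M}$ and the triangle identity $\epsilon_{\der_*F}\circ\theta\homeq\mathrm{id}$ — the same data that in the spectrum-valued case becomes the right inverses $\epsilon_r\circ\theta_{r,r}\homeq\mathrm{id}$ of Theorem~\ref{thm:spectra-valued} — make the cosimplicial object $\der_*(\Phi K^\bullet\der_{\leq n}F)=K^{\bullet+1}\der_{\leq n}F$ the split cotriple resolution of the coalgebra $\der_{\leq n}F$. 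Arguing as in the proof of Proposition~\ref{prop:weqs} (using the elementary inputs that $\Tot$ of a constant cosimplicial object is the identity and that every object in sight is fibrant), I would assemble from this, up to coherent homotopy, a natural transformation $s_n\colon Q_n\to P_nF$ under $F$ and compatible with the towers, with $q_n\circ s_n\homeq\mathrm{id}_{Q_n}$; then $s:=\holim_n s_n$ works. The genuinely delicate point, where I expect most of the effort to go, is coherence: the $s_n$ are not strict maps, so the argument must be organized in the homotopy categories of the relevant arrow categories, precisely the $A_\infty$-bookkeeping developed in Section~\ref{sec:coalgebras}.

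Putting these together, $\eta$ is a retract of $p_\infty$ in the homotopy category of the arrow category of $[\Cfin,\D]$. Since a retract of an isomorphism is an isomorphism, if $F$ is equivalent to $\holim_n P_nF$ — i.e. $p_\infty$ is a weak equivalence — then $\eta$ is a weak equivalence, which is exactly the statement that $F$ is $\der_*$-complete.
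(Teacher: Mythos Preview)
Your overall strategy --- factoring $\eta$ through $p_\infty$ via the tower $Q_n = \Tot(\Phi K^\bullet \der_{\leq n}F)$ and then seeking a section back --- is reasonable, and the first half (each $Q_n$ is $n$-excisive, so there is $q_n\colon P_nF \to Q_n$ under $F$) is fine. But the construction of the section $s_n\colon Q_n \to P_nF$ has a real gap.

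You say $s_n$ should come from the extra codegeneracies in the split cotriple resolution $K^{\bullet+1}\der_{\leq n}F$. Those extra codegeneracies exist, but they live in $\cat{M}$, not in $[\Cfin,\D]$: they are given by the counit $\epsilon\colon K \to 1_{\cat{M}}$ applied to the \emph{outermost} copy of $K = \der_*\Phi$, i.e.\ they use the map $\der_*\Phi A \to A$, not any map of functors. Applying $\Phi$ on the left does not convert this into a contracting homotopy for the cosimplicial \emph{functor} $\Phi K^\bullet\der_{\leq n}F$; there is no natural transformation $\Phi A \to (\text{something without a leading }\Phi)$ to play that role. So the splitting tells you that $\Tot(K^{\bullet+1}\der_{\leq n}F) \homeq \der_{\leq n}F$ in $\cat{M}$, but it does not produce a map out of $Q_n$ in $[\Cfin,\D]$. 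Invoking Proposition~\ref{prop:weqs} does not help either: that proposition is about derived morphisms of $K$-coalgebras, not about producing natural transformations of functors. Thus the existence of $s_n$ is not established, and the coherence issue you flag as the main difficulty is in fact secondary to this missing construction.

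The paper sidesteps building a section altogether. It proves by induction on $k$ that $P_kF \to \Tot\bigl(P_k(\Phi K^\bullet \der_*F)\bigr)$ is an \emph{equivalence}. The inductive step uses Goodwillie's delooped layer $R_k$ (with $P_kG \to P_{k-1}G \to R_kG$ a fibre sequence) together with the key observation that $R_k$ factors through $\der_*$: there is a functor $\Psi_k\colon\cat{M}\to[\Cfin,\D]$ with $R_kG \homeq R_k\Psi_k\der_*G$. Hence $R_k(\Phi K^\bullet\der_*F) \homeq R_k\Psi_k(K^{\bullet+1}\der_*F)$, and \emph{this} cosimplicial object --- now genuinely in $[\Cfin,\D]$ --- acquires an extra codegeneracy from the extra copy of $K$, so $R_kF \to \Tot\bigl(R_k(\Phi K^\bullet\der_*F)\bigr)$ is an equivalence. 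The induction runs through the fibre sequences, and the retract statement then drops out of a short diagram chase (no explicit sections needed). The moral: extra codegeneracies must appear in a cosimplicial \emph{functor}, and the factorization $R_k \homeq R_k\Psi_k\der_*$ is exactly what arranges that; the counit of $K$ alone does not.
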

\begin{proof}
Goodwillie constructs, in \cite[2.2]{goodwillie:2003}, a functor
\[ R_k: [\Cfin,\D] \to [\Cfin,\D] \]
such that, for any $F \in [\Cfin,\D]$, $R_kF$ is $k$-homogeneous, and there is a natural fibration sequence of functors:
\[ P_k F \to P_{k-1}F \to R_kF. \]
In particular, $R_kF$ is a natural delooping of $D_kF$.

Our proof of the theorem is based on the following commutative diagrams in which each column is a fibration sequence of functors
\begin{equation} \label{eq:thm-main} \begin{diagram}
  \node{P_k(F)} \arrow{s} \arrow{e} \node{\Tot(P_k(\Phi K^\bullet \der_*F))} \arrow{s} \\
  \node{P_{k-1}(F)} \arrow{e} \arrow{s} \node{\Tot(P_{k-1}(\Phi K^\bullet \der_*F))} \arrow{s} \\
  \node{R_k(F)} \arrow{e} \node{\Tot(R_k(\Phi K^\bullet \der_*F))}
\end{diagram} \end{equation}
We show first, by induction on $k$, that all the horizontal maps in these diagrams are equivalences. For this, it is sufficient, by the induction hypothesis, to show that the bottom horizontal map is an equivalence.

The key point here is that the functor $R_k$ factors via $\der_*$. We define a functor $\Psi_k: \cat{M} \to [\Cfin,\D]$ by
\[ \Psi_k(A)(X) := [\Omega^\infty](A_k \smsh X^{\smsh k})_{h\Sigma_k}. \]
That is, $\Psi_k$ is the functor that recovers $D_kF$ from $\der_*F$. There is then a natural equivalence
\[ R_kG \homeq R_k\Psi_k\der_*G. \]
The bottom horizontal map of (\ref{eq:thm-main}) is therefore equivalent to the map
\[ R_k\Psi_k(\der_*F) \to \Tot(R_k\Psi_k(K^{\bullet+1}\der_*F)). \]
This is the coaugmentation map for a cosimplicial object with extra codegeneracies (given by the counit map for the extra copy of $K$ that now appears) and so is a simplicial homotopy equivalence, hence a weak equivalence in $[\Cfin,\D]$.

We deduce then by induction that all the horizontal maps in (\ref{eq:thm-main}) are natural weak equivalences.

The claim that $\eta$ is a retract, up to homotopy, of $p_{\infty}$ then follows from the existence of the following commutative diagram:
\[ \begin{diagram}
  \node{F} \arrow{e,t}{\eta} \arrow{s,l}{p_{\infty}}
    \node{\Tot(\Phi K^{\bullet}\der_*F)} \arrow{e,t}{\sim} \arrow{s}
    \node{\holim_n \Tot(\Phi(K^{\bullet}\der_*F)_{\leq n})} \arrow{s,r}{\sim} \\
  \node{\holim_n P_n(F)} \arrow{e,t}{\sim}
    \node{\holim_n \Tot P_n(\Phi K^{\bullet}\der_*F)} \arrow{e}
    \node{\holim_n \Tot P_n(\Phi (K^{\bullet}\der_*F)_{\leq n})}
\end{diagram} \]
Since we have shown that the horizontal maps in (\ref{eq:thm-main}) are equivalences, the bottom-left horizontal map is an equivalence. The top-right horizontal map is an equivalence because the homotopy limit commutes with both totalization and the right Quillen functor $\Phi$, and because $\holim_n A_{\leq n} \homeq A$ for any $A \in \cat{M}$. The right-hand vertical map is an equivalence by Lemma \ref{lem:phi}.

Note that we do not claim the bottom-right horizontal map in the above diagram to be a weak equivalence in general. If the Taylor tower of $F$ converges, that is $p_{\infty}$ is an equivalence, then all maps in this diagram are weak equivalences.
\end{proof}

It follows from Theorem \ref{thm:main} that if $F$ is $N$-excisive for some $N$, then $F$ can be recovered from the $K$-coalgebra $\der_*F$. More generally, for any $F$, we can recover $P_NF$ from the $N$-truncation $\der_{\leq N}F$, together with its $K$-coalgebra structure.

\begin{definition} \label{def:truncated-coalgebra}
We define a comonad $K_{\leq N}$ on the subcategory of $N$-truncated objects in $\cat{M}$ by setting
\[ K_{\leq N}(A) := (KA)_{\leq N}. \]
We refer to a coalgebra over $K_{\leq N}$ is an \emph{$N$-truncated $K$-coalgebra}. Notice that if $A$ is a $K$-coalgebra in $\cat{M}$, then the $N$-truncation $A_{\leq N}$ is an $N$-truncated $K$-coalgebra.

When $A$ is $N$-truncated object in $\cat{M}$, Lemma \ref{lem:phi} tells us that
\[ (KA)_r \homeq * \]
for $r > N$ and so $K_{\leq N}A \homeq KA$. We therefore usually drop the distinction and just write $K$ instead of $K_{\leq N}$.
\end{definition}

\begin{corollary} \label{cor:truncated}
For $F \in [\Cfin,\D]$, we have
\[ P_nF \homeq \Tot(\Phi K^{\bullet} \der_{\leq n}F). \]
With respect to these equivalences, the map $P_nF \to P_{n-1}F$ is induced by the truncation map
\[ \der_{\leq n}F \to \der_{\leq(n-1)}F. \]
\end{corollary}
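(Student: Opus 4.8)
The plan is to apply Theorem~\ref{thm:main} to the $n$-excisive functor $P_nF$, and then to identify the $K$-coalgebra $\der_*P_nF$ with the truncated $K$-coalgebra $\der_{\leq n}F$. First, since $P_nF$ is $n$-excisive, standard Goodwillie calculus gives $P_k(P_nF) \homeq P_nF$ for every $k \geq n$, so the Taylor tower of $P_nF$ is eventually constant and the map $p_\infty\colon P_nF \to \holim_k P_k(P_nF)$ is an equivalence. By the last assertion of Theorem~\ref{thm:main}, $P_nF$ is therefore $\der_*$-complete, i.e. the coaugmentation
\[ \eta\colon P_nF \weq \Tot(\Phi K^{\bullet}\der_*P_nF) \]
is a weak equivalence.

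Next I would identify $\der_*P_nF$ with $\der_{\leq n}F$ \emph{as $K$-coalgebras}. The natural transformation $p_n\colon F \to P_nF$ induces, via the functor $\der_*$ into the category of $K$-coalgebras, a strict morphism of $K$-coalgebras $\der_*(p_n)\colon \der_*F \to \der_*P_nF$. On underlying symmetric sequences this map is an equivalence in degrees $\leq n$, because $p_n$ induces an equivalence $D_kF \homeq D_kP_nF$ for $k \leq n$; and $\der_kP_nF \homeq *$ in degrees $k > n$, because $D_k(P_nF) \homeq *$ there. In particular $\der_*P_nF$ is $n$-truncated, so $\der_*(p_n)$ factors through the coalgebra truncation map $\der_*F \to \der_{\leq n}F$ of Definition~\ref{def:truncated-coalgebra}; a routine diagram chase, using that $\der_*F \to \der_{\leq n}F$ is the identity in degrees $\leq n$ and trivial in higher degrees, shows that the resulting factor
\[ \der_{\leq n}F \to \der_*P_nF \]
is again a strict morphism of $K$-coalgebras whose underlying map is a levelwise weak equivalence of symmetric sequences.

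It then remains to transport this equivalence through the cobar construction. The assignment $A \mapsto \Tot(\Phi K^{\bullet}A)$ is natural with respect to strict coalgebra morphisms and is homotopy-invariant: each level $\Phi K^{m}A$ is a composite of the weak-equivalence-preserving functors $\Phi$ and $K$, and the fat totalization preserves objectwise weak equivalences of cosimplicial objects. Applying this to the strict equivalence of the previous step gives a weak equivalence $\Tot(\Phi K^{\bullet}\der_{\leq n}F) \weq \Tot(\Phi K^{\bullet}\der_*P_nF)$, which combined with $\eta$ yields $P_nF \homeq \Tot(\Phi K^{\bullet}\der_{\leq n}F)$. Since every step is natural in $F$ and compatible with varying $n$, the tower map $P_nF \to P_{n-1}F$ corresponds, under these equivalences and the identifications $\der_*P_kF \homeq \der_{\leq k}F$, precisely to the truncation map $\der_{\leq n}F \to \der_{\leq(n-1)}F$.

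The main obstacle I anticipate is the identification in the second step: one must check that $\der_*P_nF$ and $\der_{\leq n}F$ agree as \emph{coalgebras} over the comonad $K$, not merely as symmetric sequences. This requires carefully tracking the comonad structure and the right adjoint $\Phi$ — using the functoriality of $\der_*$ into $K$-coalgebras together with the universal property of coalgebra truncation — rather than just comparing underlying objects. Granting that, the first and third steps are essentially formal consequences of Theorem~\ref{thm:main} and the homotopy-theoretic machinery of Sections~\ref{sec:coalgebras} and~\ref{sec:descent}.
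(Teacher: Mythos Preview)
Your proposal follows essentially the same route as the paper's proof: apply Theorem~\ref{thm:main} to the $n$-excisive functor $P_nF$ to get $P_nF \homeq \Tot(\Phi K^{\bullet}\der_*P_nF)$, and then identify the cosimplicial objects $\Phi K^{\bullet}\der_*P_nF$ and $\Phi K^{\bullet}\der_{\leq n}F$. The paper states this last identification in one line; you are spelling out why it holds, which is appropriate.

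There is one small technical slip in your second step. You write that ``$\der_*P_nF$ is $n$-truncated, so $\der_*(p_n)$ factors through the coalgebra truncation map $\der_*F \to \der_{\leq n}F$.'' In the paper's sense (Definition~\ref{def:truncation}), $n$-truncated means the terms above $n$ are \emph{equal} to $*$; for $\der_*P_nF$ they are only \emph{contractible}. So you do not get a strict factorization $\der_{\leq n}F \to \der_*P_nF$ directly. The standard fix is to pass through a short zigzag: the truncation map $\der_*P_nF \to (\der_*P_nF)_{\leq n}$ is a strict $K$-coalgebra map and a weak equivalence, and the composite $\der_*F \to \der_*P_nF \to (\der_*P_nF)_{\leq n}$ does factor strictly through $\der_{\leq n}F$, giving a strict $K$-coalgebra equivalence $\der_{\leq n}F \weq (\der_*P_nF)_{\leq n}$. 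Your third step then applies to the zigzag $\der_{\leq n}F \weq (\der_*P_nF)_{\leq n} \lweq \der_*P_nF$ of strict coalgebra equivalences. With this adjustment the argument is complete and matches the paper's.
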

\begin{proof}
Theorem \ref{thm:main} tells us that $P_nF$ is $\der_*$-complete which means that
\[ P_nF \homeq \Tot(\Phi K^{\bullet} \der_*(P_nF)). \]
It is therefore sufficient to notice that there is a levelwise equivalence of cosimplicial objects
\[ \Phi K^{\bullet} \der_{\leq n}F \homeq \Phi K^{\bullet} \der_*(P_nF). \]
\end{proof}

\begin{corollary} \label{cor:nat}
For $F,G \in [\Cfin,\D]$ with $G$ analytic in the sense of Goodwillie~\cite{goodwillie:1991}, there are natural weak equivalences of simplicial sets
\[ \hNat(F,\holim_n P_nG) \homeq \widetilde{\Hom}_{K}(\der_*F,\der_*G) \]
where the left-hand side is the simplicial set of derived natural transformations of Definition~\ref{def:functors} and the right-hand side is the simplicial set of derived $K$-coalgebra maps of Definition~\ref{def:derived-ss-hom}.
\end{corollary}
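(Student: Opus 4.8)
The plan is to deduce the corollary from Proposition~\ref{prop:descent}, applied to the Quillen adjunction $(\der_*,\Phi)$ of Hypothesis~\ref{hyp:coefficients} (with the functors $u$ and $c$ reinserted wherever the convention has suppressed them). Write $G_\infty := \holim_n P_nG$; this is a fibrant object of $[\Cfin,\D]$, being a homotopy limit of fibrant objects. The strategy is: (i) show that $G_\infty$ is $\der_*$-complete and that the canonical map $\der_*G \to \der_*G_\infty$ is a weak equivalence of $K$-coalgebras; (ii) feed $G_\infty$ into Proposition~\ref{prop:descent} as the $\der_*$-complete target and identify the two resulting mapping spaces with the two sides of the asserted equivalence.

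The first step is where the analyticity hypothesis on $G$ is used, and I expect it to be the main obstacle. Since $G$ is analytic, Goodwillie's convergence theorem \cite{goodwillie:1991,goodwillie:2003} gives that $G \to G_\infty$ is a weak equivalence on all sufficiently highly connected objects of $\Cfin$. On the other hand, for a fixed $X\in\Cfin$ the value $P_nG(X)$ depends only on the values of $G$ on iterated suspensions of $X$, hence (by cofinality in the defining sequential homotopy colimit) only on the restriction of $G$ to arbitrarily highly connected objects; thus $P_nG \to P_nG_\infty$ is a weak equivalence for every $n$. Consequently $\der_*G_\infty \homeq \der_*G$, and $\holim_n P_nG_\infty \homeq \holim_n P_nG = G_\infty$. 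The last equivalence says that $G_\infty$ is equivalent to the limit of its own Taylor tower, so by Theorem~\ref{thm:main} it is $\der_*$-complete. Care is needed here because the $\der_*$-completion of a non-analytic functor can fail to coincide with $\holim_n P_nG$, so analyticity is genuinely doing work.

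For the second step, apply Proposition~\ref{prop:descent} with $X$ a cofibrant model of $F$ and $X' = cG_\infty$, which is fibrant in $[\Cfin,\D]_c$ and $\der_*$-complete by the previous paragraph. This yields a weak equivalence of simplicial sets
\[ \Hom_{[\Cfin,\D]_c}(X,X') \weq \hHom_{K}(\der_*X,\der_*X'). \]
Unwinding the Quillen equivalence $(u,c)$ of Theorem~\ref{thm:models}, and using that $G_\infty$ is already fibrant, identifies the left-hand side with $\hNat(F,G_\infty) = \hNat(F,\holim_n P_nG)$. On the right, $\der_*X \homeq \der_*F$ and $\der_*X' \homeq \der_*G_\infty$ because $\der_*$ preserves weak equivalences between cofibrant objects; and the strict $K$-coalgebra map $\der_*G \to \der_*G_\infty$, having underlying weak equivalence by step one, is carried to a weak equivalence by $\hHom_{K}(\der_*F,-) = \Tot\Hom(\der_*F,K^{\bullet}(-))$ — since $K$ preserves weak equivalences, every object of $\cat{M}$ is fibrant, and fat totalization is homotopy invariant on objectwise-fibrant cosimplicial simplicial sets. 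Composing these identifications with the displayed equivalence produces the natural weak equivalence $\hNat(F,\holim_n P_nG) \homeq \hHom_{K}(\der_*F,\der_*G)$, and naturality in $F$ and $G$ follows from that of Proposition~\ref{prop:descent} and of all the intervening identifications.
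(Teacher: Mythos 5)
Your proof is correct and follows essentially the same route as the paper: use analyticity to deduce that $\holim_n P_nG$ has the same Taylor tower as $G$, hence is $\der_*$-complete by Theorem~\ref{thm:main}, and then apply Proposition~\ref{prop:descent}. The paper states this in two sentences; you have simply supplied the details (why the Taylor towers agree, why $\hHom_K(\der_*F,-)$ sends the equivalence $\der_*G \to \der_*G_\infty$ to an equivalence) that the paper leaves implicit.
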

\begin{proof}
When $G$ is analytic, the functor $\holim_n P_nG$ has Taylor tower equivalent to that of $G$ and so is $\der_*$-complete by Theorem~\ref{thm:main}. The result is then Proposition \ref{prop:descent} in this context.
\end{proof}

Corollary \ref{cor:nat} now implies in particular that $\der_*$ determines a fully-faithful embedding of the homotopy theory of $N$-excisive functors into that of $N$-truncated $K$-coalgebras. We now show that this embedding is in fact an equivalence by verifying the hypothesis of our homotopic Barr-Beck Theorem (Proposition \ref{prop:barr-beck}).

\begin{lemma} \label{lem:barr-beck-derivs}
For any $K$-coalgebra $A$ such that $A_k \homeq *$ for $k > N$, the map
\[ \der_*\Tot(\Phi K^\bullet A) \to \Tot(\der_*\Phi K^\bullet A) \]
is a weak equivalence of symmetric sequences.
\end{lemma}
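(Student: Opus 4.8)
The plan is to reduce the statement to a termwise one and then prove it by a downward induction on the excision degree that peels off one homogeneous layer at a time.

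Since weak equivalences of symmetric sequences are detected termwise, it is enough to show that for each $n$ the canonical map $\der_n\Tot(\Phi K^\bullet A)\to\Tot(\der_n\Phi K^\bullet A)$ is a weak equivalence of spectra. By Lemma~\ref{lem:phi}, $K^mA$ being $N$-truncated forces $\Phi K^mA$ to be $N$-excisive, and hence $\Tot(\Phi K^\bullet A)$ is $N$-excisive as a homotopy limit of $N$-excisive functors; so both sides are contractible for $n>N$ and we may assume $n\le N$. I will use two standard facts from Goodwillie calculus. First, if $G$ is $k$-excisive then the cross-effect $\creff_k G$ is multilinear, whence $\der_k G$ is obtained from $\creff_k G$ by evaluation at spheres; thus $\der_k$, restricted to $k$-excisive functors, is a finite homotopy limit followed by evaluation at a fixed object, and therefore commutes with all homotopy limits, in particular with $\Tot$. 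Second, the homogeneous layer is recovered from the derivatives by $D_kG\homeq\Psi_k(\der_*G)$, where $\Psi_k(B)(X)=[\Omega^\infty](B_k\smsh X^{\smsh k})_{h\Sigma_k}$ is the functor from the proof of Theorem~\ref{thm:main}, sitting in the natural fibration sequence $D_kG\to P_kG\to P_{k-1}G$.

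The main step is to prove, by downward induction on $k$ from $N$ to $1$, that the comparison map $P_k\Tot(\Phi K^\bullet A)\to\Tot(P_k\Phi K^\bullet A)$ is a weak equivalence. For $k=N$ both sides are $\Tot(\Phi K^\bullet A)$, since all functors in sight are $N$-excisive. For the inductive step, map the fibration sequence $D_k\Tot(\Phi K^\bullet A)\to P_k\Tot(\Phi K^\bullet A)\to P_{k-1}\Tot(\Phi K^\bullet A)$ to the totalization of the termwise fibration sequence $D_k\Phi K^\bullet A\to P_k\Phi K^\bullet A\to P_{k-1}\Phi K^\bullet A$; the middle vertical map is an equivalence by the inductive hypothesis, so it suffices to treat the map of fibres and then compare long exact sequences of homotopy groups. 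On the target side, $D_k\Phi K^mA\homeq\Psi_k(\der_*\Phi K^mA)=\Psi_k(K^{m+1}A)$, and the cosimplicial symmetric sequence $K^{\bullet+1}A=\der_*\Phi K^\bullet A$ is exactly the cobar resolution of the $K$-coalgebra $A$; coaugmented by $A$ it admits extra codegeneracies coming from the counit on the outermost copy of $K$ — the same observation used in the proof of Theorem~\ref{thm:main} — so $\Tot(D_k\Phi K^\bullet A)=\Tot\Psi_k(K^{\bullet+1}A)\homeq\Psi_k(A)$. On the source side, the inductive hypothesis also gives $\der_k\Tot(\Phi K^\bullet A)=\der_k P_k\Tot(\Phi K^\bullet A)\homeq\der_k\Tot(P_k\Phi K^\bullet A)\homeq\Tot(\der_k\Phi K^\bullet A)\homeq A_k$, where the second equivalence uses that $\der_k$ commutes with $\Tot$ on the $k$-excisive functors $P_k\Phi K^mA$ and the last uses the extra codegeneracies of $K^{\bullet+1}A$; hence $D_k\Tot(\Phi K^\bullet A)=\Psi_k(\der_k\Tot(\Phi K^\bullet A))\homeq\Psi_k(A)$ as well, and one checks the two identifications are compatible, so the map of fibres is a weak equivalence. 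The five lemma then gives the statement for $k-1$.

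With this in hand the lemma follows: for $n\le N$ we have $\der_n\Tot(\Phi K^\bullet A)=\der_n P_n\Tot(\Phi K^\bullet A)\homeq\der_n\Tot(P_n\Phi K^\bullet A)\homeq\Tot(\der_n\Phi K^\bullet A)$, using the case $k=n$ just proved together with the fact that $\der_n$ commutes with $\Tot$ on the $n$-excisive functors $P_n\Phi K^mA$; one verifies this composite is the canonical comparison map. The point I expect to be the main obstacle — and the reason the truncation hypothesis is indispensable — is that $\der_k$ does \emph{not} commute with arbitrary homotopy limits (already not with infinite products, essentially because homotopy orbits $(-)_{h\Sigma_k}$ do not), so one cannot simply interchange $\der_*$ with $\Tot$; the layer-by-layer induction circumvents this by reducing, at each stage, to the cobar resolution of a coalgebra over a comonad, whose totalization collapses by extra codegeneracies.
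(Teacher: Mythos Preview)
Your argument is correct and follows the same core idea as the paper: an induction up the Taylor tower that isolates one homogeneous layer at a time and collapses it using the extra codegeneracies on the cobar resolution $K^{\bullet+1}A$. The organization differs in two respects worth noting. The paper applies $\der_*$ throughout and runs an \emph{upward} induction on the diagram with rows $\der_*\Tot(P_k\Phi K^\bullet A)\to\Tot(\der_*P_k\Phi K^\bullet A)$, using the delooped layer $R_k$ and the factorization $R_k = R_k\Psi_k\der_*$; this makes the bottom row literally of the form $\der_*R_k\Psi_k(K^{\bullet+1}A)$ on both sides, so the extra codegeneracies identify the comparison map with the identity on $\der_*R_k\Psi_kA$ without any further compatibility check. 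Your route instead proves the intermediate statement $P_k\Tot\simeq\Tot P_k$ by \emph{downward} induction using $D_k$, and then invokes the separate fact that $\der_k$ commutes with $\Tot$ on $k$-excisive functors. This works, but it trades one clean step for two that each require a small naturality verification (the ``one checks'' lines).

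One point to tighten: your justification of the ``first standard fact'' --- that on $k$-excisive functors $\der_k$ is a finite homotopy limit followed by evaluation, hence commutes with all homotopy limits --- is only literally correct when $\D=\spectra$. When $\D=\based$ the cross-effect lands in spaces and one must pass to the associated $\Omega$-spectrum $\{\creff_kG(S^L,\dots,S^L)\}_L$; the claim is still true because for multilinear $\creff_kG$ this prespectrum is already an $\Omega$-spectrum and limits of $\Omega$-spectra are computed levelwise, but this extra step should be said. The paper's organization sidesteps this issue entirely.
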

\begin{proof}
This is similar to the proof of \ref{thm:main}. Consider the following diagram in which the columns are fibration sequences. (It is important here that $\der_*$ preserves fibration sequences.)
\[ \begin{diagram}
  \node{\der_*\Tot(P_{k}\Phi K^{\bullet}A)} \arrow{e} \arrow{s} \node{\Tot(\der_*P_{k}\Phi K^{\bullet}A)} \arrow{s} \\
  \node{\der_*\Tot(P_{k-1}\Phi K^{\bullet}A)} \arrow{e} \arrow{s} \node{\Tot(\der_*P_{k-1}\Phi K^{\bullet}A)} \arrow{s} \\
  \node{\der_*\Tot(R_{k}\Phi K^{\bullet}A)} \arrow{e} \node{\Tot(\der_*R_{k}\Phi K^{\bullet}A)}
\end{diagram} \]
where $R_k$ is a natural delooping of $D_k$. Since $\Phi K^{\bullet}A$ is $N$-excisive by Lemma \ref{lem:phi}, it is sufficient, by induction on $k$, to show that the bottom-horizontal map is an equivalence for any $k$. But we have
\[ R_k = R_k \Psi_k \der_* \]
so this bottom map takes the form
\[ \der_*\Tot(R_k \Psi_k K^{\bullet+1}A) \to \Tot(\der_*R_k\Psi_k K^{\bullet+1}A). \]
These cosimplicial objects both have extra codegeneracies and this map is therefore equivalent to the identity on
\[ \der_*R_k\Psi_kA \]
so is a weak equivalence.
\end{proof}

\begin{theorem} \label{thm:n-excisive}
There is an equivalence between the homotopy theory of $N$-excisive pointed simplicial functors $F: \Cfin \to \D$ and that of $N$-truncated $K$-coalgebras. Letting $N \to \infty$ we obtain an equivalence between the homotopy category of all polynomial functors and that of all bounded $K$-coalgebras.
\end{theorem}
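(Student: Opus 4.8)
The plan is to apply the Homotopic Barr--Beck Theorem (Proposition~\ref{prop:barr-beck}) to the Quillen adjunction $(\der_*,\Phi)$ of Hypothesis~\ref{hyp:coefficients}, taking $\cat{A}_0$ to be the collection of $N$-excisive pointed simplicial functors $F\colon\Cfin\to\D$ and $\cat{C}_0$ the collection of $N$-truncated $K$-coalgebras that are cofibrant in $\cat{M}$. Once the two hypotheses of~\ref{prop:barr-beck} are verified, $\mathsf{Ho}(\der_*)$ restricts to an equivalence between the full subcategory of $\mathsf{Ho}([\Cfin,\D])$ spanned by the $N$-excisive functors and the full subcategory of $\mathsf{Ho}(\hM_{K})$ spanned by the cofibrant $N$-truncated $K$-coalgebras; two further bookkeeping steps then remove the word ``cofibrant'' and let $N\to\infty$.

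First I would check hypothesis (1). If $F$ is $N$-excisive then its Taylor tower is constant above stage $N$, so $D_kF\homeq *$ and hence $\der_kF\homeq *$ for $k>N$; after applying the truncation functor (which does not change the homotopy type) $\der_*F$ becomes a coalgebra over $K_{\leq N}$, i.e.\ an $N$-truncated $K$-coalgebra in the sense of Definition~\ref{def:truncated-coalgebra}. Since an $N$-excisive $F$ satisfies $F\homeq\holim_n P_nF$, Theorem~\ref{thm:main} shows that $F$ is $\der_*$-complete, which is the rest of hypothesis (1). Next I would check hypothesis (2). Let $A$ be a cofibrant $N$-truncated $K$-coalgebra. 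Then $K^mA$ is $N$-truncated for each $m$, so by Lemma~\ref{lem:phi} each $\Phi K^mA$ is $N$-excisive, and hence so is the fat totalization $\mathfrak{C}(A)=\Tot(\Phi K^\bullet A)$, since $N$-excisive functors are closed under homotopy limits; thus $\mathfrak{C}(A)\in\cat{A}_0$. The remaining enrichment condition in hypothesis (2) asks that the canonical map
\[ \der_*\Tot(\Phi K^\bullet A)\longrightarrow\Tot(\der_*\Phi K^\bullet A) \]
be a weak equivalence in $\cat{M}$, which is exactly Lemma~\ref{lem:barr-beck-derivs}. With both hypotheses in hand, Proposition~\ref{prop:barr-beck} yields the equivalence for cofibrant $N$-truncated coalgebras.

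Two cleanup steps finish the proof. (i) To upgrade to all $N$-truncated $K$-coalgebras: for an arbitrary such $A$, the derived counit $\der_*\mathfrak{C}(A)\to A$ of Definition~\ref{def:counit} is an isomorphism in $\mathsf{Ho}(\hM_{K})$ --- by Proposition~\ref{prop:Tot} together with Lemma~\ref{lem:barr-beck-derivs} --- while $\der_*\mathfrak{C}(A)$ is cofibrant in $\cat{M}$ (being the image under the left Quillen functor $\der_*$ of a cofibrant object); hence the inclusion of the cofibrant $N$-truncated coalgebras is essentially surjective, so an equivalence onto the full subcategory of all $N$-truncated $K$-coalgebras. (ii) For the limiting statement: a functor is polynomial iff it is $N$-excisive for some $N$, and a $K$-coalgebra is bounded iff it is $N$-truncated for some $N$; the equivalences above are compatible with the inclusions as $N$ grows, and the mapping spaces $\hNat$ and $\hHom_{K}$ between a fixed pair of objects do not depend on the truncation level at which they are considered, so passing to the union over $N$ gives the equivalence between the homotopy category of polynomial functors and that of bounded $K$-coalgebras.

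I expect the main obstacle to be cleanup step (i): identifying the essential image of $\mathsf{Ho}(\der_*)$ precisely, i.e.\ showing that \emph{every} $N$-truncated $K$-coalgebra --- not just the cofibrant ones --- is equivalent, as a coalgebra, to one of the form $\der_*F$. This is delicate because no model structure on $K$-coalgebras is available, so one must work entirely within the $A_\infty$-homotopy theory of Section~\ref{sec:coalgebras} and make sure $\mathsf{Ho}(\hM_{K})$ really is the correct homotopy category of $N$-truncated $K$-coalgebras. The substantive inputs to the Barr--Beck argument itself, namely Lemmas~\ref{lem:phi} and~\ref{lem:barr-beck-derivs}, are already established, so what remains beyond step (i) is assembly.
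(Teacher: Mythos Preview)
Your proposal is correct and follows essentially the same route as the paper: the paper's proof is the one-line ``This now follows from \ref{lem:barr-beck-derivs} and \ref{prop:barr-beck}'', and you have unpacked exactly that, verifying hypothesis~(1) of Proposition~\ref{prop:barr-beck} via Theorem~\ref{thm:main} and hypothesis~(2) via Lemmas~\ref{lem:phi} and~\ref{lem:barr-beck-derivs}. Your cleanup step~(i) is largely unnecessary in the paper's framework, since the homotopy category of $K$-coalgebras is from the outset defined on $\tilde{\cat{B}}_K$, i.e.\ on coalgebras that are already cofibrant in $\cat{M}$ (see Definition~\ref{def:homotopy} and the passage to $\tilde{\cat{B}}$ in Section~\ref{sec:descent}); so ``$N$-truncated $K$-coalgebra'' already means a cofibrant one, and there is nothing further to upgrade.
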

\begin{proof}
This now follows from \ref{lem:barr-beck-derivs} and \ref{prop:barr-beck}.
\end{proof}

We conclude this section with a useful strengthening of Lemma~\ref{lem:phi}. Suppose that $A$ is an $N$-truncated object of $\cat{M}$. Then not only is $\Phi A$ an $N$-excisive functor, but the counit map $\der_* \Phi A \to A$ induces an equivalence of \ord{N} terms, that is, of the highest non-trivial terms. We also have a dual result, involving the unit of the adjunction $(\der_*, \Phi)$.
\begin{proposition}\label{prop:unit}
Suppose $F$ is an $N$-excisive functor. Then the unit map
\[
F \longrightarrow \Phi\der_* F
\]
induces an equivalence of \ord{N} derivatives.
\end{proposition}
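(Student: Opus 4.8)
The plan is to reduce the statement to the dual fact about the counit recorded just above the proposition: for an $N$-truncated object $A$ of $\cat{M}$, the counit $\der_*\Phi A \to A$ (equivalently, the counit $\epsilon_A$ of the comonad $K = \der_*\Phi$) induces a weak equivalence on \ord{N} terms. Granting this, the proposition will follow formally from the triangle identities for the adjunction $(\der_*,\Phi)$.

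First I would observe that when $F$ is $N$-excisive the symmetric sequence $\der_*F$ is itself $N$-truncated: we have $P_kF \homeq F$ for all $k \geq N$, so the layers $D_kF = \hofib(P_kF \to P_{k-1}F)$ are contractible for $k > N$, and by the formula for the layers in Definition~\ref{def:coefficients} this forces $\der_kF \homeq *$ for $k > N$. Hence the counit fact applies to $A = \der_*F$, so $\epsilon_{\der_*F}\colon \der_*\Phi\der_*F \to \der_*F$ is a weak equivalence on \ord{N} terms. I would then apply $\der_*$ to the unit $\eta\colon F \to \Phi\der_*F$ to obtain a map of symmetric sequences $\der_*\eta\colon \der_*F \to \der_*\Phi\der_*F$, for which the triangle identity gives $\epsilon_{\der_*F}\circ\der_*\eta = \mathrm{id}_{\der_*F}$. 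Evaluating on the set $\{1,\dots,N\}$ and passing to the homotopy category of spectra, the composite $(\epsilon_{\der_*F})_N\circ(\der_*\eta)_N$ is the identity while $(\epsilon_{\der_*F})_N$ is an isomorphism, so $(\der_*\eta)_N$ is an isomorphism; but $(\der_*\eta)_N$ is precisely the map $\der_NF \to \der_N(\Phi\der_*F)$ induced by $\eta$ on \ord{N} derivatives, which is the claim.

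Two points deserve comment. As throughout this section we suppress the cofibrant-replacement pair $(u,c)$; to make the triangle-identity step literally correct one runs the argument in $[\Cfin,\D]_c$, where every object is cofibrant, and uses that every object of $\cat{M}$ is fibrant, so that $\der_*\eta$ and $\epsilon$ genuinely are the (derived) unit and counit of the comonad $K = \der_*uc\Phi$ and the triangle identity holds on the nose. Second, the substantive input is the counit fact, and that is where the real work lies: one applies $\Phi$ and then $\der_*$ --- both of which preserve fibre sequences, the latter as used in the proof of Lemma~\ref{lem:barr-beck-derivs} --- to the fibre sequence of modules $\widetilde A \to A \to A_{\leq N-1}$ in which $\widetilde A$ is concentrated in degree $N$ with $\widetilde A_N = A_N$, then uses Lemma~\ref{lem:phi} to see that $\Phi A_{\leq N-1}$ is $(N-1)$-excisive and hence has trivial \ord{N} derivative, thereby reducing to the $N$-homogeneous case. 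So I expect the proposition itself to present no difficulty once the counit statement is in hand; the one place needing genuine argument --- handled before the proposition --- is that homogeneous case.
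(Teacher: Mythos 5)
Your reduction of the statement to Proposition~\ref{prop:counit} via the triangle identity $\epsilon_{\der_*F}\circ\der_*\eta_F = \mathrm{id}$ is formally correct, and your sketch for reducing the counit statement to the case of $A$ concentrated in degree $N$ (using the fibre sequence $\widetilde A \to A \to A_{\le N-1}$ and Lemma~\ref{lem:phi}) mirrors the paper's own reduction. The gap is that you never prove the degree-$N$ case, and the claim that it is already handled before the proposition is not accurate. In the paper, the counit statement is Proposition~\ref{prop:counit}, stated after Proposition~\ref{prop:unit}; and its degree-$N$ base case is proved precisely by invoking Proposition~\ref{prop:unit} for the $N$-homogeneous functor $L$ with $\der_*L = A$, namely $L(X) = (\Omega^\infty)(A_N \smsh X^{\smsh N})_{h\Sigma_N}$. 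Running your own triangle-identity argument for that $L$ shows the degree-$N$ counit case is equivalent to the $N$-homogeneous case of the unit statement, so your proposal traces a circle: it reduces Proposition~\ref{prop:unit} to its own $N$-homogeneous special case without ever proving that case.

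The substantive input that closes this gap, and which your proposal never touches, is Lemma~\ref{lem:derivative detects}: for $N$-homogeneous $L$ and $N$-excisive $F$, the functor $\der_*$ induces an equivalence $\hNat(L,F)\weq\hM(\der_*L,\der_*F)$, resting on Goodwillie's theorem that $\der_*$ is an equivalence from $N$-homogeneous functors to $\Sigma_N$-spectra. The paper's direct proof of Proposition~\ref{prop:unit} combines this with the detection criterion of Lemma~\ref{lem:detect} and the adjunction isomorphism $\hNat(L,\Phi\der_*F)\isom\hM(\der_*L,\der_*F)$, concluding that the unit map is a $D_N$-equivalence because it is detected as an equivalence by all $N$-homogeneous test objects $L$.
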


\begin{proposition}\label{prop:counit}
Suppose $A$ is an $N$-truncated object of $\cat{M}$. The counit map
\[
\der_*  \Phi A \longrightarrow A
\]
induces an equivalence of \ord{N} terms of these symmetric sequences.
\end{proposition}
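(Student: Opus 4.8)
The plan is to analyze the highest non-trivial term of the cosimplicial cobar resolution. Recall that the counit $\der_*\Phi A \to A$ is the map of $0$-simplices in the cosimplicial object $\der_*(\Phi K^\bullet A) = \der_*\Phi (\der_*\Phi)^\bullet A$, or more precisely it is the composite $\der_*\Phi A \to A$ coming from the counit of the $(\der_*,\Phi)$-adjunction applied to $A$. I would first observe that, since $A$ is $N$-truncated, Lemma~\ref{lem:phi} tells us that $\Phi A$ is $N$-excisive, so $\der_*\Phi A$ is itself $N$-truncated (the derivatives above the \ord{N} of an $N$-excisive functor are contractible). Thus both sides of the counit map are $N$-truncated symmetric sequences and it suffices to compare their \ord{N} terms. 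The key is that the \ord{N} derivative is controlled entirely by the $N$-homogeneous layer $D_N$, which by the formula in Definition~\ref{def:coefficients} depends only on the symmetric sequence $\der_*$, not on the finer coalgebra structure.

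The central step is therefore to identify $\der_N\Phi A$ with $A_N$ via the counit. I would use the factorization of the functor $R_k$ (equivalently $D_k$) through $\der_*$ that appears in the proof of Theorem~\ref{thm:main}: there is a functor $\Psi_N\colon \cat{M}\to[\Cfin,\D]$ with $\Psi_N(A)(X)\homeq[\Omega^\infty](A_N\smsh X^{\smsh N})_{h\Sigma_N}$, and $D_N G \homeq D_N\Psi_N\der_*G$ naturally. Applying this to $G = \Phi A$ gives $D_N\Phi A \homeq D_N\Psi_N\der_*\Phi A$. Now I would run the analysis through the cobar filtration: the $N$-homogeneous layer of $\Tot(\Phi K^\bullet A)$ is, by the extra-codegeneracy argument of Theorem~\ref{thm:main}, equivalent to $R_N\Psi_N A$, i.e. is built from $A_N$ alone. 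Combined with Corollary~\ref{cor:truncated} (which gives $P_N(\Phi A)\homeq\Phi A$ since $\Phi A$ is $N$-excisive, so that $\Phi A\homeq \Tot(\Phi K^\bullet\der_{\le N}\Phi A)$ and one compares \ord{N} layers), this forces $\der_N\Phi A\homeq A_N$ as $\Sigma_N$-spectra. The remaining bookkeeping is to check that the equivalence so obtained is in fact \emph{induced by the counit} $\der_*\Phi A\to A$, rather than merely an abstract equivalence; this follows by tracking the counit through the identifications $D_N\Phi A \homeq D_N\Psi_N\der_*\Phi A$ and noting that the counit of $(\der_*,\Phi)$ corresponds on \ord{N} layers to the identity map of $A_N$ under the extra codegeneracy.

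An alternative, cleaner route — which I would likely adopt as the actual argument — is the following. Consider the coaugmented cosimplicial symmetric sequence $\der_*\Phi K^\bullet A$ with coaugmentation $A \to \der_*\Phi A$. Its \ord{N} term in each cosimplicial degree, by Lemma~\ref{lem:phi} applied iteratively, is $(K^{\bullet+1}A)_N$, but the same extra-codegeneracy argument used in Lemma~\ref{lem:barr-beck-derivs} shows that the \ord{N} term of this whole cosimplicial object is a resolution whose totalization is $A_N$; taking $0$-cosimplices and comparing to the coaugmentation identifies $\der_N\Phi A$ with $A_N$ compatibly with the counit. In short: the counit map of symmetric sequences, restricted to \ord{N} terms, is precisely the map of $0$-simplices into a cosimplicial object with extra degeneracies, hence a weak equivalence onto $A_N$.

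The main obstacle I anticipate is the naturality/compatibility check in the last step — making sure the abstract equivalence $\der_N\Phi A\homeq A_N$ that one extracts from the homogeneous-layer analysis is genuinely the one induced by the counit, and not off by some automorphism of $A_N$. This is the sort of diagram-chase that is conceptually routine but requires care, because $\der_*$ is only a model for the derivatives and several of the identifications ($D_N G\homeq D_N\Psi_N\der_*G$, the extra codegeneracies) are natural only up to specified homotopy. Once that is pinned down the result is immediate; everything else reduces to facts already established in the proofs of Theorem~\ref{thm:main} and Lemma~\ref{lem:barr-beck-derivs}.
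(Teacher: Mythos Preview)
Your proposal has a genuine gap. In both of your sketched routes you rely on a coaugmentation $A \to \der_*\Phi A = KA$ (and then on the extra-codegeneracy argument from Theorem~\ref{thm:main} and Lemma~\ref{lem:barr-beck-derivs}), but no such map is available: the hypothesis of Proposition~\ref{prop:counit} is only that $A$ is an $N$-truncated object of $\cat{M}$, not that $A$ is a $K$-coalgebra. The counit of the adjunction gives a map $KA \to A$, which is an \emph{augmentation} of $K^{\bullet+1}A$, not a coaugmentation; and the extra codegeneracies you want (those used in Theorem~\ref{thm:main} and Lemma~\ref{lem:barr-beck-derivs}) come precisely from the extra copy of $K$ supplied by writing $A = \der_*F$ for some $F$, or equivalently from a coalgebra structure on $A$. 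Without that, the cosimplicial object $K^{\bullet+1}A$ need not be a resolution of $A$ in any useful sense, and your appeal to those results is circular. There is also a second, independent gap: even if you did know $\Tot((K^{\bullet+1}A)_N) \homeq A_N$, this would not by itself tell you that the degree-$0$ term $(KA)_N = \der_N\Phi A$ is equivalent to $A_N$; you would be comparing a totalization with a single cosimplicial level.

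The paper's argument is quite different and avoids cobar constructions entirely. It first proves the companion statement, Proposition~\ref{prop:unit}, that for an $N$-excisive $F$ the unit $F \to \Phi\der_*F$ is a $D_N$-equivalence; this is done via a representability argument (Lemmas~\ref{lem:detect} and~\ref{lem:derivative detects}) that tests against $N$-homogeneous functors. Proposition~\ref{prop:counit} is then deduced in two steps: first for $A$ concentrated in degree $N$, by realizing $A$ as $\der_*L$ for an $N$-homogeneous $L$ and using the triangle identity $\der_*L \to \der_*\Phi\der_*L \to \der_*L$ together with Proposition~\ref{prop:unit}; and second for general $N$-truncated $A$, by the fibration sequence $A_N \to A \to A_{<N}$ and the fact that both $\der_*$ and $\Phi$ preserve fibre sequences. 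If you want to repair your approach, the missing ingredient is precisely this reduction to the concentrated case (where $A = \der_*L$ and hence genuinely is a $K$-coalgebra), after which a triangle-identity argument is cleaner than any cobar computation.
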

To prove the two propositions, we need a few preparatory lemmas that help us to detect $D_N$-equivalences.
\begin{lemma}
Let $F, G\in [\Cfin, \D]$ be functors and assume that $G$ is $N$-excisive. The map $p_N \colon F \to P_NF$ induces a weak homotopy equivalence
\[
\hNat(P_NF, G) \longrightarrow \hNat(F, G).
\]
\end{lemma}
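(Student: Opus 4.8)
The plan is to deduce the statement from the universal property of $P_N$ (Definition~\ref{def:taylor}) by a cotensoring argument that promotes the corresponding fact about homotopy classes of maps to one about full mapping spaces. Recall that $p_N\colon F\to P_NF$ is, up to homotopy, initial in $\Ho([\Cfin,\D])$ among natural transformations from $F$ to an $N$-excisive functor; equivalently, for every $N$-excisive $H$ the precomposition map
\[ (p_N)^*\colon [P_NF,H]\longrightarrow [F,H] \]
of morphism sets in $\Ho([\Cfin,\D])$ is a bijection. (Surjectivity here is exactly the Claim used in the proof of Lemma~\ref{lem:phi}; injectivity is the uniqueness half of the same universal property.) So the task is to upgrade this $\pi_0$-level bijection to a weak equivalence $\hNat(P_NF,G)\to\hNat(F,G)$.

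First I would note that, since $[\Cfin,\D]$ is a simplicial model category in which every object is fibrant, for any simplicial set $K$ the objectwise cotensor $G^K$ is again fibrant and the tensor--cotensor adjunction furnishes natural isomorphisms of simplicial sets
\[ \Map_{\sSet}(K,\hNat(F,G))\;\isom\;\hNat(F,G^K), \qquad \Map_{\sSet}(K,\hNat(P_NF,G))\;\isom\;\hNat(P_NF,G^K). \]
The key point is that $G^K$ stays $N$-excisive: the endofunctor $(-)^K$ of $\D$ is right Quillen (its left adjoint $-\otimes K$ is left Quillen because $K$ is cofibrant), so it preserves fibrant objects and homotopy limits, and hence carries homotopy cartesian cubes to homotopy cartesian cubes; since $G$ sends strongly cocartesian $(N+1)$-cubes in $\Cfin$ to homotopy cartesian cubes in $\D$, so does $G^K$.

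Applying $\pi_0$ to the displayed isomorphisms and identifying $\pi_0\hNat(-,G^K)$ with $[-,G^K]$ in $\Ho([\Cfin,\D])$, the universal property of $P_N$ applied to the $N$-excisive functor $G^K$ shows that, for every simplicial set $K$,
\[ [K,\hNat(P_NF,G)]_{\sSet}\;\xrightarrow{(p_N)^*}\;[K,\hNat(F,G)]_{\sSet} \]
is a bijection, naturally in $K$. Since $\hNat(P_NF,G)$ and $\hNat(F,G)$ are Kan complexes, this says precisely that $(p_N)^*$ induces a natural isomorphism between the functors on $\Ho(\sSet)$ that these two Kan complexes corepresent; by the Yoneda lemma, $(p_N)^*$ is then an isomorphism in $\Ho(\sSet)$, i.e.\ a weak homotopy equivalence of simplicial sets.

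The step I expect to demand the most care is the verification that $G^K$ is $N$-excisive in the homotopy-invariant sense: one must check that the cotensor is being formed as a homotopically correct construction (which it is, as $K$ is cofibrant and $G$ fibrant) and that $(-)^K$ genuinely preserves the homotopy cartesian cubes appearing in the definition of $N$-excision. Everything else is formal manipulation of simplicial mapping spaces together with the universal property of $P_N$ already invoked in the proof of Lemma~\ref{lem:phi}.
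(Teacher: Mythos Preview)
Your proof is correct and rests on the same idea as the paper's: the universal property of $P_N$ gives a $\pi_0$-bijection, and this upgrades to a weak equivalence because cotensoring $G$ preserves $N$-excisiveness. The paper's version is just a little more direct---rather than cotensoring with an arbitrary $K$ and invoking Yoneda in $\Ho(\sSet)$, it observes that $\pi_i\hNat(F,G)\cong\pi_0\hNat(F,\Omega^i G)$ and that $\Omega^i G$ is again $N$-excisive, which is the special case $K=S^i$ of your argument.
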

\begin{proof}
Again we recall that the universal property of $P_N$ is that  any natural transformation from $F$ to an $N$-excisive functor factors through $p_N$, uniquely in the homotopy category of functors. It follows that the map $\hNat(P_NF, G) \longrightarrow \hNat(F, G)$ is an isomorphism on $\pi_0$. To prove that it is an isomorphism on every homotopy group, note that for every $i$, $\pi_i(\hNat(F, G))\cong \pi_0(\hNat(F, \Omega^i G))$ and that if $G$ is $N$-excisive then so is $\Omega^iG$.
\end{proof}
\begin{corollary}\label{cor:trivial}
Let $F, G\in [\Cfin, \D]$. Assume that $P_NF \simeq *$ and that $G$ is $N$-excisive. Then
\[
\hNat(F, G) \simeq *.
\]
\end{corollary}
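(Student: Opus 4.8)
The plan is to obtain this as an immediate consequence of the preceding Lemma together with the homotopy invariance of the space of derived natural transformations in its first variable. First I would apply that Lemma to the given $F$ and $G$: since $G$ is $N$-excisive, the map $p_N \colon F \to P_NF$ induces a weak homotopy equivalence
\[ \hNat(P_NF, G) \weq \hNat(F, G). \]
So it suffices to show that $\hNat(P_NF, G) \simeq *$.

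Next I would use the hypothesis $P_NF \simeq *$. Recall that $\hNat(-, G)$ is a \emph{derived} mapping space, computed as $\Nat(c(-), fG)$ for cofibrant and fibrant replacements in the projective (simplicial) model structure on $[\Cfin,\D]$; consequently it takes weak equivalences in its first argument to weak equivalences of simplicial sets. Hence $\hNat(P_NF, G) \simeq \hNat(*, G)$. Finally, I would observe that $*$ is the zero object of the pointed category $[\Cfin,\D]$ and is both cofibrant and fibrant there, so that $\hNat(*, G) = \Nat(*, fG) = *$. Chaining the three equivalences gives $\hNat(F, G) \simeq *$.

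I do not anticipate any real obstacle; the only point needing a moment's care is the homotopy invariance of $\hNat$ in its first variable, which follows from the fact that the projective model structure on $[\Cfin,\D]$ is simplicial, so that $(F',G') \mapsto \Nat(F',G')$ is a Quillen bifunctor and hence homotopical on cofibrant--fibrant arguments. If one prefers to sidestep this, one can instead use that the equivalence furnished by the preceding Lemma is natural in $F$ and apply it along a cofibrant replacement of $F$ admitting a map to $*$; but invoking homotopy invariance directly is the cleanest route.
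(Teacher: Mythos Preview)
Your proof is correct and is exactly the argument the paper has in mind: the corollary is stated without proof immediately after the lemma, so the intended deduction is precisely to combine the equivalence $\hNat(P_NF,G)\weq\hNat(F,G)$ from that lemma with the hypothesis $P_NF\simeq *$ and the homotopy invariance of the derived mapping space.
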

\begin{lemma}\label{lem:detect}
Let $F, G$ be $N$-excisive. Let $\eta\colon F\longrightarrow G$ be a natural transformation. The induced natural transformation $D_N(\eta)\colon D_N F\longrightarrow D_NG$ is a weak equivalence if and only if the induced map
\[
\hNat(L, \eta)\colon \hNat(L, F)\longrightarrow \hNat(L, G)
\]
is a weak equivalence for all $N$-homogeneous functors $L$.
\end{lemma}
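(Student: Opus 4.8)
The plan is to reduce the biconditional, in both directions, to a Yoneda-type argument applied to the $N$-th homogeneous layers. Since $F$ and $G$ are $N$-excisive we may identify them with $P_NF$ and $P_NG$, and then $\eta$ fits into a map of homotopy fibre sequences
\[ (D_NF \to F \to P_{N-1}F) \longrightarrow (D_NG \to G \to P_{N-1}G) \]
in $[\Cfin,\D]$, whose outer vertical maps are $D_N\eta$ and $P_{N-1}\eta$. First I would apply $\hNat(L,-)$ to this, for an arbitrary $N$-homogeneous functor $L$. Since $\hNat(L,-)$ is a derived mapping space into a pointed simplicial model category it preserves homotopy fibre sequences, so we obtain homotopy fibre sequences $\hNat(L,D_NF)\to\hNat(L,F)\to\hNat(L,P_{N-1}F)$ and its analogue for $G$. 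Because $L$ is $N$-homogeneous we have $P_{N-1}L\simeq *$, while $P_{N-1}F$ is $(N-1)$-excisive, so Corollary~\ref{cor:trivial} (applied with $N$ replaced by $N-1$) gives $\hNat(L,P_{N-1}F)\simeq *$, and similarly $\hNat(L,P_{N-1}G)\simeq *$. Hence the maps $\hNat(L,D_NF)\to\hNat(L,F)$ and $\hNat(L,D_NG)\to\hNat(L,G)$ are weak equivalences, naturally in $L$ and compatibly with $\eta$. The outcome of this step is that $\hNat(L,\eta)$ is a weak equivalence for every $N$-homogeneous $L$ if and only if $\hNat(L,D_N\eta)\colon\hNat(L,D_NF)\to\hNat(L,D_NG)$ is a weak equivalence for every $N$-homogeneous $L$.

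With this reduction in hand the forward implication is immediate: if $D_N\eta$ is a weak equivalence then so is each $\hNat(L,D_N\eta)$, since a derived mapping space preserves weak equivalences in its second variable. For the converse I would run a Yoneda argument inside the homotopy category of functors, using that $D_NF$ and $D_NG$ are themselves $N$-homogeneous (and hence admissible choices of $L$) and that $\pi_0\hNat(-,-)$ computes morphisms there (Definition~\ref{def:functors}). Taking $L = D_NG$ produces a morphism $g\colon D_NG\to D_NF$ in the homotopy category with $D_N\eta\circ g = \mathrm{id}$; taking $L = D_NF$ and comparing the images of $\mathrm{id}_{D_NF}$ and of $g\circ D_N\eta$ under the $\pi_0$-isomorphism induced by $\hNat(D_NF,D_N\eta)$ then forces $g\circ D_N\eta = \mathrm{id}$ as well. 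Thus $D_N\eta$ is invertible in the homotopy category of functors, hence a weak equivalence.

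I expect the only genuinely delicate point to be the claim that $\hNat(L,-)$ carries the homotopy fibre sequence $D_NF\to F\to P_{N-1}F$ to a homotopy fibre sequence of simplicial sets. This is where one uses that $\hNat$ is a bona fide derived mapping space --- computed, say, as $\Nat(cL,f(-))$ after cofibrant and fibrant replacement --- so that it commutes with homotopy limits, and in particular with homotopy fibres, in its target variable. For $\D=\spectra$ this is automatic from stability; for $\D=\based$ it holds because a homotopy fibre is a homotopy pullback and derived mapping spaces send homotopy pullbacks to homotopy pullbacks. Beyond this, the argument is a formal manipulation of mapping spaces together with Corollary~\ref{cor:trivial}, and uses only elementary facts of Goodwillie calculus: the fibre sequence $D_NF\to F\to P_{N-1}F$ for $N$-excisive $F$, and the $N$-homogeneity of $D_NF$.
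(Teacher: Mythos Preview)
Your proof is correct and follows essentially the same route as the paper: both arguments use the fibre sequence $D_NF\to F\to P_{N-1}F$ together with Corollary~\ref{cor:trivial} to identify $\hNat(L,F)\simeq\hNat(L,D_NF)$ for $N$-homogeneous $L$, then deduce the forward direction immediately and the converse by a Yoneda argument in the homotopy category. The only difference is that the paper invokes the converse as ``a standard categorical argument'' in the homotopy category of $N$-homogeneous functors, while you spell out the Yoneda step explicitly by taking $L=D_NG$ and $L=D_NF$; your extra care about $\hNat(L,-)$ preserving homotopy fibre sequences is also something the paper takes for granted.
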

\begin{proof}
For every $L$ and $F$ there is a fibration sequence
\[
\hNat(L, D_N F)\longrightarrow \hNat(L, P_N F) \longrightarrow \hNat(L, P_{N-1} F).
\]
If $L$ is $N$-homogeneous then, by Corollary~\ref{cor:trivial}, $\hNat(L, P_{N-1} F)\simeq *$. It follows that if $F$ is $N$-excisive there is a natural equivalence
\[
\hNat(L, D_N F)\simeq \hNat(L, F).
\]
The same holds for $G$. Thus, if $D_N(\eta)\colon D_NF \longrightarrow D_NG$ is an equivalence then so is $\hNat(L, D_N(\eta))$, and therefore so is $\hNat(L, \eta)$, as required.

Conversely suppose $\hNat(L, \eta)$ is an equivalence for every $N$-homogenous functor $L$. Then so is $\hNat(L, D_N(\eta))$. It follows by a standard categorical argument that the map $D_N(\eta)$ is an isomorphism in the homotopy category of $N$-homogeneous functors, which is a sub-category of the homotopy category of all functors. Thus $D_N(\eta)$ is a weak equivalence.
\end{proof}
Recall that $(\der_*, \Phi)$ is an adjoint pair of functors between $[\Cfin, \D]$ and $\cat{M}$. Given two objects $A, B\in \cat{M}$ let $\hM(A, B)$ be the derived space of morphisms from $A$ to $B$. For example, it can be defined as the space of morphisms from a cofibrant replacement of $A$ to a fibrant replacement of $B$. (It is true that all objects are fibrant in $\cat{M}$, so taking fibrant replacement is not necessary here.)
\begin{lemma}\label{lem:derivative detects}
As before, let $L$ be an $N$-homogeneous functor and let $F$ be $N$-excisive. Then $\der_*$ induces an equivalence
\[
\hNat(L, F) \stackrel{\simeq}{\longrightarrow} \hM(\der_* L, \der_* F).
\]
\end{lemma}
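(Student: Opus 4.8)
The plan is to deduce this from Lemma~\ref{lem:detect} together with the fact that $\Phi$ is a right Quillen functor. The key point is that $\der_*$ and $\Phi$ form a Quillen adjunction, so on the level of homotopy categories we have a derived adjunction, and hence for any cofibrant $L$ and fibrant $F$ a natural weak equivalence
\[
\hM(\der_* L, \der_* F) \homeq \hNat(L, \Phi \der_* F).
\]
Under this identification, the map $\hNat(L,F) \to \hM(\der_* L, \der_* F)$ induced by $\der_*$ becomes the map
\[
\hNat(L, F) \longrightarrow \hNat(L, \Phi \der_* F)
\]
induced by the unit $\eta\colon F \to \Phi\der_* F$ of the adjunction. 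So it suffices to show that this latter map is a weak equivalence whenever $L$ is $N$-homogeneous and $F$ is $N$-excisive.

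For that, first I would observe that since $F$ is $N$-excisive, so is $\Phi \der_* F$: indeed $\der_* F$ is $N$-truncated (the derivatives of an $N$-excisive functor vanish above degree $N$) and hence $\Phi\der_* F$ is $N$-excisive by Lemma~\ref{lem:phi}. Now apply Lemma~\ref{lem:detect} to the natural transformation $\eta\colon F \to \Phi\der_* F$ between the two $N$-excisive functors $F$ and $\Phi\der_* F$: the map $\hNat(L,\eta)$ is a weak equivalence for all $N$-homogeneous $L$ if and only if $D_N(\eta)\colon D_N F \to D_N \Phi\der_* F$ is a weak equivalence. But $D_N$ of a functor depends only on the $N$\textsuperscript{th} derivative (together with its $\Sigma_N$-action, via the formula $D_N G(X) \homeq [\Omega^\infty](\der_N G \smsh X^{\smsh N})_{h\Sigma_N}$), so $D_N(\eta)$ is a weak equivalence precisely when $\eta$ induces an equivalence on $N$\textsuperscript{th} derivatives --- which is exactly the content of Proposition~\ref{prop:unit}. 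Reading the chain of equivalences backwards gives the desired conclusion.

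The main obstacle, and the one place requiring care, is the identification of the $\der_*$-induced map $\hNat(L,F) \to \hM(\der_* L, \der_* F)$ with the unit-induced map $\hNat(L,F) \to \hNat(L,\Phi\der_* F)$ across the derived adjunction isomorphism; this is the usual naturality square for a unit of an adjunction, but one must check it passes correctly to \emph{derived} mapping spaces, using that $L$ is cofibrant, that $\der_* L$ is a model for its derivatives (so no further cofibrant replacement in $\cat{M}$ changes the homotopy type, and all objects of $\cat{M}$ are fibrant), and that $\Phi$ preserves the relevant fibrant objects. Everything else is a formal consequence of results already established, with Proposition~\ref{prop:unit} doing the real work. (Note this lemma and Proposition~\ref{prop:unit} are essentially equivalent given the formalism, so logically one should be proved first; I would prove Proposition~\ref{prop:unit} directly --- e.g. by the argument sketched after its statement --- and then obtain this lemma as above, or conversely, but not circularly.)
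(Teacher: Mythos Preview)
Your reduction via the derived adjunction is correct and elegant: identifying the $\der_*$-map with the unit-induced map $\hNat(L,F)\to\hNat(L,\Phi\der_*F)$ and then invoking Lemma~\ref{lem:detect} does reduce the claim to Proposition~\ref{prop:unit}. The problem is that in the paper Proposition~\ref{prop:unit} is proved \emph{using} Lemma~\ref{lem:derivative detects} (look at the commutative square in its proof: the left vertical map is an equivalence precisely by this lemma). So as written your argument is circular.

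You anticipate this and propose to break the circle by proving Proposition~\ref{prop:unit} first ``by the argument sketched after its statement.'' But there is no such sketch in the paper; the only proof given for Proposition~\ref{prop:unit} is the one that invokes Lemma~\ref{lem:derivative detects}. So your proposal is incomplete: the real content has been deferred to a direct proof of Proposition~\ref{prop:unit} that you have not supplied and that the paper does not contain.

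The paper breaks the circle in the opposite direction. It proves Lemma~\ref{lem:derivative detects} directly by a commutative square
\[
\begin{diagram}
  \node{\hNat(L, D_NF)} \arrow{e} \arrow{s,l}{\der_*} \node{\hNat(L, F)} \arrow{s,r}{\der_*} \\
  \node{\hM(\der_*L, \der_* D_NF)} \arrow{e}  \node{\hM(\der_*L, \der_*F)}
\end{diagram}
\]
in which the other three maps are equivalences for elementary reasons: the top by the argument in Lemma~\ref{lem:detect}, the bottom because $\der_*L$ is concentrated in degree $N$ while $\der_*F$ is $N$-truncated, and---this is the key external input---the left vertical map by Goodwillie's theorem that $\der_*$ gives an equivalence between $N$-homogeneous functors and $\Sigma_N$-spectra. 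That last ingredient is what actually grounds the whole chain of results; your argument never invokes it, which is why it cannot stand on its own.
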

\begin{proof}
Consider the diagram
\[ \begin{diagram}
  \node{\hNat(L, D_NF)} \arrow{e} \arrow{s,l}{\der_*} \node{\hNat(L, F)} \arrow{s,r}{\der_*} \\
  \node{\hM(\der_*L, \der_* D_NF)} \arrow{e}  \node{\hM(\der_*L, \der_*F)}
\end{diagram} \]
Our goal is to prove that the right vertical map is an equivalence. We do this by showing that the other three maps are equivalences. We saw that the top horizontal map is an equivalence in the proof of Lemma~\ref{lem:detect}. The fact that the left vertical map is an equivalence is a consequence of Goodwillie's theorem that $\der_*$ induces an equivalence between the category of $N$-homogeneous functors and the category of spectra with an action of $\Sigma_N$. For detailed calculations of this kind see for example~\cite{arone/dwyer/lesh:2008}. Finally the fact that the bottom horizontal map is an equivalence is an easy calculation: because $\der_*L$ is concentrated in degree $N$, and $\der_*F$ is truncated at $N$, morphisms from $\der_* L$ to $\der_* F$ depend only on $\der_N F= \der_* D_N F$.
\end{proof}
\begin{proof}[Proof of Proposition~\ref{prop:unit}]
Let $L\in [\Cfin, \D]$ be an $N$-homogeneous functor. Let $F$ be an $N$-excisive functor. By Lemma~\ref{lem:phi}, $\Phi\der_* F$ is $N$-excisive. By Lemma~\ref{lem:detect} it is enough to show that the map
\[
\hNat(L, F) \longrightarrow \hNat(L, \Phi\der_* F)
\]
is a weak equivalence. Consider the commutative diagram
\[ \begin{diagram}
  \node{\hNat(L, F)} \arrow{e} \arrow{s,l}{\der_*} \node{\hNat(L, \Phi\der_* F)} \arrow{s} \\
  \node{\hM(\der_*L, \der_* F)} \arrow{e,t}{=} \node{\hM(\der_*L, \der_*F)}
\end{diagram}. \]
Here the right vertical map is an equivalence expressing the adjunction between $\der_*$ and $\Phi$. The left vertical map is an equivalence by Lemma~\ref{lem:derivative detects}. The bottom horizontal map is the identity. It follows that the top horizontal map is an equivalence.
\end{proof}
\begin{proof}[Proof of Proposition~\ref{prop:counit}]
First assume that $A$ is concentrated in degree $N$. There exists a functor $L$ such that $\der_*L=A$. Indeed, $L(X)=(\Omega^\infty)(A_N \smsh X^{\smsh N})_{h\Sigma_N}$. Consider the maps in $\cat{M}$
\[
A=\der_*L \longrightarrow \der_*\Phi \der_* L \longrightarrow \der_*L = A.
\]
Here the first map is $\der_*$ applied to the unit of the adjunction $L\to \Phi\der_* L$. By proposition~\ref{prop:unit} this map induces an equivalence $A_N=\der_N L \stackrel{\simeq}{\longrightarrow} \der_N \Phi\der_* L$. On the other hand, the composed map is the identity $A = A$. It follows that the second map induces an equivalence $\der_N \Phi A \stackrel{\simeq}{\longrightarrow} A_N.$

Now let $A$ be any $N$-truncated object of $\cat{M}$. Let $A_{<N}$ be the truncation of $A$ at $N-1$ and let $A_N$ be the \ord{N} term of $A$, considered as a symmetric sequence concentrated in degree $N$. There is a fibration sequence in $\cat{M}$
\[
A_N \longrightarrow A \longrightarrow A_{<N}.
\]
Both functors $\Phi$ and $\der_*$ preserve fibration sequences. Therefore we have a diagram in $\cat{M}$ where both rows are fibration sequences
\[ \begin{diagram}
  \node{\der_* \Phi A_N} \arrow{e} \arrow{s}\node{\der_* \Phi A} \arrow{e}\arrow{s} \node{\der_*\Phi A_{<N}}\arrow{s}\\
  \node{A_N} \arrow{e} \node{A} \arrow{e} \node{A_{<N}}
\end{diagram}. \]
Consider the restriction of this diagram of the $N$-terms of all the sequences. Then the right vertical map is a map between contractible objects and therefore is an equivalence. The left vertical map is an equivalence by the special case of the proposition that we proved already. It follows that the middle map is an equivalence.
\end{proof}

This concludes our description of the general theory. We now turn to more specific cases and discuss how the adjunction of Hypothesis \ref{hyp:coefficients} can be constructed, and what the $K$-coalgebra structure amounts to, in each case.

\section{Functors with values in spectra} \label{sec:sp}

We now focus on functors $F: \Cfin \to \spectra$ where $\C$ is either $\based$ or $\spectra$. Our first goal is to establish the existence of an adjunction of the form described in Hypothesis \ref{hyp:coefficients}, so that the theory of the previous section can be applied. We get this adjunction by showing that the derivatives functor $\der_*$ can be obtained by left Kan extension from its values on representable functors. Note that in either of the cases $\C = \based,\spectra$ we take the category $cat{M}$ of \ref{hyp:coefficients} to be just the category $\symseq$ of symmetric sequences.

\begin{definition} \label{def:reps}
For $X \in \Cfin$, we let $R_X: \Cfin \to \sset$ denote the representable functor defined by the simplicial enrichment of $\C$, that is,
\[ R_X(Y) := \Hom_{\C}(X,Y). \]
We fix models for the derivatives of the functors $\Sigma^\infty R_X : \Cfin \to \spectra$. That is, we fix a simplicially-enriched functor
\[ \der_*(\Sigma^\infty R_\bullet): (\Cfin)^{op} \to \symseq \]
whose value at $X$ is a model for the symmetric sequence of derivatives of $\Sigma^\infty R_X$. We also choose this such that $\der_*(\Sigma^\infty R_X)$ is a cofibrant symmetric sequence for each $X \in \Cfin$.
\end{definition}

\begin{definition} \label{def:der-sp}
We define $\der_*F$ for arbitrary $F \in [\Cfin,\spectra]$ by
\[ \der_*F := F(X) \smsh_{X \in \Cfin} \der_*(\Sigma^\infty R_X). \]
This is an enriched coend calculated over the simplicial category $\Cfin$ and $\smsh$ here denotes the termwise smash product of a symmetric sequence with a spectrum. This definition produces a simplicial functor
\[ \der_*: [\Cfin,\spectra] \to \symseq. \]
Note that when $F = \Sigma^\infty R_X$, this definition is canonically isomorphic (by an enriched Yoneda Lemma) to the choice of $\der_*(\Sigma^\infty R_X)$ in Definition \ref{def:reps} so our notation is consistent. We can also view $\der_*$ as the enriched left Kan extension of $\der_*(\Sigma^\infty R_\bullet)$ along the functor
\[ (\Cfin)^{op} \to [\Cfin,\spectra]; \quad X \mapsto \Sigma^\infty R_X. \]
\end{definition}

The following proposition is due to Peter Oman~\cite{oman:2010} in the case $\C=\based$.

\begin{proposition} \label{prop:der-sp}
For cofibrant $F \in [\Cfin,\spectra]$, the symmetric sequence $\der_*F$ is naturally weakly equivalent to the symmetric sequence of derivatives of $F$.
\end{proposition}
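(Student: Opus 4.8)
The plan is to show that the coend formula in Definition~\ref{def:der-sp} computes, up to weak equivalence, the Goodwillie derivatives of $F$. The strategy is to use the fact that $\der_*$, as defined by the coend, is an enriched left Kan extension, and hence is the unique cocontinuous (i.e.\ homotopy colimit preserving) extension of the functor $X \mapsto \der_*(\Sigma^\infty R_X)$ along the Yoneda embedding $X \mapsto \Sigma^\infty R_X$. On the other hand, the honest Goodwillie derivatives also form a functor $[\Cfin,\spectra] \to \symseq$ that agrees with our chosen models on the representables $\Sigma^\infty R_X$, and the derivatives functor likewise commutes with the relevant homotopy colimits (since $P_n$, and hence $D_n$, commutes with filtered homotopy colimits and with realizations of simplicial objects — this is part of Goodwillie's basic machinery). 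So the two functors will agree once we know both are suitably cocontinuous and every cofibrant $F$ is built from representables by homotopy colimits.

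First I would recall that every cofibrant object $F$ in the projective model structure on $[\Cfin,\spectra]$ is a retract of a cell complex built from the generating cofibrations, and the generating cofibrations are obtained by tensoring maps of pointed simplicial sets with the representables $\Sigma^\infty R_X$ (suspended). Concretely, this means $F$ is a homotopy colimit (in fact can be taken to be the realization of a simplicial object, or more simply a filtered colimit of pushouts) of a diagram whose terms are wedges of spectra of the form $A \smsh \Sigma^\infty R_X$ for $A$ a cofibrant spectrum and $X \in \Cfin$. Second, I would check that the coend $\der_*F = F(X) \smsh_{X} \der_*(\Sigma^\infty R_X)$ preserves these homotopy colimits: this follows because the coend is a colimit, because smashing with a cofibrant symmetric sequence preserves weak equivalences and homotopy colimits in the spectrum variable, and because $\der_*(\Sigma^\infty R_\bullet)$ was chosen to be objectwise cofibrant so the coend is a homotopy coend. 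Third, I would invoke the corresponding facts on the Goodwillie side: $\der_n$ commutes with filtered colimits (finitary functors), with wedges, and with the relevant pushouts/realizations, because $P_n$ is built as a filtered homotopy colimit of the finite cross-effect construction $T_n$, each stage of which commutes with these colimits; and because $D_n(A \smsh \Sigma^\infty R_X) \homeq D_n(\Sigma^\infty R_X)$ smashed with $A$ in an appropriate sense (linearity in the coefficient spectrum).

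Putting these together: both $\der_*F$ (the coend) and the honest $\der_*F$ are homotopy-colimit-preserving functors of $F$ that agree on the cells $A \smsh \Sigma^\infty R_X$ by Definition~\ref{def:reps} together with the naturality of the chosen models. Writing $F$ as a homotopy colimit of such cells and comparing term by term yields a natural zig-zag of weak equivalences between the two. The main obstacle I anticipate is the bookkeeping needed to make the ``homotopy colimit preserving'' claims precise on the Goodwillie side — in particular, verifying that taking $n$-th derivatives commutes with the specific cell-attachment homotopy colimits that present a cofibrant $F$, and doing so naturally rather than just objectwise. This requires care with cofibrancy (so that strict colimits model homotopy colimits) and with the fact that $\der_n$ only commutes with those colimits that $P_n$ does; a secondary subtlety is ensuring the chosen functorial model $\der_*(\Sigma^\infty R_\bullet)$ of Definition~\ref{def:reps} is compatible with these equivalences, i.e.\ that the comparison map is natural in $X$ as well as in $F$. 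One clean way to organize the argument is to reduce, via the skeletal filtration of a cell complex, to a single pushout square and a single sequential colimit, and to cite Goodwillie's results (\cite{goodwillie:2003}) that $P_n$ preserves such homotopy colimits of functors.
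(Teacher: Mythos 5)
Your proposal is correct and takes essentially the same route as the paper: both identify the coend as a homotopy-colimit-preserving left Kan extension from the representables, note agreement on representables via Yoneda, and then reduce to the fact that every cofibrant functor is built as a homotopy colimit of (cells tensored with) representables, with both $\der_*$ and the honest derivatives preserving these colimits. The paper additionally emphasizes that the comparison should be packaged as a natural assembly map (or zigzag) $\der_*F \to \der^G_*F$, which you flag as a subtlety to handle but which is the same bookkeeping step.
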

\begin{proof}
Let $\der_*^G F$ be a natural model for the actual derivatives of $F$. There is an assembly map $\der_*F \to \der_*^G F$ that is natural in $F$. (More precisely, it could be a zigzag of maps.) The claim is that this is an equivalence for all cofibrant $F$. By the Yoneda Lemma the claim is true when $F$ is of the form $\Sigma^\infty R_X$ and any cofibrant functor can be built, up to weak equivalence, as a homotopy colimit of these representable functors (for example, by the small object argument in the cofibrantly-generated model category $[\Cfin,\spectra]$). Therefore, since both functors $\der_*$ and $\der_*^G$  preserve homotopy colimits of, and weak equivalences between, cofibrant functors, the claim is true for all cofibrant $F$.
\end{proof}

\begin{definition} \label{def:phi}
The definition of the functor $\der_*: [\Cfin,\spectra] \to \symseq$ as a left Kan extension ensures that it has a right adjoint. This right adjoint is the simplicial functor $\Phi: \symseq \to [\Cfin,\spectra]$ given by
\[ \Phi(A)(X) := \Map_{\mathsf{\Sigma}}(\der_*(\Sigma^\infty R_X),A) \isom \prod_{n} \Map(\der_n(\Sigma^\infty R_X),A_n)^{\Sigma_n}. \]
\end{definition}

\begin{proposition} \label{prop:derphisp}
The adjunction
\[ \der_*: [\Cfin,\spectra] \leftrightarrows \symseq : \Phi \]
satisfies the conditions of Hypothesis \ref{hyp:coefficients}.
\end{proposition}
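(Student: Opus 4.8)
The plan is to verify the three conditions bundled into Hypothesis~\ref{hyp:coefficients}: that $(\der_*,\Phi)$ is a simplicial adjunction, that it is a Quillen adjunction, and that $\der_*F$ is a natural model for the Goodwillie derivatives of $F$ when $F$ is cofibrant. The last of these is exactly Proposition~\ref{prop:der-sp}, so there is nothing new to do there. The first is formal: $\der_*$ is constructed in Definition~\ref{def:der-sp} as a simplicially-enriched left Kan extension, hence is a simplicial functor and automatically possesses a simplicial right adjoint; the enriched Yoneda lemma applied to the defining coend identifies this right adjoint with the functor $\Phi$ of Definition~\ref{def:phi}. So the only substantive point is the Quillen condition.

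To establish that $(\der_*,\Phi)$ is a Quillen adjunction I would show that $\der_*$ is left Quillen by testing it on a generating set. With its projective model structure, $[\Cfin,\spectra]$ is cofibrantly generated by the maps $\Sigma^\infty R_X \smsh i$, where $X$ runs over $\Cfin$ and $i$ over a set of generating (trivial) cofibrations of $\spectra$. Since $\der_*$ is a left adjoint it commutes with smashing by a fixed spectrum, and the enriched co-Yoneda lemma applied to the coend of Definition~\ref{def:der-sp} gives a natural isomorphism $\der_*(\Sigma^\infty R_X \smsh i)\isom \der_*(\Sigma^\infty R_X)\smsh i$. By construction (Definition~\ref{def:reps}) $\der_*(\Sigma^\infty R_X)$ is a cofibrant symmetric sequence, so it remains to know that smashing a cofibrant symmetric sequence with a (trivial) cofibration of spectra yields a (trivial) cofibration of symmetric sequences. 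This is the pushout-product compatibility of the termwise smash product $\symseq\times\spectra\to\symseq$ with the projective model structures; one checks it degreewise, reducing to free $\Sigma_n$-cells and the pushout-product axiom for $\spectra$ itself. Thus $\der_*$ preserves generating (trivial) cofibrations and so is left Quillen. Dually, one can instead check that $\Phi$ preserves fibrations and trivial fibrations: these are detected objectwise, and $\Phi(A)(X)\isom\prod_n \Map(\der_n(\Sigma^\infty R_X),A_n)^{\Sigma_n}$, so the point reduces to the fact that $\Map_{\Sigma_n}(\der_n(\Sigma^\infty R_X),-)$ is right Quillen because $\der_n(\Sigma^\infty R_X)$ is a cofibrant $\Sigma_n$-spectrum --- the cofibrancy being exactly what makes the naive fixed-points functor behave correctly.

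The only part requiring genuine care, though it is still routine, is the model-category bookkeeping: correctly identifying the generating (trivial) cofibrations of the projective model structure on $[\Cfin,\spectra]$, and checking that the termwise smash of symmetric sequences with spectra satisfies the relevant pushout-product axiom --- in particular handling the $\Sigma_n$-fixed-point subtlety. Everything else here is either purely formal or has already been carried out in Proposition~\ref{prop:der-sp}.
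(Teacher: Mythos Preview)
Your proposal is correct and essentially matches the paper's proof. The paper takes exactly your second route: it notes that $\der_*$ models the derivatives by Proposition~\ref{prop:der-sp}, and then checks that $\Phi$ preserves fibrations and trivial fibrations because these are detected objectwise in both $[\Cfin,\spectra]$ and $\symseq$ --- a one-line argument which implicitly uses (as you make explicit) the cofibrancy of $\der_*(\Sigma^\infty R_X)$ arranged in Definition~\ref{def:reps}. Your first route via generating cofibrations and the pushout-product axiom also works but is more labor than needed here.
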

\begin{proof}
We have already seen that $\der_*$ provides a model for the derivatives of a cofibrant functor. The functor $\Phi$ preserves fibrations and trivial fibrations because these are detected objectwise in both $[\Cfin,\spectra]$ and $\symseq$. Therefore $(\der_*,\Phi)$ is a Quillen adjunction.
\end{proof}

The results of Section \ref{sec:taylor} now apply. In particular, we conclude that the derivatives of a functor $F: \Cfin \to \spectra$ possess the structure of a $K$-coalgebra where $K$ is the comonad $\der_* \Phi$ on the category of symmetric sequences. The Taylor tower of $F$ can be recovered from this $K$-coalgebra structure by the formulas in Corollary \ref{cor:truncated}.

In sections \ref{sec:specspec} and \ref{sec:topspec} below, we give an explicit (but only up to homotopy) description of the comonad $K$ in each of the cases where $\C$ is $\spectra$ and $\based$. There are, however, some common features to these descriptions which we now outline.

\begin{lemma} \label{lem:K-limits}
The comonad $K: \symseq \to \symseq$ preserves finite homotopy limits. Thus, finite homotopy limits of $K$-coalgebras can be calculated in the underlying category of symmetric sequences, that is, termwise.
\end{lemma}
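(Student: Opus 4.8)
The plan is to factor $K = \der_*\Phi$ (keeping the cofibrant replacement implicit as elsewhere, so really $K = \der_* uc\Phi$) and to check that each factor preserves finite homotopy limits. The functor $\Phi$ is a right Quillen functor between simplicial model categories in which every object is fibrant, so it preserves finite homotopy limits. The cofibrant replacement $uc$ preserves all weak equivalences and is joined to the identity by a natural weak equivalence, so it preserves finite homotopy limits up to equivalence; in particular $uc\Phi(\holim_\alpha A_\alpha) \homeq \holim_\alpha uc\Phi(A_\alpha)$. Since the functors to which we ever apply $\der_*$ are of the form $uc\Phi(M)$, hence cofibrant, Proposition~\ref{prop:der-sp} lets us replace the coend model $\der_*$ by the actual (homotopy-invariant) Goodwillie derivative functor at this point. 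It therefore remains to show that Goodwillie derivatives carry finite homotopy limits of functors $\Cfin \to \spectra$ to finite homotopy limits of symmetric sequences.

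For this, observe that all functors in sight take values in $\spectra$, so that $P_n$ evaluated at any fixed $X$ is a sequential homotopy colimit of iterated finite homotopy limits of values of $F$: concretely $P_nF(X) \homeq \hocolim_k (T_n^k F)(X)$, with each $T_n$ a finite homotopy limit in $\spectra$. As sequential homotopy colimits commute with finite homotopy limits in $\spectra$, it follows that $P_n$, and hence $D_n = \hofib(P_n \to P_{n-1})$, takes finite homotopy limits of functors to finite homotopy limits of functors. Finally $\der_n F$ is recovered from the $n$-homogeneous functor $D_nF$ via Goodwillie's equivalence between $n$-homogeneous functors and spectra with $\Sigma_n$-action; both this equivalence and its inverse $A \mapsto [X \mapsto (A \smsh X^{[n]})_{h\Sigma_n}]$ preserve finite homotopy limits (homotopy orbits and the exact functors $- \smsh X^{[n]}$ do so in the stable setting). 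Composing these observations shows that $\der_*$, and hence $K$, preserves finite homotopy limits. Note that this slightly strengthens the fact, used in the proof of Lemma~\ref{lem:barr-beck-derivs}, that $\der_*$ preserves fibration sequences.

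For the second assertion, recall that weak equivalences and fibrations in $\symseq$ are detected termwise, so homotopy limits in $\symseq$ are computed termwise. Given a finite diagram $\{A_\alpha\}$ of $K$-coalgebras, form $\holim_\alpha A_\alpha$ in $\symseq$; because $K$ preserves this finite homotopy limit, the structure maps assemble into $\holim_\alpha A_\alpha \to \holim_\alpha K A_\alpha \homeq K(\holim_\alpha A_\alpha)$, and the coalgebra identities for this map follow from those of the $A_\alpha$ since $K$ commutes with all the homotopy limits involved. Thus $\holim_\alpha A_\alpha$, computed termwise, is the homotopy limit in the category of $K$-coalgebras. I expect the only genuine work to be the middle step — verifying that Goodwillie derivatives preserve finite homotopy limits — which reduces to $P_n$ commuting with finite homotopy limits and ultimately to the interaction of sequential homotopy colimits with finite homotopy limits in $\spectra$; the surrounding arguments are formal.
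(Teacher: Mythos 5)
Your proof is correct and, at the top level, follows the same strategy as the paper's: factor $K = \der_*\Phi$ (with the cofibrant replacement $uc$ suppressed), note that the right Quillen functor $\Phi$ preserves all homotopy limits, and reduce to showing that $\der_*$ preserves finite homotopy limits. Where you diverge is in that last step. The paper's argument is to decompose $\der_n$ as: apply $D_n$, take the $n$th cross-effect (a finite total homotopy fiber), and evaluate at $S^0$ — each stage visibly commuting with finite homotopy limits. You instead establish that $P_n$ preserves finite homotopy limits via $P_nF \simeq \hocolim_k T_n^k F$ (each $T_n$ a finite holim, and sequential hocolims commute with finite holims in $\spectra$), deduce the same for $D_n = \hofib(P_n \to P_{n-1})$, and then invoke Goodwillie's classification equivalence between $n$-homogeneous functors and $\Sigma_n$-spectra, observing that it and its inverse $A \mapsto (A \smsh X^{[n]})_{h\Sigma_n}$ both preserve finite holims. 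Both arguments are valid; the paper's is more economical because the cross-effect passes from functor to coefficient spectrum directly, whereas your route requires the extra (true, but not entirely free) step that the classification equivalence preserves finite holims. Your treatment of the second sentence of the lemma — assembling the coalgebra structure on a termwise holim using that $K$ commutes with it — is the standard argument and matches what the paper leaves implicit.
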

\begin{proof}
The right adjoint $\Phi$ preserves all homotopy limits. The \ord{n} derivative functor $\der_n$ can be constructed from $F$ by applying $D_n$, taking the \ord{n} cross-effect, and evaluating at the $0$-sphere spectrum. Each of these constructions preserves finite homotopy limits, so $\der_*$ does too.
\end{proof}

Now any symmetric sequence $A$ is equivalent to the product of its individual terms (viewed as one-term symmetric sequences). It therefore follows from \ref{lem:K-limits} that when $K$ is applied to a bounded symmetric sequence, the result splits up into terms corresponding to the pieces of the input. This motivates the following definition.

\begin{definition} \label{def:derAn}
For a $\Sigma_n$-spectrum $A_n$, we also denote by $A_n$ the symmetric sequence consisting of $A_n$ in the \ord{n} term, with the trivial spectrum in all other terms. Then, for $r \leq n$, we write
\[ K_r A_n := (KA_n)_r = \der_r \left( X \mapsto \Map(\der_n(\Sigma^\infty R_X),A_n)^{\Sigma_n} \right). \]
This is the \ord{r} term in the symmetric sequence given by applying $K$ to $A_n$. Notice that the corresponding spectra are trivial for $r > n$ by Lemma \ref{lem:phi}. For an $N$-truncated symmetric sequence $A$, we then have a $\Sigma_r$-equivariant equivalence of spectra
\[ K(A)_r \homeq \prod_{n = r}^{N} K_r(A_n). \]
For $r \leq s \leq n$ the comonad structure map $K \to KK$ determines a map
\[ \delta_{r,s}: K_r A_n \to K_r K_s A_n \]
and the counit $K \to 1_{\symseq}$ determines
\[ \epsilon_r: K_r A_r \to A_r. \]
\end{definition}

We can now describe a $K$-coalgebra structure in terms of these individual maps.

\begin{lemma} \label{lem:derAn}
Let $A$ be a $K$-coalgebra with structure map $\theta: A \to KA$. Then, for positive integers $r \leq n$, $\theta$ determines $\Sigma_r$-equivariant maps
\[ \theta_{r,n}: A_r \to K_r A_n \]
such that the diagrams
\[ \begin{diagram}
  \node{A_r} \arrow{e,t}{\theta_{r,s}} \arrow{s,l}{\theta_{r,n}} \node{K_r A_s} \arrow{s,r}{\theta_{s,n}} \\
  \node{K_r A_n} \arrow{e,t}{\delta_{r,s}} \node{K_r K_s A_n}
\end{diagram} \]
and
\[ \begin{diagram}
  \node{A_r} \arrow{e,t}{\theta_{r,r}} \arrow{se,=} \node{K_r A_r} \arrow{s,r}{\epsilon_r} \\
  \node[2]{A_r}
\end{diagram} \]
commute.
\end{lemma}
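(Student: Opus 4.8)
The plan is to turn all the relevant maps into strict (not merely homotopical) term-by-term restrictions of the comonad data, so that the two diagrams become formal consequences of the $K$-coalgebra axioms together with the naturality of $\delta$ and $\epsilon$. No appeal to homotopy coherence is needed for the commutativity itself; in fact not even the finite-limit-preservation of Lemma~\ref{lem:K-limits} is required here, that lemma being used only to identify the $\theta_{r,n}$ and $\delta_{r,s}$ as carrying all the relevant information, as in Definition~\ref{def:derAn}.

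First I would fix, for each $k$, the projection $\pi_k \colon A \to A_k$ in $\symseq$ onto the $k$-th term, where, as in Definition~\ref{def:derAn}, the symbol $A_k$ also denotes the one-term symmetric sequence concentrated in degree $k$; concretely $\pi_k$ is the identity in degree $k$ and the constant map to the basepoint in all other degrees, and it is a genuine map of symmetric sequences. I then set
\[ \theta_{r,n} := \bigl( (K\pi_n) \circ \theta \bigr)_r \colon A_r \longrightarrow (KA_n)_r = K_r A_n, \]
which is $\Sigma_r$-equivariant since it is the $r$-th term of a map of symmetric sequences. Applying the same recipe to the comonad itself identifies $\delta_{r,s}$ with the $r$-th term of $(K\pi^{KA_n}_s) \circ \delta_{A_n} \colon KA_n \to KKA_n \to K(K_s A_n)$, and $\epsilon_r$ with the $r$-th term of the counit $\epsilon_{A_r} \colon KA_r \to A_r$; these agree with the maps of Definition~\ref{def:derAn}.

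For the coassociativity square I would begin with $\delta_{r,s} \circ \theta_{r,n}$, written out as
\[ (K\pi^{KA_n}_s)_r \circ (\delta_{A_n})_r \circ (K\pi_n)_r \circ \theta_r. \]
Naturality of $\delta$ rewrites $\delta_{A_n} \circ K\pi_n$ as $KK\pi_n \circ \delta_A$, and the coassociativity axiom for the coalgebra $A$ rewrites $\delta_A \circ \theta$ as $K\theta \circ \theta$. Collecting the copies of $K$, the composite becomes $\bigl( K[\, \pi^{KA_n}_s \circ K\pi_n \circ \theta \,] \bigr)_r \circ \theta_r$. Inspecting this in each degree shows that the bracketed map $A \to K_s A_n$ equals $\theta_{s,n} \circ \pi_s$ (as maps of symmetric sequences into the one-term object $K_s A_n$), since its $s$-th term is exactly $\theta_{s,n}$ and its other terms are constant. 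Hence the composite equals $(K\theta_{s,n})_r \circ (K\pi_s)_r \circ \theta_r = K_r\theta_{s,n} \circ \theta_{r,s}$, which is the other leg of the square.

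The counit triangle is shorter: $\epsilon_r \circ \theta_{r,r} = \bigl( \epsilon_{A_r} \circ K\pi_r \circ \theta \bigr)_r$; naturality of $\epsilon$ turns $\epsilon_{A_r} \circ K\pi_r$ into $\pi_r \circ \epsilon_A$, and $(\pi_r)_r$ is the identity of $A_r$, so this is $(\epsilon_A \circ \theta)_r = (\mathrm{id}_A)_r = \mathrm{id}_{A_r}$ by the counit axiom for the coalgebra. The only thing that takes care — bookkeeping rather than a real obstacle — is keeping the two roles of the symbol $A_n$ (the $n$-th term of $A$ versus the one-term symmetric sequence it spans) cleanly separated, and tracking which ``level'' each occurrence of $K$ or $K_r$ refers to; once the maps are set up as strict term-restrictions as above, nothing further is needed.
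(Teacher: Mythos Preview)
Your proposal is correct and takes essentially the same approach as the paper: define $\theta_{r,n}$ as the $r$-th term of $K\pi_n \circ \theta$ and derive the two diagrams from the coassociativity and counit axioms for the $K$-coalgebra together with naturality of $\delta$ and $\epsilon$. The paper's own proof states exactly this in two sentences without writing out the verifications; your version simply unpacks those verifications explicitly, which is fine.
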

\begin{proof}
The map $\theta_{r,n}$ is the composite
\[ A_r \arrow{e,t}{\theta} K(A)_r \to K(A_n)_r = K_r A_n \]
where the second map is induced by the projection from a symmetric sequence $A$ to $A_n$ considered as a symmetric sequence concentrated in the \ord{n} term. The given diagrams then follow from the coassociativity and counit axioms for a $K$-coalgebra in Definition \ref{def:comonad}.
\end{proof}

\begin{corollary} \label{cor:derAn}
Let $F: \Cfin \to \spectra$ be a pointed simplicial functor. Then there are $\Sigma_r$-equivariant maps
\[ \theta_{r,n}: \der_rF \to K_r \der_nF \]
for $r \leq n$, that are natural in $F$, such that the diagrams in Lemma \ref{lem:derAn} commute.
\end{corollary}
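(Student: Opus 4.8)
The plan is to deduce this corollary directly from Lemma~\ref{lem:derAn} applied to the $K$-coalgebra furnished by the derivatives of $F$. First I would invoke Proposition~\ref{prop:derphisp}, which tells us that the adjunction $(\der_*,\Phi)$ of Definitions~\ref{def:der-sp} and~\ref{def:phi} satisfies Hypothesis~\ref{hyp:coefficients}. The general machinery of Section~\ref{sec:taylor} — specifically Definition~\ref{def:K} — then endows $\der_*F$ (or, strictly speaking, $\der_* ucF$, with $u$ and $c$ suppressed as agreed) with the structure of a $K$-coalgebra for the comonad $K = \der_*\Phi$ on $\symseq$. Its structure map is the composite $\der_* F \to \der_* \Phi \der_* F = K\der_* F$ obtained by applying $\der_*$ to the unit of the adjunction. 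Note that here $K_r\der_nF$ means the specific model $(K\der_nF)_r$ of Definition~\ref{def:derAn}, so all the maps involved are honest, strictly-defined maps of spectra.

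Next I would apply Lemma~\ref{lem:derAn} with $A = \der_* F$. That lemma, given any $K$-coalgebra $A$ with structure map $\theta\colon A \to KA$, produces for each $r \le n$ the $\Sigma_r$-equivariant map $\theta_{r,n}\colon A_r \to K_r A_n$ as the composite of $\theta$ (in degree $r$) with the projection $K(A)_r \to K(A_n)_r$ induced by collapsing the symmetric sequence $A$ onto its \ord{n} term, and it shows that the two required families of diagrams commute by appeal to the coassociativity and counit axioms for a $K$-coalgebra. Substituting $A_r = \der_r F$ yields precisely the maps $\theta_{r,n}\colon \der_r F \to K_r \der_n F$ together with the commuting diagrams asserted in the corollary.

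Finally, for naturality in $F$: the coalgebra structure map $\theta$ on $\der_* F$ is natural in $F$, being $\der_*$ applied to the unit of the Quillen adjunction $(\der_*,\Phi)$; and the projection maps used to build $\theta_{r,n}$ from $\theta$ are the structural projections in $\symseq$, which do not involve $F$ at all. Hence each $\theta_{r,n}$ is a natural transformation of functors in $F$. I do not expect a genuine obstacle here — the only point requiring care is the usual bookkeeping around cofibrant replacement (so that $\der_* F$ is understood as $\der_* ucF$ and the naturality holds on the nose rather than merely up to zigzag), and this is already handled by the conventions set up at the start of Section~\ref{sec:taylor}.
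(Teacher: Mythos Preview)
Your proposal is correct and is exactly the intended argument: the paper states this as an immediate corollary of Lemma~\ref{lem:derAn} applied to the $K$-coalgebra $\der_*F$ furnished by Definition~\ref{def:K}, and does not write out a separate proof. Your remarks on naturality and on the cofibrant-replacement bookkeeping are also on point.
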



\begin{lemma} \label{lem:K-counit}
The counit map $\epsilon_r: K_r A_r \to A_r$ of \ref{def:derAn} is a weak equivalence for any $\Sigma_r$-spectrum $A_r$.
\end{lemma}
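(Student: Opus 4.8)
The plan is to unwind the definition of $K_r A_r$ and identify it with the $r$-th derivative of a recognizable functor. By Definition~\ref{def:derAn}, we have
\[ K_r A_r = \der_r\left( X \mapsto \Map(\der_r(\Sigma^\infty R_X), A_r)^{\Sigma_r} \right), \]
and by Lemma~\ref{lem:phi} (applied to the $r$-truncated symmetric sequence $A_r$) together with Lemma~\ref{lem:K-limits}, the only nontrivial derivative of $\Phi A_r$ is the $r$-th one. So the first step is to understand the functor $\Phi A_r$ itself: it is $r$-homogeneous, and we want to show its $r$-th derivative is equivalent to $A_r$ via the counit.

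The key step is the following. The counit $\der_* \Phi A_r \to A_r$ is an instance of the adjunction counit for $(\der_*, \Phi)$, and by Proposition~\ref{prop:counit} (the case $N = r$, with $A$ concentrated in degree $r$), this counit induces an equivalence on $r$-th terms. That is precisely the statement that $\epsilon_r: K_r A_r = (\der_* \Phi A_r)_r \to A_r$ is a weak equivalence. So strictly speaking the lemma is essentially a restatement of the already-proven Proposition~\ref{prop:counit} in the spectrum-valued setting, once one checks that the map $\epsilon_r$ of Definition~\ref{def:derAn} agrees with the adjunction counit restricted to the $r$-th term. This compatibility is built into the construction: the counit $K \to 1_{\symseq}$ of the comonad $K = \der_* \Phi$ is exactly $\der_*$ applied to the counit $\Phi \der_* \to 1$... no, rather it is the adjunction counit $\der_* \Phi \to 1_{\cat M}$, and $\epsilon_r$ is its value on the one-term symmetric sequence $A_r$ in degree $r$.

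Alternatively, and perhaps more self-contained in the spectrum-valued case, one can argue directly. Since the functor $X \mapsto \Map(\der_r(\Sigma^\infty R_X), A_r)^{\Sigma_r}$ is $r$-homogeneous, its $r$-th derivative is computed by Goodwillie's formula: it is the spectrum obtained by taking the $r$-th cross-effect, evaluated on spheres, and desymmetrizing. One then uses the fact (from \cite{arone/ching:2011} or \cite{oman:2010}) that $\der_r(\Sigma^\infty R_X)$ is, as a functor of $X$, a model for the relevant "co-derivative" so that the cross-effect pairing recovers $A_r$ on the nose up to homotopy. Concretely, for $\C = \spectra$ one has $\der_r(\Sigma^\infty R_X) \simeq \Map(X^{\wedge r}, \sphere)$ with its $\Sigma_r$-action, and then $\Map(\Map(X^{\wedge r}, \sphere), A_r)^{\Sigma_r}$ has $r$-th derivative $A_r$ by a duality computation; for $\C = \based$ one uses the analogous model involving the partition complexes.

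I expect the main obstacle to be purely bookkeeping: verifying that the map $\epsilon_r$ as defined combinatorially in Definition~\ref{def:derAn} (via the comonad counit applied to a one-term sequence) really does coincide with the map whose equivalence is asserted by Proposition~\ref{prop:counit}, so that no independent argument is needed. This is a naturality/compatibility check rather than a substantive computation. If one prefers the direct route, the obstacle shifts to having the explicit models for $\der_r(\Sigma^\infty R_X)$ at hand and carrying out the duality identification $\R$-equivariantly; but since those models are only introduced in the later Sections~\ref{sec:specspec} and~\ref{sec:topspec}, the cleanest proof here is simply to cite Proposition~\ref{prop:counit}.
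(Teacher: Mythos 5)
Your proposal is correct and matches the paper's proof exactly: the paper's argument is precisely to cite Proposition~\ref{prop:counit} with $A = A_r$ and $N = r$. The compatibility you worry about at the end is immediate, since by Definition~\ref{def:derAn} the map $\epsilon_r$ is literally the $r$-th term of the adjunction counit $\der_*\Phi A_r \to A_r$, which is the map Proposition~\ref{prop:counit} refers to.
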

\begin{proof}
This is Proposition~\ref{prop:counit} applied to $A=A_r$ (with the role of $N$ in that proposition played here by $r$).
\end{proof}

Lemma \ref{lem:K-counit} removes the need to consider the maps $\theta_{r,r}$ as part of the coalgebra structure. The $K$-coalgebra structure on $\der_*F$ for $F \in [\Cfin,\spectra]$ is determined by the maps $\theta_{r,n}$ for $r < n$ subject only to the coassociativity conditions in the first diagram of Lemma \ref{lem:derAn}.

As a consequence of Lemma \ref{lem:K-counit} we get a description of the $\mathbb{E}^1$-page of the Bousfield-Kan spectral sequence associated to the cosimplicial space that calculates the simplicial sets $\hNat(F,G)$.

\begin{proposition} \label{prop:BKSS}
Let $F,G \in [\Cfin,\spectra]$ be pointed simplicial functors and let $E$ be any spectrum. Then there is a spectral sequence $\mathbb{E}^*_{*,*}$ with
\[ \mathbb{E}^1_{-s,t} \isom \bigoplus_{1 \leq r_0 < \dots < r_s} E_t \left( \Hom(\der_{r_0}F,K_{r_0} K_{r_1} \dots K_{r_{s-1}}\der_{r_s}G)^{\Sigma_{r_0}} \right) \]
and differential $d^1$ equal to the alternating sum of maps induced by the comonad structure maps $\delta_{r_i,r_{i+1}}$ and the $K$-coalgebra structure maps $\theta_{r_0,r_1}$ for $\der_*F$ and $\theta_{r_{s-1},r_s}$ for $\der_*G$.

If $G$ is $N$-excisive for some $N$, then the spectral sequence collapses at the $\mathbb{E}^n$-page and converges to the $E$-homology
\[ E_{t-s} \hNat(F,G). \]
\end{proposition}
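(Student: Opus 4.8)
The plan is to realise the spectral sequence as the $E$-homology Bousfield--Kan spectral sequence of the cosimplicial object $\Hom^\bullet_K(\der_*F,\der_*G)$ of Definition~\ref{def:css-hom}, and to read off its $\mathbb{E}^1$-page using the splitting of $K$ on bounded symmetric sequences recorded in Definition~\ref{def:derAn}. Since $G$ takes values in $\spectra$, all the mapping objects in sight are underlying spaces of spectra: write $\widetilde{\Map}_K(\der_*F,\der_*G)$ for the spectrum with $\Omega^\infty$ equal to $\hHom_K(\der_*F,\der_*G)$; it is the totalisation of a cosimplicial spectrum, the $s$-th term of which has $\Omega^\infty$ equal to $\Hom(\der_*F,K^s\der_*G)$. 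Smashing this cosimplicial spectrum with $E$ and taking its homotopy spectral sequence produces a spectral sequence whose $\mathbb{E}^1$-term in cosimplicial degree $s$ is the $E$-homology of the $s$-th term, with $d^1$ the alternating sum of coface maps; we present it in normalised form.

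For the $\mathbb{E}^1$-identification, iterate the equivalence $(KA)_r\homeq\prod_{n\geq r}K_rA_n$ of Definition~\ref{def:derAn}, which is compatible with the comonad structure via the maps $\delta_{r,s}$ and $\epsilon_r$, to obtain $(K^m\der_*G)_{r_0}\homeq\prod K_{r_0}K_{r_1}\cdots K_{r_{m-1}}\der_{r_m}G$ over chains $r_0\leq r_1\leq\dots\leq r_m$, and hence a splitting of $\Hom(\der_*F,K^m\der_*G)\homeq\prod\Hom(\der_{r_0}F,K_{r_0}\cdots K_{r_{m-1}}\der_{r_m}G)^{\Sigma_{r_0}}$ indexed by such chains. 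Because $\epsilon_r$ is an equivalence (Lemma~\ref{lem:K-counit}), the summands over chains with a repeated index are precisely those not annihilated by the codegeneracies, so the normalised complex retains exactly the summands over \emph{strict} chains $1\leq r_0<r_1<\dots<r_s$; this gives the stated $\mathbb{E}^1_{-s,t}$. Tracking the cofaces of $\Hom^\bullet_K(\der_*F,\der_*G)$ through the splitting: the internal coface $\delta^i$ applies $\delta\colon K\to KK$ to the $i$-th copy of $K$ and so inserts a new index, acting by the comonad map $\delta_{r_i,r_{i+1}}$ of Definition~\ref{def:derAn}; $\delta^0$ uses the $K$-coalgebra structure on $\der_*F$ and so acts by $\theta_{r_0,r_1}$ (Lemma~\ref{lem:derAn}); and $\delta^{s+1}$ uses the $K$-coalgebra structure on $\der_*G$ and so acts by $\theta_{r_{s-1},r_s}$. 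Thus $d^1$ is the claimed alternating sum.

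Finally, suppose $G$ is $N$-excisive. Then $\der_*G$ is $N$-truncated and $K$ preserves $N$-truncation (Lemma~\ref{lem:phi}, or directly from the finite product in Definition~\ref{def:derAn}), so every $K^m\der_*G$ is $N$-truncated and only strict chains with $r_s\leq N$ contribute; hence $\mathbb{E}^1_{-s,t}=0$ for $s\geq N$ and the spectral sequence is concentrated in the finitely many columns $0\geq -s\geq -(N-1)$, so only finitely many differentials can be nonzero and it degenerates at a finite page and converges strongly. Since the normalised complex is bounded, the $\Tot$-tower of the cosimplicial spectrum stabilises, so $\Tot$ is a finite homotopy limit; therefore $E\smsh-$ commutes with it, and the abutment is $\pi_{t-s}$ of $E\smsh\widetilde{\Map}_K(\der_*F,\der_*G)$, i.e.\ the $E$-homology $E_{t-s}\hHom_K(\der_*F,\der_*G)$. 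An $N$-excisive functor is equivalent to the limit of its Taylor tower, hence $\der_*$-complete by Theorem~\ref{thm:main}, so $\hHom_K(\der_*F,\der_*G)\homeq\hNat(F,G)$ by Proposition~\ref{prop:descent} (equivalently Corollary~\ref{cor:nat}), which gives the asserted abutment. I expect the fussiest part to be the coface bookkeeping of the second paragraph; the one genuinely delicate point is that the interchange of $E\smsh-$ with $\Tot$, and hence the identification of the abutment, is legitimate only because the cosimplicial object is bounded in the $N$-excisive case — which is exactly why convergence to $E_*\hNat(F,G)$ is asserted only there.
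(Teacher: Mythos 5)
Your argument is correct and follows essentially the same route as the paper's proof: form the Bousfield--Kan spectral sequence of the cosimplicial object $\Hom(\der_*F, K^\bullet\der_*G)$, split $K^s\der_*G$ into the product $\prod_{r_0\leq\dots\leq r_s}K_{r_0}\cdots K_{r_{s-1}}\der_{r_s}G$, identify the normalized $\mathbb{E}^1$ with the strict chains, and invoke Corollary~\ref{cor:nat} for the abutment when $G$ is $N$-excisive. The only (immaterial) presentational difference is that the paper builds an explicit Reedy fibrant model in which $\epsilon_r$ is literally the identity on the diagonal, whereas you reach the same normalized $\mathbb{E}^1$ using that $\epsilon_r$ is an equivalence (Lemma~\ref{lem:K-counit}); both lead to the same conclusion.
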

\begin{proof}
The decomposition of the comonad $K$ via the constructions $K_r$ allows us to write down a Reedy fibrant model for the cosimplicial simplicial set
\begin{equation} \label{eq:spseq} \Hom_{\mathsf{\Sigma}}(\der_*F,K^{\bullet} \der_*G). \end{equation}
We do this by replacing $K^s(\der_*G)$ with the equivalent object
\[ \prod_{r_0 \leq \dots \leq r_s} K_{r_0} \dots K_{r_{s-1}} \der_{r_s}G \]
where $K_r$ is now defined to be the identity on a $\Sigma_r$-spectrum.

The codegeneracies are now inclusions and the coface maps given by the relevant comonad and coalgebra structure maps $\delta$ and $\theta$. It is easy to see this is Reedy fibrant since the matching maps are projections from a product onto one of its terms and all objects are fibrant in the underlying category. The given formula now follows by the standard form for the Bousfield-Kan spectral sequence.

If $G$ is $N$-excisive then the spectral sequence is restricted to the region where $0 \leq s \leq N-1$ so collapses at the $\mathbb{E}^N$ page. The cosimplicial simplicial set (\ref{eq:spseq}) is degenerate above the \ord{N} level and so the spectral sequence converges to the $E$-homology of its totalization. By Corollary \ref{cor:nat}, this is equivalent to the $E$-homology of $\Nat(F,G)$.
\end{proof}

\begin{remark}
There is a similar spectral sequence based on the cosimplicial cobar construction $\Phi K^\bullet \der_*G$ that, if $G$ is $N$-excisive for some $N$, converges to the $E$-homology of the spectrum $G(X)$.
\end{remark}

Before turning to the specific cases $[\finspec,\spectra]$ and $[\finbased,\spectra]$, we look at what our results say about classifying the extensions in the Taylor tower of a functor $F: \Cfin \to \spectra$.

Goodwillie proved in~\cite{goodwillie:2003} that the fibration sequence
\[
D_n F \longrightarrow P_n F \longrightarrow P_{n-1}F
\]
can be extended to the right. That is, there exists a functor $R_nF$ and a natural fibration sequence
\[
 P_n F \longrightarrow P_{n-1}F \longrightarrow R_n F.
\]
In particular, $R_nF$ is a delooping of $D_nF$. This theorem is important for understanding Taylor towers of space-level functors. On the other hand, for spectrum-valued functors it is a triviality. This raises the question whether for spectrum-valued functors the map $P_n F \longrightarrow P_{n-1} F$ can be classified in an interesting way by means of a universal fibration. For functors from $\spectra$ to $\spectra$ this was essentially answered by McCarthy~\cite{mccarthy:2001} (see also~\cite{kuhn:2004}). We now recover McCarthy's result with our methods, and show how it can be extended to functors from $\based$ to $\spectra$.

\begin{proposition}\label{prop:pullback}
For $F\in[\Cfin, \spectra]$ and $n \geq 2$, there is a homotopy pullback square, natural in $X$, of the form
\[\begin{diagram}
\node{P_n F(X)} \arrow{e}\arrow{s} \node{(\Phi \der_nF)(X)}\arrow{s} \\
\node{P_{n-1} F(X)} \arrow{e} \node{P_{n-1}(\Phi \der_nF)(X).}
\end{diagram}\]
where $\Phi \der_nF$ denotes the value of the functor $\Phi$ applied to $\der_nF$ viewed as a symmetric sequence concentrated in its \ord{n} term.
\end{proposition}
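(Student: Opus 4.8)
The plan is to realize the square as the effect of the natural transformation $\mathrm{id}\to P_{n-1}$ on a single comparison map $F\to\Phi\der_nF$, and then to prove it is a homotopy pullback by showing that this comparison map is an equivalence on \ord{n} derivatives, where Proposition~\ref{prop:counit} does the real work. First I would construct the comparison map. As in the statement, write $\der_nF$ also for the symmetric sequence concentrated in its \ord{n} term (with its $\Sigma_n$-action), let $\eta\colon F\to\Phi\der_*F$ be the unit of the adjunction $(\der_*,\Phi)$ of Hypothesis~\ref{hyp:coefficients}, and let $\pi\colon\der_*F\to\der_nF$ be the projection of symmetric sequences onto the \ord{n} term. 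Then $g:=\Phi(\pi)\circ\eta\colon F\to\Phi\der_nF$ is a natural transformation. Since $\der_nF$ is $n$-truncated, $\Phi\der_nF$ is $n$-excisive by Lemma~\ref{lem:phi}, so $g$ factors through $p_n\colon F\to P_nF$, uniquely in the homotopy category; I will also write $g\colon P_nF\to\Phi\der_nF$ for this factorization.

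Next I would identify the square in the statement. Because $P_nF$ and $\Phi\der_nF$ are both $n$-excisive, $P_n$ fixes each of them and $P_{n-1}P_nF\homeq P_{n-1}F$; applying $\mathrm{id}\to P_{n-1}$ to $g$ therefore produces exactly the square of the proposition (evaluated at $X$), with top map $g$, right map the tower map $\Phi\der_nF\to P_{n-1}(\Phi\der_nF)$, bottom map $P_{n-1}(g)$ under $P_{n-1}P_nF\homeq P_{n-1}F$, and left map the Taylor-tower map $P_nF\to P_{n-1}F$. In particular the square commutes and is natural in $X$, and the induced map on vertical homotopy fibres is $D_n(g)\colon D_nF\to D_n(\Phi\der_nF)$. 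A commuting square of this shape is a homotopy pullback if and only if this map on vertical fibres is a weak equivalence. Since $D_nG(X)=[\Omega^\infty](\der_nG\smsh[\Sigma^\infty X]^{\smsh n})_{h\Sigma_n}$, it suffices to prove that the \ord{n} component $\der_ng$ of $\der_*(g)$ is a weak equivalence of $\Sigma_n$-spectra: this forces $D_n(g)$ to be an equivalence of functors.

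Finally I would check that $\der_ng$ is an equivalence by a purely formal manipulation. Let $\epsilon\colon\der_*\Phi\to 1_{\cat{M}}$ be the counit of the adjunction. The triangle identity gives $\epsilon_{\der_*F}\circ\der_*(\eta)=\mathrm{id}_{\der_*F}$, and naturality of $\epsilon$ along $\pi$ gives $\epsilon_{\der_nF}\circ\der_*\Phi(\pi)=\pi\circ\epsilon_{\der_*F}$; combining these with $\der_*(g)=\der_*\Phi(\pi)\circ\der_*(\eta)$ yields $\epsilon_{\der_nF}\circ\der_*(g)=\pi$. Passing to \ord{n} terms, and using that $\pi$ restricts to the identity of $\der_nF$ in degree $n$, gives
\[ (\epsilon_{\der_nF})_n\circ\der_ng=\mathrm{id}_{\der_nF}. \]
Now $\der_nF$, as a symmetric sequence concentrated in degree $n$, is $n$-truncated, so Proposition~\ref{prop:counit} says that $(\epsilon_{\der_nF})_n\colon\der_n(\Phi\der_nF)\to\der_nF$ is a weak equivalence. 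A section of a weak equivalence is itself a weak equivalence, so $\der_ng$ is one, and the proof is complete.

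The only genuinely delicate point is the bookkeeping in the second step: one must carefully verify that the square obtained from $g$ by applying $\mathrm{id}\to P_{n-1}$, together with the identifications $P_n(\Phi\der_nF)\homeq\Phi\der_nF$ and $P_{n-1}P_nF\homeq P_{n-1}F$ (standard facts from \cite{goodwillie:2003}), really is the square written in the statement — in particular that its left vertical map is the Taylor-tower map. After that the argument is entirely formal, using only Lemma~\ref{lem:phi}, Proposition~\ref{prop:counit}, and the triangle identities; indeed the hypothesis $n\ge 2$ plays no role here, the case $n=1$ degenerating to the assertion that $P_1F\to\Phi\der_1F$ is an equivalence.
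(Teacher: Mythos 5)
Your proposal is correct, and it reaches the same conclusion as the paper by a genuinely different formal route. Both proofs construct essentially the same comparison map $P_nF \to \Phi\der_nF$ (the factorization of $F \to \Phi\der_*F \to \Phi\der_nF$ through $P_n$, which is the unit-followed-by-projection map in the paper), and both reduce the pullback claim to showing that this comparison is a $D_n$-equivalence. The paper then proceeds by \emph{splitting} the comparison map through $\Phi\der_{\leq n}F$: it invokes Proposition~\ref{prop:unit} to see that $P_nF \to \Phi\der_*(P_nF)\homeq\Phi\der_{\leq n}F$ is a $D_n$-equivalence, then uses that $\Phi$ commutes with products so that $\Phi\der_{\leq n}F\homeq\prod_k\Phi\der_kF$, and finally observes that projecting away the $k$-excisive factors with $k<n$ preserves $D_n$. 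Your proof avoids the product decomposition entirely: you show directly that $\der_ng$ admits $(\epsilon_{\der_nF})_n$ as a retraction by a triangle-identity computation, and then conclude using Proposition~\ref{prop:counit} that this retraction is a weak equivalence, whence so is $\der_ng$. Since Proposition~\ref{prop:counit} is proved in the paper using Proposition~\ref{prop:unit}, the two arguments are ultimately related, but yours is more self-contained at the level of the adjunction formalism and does not need to know that $\Phi$ preserves products. Your parenthetical remark that $n\geq 2$ is not actually required is also accurate; the $n=1$ case reduces to $P_1F \simeq \Phi\der_1F$.
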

\begin{proof}
By Lemma \ref{lem:phi}, $\Phi \der_nF$ is $n$-excisive. It is sufficient to show that there is a natural transformation
\[ P_nF \to \Phi \der_nF \]
that becomes an equivalence after applying $D_n$.

By Proposition \ref{prop:unit} we have a $D_n$-equivalence
\begin{equation} \label{eq:Dn} P_nF \to \Phi \der_*(P_nF) \homeq \Phi \der_{\leq n}F. \end{equation}
Now $\Phi$ commutes with products so we have
\[ \Phi \der_{\leq n} F \homeq \prod_{k =1}^{n} \Phi \der_kF. \]
By Lemma \ref{lem:phi} again each $\Phi \der_kF$ is $k$-excisive, so the projection map
\[ \Phi \der_{\leq n}F \to \Phi \der_nF \]
is a $D_n$-equivalence. Composing this with (\ref{eq:Dn}) gives the required natural transformation.
\end{proof}
\begin{remark}
We only proved the proposition for $\C$ being $\based$ or $\spectra$, but it seems likely that there is a corresponding result for a more general class of categories.
\end{remark}
\begin{corollary}\label{cor:universal}
Let $\C$ be $\based$ or $\spectra$ and let $F\in[\Cfin, \spectra]$. Let $X\in \Cfin$. If $\C=\spectra$ then for every $n\ge 1$ there is a homotopy pullback square, natural in $X$
\[\begin{diagram}
\node{P_n F(X)} \arrow{e}\arrow{s} \node{\left(\der_n F \smsh X^{\smsh n}\right)^{h\Sigma_n}}\arrow{s} \\
\node{P_{n-1} F(X)} \arrow{e} \node{\Tate_{\Sigma_n}\left(\der_n F \smsh X^{\smsh n}\right).}
\end{diagram}\]
If $\C=\based$ then the pullback square has the following form
\[\begin{diagram}
\node{P_n F(X)} \arrow{e}\arrow{s} \node{\left(\der_n F \smsh X^{\smsh n}/\Delta^n X\right)_{h\Sigma_n}}\arrow{s} \\
\node{P_{n-1} F(X)} \arrow{e} \node{\left(\der_n F \smsh \Sigma \Delta^n X\right)_{h\Sigma_n}.}
\end{diagram}\]
where $\Delta^n X$ is the fat diagonal inside $X^{\smsh n}$.
\end{corollary}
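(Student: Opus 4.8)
The plan is to read the corollary off from Proposition~\ref{prop:pullback} by identifying the two right-hand corners $(\Phi\der_nF)(X)$ and $P_{n-1}(\Phi\der_nF)(X)$ of its homotopy pullback square, separately for $\C=\spectra$ and $\C=\based$. I would assume $n\ge 2$ when invoking Proposition~\ref{prop:pullback} and note at the end that $n=1$ is degenerate; throughout, when $\C=\based$ a smash of a spectrum with a based space means smash with its suspension spectrum.

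\emph{The top-right corner.} By Definition~\ref{def:phi}, applied to $\der_nF$ regarded as a symmetric sequence concentrated in degree $n$,
\[ (\Phi\der_nF)(X) \homeq \Map\bigl(\der_n(\Sigma^\infty R_X),\,\der_nF\bigr)^{h\Sigma_n}, \]
so the first task is to identify the $\Sigma_n$-spectrum $\der_n(\Sigma^\infty R_X)$ with the Spanier--Whitehead dual $\dual(\Sigma^\infty X^{[n]})$, where $X^{[n]}=X^{\smsh n}$ when $\C=\spectra$ and $X^{[n]}=X^{\smsh n}/\Delta^n X$ when $\C=\based$. For $\C=\spectra$ this is a short chain-rule computation: $\Sigma^\infty R_X=\Sigma^\infty\Omega^\infty\circ\Map(X,-)$, the inner functor is linear with coefficient $\dual X$, and precomposition with a linear functor smashes the derivatives of $\Sigma^\infty\Omega^\infty$ (sphere spectra with trivial action) with the corresponding smash powers of $\dual X$. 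For $\C=\based$ this is the classical computation of the Goodwillie derivatives of $Y\mapsto\Sigma^\infty\Map_*(X,Y)$, which I would import from the cross-effect model used in Section~\ref{sec:topspec}. Dualizability of $\Sigma^\infty X^{[n]}$ then gives $(\Phi\der_nF)(X)\homeq(\der_nF\smsh X^{[n]})^{h\Sigma_n}$. For $\C=\spectra$ this is already the corner in the statement; for $\C=\based$, $\Sigma_n$ acts freely on $X^{[n]}=X^{\smsh n}/\Delta^n X$ away from the basepoint, so $\Sigma^\infty X^{[n]}$ is a free $\Sigma_n$-spectrum, the norm map is an equivalence, and $(\der_nF\smsh X^{[n]})^{h\Sigma_n}\homeq(\der_nF\smsh X^{[n]})_{h\Sigma_n}$.

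\emph{The bottom-right corner.} Next I would compute $P_{n-1}$ of the $n$-excisive functor $\Phi\der_nF$ (Lemma~\ref{lem:phi}), using that $P_{n-1}$ preserves fibre sequences of $\spectra$-valued functors, kills $n$-homogeneous functors, and fixes $(n-1)$-excisive ones. For $\C=\spectra$, use the norm cofibre sequence of functors of $X$,
\[ (\der_nF\smsh X^{\smsh n})_{h\Sigma_n}\longrightarrow(\der_nF\smsh X^{\smsh n})^{h\Sigma_n}\longrightarrow\Tate_{\Sigma_n}(\der_nF\smsh X^{\smsh n}), \]
whose left term is $n$-homogeneous, middle term is $(\Phi\der_nF)(X)$ by the previous step, and right term is $(n-1)$-excisive; the last point follows because $\Tate_{\Sigma_n}(\der_nF\smsh X^{\smsh n})$ is $n$-excisive (a cofibre of $n$-excisive functors) with vanishing $n$th derivative, since on $n$th cross-effects the norm becomes the norm map of $\der_nF\smsh\operatorname{cr}_n(X^{\smsh n})$, which is a free $\Sigma_n$-spectrum, hence an equivalence there (cf.\ also Kuhn~\cite{kuhn:2004}, Chaoha~\cite{chaoha:2004}). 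Thus $P_{n-1}(\Phi\der_nF)\homeq\Tate_{\Sigma_n}(\der_nF\smsh X^{\smsh n})$. For $\C=\based$, apply $\der_nF\smsh\Sigma^\infty(-)$ and then $(-)_{h\Sigma_n}$ to the cofibre sequence of based $\Sigma_n$-spaces $\Delta^n X\to X^{\smsh n}\to X^{[n]}$, obtaining a cofibre sequence of functors
\[ (\der_nF\smsh\Delta^n X)_{h\Sigma_n}\longrightarrow(\der_nF\smsh X^{\smsh n})_{h\Sigma_n}\longrightarrow(\der_nF\smsh X^{[n]})_{h\Sigma_n}, \]
whose middle term is $n$-homogeneous, right term is $(\Phi\der_nF)(X)$, and left term is $n$-excisive with vanishing $n$th derivative — the map $X^{\smsh n}\to X^{[n]}=X^{\smsh n}/\Delta^n X$ is an isomorphism on $n$th cross-effects (the bijection summands of $\operatorname{cr}_n(X^{\smsh n})$ are untouched by the quotient by the fat diagonal) — hence $(n-1)$-excisive. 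Applying $P_{n-1}$ and rotating the cofibre sequence, $P_{n-1}(\der_nF\smsh X^{[n]})_{h\Sigma_n}\homeq\Sigma(\der_nF\smsh\Delta^n X)_{h\Sigma_n}=(\der_nF\smsh\Sigma\Delta^n X)_{h\Sigma_n}$.

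Substituting these identifications into Proposition~\ref{prop:pullback} yields the two squares in the statement, with bottom map $P_{n-1}$ of the natural transformation $F\to\Phi\der_nF$ constructed in its proof. When $n=1$ the square collapses ($P_0F=*$, the fat diagonal $\Delta^1 X=*$, $\Tate_{\Sigma_1}$ vanishes) and the assertion reduces to $P_1F(X)\homeq D_1F(X)\homeq\der_1F\smsh X$, which holds because $D_1F\homeq P_1F$ and $D_1F(X)\homeq\der_1F\smsh X$. I expect the real work to lie in the two $(n-1)$-excisiveness statements of the middle step, both of which come down to analysing the $n$th cross-effect of $X\mapsto X^{\smsh n}$ (that it is free, and that it is unchanged by passing to $X^{[n]}$); the only other non-formal input is the model for $\der_n(\Sigma^\infty R_X)$, which for $\C=\based$ I would take from Section~\ref{sec:topspec}.
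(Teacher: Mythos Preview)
Your proposal is correct and follows essentially the same route as the paper: invoke Proposition~\ref{prop:pullback}, identify $(\Phi\der_nF)(X)$ via the models for $\der_n(\Sigma^\infty R_X)$ (the paper cites Corollaries~\ref{cor:spsp-phi} and~\ref{cor:topsp-phi} directly), and then compute $P_{n-1}(\Phi\der_nF)$ by analysing a cofibre sequence whose other two terms are $n$-homogeneous and $(n-1)$-excisive. The only cosmetic difference is in the $\based$ case: the paper rotates the cofibre sequence to $X^{\smsh n}\to X^{\smsh n}/\Delta^n X\to\Sigma\Delta^n X$ and argues the base is $(n-1)$-excisive because it is a homotopy colimit of $(n-1)$-excisive functors, whereas you keep the unrotated sequence and argue that $(\der_nF\smsh\Delta^n X)_{h\Sigma_n}$ is $(n-1)$-excisive by showing its $n$th cross-effect vanishes; these are equivalent observations.
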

\begin{proof}
In Corollary \ref{cor:spsp-phi} below we see that when $\C = \spectra$, we have
\[ \Phi \der_nF(X) \homeq (\der_nF \smsh X^{\smsh n})^{h\Sigma_n}. \]
So the assertion here is that there is an equivalence
\[
P_{n-1}\left(\der_n F \smsh X^{\smsh n}\right)^{h\Sigma_n} \homeq \Tate_{\Sigma_n}\left(\der_n F \smsh X^{\smsh n}\right).
\]
It is well-known, and is easy to prove that the norm map
\[
\left(\der_n F \smsh X^{\smsh n}\right)_{h\Sigma_n} \arrow{e,t}{N} \left(\der_n F \smsh X^{\smsh n}\right)^{h\Sigma_n}
\]
induces an equivalence of \ord{n} cross-effects. Since the target of this norm map is an $n$-excisive functor, the norm map is equivalent to the map
\[
D_n\left(\der_n F \smsh X^{\smsh n}\right)^{h\Sigma_n}\longrightarrow \left(\der_n F \smsh X^{\smsh n}\right)^{h\Sigma_n}.
\]
It follows that the Tate construction, which is the homotopy cofibre of the norm map, is the $(n-1)$-excisive approximation to $\left(\der_n F \smsh X^{\smsh n}\right)^{h\Sigma_n}$.

In the case $\C=\based$, Corollary \ref{cor:topsp-phi} below tells us that
\[ \Phi \der_nF(X) \homeq (\der_nF \smsh X^{\smsh n}/\Delta^n X)^{h\Sigma_n}. \]
When $X$ is a finite based complex, the space $X^{\smsh n}/\Delta^n X$ can be built from finitely many free $\Sigma_n$-cells. It follows that the norm map
\[
(\der_n F \smsh  X^{\smsh n}/\Delta^n X)_{h\Sigma_n}\longrightarrow (\der_n F \smsh  X^{\smsh n}/\Delta^n X)^{h\Sigma_n}
\]
is an equivalence. This establishes the top-right corner of the required square. It remains to prove that
\[
P_{n-1}\left(X\mapsto (\der_n F \smsh  X^{\smsh n}/\Delta^n X)_{h\Sigma_n}\right)\simeq (\der_n F \smsh \Sigma \Delta^n X)_{h\Sigma_n}.
\]
To see this, consider the homotopy fibration/cofibration sequence
\[
\Sigma^\infty X^{\smsh n} \longrightarrow \Sigma^\infty X^{\smsh n}/\Delta^n X \longrightarrow \Sigma^\infty \Sigma \Delta^n X.
\]
Smashing it with $\der_n F$ and taking homotopy orbits we obtain a fibration/cofibration sequence
\[
(\der_nF \smsh X^{\smsh n})_{h\Sigma_n} \longrightarrow (\der_nF \smsh X^{\smsh n}/\Delta^n X)_{h\Sigma_n} \longrightarrow (\der_n F\smsh  \Sigma \Delta^n X)_{h\Sigma_n}.
\]
Clearly, the fibre in this sequence is an $n$-homogeneous functor. It is easy to prove that the base is $(n-1)$-excisive (it is the homotopy colimit of $(n-1)$-excisive functors). It follows that the map from the total space to the base is projection on the \ord{(n-1)} Taylor polynomial.
\end{proof}

\section{Functors from spectra to spectra} \label{sec:specspec}

We now turn to the specific case where $\C = \spectra$, that is, we look at Taylor towers of pointed simplicial functors $F: \finspec \to \spectra$. Our goal in this section is to calculate the objects $K_r A_n$ and the maps $\delta_{r,s}$ from Definition \ref{def:derAn} in this case. These are given in Propositions \ref{prop:derAn-spsp} and \ref{prop:delta-spsp} respectively.

Our description relies on certain symmetric group actions on sphere spectra.

\begin{definition} \label{def:spheres-sym}
Let $L$ be a non-negative integer and let $S^L$ denote the suspension spectrum of the based topological $L$-sphere. For positive integers $n,r$ we write
\[ S^{L(n-r)} := \Map((S^L)^{\smsh r},(S^L)^{\smsh n}). \]
This has commuting actions of the symmetric groups $\Sigma_r$ and $\Sigma_n$ by permutation on the two smash powers respectively.

For an integer $n \geq 1$ we denote the set $\{1,\dots,n\}$ by $\un{n}$. Now suppose we are given a surjection $\alpha: \un{n} \epi \un{r}$.

For positive integers $L < L'$ there is an isomorphism
\[ S^{L'} \isom S^{L} \smsh S^{L'-L} \]
arising from the canonical inclusion of $\mathbb{R}^L$ in $\mathbb{R}^{L'}$ via the first $L$ coordinates. We then have a map
\[ i^{L,L'}_{\alpha}: S^{L(n-r)} \to S^{L'(n-r)} \]
that is adjoint to the composite
\[ \dgTEXTARROWLENGTH=3em \begin{split} (S^{L'})^{\smsh r} \smsh \Map((S^L)^{\smsh r},(S^L)^{\smsh n})
    &\arrow{e,t}{\isom} (S^{L'-L})^{\smsh r} \smsh (S^L)^{\smsh r} \smsh \Map((S^L)^{\smsh r},(S^L)^{\smsh n}) \\
    &\arrow{e} (S^{L'-L})^{\smsh r} \smsh (S^L)^{\smsh n} \\
    &\arrow{e,t}{\alpha^{\#}} (S^{L'-L})^{\smsh n} \smsh (S^L)^{\smsh n} \\
    &\arrow{e,t}{\isom} (S^{L'})^{\smsh n}
\end{split} \]
The second map is the canonical evaluation, and the third is constructed from the diagonal map on the suspension spectrum $S^{L'-L}$ via the surjection $\alpha$.
\end{definition}

We then have the following description of the pieces of the comonad $K: \symseq \to \symseq$. The proof of this result starts with Lemma \ref{lem:Rx-spsp} later in this section.

\begin{prop} \label{prop:derAn-spsp}
In the classification of Taylor towers of functors $F: \finspec \to \spectra$ we have the following calculation. For a $\Sigma_n$-spectrum $A_n$, there is a $\Sigma_r$-equivariant equivalence of spectra
\[ K_r A_n \homeq \hocolim_{L \to \infty} \left[ \left( \prod_{\un{n} \epi \un{r}} A_n \smsh S^{L(n-r)} \right)^{h\Sigma_n} \right]. \]
The product is over the set of surjections $\alpha: \un{n} \epi \un{r}$. The symmetric group $\Sigma_n$ acts on the product by composition and via the diagonal of its actions on $A_n$ and $S^{L(n-r)}$. The homotopy colimit is formed over the diagram formed by the maps $i^{L,L'}_{\alpha}$ of Definition \ref{def:spheres-sym}.
\end{prop}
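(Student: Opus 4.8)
The plan is to compute the $r$-th Goodwillie derivative of the functor $\Phi A_n \colon \finspec \to \spectra$, since $K_r A_n = \der_r(\Phi A_n)$ by Definition~\ref{def:derAn} (here $A_n$ denotes also the symmetric sequence concentrated in degree $n$). The first step -- this is where Lemma~\ref{lem:Rx-spsp} comes in -- is to pin down $\Phi A_n$ up to equivalence. Since $\C = \spectra$ and $X$ is a finite cell spectrum, the representable functor is $R_X = \bigl[\, Y \mapsto \Omega^\infty \Map(X,Y) \,\bigr]$, and the inner functor $\Map(X,-) \homeq DX \smsh (-)$ is exact; the chain rule then gives
\[ \der_m(\Sigma^\infty R_X) \homeq \der_m(\Sigma^\infty \Omega^\infty) \smsh (DX)^{\smsh m} \homeq (DX)^{\smsh m}, \]
with $\Sigma_m$ permuting the smash factors, using the classical fact that every derivative of $\Sigma^\infty \Omega^\infty$ is $S^0$ with trivial action. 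Feeding this into the formula for $\Phi$ in Definition~\ref{def:phi}, and using that $X^{\smsh n}$ is dualizable and that our chosen model for $\der_*(\Sigma^\infty R_X)$ is cofibrant (so the categorical fixed points defining $\Phi$ compute homotopy fixed points), we obtain a natural equivalence $\Phi A_n(X) \homeq (A_n \smsh X^{\smsh n})^{h\Sigma_n}$. In particular $K_r A_n = \der_r\bigl[\, X \mapsto (A_n \smsh X^{\smsh n})^{h\Sigma_n} \,\bigr]$, in agreement with \eqref{eq: K_r}; this equivalence is also recorded as Corollary~\ref{cor:spsp-phi}.

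Next I would compute the $r$-th derivative of $G := \bigl[\, X \mapsto (A_n \smsh X^{\smsh n})^{h\Sigma_n} \,\bigr]$ using the stabilized cross-effect model $\der_r G \homeq \hocolim_{L \to \infty} \Map\bigl( (S^L)^{\smsh r}, \creff_r G(S^L,\dots,S^L) \bigr)$, which follows from the coend definition of $\der_*$ in Definition~\ref{def:der-sp} once $\der_*(\Sigma^\infty R_X)$ has been identified. Two inputs drive the calculation: first, $\creff_r$ is a finite homotopy limit, so it commutes with smashing by the fixed spectrum $A_n$ and with the homotopy fixed point functor $(-)^{h\Sigma_n}$ (also a limit); second, the $r$-th cross-effect of the $n$-fold smash power is the usual wedge over surjections,
\[ \creff_r\bigl(X \mapsto X^{\smsh n}\bigr)(X_1,\dots,X_r) \homeq \bigvee_{\alpha\colon \un n \epi \un r} X_1^{\smsh|\alpha^{-1}(1)|} \smsh \cdots \smsh X_r^{\smsh|\alpha^{-1}(r)|}. \]
Evaluating at $X_1 = \dots = X_r = S^L$ collapses each summand to $(S^L)^{\smsh n}$, so $\creff_r G(S^L,\dots,S^L) \homeq \bigl( A_n \smsh \bigvee_{\alpha\colon \un n \epi \un r}(S^L)^{\smsh n} \bigr)^{h\Sigma_n}$. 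There are only finitely many surjections, so the wedge is a product; and since $(S^L)^{\smsh r}$ is dualizable, applying $\Map((S^L)^{\smsh r},-)$ passes through the product and through $(-)^{h\Sigma_n}$, and identifies each $\Map((S^L)^{\smsh r},(S^L)^{\smsh n})$ with the $(\Sigma_r \times \Sigma_n)$-spectrum $S^{L(n-r)}$ of Definition~\ref{def:spheres-sym}. Finally one unwinds the stabilization maps of the derivative model and checks that on the cross-effect of a smash power they are precisely the maps $i^{L,L'}_\alpha$ -- the diagonal of $S^{L'-L}$ entering through $\alpha$ exactly as in Definition~\ref{def:spheres-sym}. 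This yields the stated formula.

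It remains to track the two symmetric group actions. The homotopy-fixed-point $\Sigma_n$ acts by permuting the $n$ smash factors of $(S^L)^{\smsh n}$, hence by precomposition on the set of surjections together with the diagonal of its given actions on $A_n$ and on $S^{L(n-r)}$; the $\Sigma_r$ that makes $K_r A_n$ a $\Sigma_r$-spectrum comes from permuting the $r$ cross-effect variables, i.e. postcomposition on the surjections together with the action on the source $(S^L)^{\smsh r}$. As a consistency check, when $r = n$ the surjections are all bijections, the colimit is a homotopy colimit of equivalences with value $S^0$, and the formula collapses to $\bigl((\Sigma_n)_+ \smsh A_n\bigr)^{h\Sigma_n} \homeq A_n$, consistent with Lemma~\ref{lem:K-counit}.

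I expect the main obstacle to be the homotopical bookkeeping at two spots. First, the homotopy fixed points genuinely sit \emph{outside} the homotopy colimit over $L$, and one must resist commuting them past it: each map $i^{L,L'}_\alpha$ is nonequivariantly null when $\alpha$ is not a bijection, so the answer would collapse to a point for $r < n$ if $(-)^{h\Sigma_n}$ commuted with $\hocolim_L$ -- the entire content of the proposition is that this equivariant homotopy colimit is nontrivial. Second, one must be careful with point-set models throughout: that $\der_*(\Sigma^\infty R_X)$ is a cofibrant functorial model (so $\Phi$ is computed by homotopy fixed points), that $\creff_r$ really commutes with $(-)^{h\Sigma_n}$ and with $\Map((S^L)^{\smsh r},-)$ as homotopy limits, and -- most delicately -- that the stabilization maps of the derivative model are identified on the nose with the maps $i^{L,L'}_\alpha$, so that the homotopy colimit really is over the diagram named in the statement rather than merely an equivalent one.
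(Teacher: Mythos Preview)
Your proposal is correct and follows essentially the same route as the paper: identify $\Phi A_n(X)\homeq (A_n\smsh X^{\smsh n})^{h\Sigma_n}$ via Lemma~\ref{lem:Rx-spsp}, compute the $r$-th cross-effect of the smash power as a wedge/product over surjections, commute $\creff_r$ past $A_n\smsh(-)$ and $(-)^{h\Sigma_n}$, multilinearize, and then identify the stabilization maps with the $i^{L,L'}_{\alpha}$ via the diagonal on $S^{L'-L}$. The only cosmetic difference is that the paper simply cites \cite[\S5]{goodwillie:2003} for the multilinearized cross-effect model of $\der_r$ rather than deriving it from the coend of Definition~\ref{def:der-sp}.
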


\begin{remark} \label{rem:derAn-spsp}
The orbits of the action of $\Sigma_n$ on the set of surjections $\un{n} \epi \un{r}$ are in one-to-one correspondence with ordered partitions $n = n_1+\dots+n_r$ of $n$ into positive integers $n_1,\dots,n_r$. The stabilizer for the orbit corresponding to such a partition is isomorphic to the subgroup
\[ \Sigma_{n_1} \times \dots \times \Sigma_{n_r} \subseteq \Sigma_n. \]
It follows that the formula for $K_r A_n$ can be rewritten as
\[ K_r A_n \homeq \prod_{n = n_1+\dots+n_r} \hocolim_{L \to \infty} \left[ \left( A_n \smsh S^{L(n-r)} \right)^{h\Sigma_{n_1} \times \dots \times \Sigma_{n_r}} \right] \]
where the product is over the set of ordered partitions of $n$ into positive integers $n_1,\dots,n_r$.
\end{remark}

\begin{corollary} \label{cor:derF-spsp}
For a pointed simplicial functor $F: \finspec \to \spectra$, and $r < n$, there are natural $\Sigma_r$-equivariant maps
\[ \theta_{r,n}: \der_rF \to \hocolim_{L \to \infty} \left[ \left( \prod_{\un{n} \epi \un{r}} A_n \smsh S^{L(n-r)} \right)^{h\Sigma_n} \right] \]
that encode the $K$-coalgebra structure map for $\der_*F$. Equivalently, these can be described in terms of a map
\[ \theta_{(n_1,\dots,n_r)}: \der_rF \to \hocolim_{L \to \infty} \left[ \left( A_n \smsh S^{L(n-r)} \right)^{h\Sigma_{n_1} \times \dots \times \Sigma_{n_r}} \right] \]
for each partition $n = n_1+\dots+n_r$.
\end{corollary}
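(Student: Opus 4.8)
The plan is to obtain the maps $\theta_{r,n}$ by specializing the abstract coalgebra structure map of $\der_*F$ to the explicit model for $K_r$ provided by Proposition~\ref{prop:derAn-spsp}. First I would invoke Proposition~\ref{prop:derphisp}: since $(\der_*,\Phi)$ is a Quillen adjunction of the form required by Hypothesis~\ref{hyp:coefficients}, the symmetric sequence $\der_*F$ carries a canonical $K$-coalgebra structure $\theta\colon \der_*F \to K\der_*F$ for the comonad $K = \der_*\Phi$, naturally in $F$. Applying Lemma~\ref{lem:derAn} (equivalently, Corollary~\ref{cor:derAn}) to this coalgebra produces, for each $r < n$, a $\Sigma_r$-equivariant map $\theta_{r,n}\colon \der_rF \to K_r\der_nF$, natural in $F$, together with the coassociativity datum recorded there; by the discussion following Lemma~\ref{lem:K-counit}, once the redundant maps $\theta_{r,r}$ are discarded this collection is precisely what encodes the $K$-coalgebra structure on $\der_*F$.

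The first displayed formula is then obtained by substituting $A_n = \der_nF$ into the $\Sigma_r$-equivariant equivalence $K_rA_n \homeq \hocolim_{L\to\infty}[(\prod_{\un{n}\epi\un{r}} A_n\smsh S^{L(n-r)})^{h\Sigma_n}]$ of Proposition~\ref{prop:derAn-spsp}. Since the construction $A_n \mapsto K_rA_n$ is functorial in the $\Sigma_n$-spectrum $A_n$ and the equivalence of that proposition is natural, the target of $\theta_{r,n}$ may be replaced by this homotopy colimit of homotopy fixed points without disturbing naturality in $F$ or $\Sigma_r$-equivariance. For the second (``equivalently'') description I would use Remark~\ref{rem:derAn-spsp}: the $\Sigma_n$-set of surjections $\un{n}\epi\un{r}$ decomposes into orbits indexed by the ordered partitions $n = n_1 + \dots + n_r$ into positive integers, the orbit of such a partition having stabilizer $\Sigma_{n_1}\times\dots\times\Sigma_{n_r}$, so that $(\prod_{\un{n}\epi\un{r}} A_n\smsh S^{L(n-r)})^{h\Sigma_n}$ splits as a product over these partitions of $(A_n\smsh S^{L(n-r)})^{h\Sigma_{n_1}\times\dots\times\Sigma_{n_r}}$. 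Setting $\theta_{(n_1,\dots,n_r)}$ equal to the component of $\theta_{r,n}$ at the factor indexed by $(n_1,\dots,n_r)$ yields the stated reformulation, and conversely $\theta_{r,n}$ is recovered as the product of the $\theta_{(n_1,\dots,n_r)}$.

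There is essentially no obstacle internal to this argument: the corollary is a formal consequence of Corollary~\ref{cor:derAn} and Proposition~\ref{prop:derAn-spsp}, and the only genuine work lies in the latter (whose proof, beginning with Lemma~\ref{lem:Rx-spsp}, we take for granted here). The one point deserving a moment's care is that the identification of $K_r\der_nF$ with the explicit homotopy colimit can be made naturally in $F$ — i.e.\ that the zigzag of equivalences underlying Proposition~\ref{prop:derAn-spsp} respects the functoriality of $\der_nF$ in $F$ — but this is immediate from the way $K_r$ is assembled from $\der_r$, $\Phi$, and the representables $\Sigma^\infty R_X$, all of which are manifestly natural.
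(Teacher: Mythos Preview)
Your proposal is correct and matches the paper's approach: the corollary is stated without a separate proof precisely because it is the formal combination of Corollary~\ref{cor:derAn} (giving the abstract maps $\theta_{r,n}\colon \der_rF \to K_r\der_nF$) with Proposition~\ref{prop:derAn-spsp} (identifying $K_rA_n$) and Remark~\ref{rem:derAn-spsp} (the partition decomposition), exactly as you outline.
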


We now turn to the maps $\delta_{r,s}$ that encode the comonad structure. To describe these it is convenient to observe the following formula for $K_r K_s A_n$.

\begin{proposition} \label{prop:spsp-KK}
For positive integers $r < s < n$, we have
\[ K_r K_s A_n \homeq \hocolim_{M \to \infty} \left[ \hocolim_{N \to \infty} \left[ \prod_{\un{n} \epi \un{s} \epi \un{r}} A_n \smsh S^{N(n-s)} \smsh S^{M(s-r)} \right]^{h\Sigma_n} \right]^{h\Sigma_s}. \]
\end{proposition}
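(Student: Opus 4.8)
The plan is to apply the computation of $K_r A_n$ from Proposition~\ref{prop:derAn-spsp} twice, once to an inner $\Sigma_n$-spectrum and once to the outer $\Sigma_s$-spectrum that results. First I would set $B_s := K_s A_n$, which by Proposition~\ref{prop:derAn-spsp} is the $\Sigma_s$-spectrum
\[ B_s \homeq \hocolim_{N \to \infty} \left[ \left( \prod_{\un{n} \epi \un{s}} A_n \smsh S^{N(n-s)} \right)^{h\Sigma_n} \right]. \]
Then I would apply Proposition~\ref{prop:derAn-spsp} again, this time with $(r,s)$ in place of $(r,n)$ and $B_s$ in place of $A_n$, to obtain
\[ K_r K_s A_n = K_r B_s \homeq \hocolim_{M \to \infty} \left[ \left( \prod_{\un{s} \epi \un{r}} B_s \smsh S^{M(s-r)} \right)^{h\Sigma_s} \right]. \]
Substituting the formula for $B_s$ into this and commuting the (finite, since all spectra involved are bounded) products past the homotopy orbit/colimit constructions then yields the stated double-homotopy-colimit formula, once one observes that a surjection $\un{n}\epi\un{r}$ factoring through $\un{s}$ is the same data as a pair of surjections $\un{n}\epi\un{s}\epi\un{r}$, so that $\prod_{\un{s}\epi\un{r}}\prod_{\un{n}\epi\un{s}} = \prod_{\un{n}\epi\un{s}\epi\un{r}}$.

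The key steps, in order: (i) invoke Proposition~\ref{prop:derAn-spsp} for $K_s A_n$; (ii) invoke it again for $K_r$ applied to the resulting $\Sigma_s$-spectrum; (iii) check that the smash product $S^{M(s-r)}$ with the inner expression can be pulled inside the inner homotopy colimit and homotopy fixed points — this is legitimate because smashing with a fixed finite spectrum commutes with $\hocolim$ and, up to the usual finiteness/duality hypotheses on $S^{M(s-r)}$, with $(-)^{h\Sigma_n}$; (iv) commute the outer $(-)^{h\Sigma_s}$ and $\hocolim_{M}$ past the product over $\un{s}\epi\un{r}$ (finite, hence unproblematic) and reindex the two nested products as a single product over composable surjections; and (v) identify the $\Sigma_n\times\Sigma_s$-equivariance so that the $\Sigma_n$-homotopy fixed points sit inside the $\Sigma_s$-homotopy fixed points, exactly as displayed. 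Here $\Sigma_s$ acts on $\prod_{\un{n}\epi\un{s}\epi\un{r}}(\cdots)$ by precomposition on the $\un{s}$ in the middle of each composable pair (together with its action on $S^{M(s-r)}$), while $\Sigma_n$ acts by precomposition on the left-hand $\un{n}$ and on $S^{N(n-s)}$ and $A_n$, and these actions commute.

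The main obstacle I anticipate is step (iii)/(v): being careful that smashing the inner homotopy-fixed-point spectrum $\big(\prod A_n\smsh S^{N(n-s)}\big)^{h\Sigma_n}$ with the finite sphere spectrum $S^{M(s-r)}$ really does compute $\big(\prod A_n\smsh S^{N(n-s)}\smsh S^{M(s-r)}\big)^{h\Sigma_n}$ with the correct $\Sigma_s$-action. Since $S^{M(s-r)} = \Map((S^M)^{\smsh r},(S^M)^{\smsh s})$ carries a $\Sigma_s$-action but a \emph{trivial} $\Sigma_n$-action (the $\Sigma_n$ here permutes the $\un{n}$ factors, which do not appear in $S^{M(s-r)}$), smashing with it commutes with $(-)^{h\Sigma_n}$ — a fixed finite spectrum with trivial $G$-action passes freely through $(-)^{hG}$ — so this is fine, but the bookkeeping of which group acts on which sphere is the delicate point and deserves explicit attention. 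Everything else is formal manipulation of the bounded symmetric sequences, using that all the products in sight are finite (by Lemma~\ref{lem:phi}, $K_rA_n$ vanishes for $r>n$) so that no convergence issues arise when commuting them past homotopy limits and colimits.
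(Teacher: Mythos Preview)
Your proposal is correct and follows the same approach as the paper: apply Proposition~\ref{prop:derAn-spsp} twice and commute the various constructions. The paper's proof is a single sentence observing that finite products commute with homotopy fixed points and with smash products in spectra, whereas you have spelled out the bookkeeping (in particular the equivariance check in step~(iii)/(v)) that the paper leaves implicit.
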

\begin{proof}
This follows from Proposition \ref{prop:derAn-spsp} by observing that finite products commute with both homotopy fixed points and smash products in the category of spectra.
\end{proof}

We also need to understand how the symmetric group actions on the spheres $S^{L(n-r)}$ fit together.

\begin{definition} \label{def:sphere-cooperad}
For positive integers $L$, $n$, $r$ and $s$, there are natural composition maps
\[ c: \Map((S^L)^{\smsh r},(S^L)^{\smsh s}) \smsh \Map((S^L)^{\smsh s},(S^L)^{\smsh n}) \to \Map((S^L)^{\smsh r},(S^L)^{\smsh n}) \]
or, in the notation of \ref{def:spheres-sym},
\[ c: S^{L(s-r)} \smsh S^{L(n-s)} \to S^{L(n-r)}. \]
Note that these maps are weak equivalences of spectra and are equivariant with respect to actions of $\Sigma_n$, $\Sigma_s$ and $\Sigma_r$. (The $\Sigma_s$-action is the diagonal action on the source and is trivial on the target.)

For each $c$ we fix an inverse
\[ c^{-1}: S^{L(n-r)} \to S^{L(s-r)} \smsh S^{L(n-s)} \]
in the homotopy category of spectra with $\Sigma_n \times \Sigma_s \times \Sigma_r$ action.
\end{definition}

\begin{lemma}
For surjections $\alpha: \un{n} \epi \un{s}$ and $\beta: \un{s} \epi \un{r}$, the following diagram commutes in the homotopy category:
\[ \begin{diagram}
  \node{S^{L(n-r)}} \arrow{s,r}{i^{L,L'}_{\beta \circ \alpha}} \arrow{e,t}{c^{-1}} \node{S^{L(s-r)} \smsh S^{L(n-s)}} \arrow{s,r}{i^{L,L'}_\beta \smsh i^{L,L'}_\alpha} \\
  \node{S^{L'(n-r)}} \arrow{e,t}{c^{-1}} \node{S^{L'(s-r)} \smsh S^{L'(n-s)}}
\end{diagram} \]
where the vertical maps are as in Definition \ref{def:spheres-sym}.
\end{lemma}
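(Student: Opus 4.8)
The plan is to reduce the statement to a strictly commuting square involving the honest composition maps $c$ of Definition~\ref{def:sphere-cooperad} rather than their chosen homotopy inverses, and then to verify that square by unwinding the definitions of $c$ and of the stabilization maps $i^{L,L'}_\gamma$.

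First I would observe that, since every $c$ is a weak equivalence and $c^{-1}$ is a chosen two-sided inverse in the homotopy category of $\Sigma_n\times\Sigma_s\times\Sigma_r$-spectra, the square in the statement commutes in that homotopy category if and only if the square
\[ \begin{diagram}
  \node{S^{L(s-r)} \smsh S^{L(n-s)}} \arrow{s,l}{i^{L,L'}_\beta \smsh i^{L,L'}_\alpha} \arrow{e,t}{c} \node{S^{L(n-r)}} \arrow{s,r}{i^{L,L'}_{\beta \circ \alpha}} \\
  \node{S^{L'(s-r)} \smsh S^{L'(n-s)}} \arrow{e,t}{c} \node{S^{L'(n-r)}}
\end{diagram} \]
commutes; one passes between the two squares by conjugating with the maps $c$, using only that $c\circ c^{-1}$ and $c^{-1}\circ c$ are identities in the homotopy category. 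I would then claim that this second square in fact commutes on the nose (we only need commutativity up to homotopy, but nothing is lost).

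To prove this, I would pass to adjoints under the $(\smsh,\Map)$-adjunction, so that the claim becomes the equality of two maps $(S^{L'})^{\smsh r}\smsh S^{L(s-r)}\smsh S^{L(n-s)}\to (S^{L'})^{\smsh n}$. Writing $E:=S^{L}$ and $D:=S^{L'-L}$ and using the splitting $S^{L'}\cong E\smsh D$ of Definition~\ref{def:spheres-sym}, one unwinds each composite into a string of structural maps. The composite ``$c$ then $i^{L,L'}_{\beta\circ\alpha}$'' becomes: shuffle $(S^{L'})^{\smsh r}$ into $D^{\smsh r}\smsh E^{\smsh r}$, compose the two mapping spectra into $\Map(E^{\smsh r},E^{\smsh n})$, evaluate on $E^{\smsh r}$, apply the iterated diagonal $(\beta\circ\alpha)^{\#}$ to $D^{\smsh r}$, and re-shuffle; while ``$i^{L,L'}_\beta\smsh i^{L,L'}_\alpha$ then $c$'' becomes: shuffle, evaluate the first mapping spectrum on $E^{\smsh r}$ and then the second on the resulting $E^{\smsh s}$, apply $\beta^{\#}$ and then $\alpha^{\#}$ to the $D$-factors, and re-shuffle. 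Comparing factorwise, the agreement on the $E=S^{L}$ factors is exactly the associativity of evaluation and composition in the closed symmetric monoidal category of spectra (``evaluate then evaluate'' equals ``compose then evaluate''), which is the content of the maps $c$; and the agreement on the $D=S^{L'-L}$ factors is the identity $(\beta\circ\alpha)^{\#}=\alpha^{\#}\circ\beta^{\#}$, the cofunctoriality of $(-)^{\#}$, which is a formal consequence of the coassociativity and cocommutativity of the diagonal on the suspension spectrum $S^{L'-L}$, and which is the content of the maps $i^{L,L'}_\gamma$. Since all the maps in sight (shuffles, evaluations, diagonals) are natural, they are equivariant for the relevant symmetric group actions, so the identity holds equivariantly.

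The only genuine work is the bookkeeping in the adjunction step: keeping the applications of the splitting $S^{L'}\cong S^{L}\smsh S^{L'-L}$ and of the $(\smsh,\Map)$-adjunctions consistent across the three occurrences of $i^{L,L'}$, so that the two-stage decomposition of the source matches the one-stage decomposition. Once this is set up correctly, the equality of the two composites is forced by the two structural identities above, so I expect this bookkeeping — not any conceptual point — to be the main obstacle.
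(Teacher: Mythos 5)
Your proof is correct and takes essentially the same route as the paper: both reduce the claim to the strict commutativity of the square with the honest composition maps $c$, and then pass to the homotopy category where $c$ is invertible. The paper asserts this strict commutativity in one sentence without further argument; you verify it by unwinding the $(\smsh,\Map)$-adjoints, which correctly boils down to $(\beta\circ\alpha)^{\#}=\alpha^{\#}\circ\beta^{\#}$ on the $S^{L'-L}$-factors and associativity of evaluation and composition on the $S^{L}$-factors.
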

\begin{proof}
The maps $c$ commute strictly with the vertical maps so their inverses commute in the homotopy category.
\end{proof}

We then have the following description of the comonad structure maps.

\begin{prop} \label{prop:delta-spsp}
For positive integers $r < s < n$, and with respect to the equivalences of \ref{prop:derAn-spsp} and \ref{prop:spsp-KK}, the comonad structure map
\[ \delta_{r,s}: K_r A_n \to K_r K_s A_n \]
takes the form
\[ \begin{diagram} \dgARROWLENGTH=1em
  \node{\hocolim_{L \to \infty} \left[ \left( \prod_{\un{n} \epi \un{r}} A_n \smsh S^{L(n-r)} \right)^{h\Sigma_n} \right]} \arrow{e,t}{\delta_{r,s}}
  \node{\hocolim_{M \to \infty} \left[ \hocolim_{N \to \infty} \left[ \prod_{\un{n} \epi \un{s} \epi \un{r}} A_n \smsh S^{N(n-s)} \smsh S^{M(s-r)} \right]^{h\Sigma_n} \right]^{h\Sigma_s}}
\end{diagram} \]
and, in the homotopy category, is induced by the map
\[ \prod_{\un{n} \epi \un{r}} A_n \smsh S^{L(n-r)} \to \left[ \prod_{\un{n} \epi \un{s} \epi \un{r}} A_n \smsh S^{L(n-s)} \smsh S^{L(s-r)} \right]^{h\Sigma_s} \]
which composes surjections and applies $c^{-1}$ of Definition \ref{def:sphere-cooperad}. The $\Sigma_s$-equivariance of $c^{-1}$ implies that this lands in the $\Sigma_s$-fixed points and hence the homotopy fixed points.

The map $\delta_{r,s}$ is then obtained by applying the $\Sigma_n$-homotopy fixed points, commuting them with the $\Sigma_s$-homotopy fixed points, and including into the homotopy colimits as the terms $M = L$ and $N = L$.
\end{prop}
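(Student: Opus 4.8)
The plan is to make the comonad structure map completely explicit and then read off $\delta_{r,s}$ from the identifications already in place. Recall that for the adjunction $(\der_*,\Phi)$ of Proposition~\ref{prop:derphisp} the comultiplication on $K = \der_*\Phi$ is $\delta = \der_*\,\eta\,\Phi$, where $\eta\colon \mathrm{id}\to \Phi\der_*$ is the unit. Evaluating at the symmetric sequence $A_n$ and taking the $r$-th term, $\delta$ becomes $\der_r$ applied to the map of functors $\eta_{\Phi A_n}\colon \Phi A_n\to \Phi\der_*\Phi A_n$. Since $\der_*\Phi A_n$ is a bounded symmetric sequence whose $s$-th term is $K_sA_n$, Definition~\ref{def:derAn} identifies $K_r(\der_*\Phi A_n)$ with $\prod_s K_rK_sA_n$, and $\delta_{r,s}$ is the composite of $\der_r(\eta_{\Phi A_n})$ with the projection onto the factor indexed by $s$. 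Thus the entire proof reduces to describing $\der_r(\eta_{\Phi A_n})$ under the equivalences of Propositions~\ref{prop:derAn-spsp} and~\ref{prop:spsp-KK}.

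To obtain that description I would re-run the computation behind Proposition~\ref{prop:derAn-spsp} --- Lemma~\ref{lem:Rx-spsp} together with the results between it and Proposition~\ref{prop:derAn-spsp} --- applied not to $\Phi A_n$ but to $\Phi B$ for an arbitrary bounded symmetric sequence $B$. For each term $B_m$ this yields a $\hocolim_L$, over the set of surjections $\un{m}\epi\un{r}$, of the terms $B_m\smsh S^{L(m-r)}$ (with the diagonal $\Sigma_m$-action, passing to $h\Sigma_m$). Taking $B = \der_*\Phi A_n$, so $B_s = K_sA_n$, and commuting the finite products past smash products and homotopy fixed points exactly as in the proof of Proposition~\ref{prop:spsp-KK}, recovers the double-$\hocolim$ formula for $K_rK_sA_n$. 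What remains is to check that through these identifications $\der_r(\eta_{\Phi A_n})$ sends the factor indexed by a surjection $\un{n}\epi\un{r}$ to the product over all its factorizations $\un{n}\epi\un{s}\epi\un{r}$, applying $c^{-1}$ of Definition~\ref{def:sphere-cooperad} to the sphere coordinate on each factor.

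The appearance of the factorizations is structural. Unwinding $\der_r$ of $\eta_{\Phi A_n}$ by means of the coend/left Kan extension model of $\der_*$ (Definition~\ref{def:der-sp}) and the formula $\Phi(B)(X)=\Map_\Sigma(\der_*(\Sigma^\infty R_X),B)$ (Definition~\ref{def:phi}), the enriched-Yoneda bookkeeping relates a single surjection $\un{n}\epi\un{r}$ parametrizing a $\der_r$-class on $\Phi A_n$ to a surjection $\un{n}\epi\un{s}$ --- recording the intermediate degree at which $\der_*\Phi A_n$ is sampled --- followed by a surjection $\un{s}\epi\un{r}$. The sphere twist $S^{L(n-r)}$ on the source must then be compared with $S^{L(n-s)}\smsh S^{L(s-r)}$ on the target; the natural map in that direction is the evaluation/composition equivalence $c$ of Definition~\ref{def:sphere-cooperad}, but the unit $\eta$ runs opposite to the direction in which $c$ is built (out of mapping spectra into, rather than out of, the spheres), so the induced map exists only after inverting $c$ --- hence $c^{-1}$ in the homotopy category. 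The lemma preceding the proposition shows $c^{-1}$ is compatible with the transition maps $i^{L,L'}_\alpha$, so the description descends to the homotopy colimits; naturality of $\der_r(\eta)$ in $L$ then produces the map on the $L$-th terms, and after applying $h\Sigma_n$ and commuting it with $h\Sigma_s$ (finite groups, so these commute) one includes at $M=N=L$ as asserted. The $\Sigma_s$-equivariance of $c^{-1}$ is precisely what lands the image in the $\Sigma_s$-homotopy fixed points.

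The main obstacle is exactly this last bookkeeping: pinning down $\der_r(\eta_{\Phi A_n})$ combinatorially and tracking the sphere-coordinate actions --- getting the direction of the sphere comparison right (so that $c^{-1}$ rather than $c$ appears) and verifying that the target is naturally a $\Sigma_s$-fixed-point construction. No idea beyond the proof of Proposition~\ref{prop:derAn-spsp} is required, but the equivariance claims and the homotopy-colimit compatibility demand care.
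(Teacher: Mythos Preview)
Your high-level setup is right: $\delta_{r,s}$ is indeed $\der_r$ of the unit $\eta$ applied to $\Phi A_n$, followed by the projection to the $s$-th factor, and the target identification via Proposition~\ref{prop:spsp-KK} is as you describe. Where your proposal parts ways with the paper is in how $\der_r(\eta)$ is actually computed.

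The paper does not work directly with the coend model and ``enriched-Yoneda bookkeeping''. Instead it first proves an explicit formula for the unit map (Proposition~\ref{prop:spsp-eta}): for any $F\colon\finspec\to\spectra$, the component $\eta_s\colon F(X)\to(\der_sF\smsh X^{\smsh s})^{h\Sigma_s}$ is the composite of the diagonal $F(X)\to F(X^{\times s})^{h\Sigma_s}$, the map $\iota$ to the $s$-th co-cross-effect, the inclusion into the $L=0$ term of the multilinearization, and the \emph{inverse of the assembly map equivalence} of Definition~\ref{def:assembly}. This is established first for representables (where it is the simplicial enrichment of $X\mapsto X^{\smsh s}$) and then propagated through the coend. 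With this in hand, the proof of the present proposition specializes to $F(X)=(A_n\smsh X^{\smsh n})^{h\Sigma_n}$, takes the $r$-th cross-effect of each stage of the composite, and multilinearizes.

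The appearance of $c^{-1}$ is not a formal direction-of-arrow phenomenon; it is pinned down by a concrete diagram chase. After taking cross-effects, the assembly-map step produces a map built from the diagonals $\alpha^{\#}\colon(S^L)^{\smsh s}\to(S^L)^{\smsh n}$ associated to surjections $\alpha\colon\un{n}\epi\un{s}$. The paper shows that, for each such $\alpha$, the diagram
\[
\begin{diagram}
  \node{\Map(S^{Ls},S^{Ln})\smsh\Map(S^{Lr},S^{Ls})} \arrow{e,t}{c} \arrow{se,b}{(1,\Delta)} \node{\Map(S^{Lr},S^{Ln})} \arrow{s,r}{i^{0,L}_\alpha} \\
  \node[2]{\Map(S^{Ls},S^{Ln})\smsh\Map(S^{Lr},S^{Ln})}
\end{diagram}
\]
commutes up to an equivariant and $L$-natural homotopy. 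After desuspending, this reduces to the elementary fact that the two inclusions $S^W\to S^W\smsh S^W$, $w\mapsto w\smsh *$ and $w\mapsto *\smsh w$, are homotopic via the rotation $w\mapsto(w\cos t,\,w\sin t)$. Inverting $c$ in this diagram is what yields $c^{-1}$ in the final description.

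Your proposal gestures at the right answer but does not supply this content. The heuristic ``$\eta$ runs opposite to $c$'' is not a proof, and the coend bookkeeping by itself will not produce the sphere identification without something equivalent to Proposition~\ref{prop:spsp-eta} and the diagram above. The missing ingredient is precisely that explicit model for $\eta_s$ via co-cross-effects and the assembly equivalence, together with the homotopy that identifies the resulting map with $c^{-1}$.
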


\begin{remark} \label{rem:pro-operad}
Propositions~ \ref{prop:derAn-spsp} and \ref{prop:delta-spsp} can be interpreted in terms of a certain {\it pro-operad} of spectra. The pro-operad is a tower of operads, where the \ord{n} spectrum of the \ord{L} operad is $S^{L(1-n)}$. Associated to each operad $S^{L(1-*)}$ is a comonad $K^L$ whose coalgebras are precisely the right $S^{L(1-*)}$-modules. The comonad $K$ is then equivalent (as a comonad) to the homotopy colimit of the $K^L$.
\end{remark}

We now turn to the proofs of Propositions \ref{prop:derAn-spsp} and \ref{prop:delta-spsp}. Following the general approach of Section \ref{sec:sp}, this involves first understanding the derivatives of the representable functors in $[\finspec,\spectra]$. These are given by the following lemma.

\begin{lemma} \label{lem:Rx-spsp}
For $X \in \finspec$, we have
\[ \der_n(\Sigma^\infty \Hom_{\spectra}(X,-)) \homeq \dual(X^{\smsh n}) \]
with $\Sigma_n$-action given by permuting the factors of $X$.
\end{lemma}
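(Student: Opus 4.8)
The plan is to recognise the functor $F := \suspec\Hom_{\spectra}(X,-)$ as a composite $\suspec\loops^\infty \circ L_X$, where $L_X$ is the exact functor $Y \mapsto \dual X \smsh Y$, and then to combine the well-known derivatives of $\suspec\loops^\infty$ with the behaviour of Goodwillie derivatives under precomposition with an exact functor.

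First I would use Spanier--Whitehead duality: since $X \in \finspec$ it is dualizable, so there is a natural equivalence $\Map(X,Y) \homeq \dual X \smsh Y$ of spectra. As $X$ is cofibrant in $\finspec$ and every spectrum is fibrant, the simplicial mapping object $\Hom_{\spectra}(X,Y)$ is a model for $\loops^\infty\Map(X,Y)$, whence there is a natural equivalence
\[ F(Y) = \suspec\Hom_{\spectra}(X,Y) \homeq \suspec\loops^\infty(\dual X \smsh Y). \]
Next I would invoke the Snaith splitting: for a connective spectrum $Z$ there is a natural equivalence $\suspec\loops^\infty Z \homeq \Wdge_{k \geq 1}(Z^{\smsh k})_{h\Sigma_k}$. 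Substituting $Z = \dual X \smsh Y$, which is connective whenever $Y$ is sufficiently highly connected, and using the symmetry of $\smsh$ to rewrite $(\dual X \smsh Y)^{\smsh k} \isom \dual(X^{\smsh k}) \smsh Y^{\smsh k}$, one finds that on highly connected inputs $F$ agrees with the functor
\[ H(Y) := \Wdge_{k \geq 1}\left(\dual(X^{\smsh k}) \smsh Y^{\smsh k}\right)_{h\Sigma_k}, \]
where $\Sigma_k$ acts diagonally, permuting the smash factors of both $\dual(X^{\smsh k})$ and $Y^{\smsh k}$.

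The functor $H$ is a wedge of homogeneous functors, so its Taylor tower is $P_nH(Y) \homeq \prod_{k=1}^n(\dual(X^{\smsh k})\smsh Y^{\smsh k})_{h\Sigma_k}$, its $n$-th layer is $D_nH(Y) \homeq (\dual(X^{\smsh n})\smsh Y^{\smsh n})_{h\Sigma_n}$, and reading off the coefficient spectrum gives $\der_nH \homeq \dual(X^{\smsh n})$ with $\Sigma_n$ permuting the smash factors. Since Goodwillie derivatives depend only on the restriction of a functor to arbitrarily highly connected inputs, $\der_nF \homeq \der_nH \homeq \dual(X^{\smsh n})$ with the permutation action, which is the claim. (Alternatively one may bypass the comparison with $H$ by combining the standard identification $\der_n(\suspec\loops^\infty) \homeq S$ with trivial $\Sigma_n$-action with the general fact that $P_n$ and $D_n$ commute with precomposition by an exact functor such as $L_X$ — since $L_X(X\ast T)\homeq L_X(X)\ast T$ — which again has the effect of multiplying the $n$-th derivative by $(\dual X)^{\smsh n}$.)

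The only points that are not purely formal are (i) the connectivity hypothesis in the Snaith splitting, handled by the observation that $\dual X\smsh Y$ is connective once $Y$ is connected enough together with the insensitivity of derivatives to the behaviour of a functor away from highly connected objects; and (ii) tracking the symmetric group action through the shuffle isomorphism $(\dual X\smsh Y)^{\smsh n}\isom \dual(X^{\smsh n})\smsh Y^{\smsh n}$ to confirm that it is indeed the factor-permutation action on $\dual(X^{\smsh n})$, which is a routine consequence of the symmetric monoidal structure on EKMM $S$-modules. Neither of these presents a genuine obstacle.
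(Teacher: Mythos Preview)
Your argument is correct. The paper's own proof of this lemma is a one-line deferral: it simply notes that the result is well known and cites the explicit models for derivatives of spectrum-to-spectrum functors in \cite[3.1.4]{arone/ching:2011}. Those models identify $\der_nF$ as the multilinearisation of the $n$th (co-)cross-effect evaluated at spheres, and for $F=\Sigma^\infty\Hom_{\spectra}(X,-)$ a direct computation with that recipe yields $\dual(X^{\smsh n})$. Your route via the factorisation $F \simeq \Sigma^\infty\Omega^\infty\circ(\dual X\smsh -)$, combined with either the Snaith splitting or the behaviour of $P_n$ under precomposition with an exact functor, is an equally standard path to the same conclusion and is arguably more transparent about where the permutation $\Sigma_n$-action comes from. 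Both arguments are elementary; the paper simply chose to cite rather than spell anything out.
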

\begin{proof}
This result is well-known. It follows, for example, from the models for the derivatives of functors from spectra to spectra described in \cite[3.1.4]{arone/ching:2011}.
\end{proof}

\begin{corollary} \label{cor:spsp-phi}
The functor $\der_*: [\finspec,\spectra] \to \symseq$ has right adjoint $\Phi: \symseq \to [\finspec,\spectra]$ that satisfies
\[ \Phi(A)(X) \homeq \prod_{n = 1}^{\infty} (A_n \smsh X^{\smsh n})^{h\Sigma_n}. \]
\end{corollary}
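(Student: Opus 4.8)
The plan is to feed the computation of the derivatives of the representable functors (Lemma~\ref{lem:Rx-spsp}) into the explicit point‑set formula for $\Phi$ obtained in Definition~\ref{def:phi}. Recall from there that
\[ \Phi(A)(X) \isom \prod_{n \geq 1} \Map\big(\der_n(\Sigma^\infty R_X), A_n\big)^{\Sigma_n}, \]
so it suffices to identify, naturally in $X$, each factor $\Map(\der_n(\Sigma^\infty R_X), A_n)^{\Sigma_n}$ with $(A_n \smsh X^{\smsh n})^{h\Sigma_n}$, and then take the product over $n$.

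First I would substitute Lemma~\ref{lem:Rx-spsp}, which gives a $\Sigma_n$‑equivariant equivalence $\der_n(\Sigma^\infty R_X) \homeq \dual(X^{\smsh n})$, with $\Sigma_n$ permuting the smash factors of $X^{\smsh n}$. Since $X \in \finspec$ is a finite cell spectrum, $X^{\smsh n}$ is dualizable, so the standard Spanier--Whitehead duality equivalence $\Map(\dual Y, Z) \homeq Z \smsh Y$ (for $Y$ dualizable) applies with $Y = X^{\smsh n}$, $Z = A_n$, and yields
\[ \Map\big(\der_n(\Sigma^\infty R_X), A_n\big) \homeq A_n \smsh X^{\smsh n}, \]
naturally in $X$ and $\Sigma_n$‑equivariantly for the diagonal action of $\Sigma_n$ on the right (permuting factors of $X^{\smsh n}$ and acting on $A_n$ through its given action). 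Next I would deal with the passage from strict to homotopy fixed points. By the choice made in Definition~\ref{def:reps}, $\der_*(\Sigma^\infty R_X)$ is a cofibrant symmetric sequence; in particular $\der_n(\Sigma^\infty R_X)$ is cofibrant as a $\Sigma_n$‑spectrum. As every object of $\spectra$ is fibrant, the mapping spectrum $\Map(\der_n(\Sigma^\infty R_X), A_n)$ is then a fibrant $\Sigma_n$‑spectrum with the correct underlying homotopy type, so its strict $\Sigma_n$‑fixed points compute the homotopy fixed points. Combining the two observations gives
\[ \Map\big(\der_n(\Sigma^\infty R_X), A_n\big)^{\Sigma_n} \homeq \big(A_n \smsh X^{\smsh n}\big)^{h\Sigma_n}, \]
and taking the product over $n \geq 1$ produces the claimed formula.

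The main obstacle is making the duality equivalence of the middle step $\Sigma_n$‑equivariant in a form that survives passage to homotopy fixed points. One clean way is to model $\dual(X^{\smsh n})$ by the EKMM function spectrum $F_{\spectra}(X^{\smsh n}, \sphere)$, with $X$ taken to be a finite cell $S$‑module so that this is genuinely dualizing, and to note that the evaluation and coevaluation maps exhibiting the dualizability of $X^{\smsh n}$ can be chosen $\Sigma_n$‑equivariantly since the $\Sigma_n$‑action only permutes smash factors. The resulting adjunction equivalence $\Map(F_{\spectra}(X^{\smsh n}, \sphere), A_n) \homeq A_n \smsh X^{\smsh n}$ is then $\Sigma_n$‑equivariant, and since the domain is (up to the cofibrant replacement built into the choice in Definition~\ref{def:reps}) a cofibrant $\Sigma_n$‑spectrum mapping into a fibrant one, the induced map on strict $= $ homotopy fixed points is the desired equivalence. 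The remaining bookkeeping — naturality in $X$ and compatibility of the equivalence with the maps in the indexing category used later for the $\hocolim$ over $L$ — is routine and follows from functoriality of duality.
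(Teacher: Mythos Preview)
Your argument is correct and is essentially the implicit proof the paper has in mind: the corollary is stated without proof immediately after Lemma~\ref{lem:Rx-spsp}, and is meant to follow by plugging that lemma into the formula for $\Phi$ from Definition~\ref{def:phi} and invoking Spanier--Whitehead duality. You have supplied more care than the paper does about the $\Sigma_n$-equivariance of the duality equivalence and the passage from strict to homotopy fixed points, but these are exactly the details one would fill in.
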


\begin{proof}[Proof of Proposition \ref{prop:derAn-spsp}]
It follows from Corollary \ref{cor:spsp-phi} that the spectrum $K_r A_n$ is given by the \ord{r} derivative of the functor
\[ X \mapsto (A_n \smsh X^{\smsh n})^{h\Sigma_n}. \]
We calculate this using cross-effects. The \ord{r} cross-effect of $(A_n \smsh X^{\smsh n})$ is
\[ (X_1,\dots,X_r) \mapsto \prod_{\alpha: \un{n} \epi \un{r}} A_n \smsh X_1^{n_1} \smsh \dots \smsh X_r^{n_r} \]
where we are writing $n_j := |\alpha^{-1}(j)|$.

Since cross-effects commute with homotopy fixed points, the \ord{r} cross-effect of $(A_n \smsh X^{\smsh n})^{h\Sigma_n}$ is then
\[ (X_1,\dots,X_r) \mapsto \left[ \prod_{\alpha: \un{n} \epi \un{r}} A_n \smsh X_1^{n_1} \smsh \dots \smsh X_r^{n_r} \right]^{h\Sigma_n}. \]
According to \cite{goodwillie:2003}, we can now recover the \ord{r} derivative by multilinearizing. This gives us
\[ K_r A_n \homeq \hocolim_{L \to \infty} \Map \left( (S^L)^{\smsh r},  \left[ \prod_{\alpha: \un{n} \to \un{r}} A_n \smsh (S^L)^{\smsh n_1} \smsh \dots \smsh (S^L)^{\smsh n_r}\right]^{h\Sigma_{n}} \right). \]
The terms in this homotopy colimit are clearly equivalent to
\[ \left( \prod_{\un{n} \epi \un{r}} A_n \smsh S^{L(n-r)} \right)^{h\Sigma_n} \]
so it remains to show that the maps between these terms are given by the $i^{L,L'}_{\alpha}$ of Definition \ref{def:spheres-sym}.

For this we note that the stabilization maps are given by the topological enrichment of the cross-effect functor which yields maps
\[ \creff_n(F)(S^L,\dots,S^L) \smsh S^{L'-L} \smsh \dots \smsh S^{L'-L} \to \creff_n(F)(S^{L'}, \dots, S^{L'}). \]
(Here we are using the fact that the spectra $S^{L'-L}$ are suspension spectra. The cross-effect functors are not strictly enriched over based spaces, but are enriched in an up-to-homotopy sense that is sufficient for us.) The topological enrichment of the functor $X \mapsto X^{\smsh n}$ is given by the diagonal of the space being tensored with. It follows that the stabilization map in our case is given by the map
\[ (A_n \smsh (S^L)^{\smsh n_1} \smsh \dots \smsh (S^L)^{\smsh n_r}) \smsh S^{L'-L} \smsh \dots \smsh S^{L'-L} \to (A_n \smsh (S^{L'})^{\smsh n_1} \smsh \dots \smsh (S^{L'})^{\smsh n_r}) \]
that applies the diagonal
\[ \alpha^{\#}: (S^{L'-L})^{\smsh r} \to (S^{L'-L})^{\smsh n}. \]
This is equivalent to the description of $i^{L,L'}_{\alpha}$ in Definition \ref{def:spheres-sym}.
\end{proof}

We now turn to Proposition \ref{prop:delta-spsp}. The comonad structure maps $\delta_{r,s}: K_r A_n \to K_r K_s A_n$ are derived from the unit map of the $(\der_*,\Phi)$ adjunction, that is, the map
\[ \eta: F \to \Phi \der_*F. \]
It follows from Corollary \ref{cor:spsp-phi} that $\eta$ is built from natural transformations
\[ \eta_s: F(X) \to (\der_sF \smsh X^{\smsh s})^{h\Sigma_s}. \]
The structure map $\delta_{r,s}$ is given by applying $\der_r$ to the natural transformation $\eta_s$ associated to the functor $F(X) = (A_n \smsh X^{\smsh n})^{h\Sigma_n}$. The first step in proving Proposition \ref{prop:delta-spsp} is therefore to get a better understanding of the maps $\eta_s$. In order to express these we use the `co-cross-effects'.

\begin{definition}[Co-cross-effect] \label{def:cocross}
For a pointed simplicial functor $F: \finspec \to \spectra$, the \emph{\ord{n} co-cross-effect} is the functor of $n$-variables, denoted by $\creff^n(F)$, given by
\[ \creff^n(F)(X_1,\dots,X_n) := \thocofib_{S \subseteq \un{n}} F \left( \prod_{i \in S} X_i \right) \]
This is the total homotopy cofibre of an $n$-cube, as defined by Goodwillie \cite[1.4]{goodwillie:1991}.

For functors from spectra to spectra, the \ord{n} co-cross-effect is naturally equivalent to the \ord{n} cross-effect \cite[2.2]{ching:2010} and hence its multilinearization can be used to calculate the \ord{n} derivative. We therefore have
\[ \der_nF \homeq \hocolim_{L \to \infty} \Map((S^L)^{\smsh n}, \creff^n(F)(S^L,\dots,S^L)). \]
\end{definition}

\begin{definition}[Assembly map equivalence] \label{def:assembly}
Goodwillie established in \cite[\S5]{goodwillie:2003} the relationship between the cross-effects and the derivatives. Central to this is the existence of an equivalence (for finite cell spectra $X_1,\dots,X_n$)
\[ \der_nF \smsh X_1 \smsh \dots \smsh X_n \arrow{e,t}{\sim} \hocolim_{L \to \infty} \Map((S^L)^{\smsh n}, \creff^n(F)(\Sigma^L X_1,\dots,\Sigma^L X_n)). \]
We refer to this as the \emph{assembly map equivalence} and it describes how a symmetric multilinear functor (in this case the multilinearized co-cross-effect) can be described with a single coefficient spectrum (in this case $\der_nF$).

In the case that $X_1,\dots,X_n$ are suspension spectra, the assembly map equivalence can be described at the level of co-cross-effects (i.e. before multilinearizing). In this case there are natural maps
\[ \creff^n(F)(S^L,\dots,S^L) \smsh X_1 \smsh \dots \smsh X_n \to \creff^n(S^L \smsh X_1, \dots, S^L \smsh X_n) \]
that arise from the simplicial enrichment of the co-cross-effect functor in each variable. (Strictly speaking, this map only exists up to certain inverse equivalences which we suppress.) Looping and taking the homotopy colimit as $L \to \infty$ we get the assembly map equivalence as described above.
\end{definition}

With this in mind, we can describe the maps $\eta_n$ as follows.

\begin{prop} \label{prop:spsp-eta}
For a functor $F: \finspec \to \spectra$, the map $\eta_n$ is equivalent to the composite
\[ \dgTEXTARROWLENGTH=3em \begin{split}
    F(X) &\arrow{e,t}{\Delta} F(X \times \dots \times X)^{h\Sigma_n} \\
        &\arrow{e,t}{\iota} \creff^n(F)(X,\dots,X)^{h\Sigma_n} \\
        &\arrow{e,t}{i} \left[\hocolim_{L \to \infty} \Map\left((S^L)^{\smsh n}, \creff^n(F)(\Sigma^L X, \dots, \Sigma^L X)\right)\right]^{h\Sigma_n} \\
        &\arrow{e,t}{\sim} (\der_nF \smsh X^{\smsh n})^{h\Sigma_n}
\end{split} \]
where $\Delta$ is induced by the diagonal $X \to X \times \dots \times X$, $\iota$ is the map from the terminal object of a cube to its total homotopy cofibre, $i$ is the inclusion as the $L = 0$ term and the final equivalence is an inverse to the assembly map equivalence of \ref{def:assembly}.
\end{prop}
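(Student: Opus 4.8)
The plan is to recognise $\eta_n$ as the unit of an adjunction and then reduce the comparison to the case of representable functors, where both maps can be written down explicitly. First I would note that $\eta_n$ is the unit of the Quillen adjunction $(\der_n,\Phi_n)$ between $[\Cfin,\spectra]$ and the category of $\Sigma_n$-spectra, where
\[ \Phi_n(A)(X) := \Map(\der_n(\Sigma^\infty R_X),A)^{\Sigma_n} \homeq (A\smsh X^{\smsh n})^{h\Sigma_n}; \]
this adjunction is obtained from $(\der_*,\Phi)$ by composing with the adjunction between $\symseq$ and $\Sigma_n$-spectra given by projection onto and inclusion of the $n$-th term, and the displayed equivalence is Corollary~\ref{cor:spsp-phi} (via Lemma~\ref{lem:Rx-spsp}). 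Thus both $\eta_n$ and the composite in the statement, which I denote $\gamma$, are natural transformations $\mathrm{Id}\to\Phi_n\der_n$ of endofunctors of $[\Cfin,\spectra]$. By the adjunction $(\der_n,\Phi_n)$ such a natural transformation is the same datum as a natural endomorphism of $\der_n$, and since $\der_n$ is the enriched left Kan extension of $X\mapsto\der_n(\Sigma^\infty R_X)$ along $X\mapsto\Sigma^\infty R_X$ (Definition~\ref{def:der-sp}; for the homotopical version one argues as in the proof of Proposition~\ref{prop:der-sp}), a natural endomorphism of $\der_n$ is determined by its restriction to the representables. It therefore suffices to prove that $\eta_n$ and $\gamma$ agree when $F=\Sigma^\infty R_X$.

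On a representable functor I claim both maps are the suspended, stabilised ``$n$-th smash power'' transformation
\[ \Sigma^\infty\Hom_\spectra(X,Y)\longrightarrow \Map(X^{\smsh n},Y^{\smsh n})^{h\Sigma_n}\homeq \bigl(\der_n(\Sigma^\infty R_X)\smsh Y^{\smsh n}\bigr)^{h\Sigma_n},\qquad \phi\longmapsto\phi^{\smsh n}. \]
For $\eta_n$ this is immediate from the description of the unit of a left Kan extension adjunction at an object of the form $\Sigma^\infty R_X$ together with Lemma~\ref{lem:Rx-spsp}: the map $\der_n(\Sigma^\infty R_Y)\to\der_n(\Sigma^\infty R_X)$ induced by $\phi\colon X\to Y$ is $\dual(\phi^{\smsh n})$. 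For $\gamma$ one unwinds its four constituent maps: $\Delta$ sends $\phi$ to $(\phi,\dots,\phi)\in\Hom(X,Y)^{\times n}\cong\Hom(X,Y^{\times n})$; since $\Hom_\spectra(X,-)$ is linear, $\creff^n(\Sigma^\infty R_X)(Y_1,\dots,Y_n)\homeq \creff^n(\Sigma^\infty\Omega^\infty)\bigl(\Map(X,Y_1),\dots,\Map(X,Y_n)\bigr)$, whose multilinearisation is $\dual(X)^{\smsh n}\smsh Y_1\smsh\dots\smsh Y_n$; and the maps $\iota$ and $i$, followed by the assembly-map equivalence, pick out exactly the $\phi\mapsto\phi^{\smsh n}$ component of this, since the $n$-th multilinearisation of $\Sigma^\infty\Omega^\infty$ is the $n$-fold smash product (reflecting $\der_n(\Sigma^\infty\Omega^\infty)\homeq\mathbb{S}$). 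Both descriptions are manifestly $\Sigma_n$-equivariant and natural in maps of representables, so they determine the same natural endomorphism of $\der_n$ restricted to representables, and the reduction above then completes the proof.

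I expect the main obstacle to be this last identification: tracking the composite $i\circ\iota\circ\Delta$ through the co-cross-effect of $\Sigma^\infty R_X$ and the assembly-map equivalence of Definition~\ref{def:assembly}. This requires computing the co-cross-effects of $\Sigma^\infty\Omega^\infty$ and, more delicately, keeping careful track of the $\Sigma_n$-action, which is broken by the passage to the $n$-variable co-cross-effect and is recovered only upon restricting to the diagonal; one must check that the relevant maps are equivariant and commute with the passage to homotopy fixed points. One must also bear in mind (as flagged in Definitions~\ref{def:cocross} and~\ref{def:assembly}) that the simplicial enrichment of the co-cross-effect functor, and hence the map $i$, exists only up to a zigzag of natural equivalences, so that the final statement is an identification of natural transformations in the homotopy category of functors rather than a strict equality.
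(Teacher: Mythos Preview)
Your approach is essentially the same as the paper's: both reduce to representable functors and verify the identity there by unwinding $\eta_n$ and the displayed composite as the ``$n$-th smash power'' map $\phi \mapsto \phi^{\smsh n}$. The one notable difference is in how the reduction from arbitrary $F$ to representables is carried out. You argue abstractly via the adjunction $(\der_n,\Phi_n)$ and the left Kan extension property of $\der_n$, converting the problem to one about natural endomorphisms of $\der_n$. The paper instead composes both maps with the forgetful map $(\der_nF\smsh X^{\smsh n})^{h\Sigma_n}\to \der_nF\smsh X^{\smsh n}$, uses the coend formula $F(Y)\smsh_Y R_Y(X)$ and naturality of $\eta_n$ directly to reduce to representables, and then appeals to the fact that a factorisation through homotopy fixed points is unique up to homotopy. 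This last step is the concrete substitute for your homotopical Kan extension argument, and it sidesteps the issue you flag about making the Kan extension universal property work homotopically; your reference to Proposition~\ref{prop:der-sp} does not quite supply this, since that proposition only identifies the homotopy type of $\der_*F$, not the universal property of natural transformations out of it.
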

\begin{proof}
We show first that this calculation is valid for representable functors. Suppose $F = \Sigma^\infty R_Y$ for some $Y \in \finspec$. Then the map
\[ \eta_n: \Sigma^\infty R_Y(X) \to \Phi \der_n(\Sigma^\infty R_Y)(X) = \Map(\der_n(\Sigma^\infty R_X), \der_n(\Sigma^\infty R_Y))^{\Sigma_n} \homeq \Map(Y^{\smsh n},X^{\smsh n})^{\Sigma_n} \]
is given, for formal reasons, by the simplicial enrichment of the functor $X \mapsto X^{\smsh n}$. This is adjoint to the diagonal map
\[ \Hom(Y,X) \arrow{e,t}{\Delta} \left[\Hom(Y,X)^{\smsh n}\right]^{\Sigma_n} \to \Hom(Y^{\smsh n},X^{\smsh n})^{\Sigma_n} \isom \Omega^\infty \Map(Y^{\smsh n},X^{\smsh n})^{\Sigma_n}. \]
On the other hand, the composite in the statement of the lemma is equivalent to
\[ \dgTEXTARROWLENGTH=3em \begin{split} \Sigma^\infty \Hom(Y,X) &\arrow{e,t}{\Delta} [\Sigma^\infty \Hom(Y,X)^{n}]^{\Sigma_n} \\
    &\arrow{e,t}{\iota} [\Sigma^\infty \Hom(Y,X)^{\smsh n}]^{\Sigma_n} \\
    &\arrow{e,t}{i} [\Map(Y,X)^{\smsh n}]^{\Sigma_n} \\
    &\arrow{e,t}{\sim} \Map(Y^{\smsh n},X^{\smsh n})^{\Sigma_n}
\end{split} \]
which is therefore equivalent to $\eta_n$.

Now take an arbitrary $F: \finspec \to \spectra$. The composite
\[ F(X) \arrow{e,t}{\eta_n} (\der_nF \smsh X^{\smsh n})^{h\Sigma_n} \to (\der_nF \smsh X^{\smsh n}) \]
is equivalent, by naturality of $\eta_n$, to
\[ F(Y) \smsh_{Y \in \finspec} R_Y(X) \arrow{e,t}{\eta_n} F(Y) \smsh_{Y \in \finspec} [\der_n(R_Y) \smsh X^{\smsh n}]^{h\Sigma_n} \to F(Y) \smsh_{Y \in \finspec} \der_n(R_Y) \smsh X^{\smsh n} \]
which, from the calculation for $R_Y$, we know to be equivalent to the composite
\[ \dgTEXTARROWLENGTH=3em \begin{split} F(Y) \smsh_{Y} R_Y(X) &\to F(Y) \smsh_{Y} R_Y(X \times \dots \times X) \\
    &\to F(Y) \smsh_{Y} \creff^n(R_Y)(X,\dots,X) \\
    &\to F(Y) \smsh_{Y} \der_n(R_Y) \smsh X^{\smsh n}
\end{split} \]
which is equivalent to
\[ F(X) \to F(X \times \dots \times X) \to \creff^n(F)(X,\dots,X) \to (\der_nF \smsh X^{\smsh n}). \]
The factorization via homotopy fixed points is unique up to homotopy, so this completes the proof.
\end{proof}

\begin{remark}
The maps $\eta_n$ have been previously studied by McCarthy \cite{mccarthy:2001}. He used them to show how the $n$-excisive approximation $P_nF$ can be built from $P_{n-1}F$ and $D_nF$ via a certain Tate spectrum. Since our goal here is also to understand how $P_nF$ is built from its pieces, it should not be a surprise to see this map appearing here too.
\end{remark}

\begin{example}
When $F$ is the $n$-homogeneous functor given by
\[ F(X) = (A_n \smsh X^{\smsh n})_{h\Sigma_n} \]
the map $\eta_n$ is equivalent to the norm map of Greenlees-May \cite{greenlees/may:1995}
\[ N: (A_n \smsh X^{\smsh n})_{h\Sigma_n} \to (A_n \smsh X^{\smsh n})^{h\Sigma_n} \]
from homotopy orbits to homotopy fixed points.
\end{example}

We are now in a position to calculate the comonad structure maps $\delta_{r,s}: K_r A_n \to K_r K_s A_n$.

\begin{proof}[Proof of Proposition \ref{prop:delta-spsp}]
Recall that $\delta_{r,s}$ is given by applying $\der_r$ to the unit map $\eta_s$ of Proposition \ref{prop:spsp-eta} in the case that $F(X) = (A_n \smsh X^{\smsh n})^{h\Sigma_n}$. Recalling the form of the \ord{s} cross-effect of $F$ from the proof of \ref{prop:derAn-spsp} we see that $\eta_s$ is the composite
\[ \dgTEXTARROWLENGTH=3em \begin{split}
  (A_n \smsh X^{\smsh n})^{h\Sigma_n}
    &\arrow{e,t}{\Delta} \left[ \left[ \prod_{\un{n} \epi \un{s}} A_n \smsh X^{\smsh n} \right]^{h\Sigma_n} \right]^{h\Sigma_s} \\
    &\arrow{e} \left[ \hocolim_{N \to \infty} \left[ \prod_{\un{n} \epi \un{s}} A_n \smsh S^{N(n-s)} \smsh X^{\smsh n} \right]^{h\Sigma_n} \right]^{h\Sigma_s} \\
    &\lweq \left[ \hocolim_{N \to \infty} \left[ \prod_{\un{n} \epi \un{s}} A_n \smsh S^{N(n-s)} \right]^{h\Sigma_n} \smsh X^{\smsh s} \right]^{h\Sigma_s}
\end{split} \]
where the first map $\Delta$ is the diagonal, the second is inclusion as the $N = 0$ term of the homotopy colimit, and the third is the inverse of the assembly map equivalence of \ref{def:assembly}, applied to the \ord{s} cross-effect of the functor $F(X)=(A_n\smsh X^{\smsh n})^{h\Sigma_n}$. This equivalence is valid for all finite cell spectra $X$, but is easiest to describe in the case that $X$ is a suspension spectrum. In that case, each surjection $\alpha: \un{n} \epi \un{s}$ determines a diagonal map
\[ \alpha^{\#}: X^{\smsh s} \to X^{\smsh n} \]
and the assembly map equivalence above is built from these.

To get $\delta_{r,s}$ we now apply $\der_r$ to this composite. First we take the \ord{r} cross-effect. Under the identification again of the \ord{r} cross-effect of $X^{\smsh n}$ in terms of surjections, and noting that cross-effects commute with homotopy colimits, products, homotopy fixed points and smash products, we get the composite
\[ \dgTEXTARROWLENGTH=3em \begin{split}
  \left[ \prod_{\un{n} \epi \un{r}} A_n \smsh X^{\smsh n} \right]^{h\Sigma_n}
    &\arrow{e} \left[ \left[ \prod_{\un{n} \epi \un{s}} \prod_{\un{n} \epi \un{r}} A_n \smsh X^{\smsh n} \right]^{h\Sigma_n} \right]^{h\Sigma_s} \\
    &\arrow{e} \left[ \hocolim_{N \to \infty} \left[ \prod_{\un{n} \epi \un{s}} \prod_{\un{n} \epi \un{r}} A_n \smsh S^{N(n-s)} \smsh X^{\smsh n} \right]^{h\Sigma_n} \right]^{h\Sigma_s} \\
    &\lweq \left[ \prod_{\un{s} \epi \un{r}} \hocolim_{N \to \infty} \left[ \prod_{\un{n} \epi \un{s}} A_n \smsh S^{N(n-s)} \right]^{h\Sigma_n} \smsh X^{\smsh s} \right]^{h\Sigma_s}
\end{split} \]

To get the map $\delta_{r,s}$ we now multilinearize the cross-effects and apply to the sphere spectrum. (Note that the expressions in the above formula are really functors of $r$ variables that have been diagonalized, and that we are multilinearizing the underlying functors of $r$ variables.) This therefore tells us that $\delta_{r,s}$ is equivalent to the composite
\[ \dgTEXTARROWLENGTH=1.5em \begin{split}
  \hocolim_{L \to \infty} \left[ \prod_{\un{n} \epi \un{r}} A_n \smsh S^{L(n-r)} \right]^{h\Sigma_n}
    &\arrow{e} \hocolim_{L \to \infty} \left[ \left[ \prod_{\un{n} \epi \un{s}} \prod_{\un{n} \epi \un{r}} A_n \smsh S^{L(n-r)} \right]^{h\Sigma_n} \right]^{h\Sigma_s} \\
    &\arrow{e} \hocolim_{L \to \infty} \left[ \hocolim_{N \to \infty} \left[ \prod_{\un{n} \epi \un{s}} \prod_{\un{n} \epi \un{r}} A_n \smsh S^{N(n-s)} \smsh S^{L(n-r)} \right]^{h\Sigma_n} \right]^{h\Sigma_s} \\
    &\lweq \hocolim_{L \to \infty} \left[ \prod_{\un{s} \epi \un{r}} \hocolim_{N \to \infty} \left[ \prod_{\un{n} \epi \un{s}} A_n \smsh S^{N(n-s)} \right]^{h\Sigma_n} \smsh S^{L(s-r)} \right]^{h\Sigma_s}
\end{split} \]
where the first map is a diagonal and the second is inclusion as the term $N = 0$.

The third map in the above composite is based on the assembly map equivalence and hence on the diagonal map
\[ \alpha^{\#}: (S^L)^{\smsh s} \to (S^L)^{\smsh n} \]
associated to a surjection $\alpha: \un{n} \epi \un{s}$. Applying the \ord{r} cross-effect to this diagonal map gives a map
\[ \alpha_*: \prod_{\un{s} \epi \un{r}} (S^L)^{\smsh s} \to \prod_{\un{n} \epi \un{r}} (S^L)^{\smsh n} \]
for which the component associated to a surjection $\beta: \un{n} \epi \un{r}$ depends on whether $\beta$ factors as
\[ \un{n} \arrow{e,t,>>}{\alpha} \un{s} \arrow{e,t,>>}{\gamma} \un{r} \]
for some surjection $\gamma$ (which is unique if it exists). If it does factor thus, then the required component of $\alpha_*$ is projection onto the term corresponding to $\gamma$ followed by the diagonal. If it does not factor thus, then that component is the trivial map. The third map in the composite above is therefore determined by $\alpha_*$.

Our claim then is that this composite is induced, in the homotopy category, by the maps
\[ c^{-1}: S^{L(n-r)} \to S^{L(s-r)} \smsh S^{L(n-s)} \]
of Definition \ref{def:sphere-cooperad} together with various natural inclusions and projections. The truth of this boils down to the following claim: that for each surjection $\alpha: \un{n} \epi \un{s}$ the diagram
\[ \begin{diagram}
  \node{\Map(S^{Ls},S^{Ln}) \smsh \Map(S^{Lr},S^{Ls})} \arrow{se,b}{(1,\Delta)} \arrow{e,t}{c} \node{\Map(S^{Lr},S^{Ln})} \arrow{s,r}{i^{0,L}_\alpha} \\
  \node[2]{\Map(S^{Ls},S^{Ln}) \smsh \Map(S^{Lr},S^{Ln})}
\end{diagram} \]
commutes up to a homotopy that is compatible with the action of the group $\Sigma_r\times \Sigma_{n_1} \times\cdots\times \Sigma_{n_s}$, where $n_1, \ldots, n_s$ are the inverse images of $\alpha$. The homotopy also needs to be natural in $L$. In this diagram, $c$ is the composition map, $i_\alpha^{0, L}$ is the composed map
\[
\Map(S^{Lr},S^{Ls}) \stackrel{\simeq}{\longrightarrow} S^0 \smsh \Map(S^{Lr},S^{Ls})  \longrightarrow \Map(S^{Ls},S^{Ln}) \smsh\Map(S^{Lr},S^{Ls})
\]
where the second map uses the map $S^0 \longrightarrow \Map(S^{Ls},S^{Ln})$, which is adjoint to the diagonal map $\alpha^\sharp\colon S^{Ls}\to S^{Ln}$ induced by $\alpha$. The map $(1, \Delta)$ is the identity on the factor $\Map(S^{Ls},S^{Ln})$ and, again, uses the map $\alpha^\sharp\colon S^{Ls}\to S^{Ln}$ to get a map
\[
\Map(S^{Lr},S^{Ls}) \longrightarrow \Map(S^{Lr},S^{Ln}).
\]
We need to show that the resulting two maps
\[
\Map(S^{Ls},S^{Ln})\smsh \Map(S^{Lr},S^{Ls}) \longrightarrow \Map(S^{Ls},S^{Ln})\smsh \Map(S^{Lr},S^{Ln})
\]
are naturally homotopic. If one desuspends the two maps by $S^{L(s-r)}$ then they are naturally equivalent to two maps
\[
\Map(S^{Ls},S^{Ln}) \longrightarrow \Map(S^{Ls},S^{Ln}) \smsh \Map(S^{Ls},S^{Ln})
\]
given by $f\mapsto f \smsh \alpha^\sharp$ and $f\mapsto \alpha^\sharp\smsh f$. To see that these two maps are homotopic we note that these maps, which were defined as stable maps, are in fact (naturally equivalent to) stabilizations of space level maps. Namely, $S^{L(n-s)}$ is naturally equivalent to the suspension spectrum of the one-point compactification of the orthogonal complement of $\R^{Ls}$ in $\R^{Ln}$. The fact that the two maps are homotopic now boils down to the following elementary assertion: Let $W$ be a vector space. The two natural inclusions $S^W\to S^W\smsh S^W$, given by $w\mapsto w \smsh S^0$ and $w\mapsto S^0 \smsh w$ are naturally homotopic. The homotopy is given by the one-point compactification of the linear map $w\mapsto (w\cos t, w\sin t)$, where $0\le t\le \frac{\pi}{2}$.
\end{proof}

This completes our description of the comonad $K$ in the context of functors from spectra to spectra. We now turn to more explicit descriptions of the maps $\theta_{r,n}$ and $\theta_{(n_1,\dots,n_r)}$ of Corollary \ref{cor:derF-spsp} for certain functors $F: \finspec \to \spectra$.

\subsubsection*{$2$-excisive functors}

A $2$-excisive functor $F: \finspec \to \spectra$ is determined by a $2$-term symmetric sequence $A_1,A_2$ and a single map
\[ \theta_{1,2} = \theta_{(2)}: A_1 \to \hocolim_{L \to \infty} [(S^{L(2-1)} \smsh A_2)^{h\Sigma_2}]. \]
The $\Sigma_2$-spectrum
\[ S^{L(2-1)} := \Map(S^L,(S^L)^{\smsh 2}) \]
is equivariantly equivalent to the suspension spectrum of the based space $S^L$ with $\Sigma_2$-action given by reflection in each of the $L$ coordinates. This action is free away from a copy of $S^0$. We therefore have a commutative diagram
\[ \begin{diagram}
  \node{(S^0 \smsh A_2)_{h\Sigma_2}} \arrow{s} \arrow{e} \node{(S^L \smsh A_2)_{h\Sigma_2}} \arrow{s} \arrow{e} \node{(S^L/S^0 \smsh A_2)_{h\Sigma_2}} \arrow{s,r}{\sim} \\
  \node{(S^0 \smsh A_2)^{h\Sigma_2}} \arrow{e} \node{(S^L \smsh A_2)^{h\Sigma_2}} \arrow{e} \node{(S^L/S^0 \smsh A_2)^{h\Sigma_2}}
\end{diagram} \]
in which the vertical maps are the standard norm maps and the rows are cofibration sequences. It follows that the left-hand square is a homotopy pushout square. Now taking the homotopy colimit as $L \to \infty$ we get a homotopy pushout square
\[ \begin{diagram}
  \node{(A_2)_{h\Sigma_2}} \arrow{e} \arrow{s} \node{*}  \arrow{s} \\
  \node{(A_2)^{h\Sigma_2}} \arrow{e} \node{\hocolim_{L \to \infty} (S^L \smsh A_2)^{h\Sigma_2}}
\end{diagram} \]
which implies that the target of the map $\theta_{1,2}$ is equivalent to the Tate spectrum $\Tate_{\Sigma_2}(A_2)$. Thus, the homotopy theory of $2$-excisive functors $F: \finspec \to \spectra$ is equivalent to that of triples $(A_1,A_2,\theta)$ where $A_2$ has a $\Sigma_2$-action and $\theta$ is any map
\[ \theta: A_1 \to \Tate_{\Sigma_2}(A_2). \]

\subsubsection*{$3$-excisive functors}

A $3$-excisive functor $F: \finspec \to \spectra$ is determined by a $3$-term symmetric sequence $A_1,A_2,A_3$, together with a map $\theta_{(2)}$ as above, a map
\[ \theta_{(3)}: A_1 \to \hocolim_{L \to \infty} \left[(A_3 \smsh S^{L(3-1)})^{h\Sigma_3}\right], \]
and a map
\[ \theta_{(1,2)}: A_2 \to \hocolim_{L \to \infty} \left[\left(A_3 \smsh S^{L(3-2)} \right)^{h\Sigma_1 \times \Sigma_2} \right]. \]
By the same analysis as for $\theta_{(2)}$, the target of $\theta_{(1,2)}$ is equivalent to the Tate spectrum $\Tate_{\Sigma_1 \times \Sigma_2}(A_3)$.

A similar analysis for $\theta_{(3)}$ implies that its target is equivalent to the Tate spectrum $\Tate_{\Sigma_3}(L_3 \smsh A_3)$ where $L_3$ is the subspace of $S^2$ on which $\Sigma_3$ fails to acts freely. (This space $L_3$ is the union of three semi-circles glued at their endpoints. It is non-equivariantly equivalent to a wedge of two $1$-spheres.)

These maps must make the following square commute up to homotopy
\[ \begin{diagram}
  \node{A_1} \arrow{e} \arrow{s} \node{\Tate_{\Sigma_2}(A_2)} \arrow{s} \\
  \node{\Tate_{\Sigma_3}(L_3 \smsh A_3)} \arrow{e} \node{\Tate_{\Sigma_2}(\Tate_{\Sigma_1 \times \Sigma_2}(A_3) \times \Tate_{\Sigma_2 \times \Sigma_1}(A_3))}
\end{diagram} \]
but the bottom-right corner is trivial since $\Sigma_2$ permutes the two terms in $K_2(A_3)$. There is therefore no compatibility condition required of the three maps.

We conclude then that there is an equivalence between the homotopy theory of $3$-excisive functors $F: \finspec \to \spectra$ and that of $3$-truncated symmetric sequences $A$ together with maps
\[ \begin{split}
    \theta_{(2)}: &A_1 \to \Tate_{\Sigma_2}(A_2) \\
    \theta_{(3)}: &A_1 \to \Tate_{\Sigma_3}(L_3 \smsh A_3) \\
    \theta_{(1,2)}: &A_2 \to \Tate_{\Sigma_2}(A_3).
\end{split} \]

\subsubsection*{Representable functors}

For $X \in \finspec$, the derivatives of $\Sigma^\infty \Hom_{\spectra}(X,-)$ have a $K$-coalgebra structure that encodes the Taylor tower of this functor. This structure consists of maps
\[ \theta_{r,n}: \dual(X^{\smsh r}) \to \hocolim_{L \to \infty} \left[ \left( \prod_{ \un{n} \epi \un{r}} S^{L(n-r)} \smsh \dual(X^{\smsh n}) \right)^{h\Sigma_n} \right] \]
If we set $X = \dual Y$, the Spanier-Whitehead dual of a finite spectrum $Y$, then these maps take the form
\begin{equation} \label{eq:diag} \theta_{r,n}: Y^{\smsh r} \to \hocolim_{L \to \infty} \left[ \left( \prod_{ \un{n} \epi \un{r}} S^{L(n-r)} \smsh Y^{\smsh n} \right)^{h\Sigma_n} \right]. \end{equation}
In particular, taking $r = 1$ we get maps
\[ \theta_{1,n}: Y \to \hocolim_{L \to \infty} \left[ \left( S^{L(n-1)} \smsh Y^{\smsh n} \right)^{h\Sigma_n} \right] \]
These can be thought of as generalized diagonal maps that exists for any finite spectrum $Y$.

\subsubsection*{The functor $\Sigma^\infty \Omega^\infty$}

The functor $\Sigma^\infty \Omega^\infty: \spectra \to \spectra$ is a special case of the representable functor in the previous section in the case $Y = S^0$. Thus the $K$-coalgebra structure maps for the derivatives of $\Sigma^\infty \Omega^\infty$  take the form
\begin{equation} \label{eq:SO} \theta_{(n_1,\dots,n_r)}: S^0 \to \hocolim_{L \to \infty} \left[ (S^{L(n-r)})^{h\Sigma_{n_1} \times \dots \times \Sigma_{n_r}} \right]. \end{equation}
We can then identify these maps as follows.

\begin{lemma}
The map $\theta_{(n_1,\dots,n_r)}$ associated to the $K$-coalgebra structure on $\der_*(\Sigma^\infty \Omega^\infty)$ is determined by the map
\[ S^0 \to \Map((B\Sigma_{n_1} \times \dots \times B\Sigma_{n_r})_+,S^0) = (S^0)^{h\Sigma_{n_1} \times \dots \times \Sigma_{n_r}} \]
dual to the collapse map
\[ B\Sigma_{n_1} \times \dots \times B\Sigma_{n_r} \to * \]
followed by the inclusion of $(S^0)^{h\Sigma_{n_1} \times \dots \times \Sigma_{n_r}}$ as the $L = 0$ term in the homotopy colimit (\ref{eq:SO}).
\end{lemma}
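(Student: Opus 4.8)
The plan is to unwind the definition of the $K$-coalgebra structure on $\der_*F$ in the case $F = \Sigma^\infty\Omega^\infty = \Sigma^\infty\Hom_{\spectra}(S^0,-)$ and feed it through the explicit formulas established earlier in this section. First I would record, using Lemma~\ref{lem:Rx-spsp} with $X = S^0$, that $\der_*(\Sigma^\infty\Omega^\infty)$ is the symmetric sequence with $\der_n \homeq \dual((S^0)^{\smsh n}) \homeq S^0$ in every degree and with trivial $\Sigma_n$-action, so that the target of $\theta_{(n_1,\dots,n_r)}$ is exactly the homotopy colimit (\ref{eq:SO}) with $A_n = S^0$. Next, by the construction of the coalgebra structure map (Lemma~\ref{lem:derAn} applied to $A = \der_*F$, and Corollary~\ref{cor:derAn}) together with the description of $\Phi$ in Corollary~\ref{cor:spsp-phi}, the map $\theta_{r,n}$ is obtained by applying $\der_r$ to the natural transformation $\eta_n\colon F(X) \to (\der_nF \smsh X^{\smsh n})^{h\Sigma_n}$, the \ord{n} component of the unit $\eta\colon F \to \Phi\der_*F$; and $\theta_{(n_1,\dots,n_r)}$ is the composite of $\theta_{r,n}$ with the projection of $K_r\der_nF$ onto the orbit summand indexed by the partition $(n_1,\dots,n_r)$, as in Corollary~\ref{cor:derF-spsp}.

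The heart of the argument is to compute the \ord{r} Goodwillie derivative of $\eta_n$ for this particular $F$. Here I would use the explicit form of $\eta_n$ for a representable functor worked out in the proof of Proposition~\ref{prop:spsp-eta}: for $F = \Sigma^\infty\Hom_{\spectra}(S^0,-)$ the map $\eta_n$ is the stable extension of the space-level \ord{n} power map $\Omega^\infty X \to \Omega^\infty((X^{\smsh n})^{h\Sigma_n})$, $f \mapsto f^{\smsh n}$; that is, it is assembled from the $n$-fold reduced diagonal on $\Omega^\infty X$ together with the simplicial enrichment of $Y \mapsto Y^{\smsh n}$. I would then take the \ord{r} (co-)cross-effect of both source and target and multilinearize, following the same bookkeeping as in the proofs of Propositions~\ref{prop:derAn-spsp} and~\ref{prop:delta-spsp}: the \ord{r} cross-effect of the target splits over surjections $\un{n} \epi \un{r}$, and after applying the cross-effect functor the \ord{n} power map restricts, on the summand indexed by a surjection $\alpha$, to post-composition with the generalized diagonal $\alpha^{\#}\colon (S^L)^{\smsh r} \to (S^L)^{\smsh n}$. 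The conclusion would be that, at sphere level $L$ and on the orbit summand corresponding to the partition $(n_1,\dots,n_r)$, the map has the form
\[ \Map((S^L)^{\smsh r},(S^L)^{\smsh r}) \longrightarrow \Map((S^L)^{\smsh r},(S^L)^{\smsh n})^{h\Sigma_{n_1} \times \dots \times \Sigma_{n_r}} = (S^{L(n-r)})^{h\Sigma_{n_1} \times \dots \times \Sigma_{n_r}} \]
given by post-composition with a chosen representative $\alpha^{\#}$, while the source $\der_r(\Sigma^\infty\Omega^\infty) \homeq S^0$ contributes at each level $L$ through the unit $S^L \to \Sigma^\infty\Omega^\infty S^L$ evaluated on the identity of $(S^L)^{\smsh r}$.

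Finally I would specialise to $L = 0$. There $(S^0)^{\smsh r} = S^0$ and $\alpha^{\#}\colon S^0 \to S^0$ is the identity, so post-composition with $\alpha^{\#}$ is the identity and the $L = 0$ map becomes the canonical map
\[ S^0 = \Map(S^0,S^0) \longrightarrow \Map(S^0,S^0)^{h\Sigma_{n_1} \times \dots \times \Sigma_{n_r}} = \Map((B\Sigma_{n_1} \times \dots \times B\Sigma_{n_r})_+, S^0), \]
which is precisely the map dual to the collapse $B\Sigma_{n_1} \times \dots \times B\Sigma_{n_r} \to *$. Since $\der_*(\Sigma^\infty\Omega^\infty)$ is already linear in each term, its contribution to the homotopy colimit over $L$ is constant (generated at every stage by the unit applied to the identity map), and one checks against Definition~\ref{def:spheres-sym} that the transition maps $i^{L,L'}_{\alpha}$ carry the $L=0$ element $\alpha^{\#}$ to the level-$L$ element $\alpha^{\#}$; hence the whole structure map $\theta_{(n_1,\dots,n_r)}$ factors through, and is induced by, the $L = 0$ term, followed by the canonical inclusion of that term into the homotopy colimit (\ref{eq:SO}). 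This is the assertion of the lemma.

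I expect the main obstacle to be the middle step: the cross-effect and multilinearization computation identifying the \ord{r} derivative of the \ord{n} power map, restricted to the partition-$(n_1,\dots,n_r)$ summand, with post-composition by the generalized diagonal $\alpha^{\#}$, compatibly with the stabilization maps $i^{L,L'}_{\alpha}$. This is of the same flavour as, but somewhat more delicate than, the identification of stabilization maps carried out in the proof of Proposition~\ref{prop:derAn-spsp}, since here one must simultaneously keep track of the diagonal built into $\eta_n$ and of the homotopy-fixed-point bookkeeping that pins down which subgroup $\Sigma_{n_1} \times \dots \times \Sigma_{n_r}$ acts and that $\Map(S^0,S^0)$ carries the trivial action.
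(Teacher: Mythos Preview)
Your proposal is correct and follows essentially the same route as the paper: identify $\theta_{r,n}$ as $\der_r(\eta_n)$, use the description of $\eta_n$ for $\Sigma^\infty\Omega^\infty$ as the stable diagonal on $\Omega^\infty X$, take the \ord{r} cross-effect (splitting over surjections $\un{n}\epi\un{r}$), and then argue that the whole map factors through the $L=0$ stage of the multilinearization, where it becomes the canonical unit $S^0 \to (S^0)^{h\Sigma_{n_1}\times\dots\times\Sigma_{n_r}}$ dual to the collapse map.

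The only difference is in how you justify the $L=0$ factorization. The paper invokes a one-line observation: $\Sigma^\infty\Omega^\infty$ splits off its linear part on the suspension spectrum $S^0$ (the unit $S^0 \to \Sigma^\infty\Omega^\infty S^0$ has a retraction), so the derivative computation for the source is already realised at the $L=0$ stage of the cross-effect and no stabilization is needed. You instead verify by hand that the transition maps $i^{L,L'}_{\alpha}$ carry the $L=0$ element $\alpha^{\#}$ to the level-$L$ element $\alpha^{\#}$, which amounts to the same thing but is slightly more work. The paper's splitting argument is the cleaner way to package the step you flagged as the main obstacle.
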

\begin{proof}
The map $\theta_{r,n}$ is given by applying $\der_r$ to the map $\eta_n$ of Proposition \ref{prop:spsp-eta} associated to $\Sigma^\infty \Omega^\infty$. In this case, $\eta_n$ is the map
\[ \Sigma^\infty \Omega^\infty X \to (X^{\smsh n})^{h\Sigma_n} \]
that is adjoint to the diagonal on the space $\Omega^\infty X$. Taking the \ord{r} cross-effect of this we get
\[ (\Sigma^\infty \Omega^\infty X_1) \smsh \dots \smsh (\Sigma^\infty \Omega^\infty X_r) \to  \left[ \prod_{\un{n} \epi \un{r}} X_1^{n_1} \smsh \dots \smsh X_r^{n_r} \right]^{h\Sigma_n} \]
adjoint to the diagonals on the spaces $\Omega^\infty X_j$. To get $\theta_{r,n}$ we now multilinearize and evaluate at $(S^0,\dots,S^0)$. But $\Sigma^\infty \Omega^\infty$ splits off the linear part on the suspension spectrum $S^0$ and so $\theta_{r,n}$ factors via the cross-effect itself evaluated at $(S^0,\dots,S^0)$ that is, via the composite
\[ S^0 = (S^0)^{\smsh r} \to (\Sigma^\infty \Omega^\infty S^0)^{\smsh r} \to \left[ \prod_{\un{n} \epi \un{r}} S^0 \right]^{h\Sigma_n} \]
followed by the inclusion of the right-hand side as the $L = 0$ term in the homotopy colimit (\ref{eq:SO}). In the above composite the first map is the inclusion of $S$ in $\Sigma^\infty \Omega^\infty S^0$ and the second is adjoint to the diagonal on $\Omega^\infty S^0$. The composite is the natural map into the homotopy fixed points which is dual to the collapse map as claimed.
\end{proof}

\subsubsection*{Functors with split Taylor tower}

We say that the Taylor tower of a functor $F: \finspec \to \spectra$ \emph{splits} if, for every $n$, we have
\[ P_n(F) \homeq \prod_{j = 1}^{n} D_j(F). \]
Since the comonad $K$ preserves finite products, the $K$-coalgebra structure on the product functor on the right-hand side is just the product of the $K$-coalgebra structures on the individual functors $D_j(F)$. Since $D_j(F)$ is homogeneous, this $K$-coalgebra structure is trivial. We can therefore express the splitting condition in the following way.

\begin{proposition}
The Taylor tower of $F: \finspec \to \spectra$ splits if and only if, for every $n$, the $K$-coalgebra structure on $\der_{\leq n}F$ is equivalent (in the homotopy category of derived $K$-coalgebras) to the trivial $K$-coalgebra structure in which
\[ \theta_{r,n}: \der_rF \to K_r \der_nF \]
is the trivial map for all $r < n$.
\end{proposition}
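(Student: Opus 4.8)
The plan is to read the proposition off directly from the classification established in Theorem~\ref{thm:n-excisive}; fix $n$ throughout. That theorem, together with Corollary~\ref{cor:truncated}, tells us that $F\mapsto\der_{\leq n}F$ (equipped with the $K$-coalgebra structure of Definition~\ref{def:K}) induces an equivalence between the homotopy category of $n$-excisive functors $\finspec\to\spectra$ and the homotopy category of $n$-truncated derived $K$-coalgebras, and that under this equivalence $P_nF$ corresponds to the $K$-coalgebra $\der_{\leq n}F$ (via the natural $K$-coalgebra equivalence $\der_*(P_nF)\simeq\der_{\leq n}F$ implicit in Corollary~\ref{cor:truncated}). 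An equivalence of categories both preserves and reflects isomorphisms, so $P_nF$ is equivalent, as an $n$-excisive functor, to a given $n$-excisive functor $G$ if and only if the $K$-coalgebra $\der_{\leq n}F$ is equivalent, in the homotopy category of derived $K$-coalgebras, to $\der_*G$. Taking $G=\prod_{j=1}^nD_jF$, which is $n$-excisive because $n$-excisive functors are closed under finite products, it remains only to identify the $K$-coalgebra $\der_*\bigl(\prod_{j=1}^nD_jF\bigr)$ with the trivial $K$-coalgebra structure on $\der_{\leq n}F$.

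For this I would argue as in the discussion preceding the proposition. The functor $\der_*$ preserves finite products (Lemma~\ref{lem:K-limits}), and since its $K$-coalgebra structure map is $\der_*$ applied to the unit of $(\der_*,\Phi)$, which itself respects products (as $\Phi$ is a right adjoint), the derived $K$-coalgebra $\der_*\bigl(\prod_jD_jF\bigr)$ is the product of the derived $K$-coalgebras $\der_*(D_jF)$. Each $D_jF$ is $j$-homogeneous, so $\der_*(D_jF)$ is concentrated in degree $j$, where it agrees with $\der_jF$; hence the underlying symmetric sequence of the product is exactly $\der_{\leq n}F$. For a symmetric sequence concentrated in a single degree $j$, the components $\theta_{r,m}$ of any $K$-coalgebra structure map have trivial source unless $r=j$ and trivial target unless $m=j$, so the only possibly nontrivial component is $\theta_{j,j}$, which by the counit axiom must be a homotopy section of $\epsilon_j\colon K_j\der_jF\to\der_jF$; since $\epsilon_j$ is an equivalence (Lemma~\ref{lem:K-counit}) this section is unique up to homotopy. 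Assembling over $j\le n$, the product coalgebra has $\theta_{r,m}$ trivial for all $r<m$ and $\theta_{r,r}$ the canonical section of $\epsilon_r$ --- which is precisely the trivial $K$-coalgebra structure named in the statement. Combining this with the first paragraph yields the proposition.

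The content here is light; the one place requiring care is the bookkeeping of the second paragraph. All the identifications must be carried out in the homotopy category of derived $K$-coalgebras of Section~\ref{sec:coalgebras}, so that agreement of underlying symmetric sequences together with agreement of structure maps up to coherent homotopy genuinely yields an isomorphism there --- this is exactly what Proposition~\ref{prop:weqs} supplies. Relatedly, the phrase ``trivial $K$-coalgebra structure'' leaves the diagonal components $\theta_{r,r}$ unspecified; the computation above, via the fact that $\epsilon_r$ is an equivalence, both pins these down and confirms that such a coalgebra exists in the first place, so that the asserted equivalence is meaningful. Everything else is a formal consequence of the equivalence of homotopy categories already in hand.
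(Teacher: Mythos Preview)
Your argument is correct and is precisely the approach the paper takes: it cites Theorem~\ref{thm:n-excisive} and relies on the observation (stated in the paragraph preceding the proposition) that $K$ preserves finite products and that the $K$-coalgebra $\der_*(D_jF)$ is trivial for each $j$. You have simply spelled out the details that the paper leaves implicit.
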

\begin{proof}
This follows from Theorem \ref{thm:n-excisive}.
\end{proof}

It should be noted that this condition is more subtle that just asking each map $\theta_{r,n}$ to be nullhomotopic though it certainly implies that fact. The nullhomotopies have to be coherent up to higher nullhomotopies. We do, however, recover a well known sufficient condition for the Taylor tower of $F$ to split.

\begin{proposition}\label{prop:splits}
Let $F: \finspec \to \spectra$ be a pointed simplicial functor with the property that, for each $n$, $\der_nF$ can be built from finitely many free $\Sigma_n$-cells. Then the Taylor tower of $F$ splits.
\end{proposition}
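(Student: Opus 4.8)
The plan is to verify the hypothesis of the previous proposition, namely that for each $n$ the $K$-coalgebra $\der_{\leq n}F$ is equivalent, in the homotopy category of derived $K$-coalgebras, to the trivial one. By Theorem~\ref{thm:n-excisive} it suffices to show that $P_nF$ is equivalent (as a functor) to $\prod_{j=1}^n D_jF$, i.e. that the Taylor tower splits as a product. First I would recall that, by Proposition~\ref{prop:counit} and Lemma~\ref{lem:K-counit}, the counit $K_r\der_rF \to \der_rF$ is an equivalence, so that the only genuine data in the $K$-coalgebra structure on $\der_{\leq n}F$ are the maps $\theta_{r,n}\colon \der_rF \to K_r\der_nF$ for $r<n$, together with their coherences.

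The key computation is that under the freeness hypothesis the target spectra $K_r\der_nF$ are themselves contractible. Indeed, by Remark~\ref{rem:derAn-spsp},
\[
K_r\der_nF \homeq \prod_{n=n_1+\dots+n_r} \hocolim_{L\to\infty}\left[\left(\der_nF \smsh S^{L(n-r)}\right)^{h\Sigma_{n_1}\times\dots\times\Sigma_{n_r}}\right].
\]
Since $\der_nF$ is built from finitely many free $\Sigma_n$-cells, so is $\der_nF\smsh S^{L(n-r)}$ as a spectrum with $\Sigma_{n_1}\times\dots\times\Sigma_{n_r}$-action (restriction of a free action along a subgroup inclusion is still free, and finiteness is preserved). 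Hence the norm map from homotopy orbits to homotopy fixed points is an equivalence for each such term, and in the colimit over $L$ the homotopy orbits become contractible exactly as in the $2$- and $3$-excisive examples worked out just above (the $\Sigma_n$-action on $S^{L(n-r)}$, and hence on the relevant smash product, becomes ``more and more free'' so the orbits of the cofibre vanish in the limit; more directly, $(\der_nF\smsh S^{L(n-r)})_{h\Sigma_{n_1}\times\dots\times\Sigma_{n_r}}$ is already the homotopy type of the fixed points by freeness, and its colimit with the connecting maps $i^{L,L'}_\alpha$ is contractible by the same pushout argument as in the $2$-excisive case). Therefore $K_r\der_nF \homeq *$ for all $r<n$.

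It follows that the cosimplicial object $\Phi K^\bullet \der_{\leq n}F$ computing $P_nF$ via Corollary~\ref{cor:truncated} has all the ``off-diagonal'' pieces contractible, so $K\der_{\leq n}F \homeq \prod_{j=1}^n \der_jF$ with trivial comonad action, and the $\hHom_K$ between $\der_{\leq n}F$ and the trivial coalgebra on the same underlying symmetric sequence is contractible onto the identity; equivalently, every potential obstruction (and every higher coherence datum) lives in a contractible space. By Theorem~\ref{thm:n-excisive} this identifies $P_nF$ with the trivial coalgebra's associated functor, namely $\prod_{j=1}^n D_jF$. Letting $n$ vary gives the splitting of the whole Taylor tower. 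The main obstacle is the second step: being careful that ``built from finitely many free $\Sigma_n$-cells'' is genuinely preserved under smashing with $S^{L(n-r)}$ and restricting to the subgroups $\Sigma_{n_1}\times\dots\times\Sigma_{n_r}$, and that the contractibility survives passage to the homotopy colimit over $L$ — this is where one must reuse, in general form, precisely the norm-map/cofibre-sequence argument carried out explicitly for the $2$-excisive case in the text.
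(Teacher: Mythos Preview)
Your overall strategy is the same as the paper's: show that $K_r\der_nF \homeq *$ for $r<n$ and deduce the splitting from there. But both halves of your argument are more circuitous than necessary, and the first half is not actually completed.

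For the vanishing of $K_r\der_nF$, you work with the explicit formula from Remark~\ref{rem:derAn-spsp} and try to replay the pushout/norm argument from the $2$-excisive example. This can be made to work, but you have not carried it out: the sentence about the action on $S^{L(n-r)}$ becoming ``more and more free'' and the colimit of orbits becoming contractible is a gesture, not a proof. The paper's argument (implicit in the one-line assertion) is much shorter: by definition $K_r\der_nF = \der_r\bigl(X\mapsto (\der_nF\smsh X^{\smsh n})^{h\Sigma_n}\bigr)$. Since $\der_nF$ is built from finitely many free $\Sigma_n$-cells and $X$ is finite, the norm map
\[
(\der_nF\smsh X^{\smsh n})_{h\Sigma_n}\longrightarrow (\der_nF\smsh X^{\smsh n})^{h\Sigma_n}
\]
is an equivalence. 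The left side is $n$-homogeneous, hence so is the right, and $\der_r$ vanishes for $r\neq n$. No colimit over $L$, no restriction to subgroups, no shearing is needed.

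For the deduction of the splitting, you invoke the previous proposition together with Theorem~\ref{thm:n-excisive} to say the coalgebra is trivial. This is correct, but the paper avoids the detour through the classification theorem: once the counit $K(\der_{\leq n}F)\to\der_{\leq n}F$ is an equivalence, the cosimplicial object $\Phi K^\bullet\der_{\leq n}F$ of Corollary~\ref{cor:truncated} is levelwise equivalent to the constant object $\Phi\der_{\leq n}F$, so
\[
P_nF \homeq \Phi\der_{\leq n}F(X) \homeq \prod_{j=1}^n (\der_jF\smsh X^{\smsh j})^{h\Sigma_j} \homeq \prod_{j=1}^n (\der_jF\smsh X^{\smsh j})_{h\Sigma_j} = \prod_{j=1}^n D_jF(X),
\]
the last step again using freeness. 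This is a direct computation rather than an appeal to an equivalence of homotopy categories.
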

\begin{proof}
This and more general splitting results were studied by Chaoha in \cite{chaoha:2004} based on work of McCarthy \cite{mccarthy:2001}, but it follows from our work in the following way.

The condition on $F$ implies that $K_r \der_nF \homeq *$ for all $r < n$, so the counit map $\epsilon: K(\der_{\leq n}F) \to \der_{\leq n}F$ is an equivalence. Therefore in the cobar construction for recovering $P_nF$ we have an equivalence
\[ P_nF \weq \Tot(\Phi \der_{\leq n}F) \]
where the right-hand side is a constant cosimplicial object. The totalization of this is just equivalent again to
\[ (\Phi \der_{\leq n}F)(X) \homeq \prod_{j = 1}^{n} (\der_jF \smsh X^{\smsh j})^{h\Sigma_j} \]
which by the freeness of each $\der_jF$ is equivalent to
\[ \prod_{j = 1}^{n} (\der_jF \smsh X^{\smsh j})_{h\Sigma_j} = \prod_{j = 1}^{n} D_jF(X). \]
\end{proof}

\section{Functors from based spaces to spectra} \label{sec:topspec}

We now turn to the analysis of the Taylor towers of functors $F: \finbased \to \spectra$. We have a similar aim as in Section \ref{sec:specspec}, that is, to give a better description of what a $K$-coalgebra structure amounts to in this case, at least up to homotopy.

Recall that the derivatives of such a functor $F$ possess the structure of a right module over the operad $\der_*I$ formed by the derivatives of the identity functor on $\based$. We show here that the $K$-coalgebra structure on those coefficients includes and extends that module structure.

In this section, then, $K$ denotes the comonad on the category of symmetric sequences associated to the adjunction
\[  \der_*: [\finbased,\spectra] \rightleftarrows \symseq : \Phi \]
of Proposition \ref{prop:derphisp}. Our calculation of this comonad is given by the following result which we prove starting with \ref{def:bar} later in this section.

\begin{prop} \label{prop:topsp-KrAn}
In the classification of Taylor towers of functors $F: \finbased \to \spectra$, we have the following calculation. For a $\Sigma_n$-spectrum $A_n$, there are natural $\Sigma_r$-equivalences
\[ K_r A_n \homeq \left[ \prod_{\un{n} \epi \un{r}} \Map(\der_{n_1}I \smsh \dots \smsh \der_{n_r}I, A_n) \right]_{h\Sigma_n} \]
where the product is taken over the set of surjections $\alpha: \un{n} \epi \un{r}$ and we are writing $n_i := |\alpha^{-1}(i)|$ with the dependence on $\alpha$ understood.
\end{prop}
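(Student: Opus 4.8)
### Plan for the proof of Proposition \ref{prop:topsp-KrAn}

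The strategy parallels the proof of Proposition \ref{prop:derAn-spsp} in the spectra-to-spectra case, but now the representable functors are different. First I would record the analogue of Lemma \ref{lem:Rx-spsp}: for $X \in \finbased$, the derivatives of $\Sigma^\infty R_X = \Sigma^\infty \Hom_{\based}(X,-)$ are computed by the known models of \cite{arone/ching:2011}, giving
\[ \der_n(\Sigma^\infty \Hom_{\based}(X,-)) \homeq \Map(\der_n I, \Sigma^\infty X^{\smsh n})^{?} \]
or, more to the point, a right $\der_*I$-module whose underlying symmetric sequence assembles $\der_*I$ against $\Sigma^\infty X^{[\bullet]}$. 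From the Yoneda-style formula of Definition \ref{def:phi}, $\Phi(A)(X) = \Map_{\mathsf{\Sigma}}(\der_*(\Sigma^\infty R_X), A)$, which in this setting should unwind to a mapping object out of $\der_*I \smsh (\text{powers of } \Sigma^\infty X)$ into $A$. So the first concrete step is an explicit description of $\Phi \der_n F (X)$ — the based-spaces analogue of Corollary \ref{cor:spsp-phi} — which I expect to be Corollary \ref{cor:topsp-phi}, stating roughly $\Phi \der_nF(X) \homeq (\der_nF \smsh X^{\smsh n}/\Delta^n X)^{h\Sigma_n}$, equivalently built from $\der_*I$ and $X$.

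Next, as in the earlier proof, $K_r A_n = (KA_n)_r$ is the $r$-th derivative of the functor $X \mapsto \Phi(A_n)(X)$ where $A_n$ is concentrated in degree $n$. I would compute this via cross-effects: take the $r$-th cross-effect of $X \mapsto \Phi(A_n)(X)$, which (because cross-effects commute with the mapping-object and fixed-point/orbit constructions involved) decomposes over surjections $\alpha: \un{n} \epi \un{r}$ into a product of terms involving $\Map(\der_{n_1}I \smsh \dots \smsh \der_{n_r}I, A_n)$ smashed with appropriate sphere/diagonal data coming from $X^{[n]}$ versus $X_1 \smsh \dots \smsh X_r$. Then multilinearize and evaluate at the sphere. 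The crucial input that distinguishes this case from the spectra-to-spectra case is the behavior of the fat-diagonal quotient $X^{\wedge n}/\Delta^n X$ under cross-effects and multilinearization: its $r$-th cross-effect, evaluated multilinearly, contributes exactly the derivatives $\der_{n_i}I$ of the identity (this is the standard fact that $X \mapsto X^{\wedge n}/\Delta^n X$ has $n$-th layer governed by $\der_n I$, and its cross-effects split according to surjections). The homotopy-orbit (rather than homotopy-fixed-point) appearance of $\Sigma_n$ in the final formula — in contrast to Proposition \ref{prop:derAn-spsp} — comes precisely from the fact that the relevant stabilization here produces free $\Sigma_n$-cells, so the norm map is an equivalence and the homotopy colimit over $L$ collapses the fixed points to orbits.

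The main obstacle will be carefully tracking the equivariance and the identification of the multilinearized cross-effect of $\Map(\der_*(\Sigma^\infty R_X), A_n)$ with the asserted product over surjections — in particular, showing that the $\der_*I$-operad structure on the derivatives of representables is exactly what emerges, with the correct $\Sigma_{n_1} \times \dots \times \Sigma_{n_r}$ versus $\Sigma_n$ actions, and that passing to the limit over $L$ converts homotopy fixed points into homotopy orbits. I would handle this the same way as Proposition \ref{prop:spsp-eta}: verify the formula first on representable functors $F = \Sigma^\infty R_Y$ using the Yoneda lemma and the explicit derivative models of \cite{arone/ching:2011}, where everything is forced for formal reasons, and then extend to arbitrary $F$ by writing $F$ as a coend $F(Y) \smsh_{Y} R_Y(-)$ and using that $\der_r$ and all the constructions in sight commute with the relevant homotopy colimits and weak equivalences between cofibrant objects (exactly as in Definition \ref{def:der-sp} and Proposition \ref{prop:der-sp}). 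The subsequent lemmas (the based-spaces analogues of \ref{prop:spsp-KK}, \ref{def:sphere-cooperad}, \ref{prop:delta-spsp}) describing $K_rK_sA_n$ and the maps $\delta_{r,s}$ would then be set up in the same spirit, with the sphere-cooperad data replaced by the cooperad structure dual to $\der_*I$.
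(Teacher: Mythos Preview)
Your approach differs substantively from the paper's. The paper does not compute $K_r A_n$ by taking cross-effects of $X \mapsto (A_n \smsh X^{\smsh n}/\Delta^n X)^{h\Sigma_n}$ and multilinearizing. Instead it works directly with the coend model $K_r A_n = M(X)_r \smsh_{X} \Map(M(X)_n, A_n)^{\Sigma_n}$ coming from Definition~\ref{def:der-sp}, where $M(X) \homeq \dual B(X^{\smsh *},\mathsf{Com},1)$ is the bar-construction model of Definition~\ref{def:bar}. The argument is organized around the commutative square (\ref{eq:norm}), whose vertical maps are norm maps and whose horizontal maps $m$ are built from the right $B(1,\mathsf{Com},1)$-comodule structure on $B(\Sigma^\infty X^{\smsh *},\mathsf{Com},1)$. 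Lemma~\ref{lem:free-finite} shows the left-hand norm is an equivalence, and Lemmas~\ref{lem:topsp1} and~\ref{lem:topsp2} together show the top map $m$ is an equivalence. Lemma~\ref{lem:topsp2} in particular is a Yoneda-type evaluation of the coend $M(X)_r \smsh_X B(X^{\smsh *},\mathsf{Com},1)_r$ by probing with the finite pointed set $[r]$; this step has no counterpart in your outline.

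Your route could likely be made to work, but two points need correction. First, your explanation of why homotopy orbits appear is confused: there is no ``homotopy colimit over $L$'' here. The norm map $(-)_{h\Sigma_n} \to (-)^{h\Sigma_n}$ is already an equivalence for each fixed finite $X$, because $X^{\smsh n}/\Delta^n X$ is built from finitely many free $\Sigma_n$-cells (this is exactly Lemma~\ref{lem:free-finite}); no further stabilization is involved. Second, the step you call ``the standard fact'' --- that the multilinearized $r$-th cross-effect of $X \mapsto \Sigma^\infty X^{\smsh n}/\Delta^n X$ contributes precisely $T_{n_1} \smsh \dots \smsh T_{n_r}$ indexed over surjections $\un{n} \epi \un{r}$ --- is the substantive content of the calculation; it is essentially Lemma~\ref{lem:topsp1}, and its proof uses the bar-construction model and the comodule structure in a way that is not visible from the quotient description alone. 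If you pursue the cross-effect route you will have to supply this, and the cleanest way to do so is via the bar construction, at which point you are close to the paper's argument anyway.
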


\begin{remark} \label{rem:topsp-KrAn}
Note the similarities between the above formula and the corresponding calculation for functors from spectra to spectra in Proposition \ref{prop:derAn-spsp}. As in that case, our formula breaks up into a product indexed by ordered partitions $n = n_1+\dots+n_r$ of $n$ into a sum of positive integers:
\[ K_r A_n \homeq \prod_{n = n_1+\dots+n_r} \left[ \Map(\der_{n_1}I \smsh \dots \smsh \der_{n_r}I, A_n) \right]_{h\Sigma_{n_1}\times\dots\times\Sigma_{n_r}}. \]
\end{remark}

\begin{definition} \label{def:psi}
It follows from Proposition \ref{prop:topsp-KrAn} that the $K$-coalgebra structure on the derivatives of a pointed simplicial functor $F: \finbased \to \spectra$ takes the form of a $\Sigma_r$-equivariant map
\[ \theta_{r,n}: \der_rF \to \left[ \prod_{\un{n} \epi \un{r}} \Map(\der_{n_1}I \smsh \dots \smsh \der_{n_r}I, \der_nF) \right]_{h\Sigma_n}. \]
for each pair of positive integers $r < n$.

On the other hand, the right $\der_*I$-module structure on $\der_*F$ can be encoded by $\Sigma_r$-equivariant maps
\[ \psi_{r,n}: \der_rF \to \left[ \prod_{\un{n} \epi \un{r}} \Map(\der_{n_1}I \smsh \dots \smsh \der_{n_r}I, \der_nF) \right]^{h\Sigma_n}. \]
These are adjoint to the usual right module structure maps $\der_rF \smsh \der_{n_1}I \smsh \dots \smsh \der_{n_r}I \to \der_nF$.
\end{definition}

The following result expresses the connection between the maps $\theta_{r,n}$ and $\psi_{r,n}$. Again the proof is given later in the section.

\begin{prop} \label{prop:norm}
For a pointed simplicial functor $F: \finbased \to \spectra$, there is a commutative diagram (in the stable homotopy category)
\[ \begin{diagram}
  \node[2]{ \left[ \prod_{\un{n} \epi \un{r}} \Map(\der_{n_1}I \smsh \dots \smsh \der_{n_r}I, \der_nF) \right]_{h\Sigma_n}} \arrow{s,r}{N} \\
  \node{\der_rF} \arrow{ne,t}{\theta_{r,n}} \arrow{e,b}{\psi_{r,n}}
    \node{ \left[ \prod_{\un{n} \epi \un{r}} \Map(\der_{n_1}I \smsh \dots \smsh \der_{n_r}I, \der_nF) \right]^{h\Sigma_n}}
\end{diagram} \]
where the vertical map $N$ is the norm map from homotopy orbits to homotopy fixed points. (See Greenlees-May \cite{greenlees/may:1995}.)
\end{prop}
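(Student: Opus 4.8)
The plan is to mirror the strategy of Section~\ref{sec:specspec}: first pin down the relevant component of the unit map $\eta\colon F\to\Phi\der_*F$, then apply $\der_r$ to it, and finally compare the result with the right $\der_*I$-module structure. Recall that $\theta_{r,n}$ is, by definition, obtained by applying $\der_r$ to the $n$-th component $\eta_n\colon F(X)\to\Phi(\der_nF)(X)$ of the unit, and that by Corollary~\ref{cor:topsp-phi} this target is $(\der_nF\smsh X^{\smsh n}/\Delta^nX)^{h\Sigma_n}$. So the first step is to establish the analogue for functors $F\colon\finbased\to\spectra$ of Proposition~\ref{prop:spsp-eta}: that $\eta_n$ is the composite of the diagonal $F(X)\to F(X^{\times n})^{h\Sigma_n}$, the map $\iota$ to the total cofibre of a cube landing in the co-cross-effect $\creff^n(F)(X,\dots,X)^{h\Sigma_n}$, and an inverse to the assembly-map equivalence of Definition~\ref{def:assembly} — with the difference that the co-cross-effect is now that of a functor out of based spaces, so its multilinearisation brings in $X^{\smsh n}/\Delta^nX$ rather than $X^{\smsh n}$. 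As in the proof of Proposition~\ref{prop:spsp-eta}, this is checked first for representable functors $F=\Sigma^\infty R_Y$ and then extended to all $F$ via the coend presentation $F\homeq F(Y)\smsh_Y\Sigma^\infty R_Y$ together with the uniqueness up to homotopy of factorisations through homotopy fixed points.

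Next I would apply $\der_r$ to this description, i.e.\ take the $r$-th cross-effect, multilinearise, and evaluate at the sphere spectrum $\sphere$. Here the multilinearised co-cross-effect of the functor $X\mapsto X^{\smsh n}/\Delta^nX$ is governed by the Spanier--Whitehead duals of the partition complexes, which is exactly where the spectra $\der_{n_1}I,\dots,\der_{n_r}I$ enter (this is the input of \cite{ching:2005} and of the chain-rule models of \cite{arone/ching:2011}, and is the content of Proposition~\ref{prop:topsp-KrAn}), while the $\Sigma_n$-homotopy orbits in the target of Proposition~\ref{prop:topsp-KrAn} come from the diagonal $\Sigma_n$-action on the $n$ co-cross-effect variables together with the collapse of the fat diagonal. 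This recovers $\theta_{r,n}$ as a map into $\bigl[\prod_{\un{n}\epi\un{r}}\Map(\der_{n_1}I\smsh\cdots\smsh\der_{n_r}I,\der_nF)\bigr]_{h\Sigma_n}$.

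It then remains to recognise that the right $\der_*I$-module structure map $\psi_{r,n}$ — adjoint to the structure map $\der_rF\smsh\der_{n_1}I\smsh\cdots\smsh\der_{n_r}I\to\der_nF$ of \cite{arone/ching:2011} — is built from the same diagonal and co-cross-effect data, but assembled into the $\Sigma_n$-homotopy fixed points rather than the homotopy orbits, and that the Greenlees-May norm map $N$ of \cite{greenlees/may:1995} is precisely the canonical comparison between the two assemblies. For $F=\Sigma^\infty R_Y$ this becomes an explicit computation: $\der_*(\Sigma^\infty R_Y)$ and its right $\der_*I$-module structure are known, and the triangle reduces to the naturality of the norm map with respect to the diagonal maps and the partition-complex data; the general case then follows by naturality in $F$ (both $\theta_{r,n}$ and, via adjunction, $\psi_{r,n}$ being determined by their restrictions to representables, since $\der_r$ and the functor $F\mapsto\der_nF$ underlying both preserve the relevant homotopy colimits). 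I expect this last identification to be the main obstacle, since it requires matching the two routes to the module structure — the one via the comonad and the unit map, which naturally produces homotopy \emph{orbits} (it arises from the quotient by the fat diagonal, the locus where $\Sigma_n$ acts freely), against the one via the chain rule of \cite{arone/ching:2011}, which naturally produces homotopy \emph{fixed points} — and hence a careful comparison of the bar-construction models underlying Proposition~\ref{prop:topsp-KrAn} with the models used to define $\der_*$ in the first place.
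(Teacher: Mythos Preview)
Your plan is plausible but takes a genuinely different route from the paper's proof, and is considerably more indirect.

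The paper does \emph{not} compute $\eta_n$ via co-cross-effects, nor does it invoke any analogue of Proposition~\ref{prop:spsp-eta}. Instead it works directly with the coend model $\der_rF = M(X)_r \smsh_{X} F(X)$ and the fact that the right $\der_*I$-module structure on $\der_*F$ is inherited from that on $M(X)$. For a fixed surjection $\un{n}\epi\un{r}$ it writes down a single commutative ladder
\[
\begin{diagram}
\node{M(X)_r \smsh_X FX} \arrow{e} \arrow{s} \node{M(X)_r \smsh_X \Map(M(X)_n, \der_nF)} \arrow{s} \\
\node{M(X)_r \smsh_X \Map(M(X)_r, \der_rF)} \arrow{e} \arrow{s} \node{M(X)_r \smsh_X \Map(M(X)_r \smsh \der_{n_1}I \smsh\dots, \der_nF)} \arrow{s} \\
\node{\der_rF} \arrow{e} \node{\Map(\der_{n_1}I \smsh\dots\smsh\der_{n_r}I, \der_nF)}
\end{diagram}
\]
whose top and left maps are the unit of the adjunction, whose right and middle horizontal maps use the $\der_*I$-module structure on $M(-)$, and whose bottom verticals are evaluations. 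The top--right path, assembled over all surjections into the $\Sigma_n$-homotopy fixed points, is exactly $N\circ\theta_{r,n}$ (this is where the norm enters, via the square~\eqref{eq:norm} underlying Proposition~\ref{prop:topsp-KrAn}); the left path collapses to the identity by a triangle identity for $(\der_*,\Phi)$; and the bottom map is by definition $\psi_{r,n}$. Commutativity gives the result in one stroke.

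What your approach buys is an explicit formula for $\eta_n$ and hence for $\theta_{r,n}$ itself, which the paper's argument never produces; what the paper's approach buys is that the comparison with $\psi_{r,n}$ becomes a formality rather than the ``main obstacle'' you anticipate. The key observation you are missing is that both $\theta_{r,n}$ and $\psi_{r,n}$ are visible directly on the coend model via the unit and the module structure on $M(X)$, so no cross-effect calculus is needed.
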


\begin{remark} \label{rem:norm}
Thus we can think of a $K$-coalgebra in this case as consisting of a right $\der_*I$-module together with (compatible) lifts up the norm map. The situation is analogous (but dual) to the relationship between divided power algebras and ordinary commutative algebras, in which the divided power structure can be expressed as an extension along the norm map. (See, for example, Fresse \cite{fresse:2000}.) We can therefore view the structure of a $K$-coalgebra as providing a rigid notion of \emph{divided power right $\der_*I$-module}. Further justification for this is given by Proposition \ref{prop:delta-topsp} where we show that the comonad structure map for $K$ is determined (at least up to homotopy) by the operad composition map for $\der_*I$. The main conclusion of this section can now be expressed as saying that $n$-excisive functors from based spaces to spectra are classified by the divided power right $\der_*I$-module structure on their derivatives.
\end{remark}

Let us turn then to an explicit description of the comonad structure on the functor $K$ with respect to the equivalences of Proposition \ref{prop:topsp-KrAn}. To state our answer, it is convenient to give a formula for iterates of $K$. For this we need some new notation.

\begin{definition} \label{def:seqs}
For integers $r_0 < r_1 < \dots < r_t$ we put an equivalence relation on the set of sequences of surjections
\[ \un{r_t} \epi \un{r_{t-1}} \epi \dots \epi \un{r_0}. \]
We say that two such sequences $(\alpha_t,\dots,\alpha_1)$ and $(\beta_t,\dots,\beta_1)$ are equivalent if they fit into a commutative diagram
\[ \begin{diagram}
  \node[2]{\un{r_{t-1}}} \arrow{e,t}{\alpha_{t-1}} \arrow[2]{s,r}{\isom} \node{\dots} \arrow{e,t}{\alpha_{2}} \node{\un{r_1}} \arrow[2]{s,l}{\isom} \arrow{se,t}{\alpha_{1}} \\
  \node{\un{r_t}} \arrow{ne,t}{\alpha_{t}} \arrow{se,b}{\beta_{t}} \node[4]{\un{r_0}} \\
  \node[2]{\un{r_{t-1}}} \arrow{e,b}{\beta_{t-1}} \node{\dots} \arrow{e,b}{\beta_{2}} \node{\un{r_1}} \arrow{ne,b}{\beta_1}
\end{diagram} \]
where the vertical maps are bijections. We write
\[ [\un{r_t} \epi \dots \epi \un{r_0}] \]
for the set of equivalence classes of such sequences. In the case $t = 1$ notice that $[\un{n} \epi \un{r}]$ is just the set of surjections $\alpha: \un{n} \epi \un{r}$ as previously.
\end{definition}

\begin{proposition} \label{prop:topsp-KK}
For a $\Sigma_n$-spectrum $A_n$, and integers $r = r_0 < r_1 < \dots < r_t = n$, we have
\[ K_{r_0}K_{r_1} \dots K_{r_{t-1}}A_n \homeq \left[ \prod_{[\un{r_t} \epi \dots \epi \un{r_0}]} \Map \left(\Smsh_{i = 1}^{t} (\der_{r_{i1}}I \smsh \dots \smsh \der_{r_{ir_i}}I), A_n \right) \right]_{h\Sigma_n} \]
where the product is taken over equivalence classes of sequences $(\alpha_t,\dots,\alpha_1)$ as in Definition \ref{def:seqs}, and we are writing
\[ r_{ij} := |\alpha_i^{-1}(j)|. \]
Note that these numbers are, up to reordering, independent of the representative chosen for the equivalence class. In particular
\[ K_r K_s A_n \homeq \left[ \prod_{[\un{n} \epi \un{s} \epi \un{r}]} \Map(\der_{s_1}I \smsh \dots \smsh \der_{s_r}I \smsh \der_{n_1}I \smsh \dots \smsh \der_{n_s}I, A_n) \right]_{h\Sigma_n} \]
where the product is over equivalence classes of pairs of surjections
\[ \un{n} \arrow{e,t,>}{\beta} \un{s} \arrow{e,t,>}{\gamma} \un{r} \]
and we are writing $n_i := |\beta^{-1}(i)|$ and $s_j := |\gamma^{-1}(j)|$.
\end{proposition}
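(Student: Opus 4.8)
The plan is to derive Proposition~\ref{prop:topsp-KK} as a direct consequence of Proposition~\ref{prop:topsp-KrAn} by iterated application, exactly as Proposition~\ref{prop:spsp-KK} was derived from Proposition~\ref{prop:derAn-spsp} in the spectra-to-spectra case. First I would set up the induction on $t$. The base case $t = 1$ is precisely the statement of Proposition~\ref{prop:topsp-KrAn}, once one unwinds that $[\un{r_1} \epi \un{r_0}]$ is the set of surjections and $\Smsh_{i=1}^1(\der_{r_{11}}I \smsh \dots) = \der_{n_1}I \smsh \dots \smsh \der_{n_r}I$. For the inductive step, I would apply $K_{r_0}$ to the formula already established for $K_{r_1}\dots K_{r_{t-1}}A_n$ (viewed as a $\Sigma_{r_1}$-spectrum). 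Using Proposition~\ref{prop:topsp-KrAn} with $n$ replaced by $r_1$, the result is a product over surjections $\alpha_1 : \un{r_1} \epi \un{r_0}$ of mapping spectra $\Map(\der_{r_{11}}I \smsh \dots \smsh \der_{r_{1 r_0}}I, K_{r_1}\dots K_{r_{t-1}}A_n)_{h\Sigma_{r_1}}$.

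The key combinatorial point, which I would isolate as the main step, is the bookkeeping identity: a pair consisting of an equivalence class of a sequence $\un{r_t} \epi \dots \epi \un{r_1}$ together with a surjection $\alpha_1 : \un{r_1} \epi \un{r_0}$ (with the latter acting by precomposition on the $\Sigma_{r_1}$-action) is the same data as an equivalence class of a full sequence $\un{r_t} \epi \dots \epi \un{r_0}$, in the sense of Definition~\ref{def:seqs}. This is exactly the statement that the equivalence relation in Definition~\ref{def:seqs} is generated compatibly under extension by one more surjection at the bottom, and it is where all the $\Sigma_{r_i}$-actions get absorbed into the single remaining $\Sigma_n$-action. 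I would verify that plugging the inductive formula into the $\Map(-, -)$ and commuting $\Map$ with the (finite) product and with homotopy orbits identifies $\Map(\der_{r_{11}}I \smsh \dots, (\prod \Map(\dots, A_n))_{h\Sigma_{r_1}})$ with $\prod \Map(\der_{r_{11}}I \smsh \dots \smsh (\Smsh_{i=2}^t \dots), A_n)$, smashing in the new factors $\der_{r_{11}}I \smsh \dots \smsh \der_{r_{1 r_0}}I$ as the $i = 1$ layer of $\Smsh_{i=1}^t$.

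The routine ingredients I would invoke without further comment are: finite products commute with $\Map(X, -)$ in spectra, finite products commute with homotopy orbits, and $\Map(X \smsh Y, Z) \cong \Map(X, \Map(Y, Z))$ — precisely the formal facts used in the proof of Proposition~\ref{prop:spsp-KK}. I would then spell out the two-fold case $t = 2$ explicitly, since it is the one used later: here $[\un{n} \epi \un{s} \epi \un{r}]$ consists of equivalence classes of composable pairs $\un{n} \xrightarrow{\beta} \un{s} \xrightarrow{\gamma} \un{r}$, with $n_i = |\beta^{-1}(i)|$ and $s_j = |\gamma^{-1}(j)|$, and $\Smsh_{i=1}^2(\dots)$ unwinds to $(\der_{s_1}I \smsh \dots \smsh \der_{s_r}I) \smsh (\der_{n_1}I \smsh \dots \smsh \der_{n_s}I)$, giving the displayed formula for $K_r K_s A_n$. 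The $\Sigma_n$-action is the diagonal of its permutation action on the smash factors $\der_{n_i}I$ (via $\beta$) and its action on $A_n$, together with the resulting induced action permuting the set of equivalence classes; this is inherited directly from Proposition~\ref{prop:topsp-KrAn} at each stage. The only real subtlety — and the step I expect to require the most care — is checking that the equivalence relation of Definition~\ref{def:seqs} is exactly the one produced by this iteration, i.e. that no finer or coarser identifications are introduced; once that combinatorial lemma is in hand, the rest is a formal manipulation of adjunctions and finite (co)limits.
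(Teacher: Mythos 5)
Your approach is essentially the same as the paper's: iterate Proposition~\ref{prop:topsp-KrAn}, commute $\Map$ past finite products and homotopy orbits, and absorb the indexing combinatorics into equivalence classes of sequences of surjections. The paper's (one-sentence) proof explicitly names one ingredient that your outline leaves implicit: $\Sigma_s$ acts \emph{freely} on the set of surjections $\un{n}\epi\un{s}$. Your ``bookkeeping identity'' is stated as a bijection of sets, but what is actually needed is that the $\Sigma_{r_1}$-\emph{homotopy} orbits of a product indexed by pairs $\bigl(\alpha_1,[\un{r_t}\epi\dots\epi\un{r_1}]\bigr)$ compute the honest quotient of that indexing set; this holds precisely because the $\Sigma_{r_1}$-action on such pairs is free (inherited from its free action, by post-composition, on surjections $\un{r_2}\epi\un{r_1}$), so that the diagonal $\Sigma_{r_1}$-spectrum is induced and its homotopy orbits collapse. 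You should also record that commuting $\Map(\der_{k}I,-)$ past $(\text{--})_{h\Sigma_n}$ uses that the $\der_kI$ are finite (dualizable) spectra, which they are, being Spanier--Whitehead duals of the finite partition poset complexes.
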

\begin{proof}
This follows by iterating the formula given in Proposition \ref{prop:topsp-KrAn}, and using the fact that $\Sigma_s$ acts freely on the set of surjections $\un{n} \epi \un{s}$ (and that homotopy colimits commute).
\end{proof}

\begin{prop} \label{prop:delta-topsp}
With respect to the equivalences of \ref{prop:topsp-KrAn} and \ref{prop:topsp-KK}, the comultiplication map
\[ \delta_{r,s}: K_r A_n \to K_r K_s A_n \]
associated to the comonad $K$, takes the form
\[ \begin{diagram}
  \node{ \left[ \prod_{\un{n} \epi \un{r}} \Map(\der_{n'_1}I \smsh \dots \smsh \der_{n'_r}I, A_n) \right]_{h\Sigma_n} } \arrow{e,t}{\delta_{r,s}}
  \node{ \left[ \prod_{[\un{n} \epi \un{s} \epi \un{r}]} \Map(\der_{s_1}I \smsh \dots \smsh \der_{n_1}I \smsh \dots , A_n) \right]_{h\Sigma_n}}
\end{diagram} \]
where $n'_i := |\alpha^{-1}(i)|$. This map is given by applying $\Sigma_n$-homotopy orbits to a map between the products constructed in the following way. We can compose a sequence of surjections
\[ \un{n} \epi \un{s} \epi \un{r} \]
to get a single surjection $\un{n} \epi \un{r}$ and this composite is the same for all representatives of an element in $[\un{n} \epi \un{s} \epi \un{r}]$. Associated to this composition are operad composition maps
\[ \der_{s_j}I \smsh \der_{n_{s_1+\dots+s_{j-1}+1}}I \smsh \dots \smsh \der_{n_{s_1+\dots+s_j}}I \to \der_{n'_j}I \]
which yield the required map $\delta_{r,s}$.
\end{prop}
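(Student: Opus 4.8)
\textbf{Proof proposal for Proposition~\ref{prop:delta-topsp}.}

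The plan is to trace through the definition of the comonad $K = \der_*\Phi$ and identify the comultiplication $\delta_{r,s}$ concretely, using the already-established calculation of $K_rA_n$ in Proposition~\ref{prop:topsp-KrAn} and the identification of $\Phi$ for functors $\finbased \to \spectra$. The key point, exactly as in the proof of Proposition~\ref{prop:delta-spsp} in the $[\spectra,\spectra]$ case, is that $\delta_{r,s}$ is obtained by applying $\der_r$ to the unit map $\eta_s\colon F \to \Phi\der_{\le s}F$ (more precisely, to its $s$-component $F \to (\der_*I\text{-module version of }\Phi)$ evaluated on $\der_sF$) in the special case $F(X) = \Phi(A_n)(X)$, the $n$-homogeneous-ish functor whose $r$-th derivative computes $K_rA_n$. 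So the first step is to set up the analogue of Proposition~\ref{prop:spsp-eta}: describe the unit map $\eta_s$ for a functor $\finbased \to \spectra$ in terms of co-cross-effects, the diagonal $X \to X^{\times s}$, and the assembly-map equivalence relating the multilinearized co-cross-effects to $\der_sF$ smashed with $X$'s. The new ingredient compared with the spectra-to-spectra case is that the cross-effects of functors of \emph{based spaces} are computed not just from smash powers but from the partition-poset / derivatives-of-the-identity machinery of \cite{goodwillie:2003, arone/ching:2011}; this is precisely what produces the $\der_{n_i}I$ factors in Proposition~\ref{prop:topsp-KrAn}, and the same machinery must be invoked to identify the stabilization maps appearing in $\eta_s$.

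Second, I would take $F(X) = \Phi(A_n)(X)$ and compute the $r$-th cross-effect of the composite $\eta_s$ applied to this $F$. Using Proposition~\ref{prop:topsp-KrAn} and Remark~\ref{rem:topsp-KrAn}, the $r$-th cross-effect of $\Phi(A_n)$ decomposes as a product over surjections $\un{n}\epi\un{r}$ of mapping spectra $\Map(\der_{n_1}I\smsh\cdots\smsh\der_{n_r}I, A_n)$ with a homotopy-orbit; the $s$-th cross-effect introduces a second layer of surjections $\un{n}\epi\un{s}$. Taking the $r$-th cross-effect of that and invoking the compatibility of cross-effects with homotopy orbits, products, and smash products (all of which hold up to homotopy for spectra-valued functors, as used already in \ref{prop:derAn-spsp} and \ref{prop:topsp-KK}), one lands on the product over $[\un{n}\epi\un{s}\epi\un{r}]$ that appears in the statement of \ref{prop:topsp-KK}. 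Multilinearizing and evaluating at the sphere then yields $\delta_{r,s}$, and the only nontrivial piece of data left to pin down is which map $\der_{s_j}I \smsh \der_{n_{\bullet}}I \smsh\cdots \to \der_{n'_j}I$ governs the identification. Here I would argue, as in the end of the proof of \ref{prop:delta-spsp}, that the relevant map is forced: the stabilization/assembly maps for co-cross-effects of the identity functor $I$ are by construction the operad structure maps of $\der_*I$ (this is the content of \cite{ching:2005} together with \cite{arone/ching:2011}), so composing surjections $\un{n}\epi\un{s}\epi\un{r}$ corresponds precisely to the operad composition $\der_{s_j}I \smsh \der_{n_{s_1+\cdots+s_{j-1}+1}}I\smsh\cdots\smsh \der_{n_{s_1+\cdots+s_j}}I \to \der_{n'_j}I$.

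The main obstacle I anticipate is keeping the equivariance and the homotopy-orbit bookkeeping honest through the two nested cross-effect computations: the $\Sigma_n$-action on $A_n$, the (free) $\Sigma_s$-action on surjections $\un{n}\epi\un{s}$, and the $\Sigma_r$-action all interact, and one must check that the resulting map really is $\Sigma_n$-equivariant and descends to homotopy orbits in the way claimed (this is why the statement is only asserted in the stable homotopy category and only up to the specified inverse equivalences). A secondary technical point is that the assembly-map equivalence of Definition~\ref{def:assembly}, and its co-cross-effect-level refinement, were set up in Section~\ref{sec:specspec} for functors \emph{from} spectra; for functors from based spaces one needs the version where the input variables $X_i$ are replaced, after applying the cross-effect cube, by the relevant reduced diagonal quotients $X^{[n]}$, and where the coefficient spectra are the $\der_{n_i}I$. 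I would handle this by reducing to representable functors $F = \Sigma^\infty\Hom_{\based}(Y,-)$ exactly as in the proof of \ref{prop:spsp-eta}: verify the formula there (where it follows from the known derivatives of representables computed via the bar construction, cf.\ \ref{def:bar} below), and then extend to arbitrary $F$ by writing $F$ as a homotopy colimit of representables and using that all the functors involved ($\der_*$, cross-effects, $\Phi$ on bounded inputs) commute suitably with homotopy colimits of cofibrant objects. Once the representable case is nailed down, the naturality argument propagates the identification of $\delta_{r,s}$ to all $F$, and in particular identifies the comultiplication of $K$ purely in terms of the operad composition on $\der_*I$.
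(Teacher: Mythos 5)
Your proposal takes a genuinely different route from the paper's. You propose to build a $[\finbased,\spectra]$ analogue of Proposition~\ref{prop:spsp-eta}, describing the unit map $\eta_s$ via diagonals, co-cross-effects, and an assembly-map equivalence, and then to compute the $r$-th cross-effect of $\eta_s$ for $F = \Phi(A_n)$ directly, exactly mirroring the proof of Proposition~\ref{prop:delta-spsp}. The paper instead uses a one-observation shortcut: $\delta_{r,s}$ \emph{is} the coalgebra structure map $\theta_{r,s}$ for the functor $F(X) = \Map(M(X)_n,A_n)_{\Sigma_n}$, and by naturality it suffices to understand $\theta_{r,s}$ for the un-orbited functor $X\mapsto\Map(M(X)_n,A_n)$ and then apply $\Sigma_n$-orbits. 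By the proof of Proposition~\ref{prop:topsp-KrAn}, $\der_s$ of this functor is the product $\prod_{\un{n}\epi\un{s}}\Map(\der_{n_1}I\smsh\cdots\smsh\der_{n_s}I,A_n)$, on which $\Sigma_s$ acts freely (it permutes the surjections freely). Consequently the norm map of Proposition~\ref{prop:norm} is an equivalence, so $\theta_{r,s}$ is forced to agree with the right $\der_*I$-module structure map $\psi_{r,s}$, and that module structure was already identified in terms of the operad composition on $\der_*I$ via the maps~$m$ of~(\ref{eq:m}). This is the insight your plan misses: Proposition~\ref{prop:norm} already plays the role that \ref{prop:spsp-eta} played in the spectra-to-spectra case, so there is no need to re-derive an assembly-map description of $\eta_s$ for based-space domains. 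Your approach buys a more uniform parallel with Section~\ref{sec:specspec}, but at real cost: a based-space version of Definition~\ref{def:assembly} and Proposition~\ref{prop:spsp-eta} is nowhere in the paper, would have to be formulated with $X^{\smsh n}/\Delta^n X$ and the $\der_{n_i}I$ in place of bare smash powers, and the claim that those stabilization maps ``are by construction'' the operad composition on $\der_*I$ is in fact a substantive theorem (resting on the bar-construction comparison of \cite{ching:2005} and \cite{arone/ching:2011}, i.e., on Lemmas~\ref{lem:topsp1}--\ref{lem:topsp2}) rather than a definitional unwinding. So while your strategy is plausible in principle, it would require rebuilding machinery the paper deliberately avoids, and the freeness-plus-norm-map argument is both shorter and more robust.
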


Before turning to the proofs, we show that the relationship between $K$-coalgebras and right $\der_*I$-module can be described in terms of a map of comonads that is given, up to homotopy, by the norm maps of Proposition \ref{prop:norm}.

\begin{definition} \label{def:module-comonad}
We can define a comonad $K'$ on the category of symmetric sequences whose coalgebras are precisely the right $\der_*I$-modules. On the symmetric sequence $A$, $K'$ is given by
\[ K'(A)_r := \prod_{n} \left[ \prod_{\un{n} \epi \un{r}} \Map(\der_{n_1}I \smsh \dots \smsh \der_{n_r}I, A_n) \right]^{\Sigma_n}. \]
This functor is right adjoint to the free right $\der_*I$-module monad and so for formal reasons inherits a comonad structure whose coalgebras are the right $\der_*I$-modules. The $K'$-coalgebra structure on $\der_*F$ is a map of symmetric sequences $\der_*F \to K'(\der_*F)$ that is captured exactly by the maps $\psi_{r,n}$ of Definition \ref{def:psi}. (We use a cofibrant model for $\der_*I$ so that the strict fixed points in the definition of $K'$ are equivalent to the homotopy fixed points.)

Now the comonad $K = \der_* \Phi$ takes values in right $\der_*I$-modules (because $\der_*$ does). This gives us a map
\[ K \to K'K \]
and composing with the counit for $K$ we get a natural transformation
\[ \nu: K \to K'. \]
Notice that $K'$ commutes with products so that it is determined by constructions $K'_r A_n$ for $r < n$ analogous to those of Definition \ref{def:derAn} for $K$. Then $\nu$ restricts to maps $\nu_r: K_rA_n \to K'_rA_n$.
\end{definition}

\begin{lemma} \label{lem:map-comonads}
The map $\nu: K \to K'$ is a morphism of comonads and the right $\der_*I$-module structure on a $K$-coalgebra $A$ is encoded in the composite map
\[ A \to KA \arrow{e,t}{\nu} K'A. \]
\end{lemma}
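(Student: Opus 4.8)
The plan is to verify the two assertions of Lemma~\ref{lem:map-comonads} separately: first that $\nu\colon K\to K'$ is a morphism of comonads, and second that the right $\der_*I$-module structure on a $K$-coalgebra is recovered by composing the coalgebra structure map with $\nu$. Both are formal consequences of the construction of $\nu$ in Definition~\ref{def:module-comonad}, so no calculation with the explicit models of $K$ and $K'$ from Propositions~\ref{prop:topsp-KrAn}, \ref{prop:topsp-KK} and \ref{prop:delta-topsp} should be needed.

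First I would recall the general mechanism. The comonad $K=\der_*\Phi$ factors through the category of right $\der_*I$-modules: there is a functor $\hat K\colon\symseq\to\der_*I\text{-mod}$ and a forgetful functor $V\colon\der_*I\text{-mod}\to\symseq$ with $K=V\hat K$, because $\der_*$ lands in right $\der_*I$-modules. Dually, $K'=VU$ where $U$ is the cofree right $\der_*I$-module functor, right adjoint to $V$. The counit of the $(V,U)$-adjunction is a natural transformation $VU\to 1$ on $\symseq$, and its unit $1\to UV$ gives, after applying $V$ on the left and whiskering with $\hat K$, the map $V\hat K\to VUV\hat K$, i.e. $K\to K'K$. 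Composing with $V(\text{counit of }\hat K\dashv\dots)$ — more precisely with the counit $\epsilon\colon K\to 1$ applied inside — yields $\nu\colon K\to K'$. The key point is that $\nu$ is exactly the composite built from the coalgebra-to-module adjunction, and such comparison maps between comonads arising from a factorization of one adjunction through another are always morphisms of comonads. I would make this precise by checking compatibility with comultiplication and counit: the counit condition $\epsilon'\circ\nu=\epsilon$ is immediate from the definition of $\nu$ (it ends with $\epsilon$), and the comultiplication condition $\delta'\circ\nu=(\nu\ast\nu)\circ\delta$ follows from the triangle identities for the $(V,U)$-adjunction together with naturality of $\epsilon$; this is the same diagram chase that shows a monoidal-adjunction-induced comparison is a monoidal functor, dualized.

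For the second assertion, let $A$ be a $K$-coalgebra with structure map $\theta\colon A\to KA$. I would observe that, because $K$ takes values in right $\der_*I$-modules via $\hat K$, the composite $A\xrightarrow{\theta}KA=V\hat K A$ followed by $V$ of the unit $\hat K A\to U V\hat K A$ gives a map $A\to K'A=VU V\hat K A$; but this is precisely $\nu_A\circ\theta$ after applying the counit to collapse $\hat K A$. The claim that this map is a right $\der_*I$-module structure on $A$ (i.e. a $K'$-coalgebra structure) is then a consequence of the Barr–Beck type observation already invoked in Definition~\ref{def:module-comonad}: $K'$-coalgebras are precisely right $\der_*I$-modules, and the $K'$-coalgebra structure $\psi\colon A\to K'A$ encodes the action maps $A_r\smsh\der_{n_1}I\smsh\dots\smsh\der_{n_r}I\to A_n$ by adjunction. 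One checks directly that $\nu\circ\theta$ satisfies the $K'$-coalgebra axioms by using that $\theta$ satisfies the $K$-coalgebra axioms and that $\nu$ is a comonad map (which is why the first part is logically prior). Unwinding the adjunction, the resulting action maps on $A$ are exactly the ones that make $\der_*F$ a right $\der_*I$-module in the case $A=\der_*F$, by the construction of that module structure in \cite{arone/ching:2011} — since both are defined by the same universal property relative to $\der_*$.

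The main obstacle I anticipate is bookkeeping rather than mathematics: making the factorization $K=V\hat K$ and the identification $K'=VU$ fully precise as \emph{simplicially enriched} comonads, and tracking the coherence so that ``$\nu$ is a morphism of comonads'' is a genuine commuting diagram and not just a statement up to homotopy. Because the paper has been careful (Section~\ref{sec:descent}) to work with strictly defined comonads and only pass to homotopy at the level of coalgebra mapping spaces, I would want $\nu$ to be a strict natural transformation of strict comonads, which it is: every functor in sight ($\der_*$, $\Phi$, $V$, $U$) is an honest simplicial functor and the adjunctions are honest simplicial adjunctions. Once that is set up, the diagram chases are the standard ones and I would present them compactly, pointing to the triangle identities as the only input. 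The homotopical content — that $\nu$ is given up to homotopy by the norm maps — is then deferred to Proposition~\ref{prop:norm}, as the remark preceding the lemma indicates, and is not part of what this lemma claims.
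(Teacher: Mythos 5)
Your argument is correct and matches the paper's approach: the paper compresses the proof to a single sentence (``This is a diagram chase using the naturality of the $K'$-coalgebra structure on the values of $K$''), and your factorization $K = V\hat K$, $K' = VU$ through the forgetful/cofree adjunction for right $\der_*I$-modules is precisely the right way to organize that chase. The comonad-morphism identities follow as you outline (using naturality of the unit $1\to UV$ with respect to module maps, the $K'$-coalgebra axioms for the values of $K$, and the counit axiom for $K$), and the second assertion is then formal, reducing for $A=\der_*F$ to the module structure of \cite{arone/ching:2011} by naturality of the $K'$-coalgebra structure on $KA$ with respect to the module map $\theta_F = \der_*\eta_F$.
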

\begin{proof}
This is a diagram chase using the naturality of the $K'$-coalgebra structure on the values of $K$.
\end{proof}

\begin{proposition} \label{prop:comonad-norm}
With respect to the equivalence of Proposition \ref{prop:topsp-KrAn}, the map
\[ \nu_r: K_rA_n \to K'_rA_n \]
is given, in the homotopy category, by the norm maps
\[ N: \left[ \prod_{\un{n} \epi \un{r}} \Map(\der_{n_1}I \smsh \dots \smsh \der_{n_r}I, A_n) \right]_{h\Sigma_n} \to \left[ \prod_{\un{n} \epi \un{r}} \Map(\der_{n_1}I \smsh \dots \smsh \der_{n_r}I, A_n) \right]^{h\Sigma_n} \]
that appear in Proposition \ref{prop:norm}.
\end{proposition}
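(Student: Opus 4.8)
The plan is to trace through the construction of $\nu_r$ from Definition~\ref{def:module-comonad}, identify each ingredient with an explicit map under the equivalence of Proposition~\ref{prop:topsp-KrAn}, and observe that the composite is exactly the norm. Recall that $\nu$ is defined as the composite $K \to K'K \xrightarrow{\epsilon} K'$, where the first map uses the fact that $K=\der_*\Phi$ lands in right $\der_*I$-modules and the second is the counit of $K$. So $\nu_r$ is obtained by taking the right $\der_*I$-module structure on $K_r A_n = \der_r(\Phi A_n)$ — i.e.\ the module structure on the derivatives of the functor $X\mapsto (A_n\smsh X^{[n]})^{h\Sigma_n}$ where $X^{[n]}=X^{\wedge n}/\Delta^n X$ — and then applying the counit of $K$.

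First I would unwind the two sides. By Proposition~\ref{prop:topsp-KrAn} (and its proof, which passes through the bar construction in Definition~\ref{def:bar} and the model for $\der_*I$ as Spanier--Whitehead duals of partition complexes), $K_r A_n$ is the homotopy orbit $\big[\prod_{\un n\epi\un r}\Map(\der_{n_1}I\smsh\cdots\smsh\der_{n_r}I,A_n)\big]_{h\Sigma_n}$, while $K'_r A_n$ is the strict (hence, for our cofibrant model of $\der_*I$, homotopy) fixed points of the same $\Sigma_n$-diagram. The map $\nu_r$ is therefore a natural self-map of this $\Sigma_n$-object composed with orbits on the source and fixed points on the target, so by naturality it is determined by a single $\Sigma_n$-equivariant endomorphism of $\prod_{\un n\epi\un r}\Map(\der_{n_1}I\smsh\cdots\smsh\der_{n_r}I,A_n)$; the content is that this endomorphism is (equivariantly homotopic to) the identity, so that $\nu_r$ becomes the norm. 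To see this, I would compare with the right $\der_*I$-module structure on the derivatives of the free/cofree functors. The module structure map $\psi_{r,n}$ on $K'$ comes from the operad action on $\der_*\Phi$, which at the level of representable functors is forced by the Yoneda lemma and the $\der_*I$-module structure constructed in \cite{arone/ching:2011}; meanwhile the $K$-coalgebra structure $\theta_{r,n}$ arises from the unit $F\to\Phi\der_*F$, which on representables is again determined formally by a diagonal, exactly as in Proposition~\ref{prop:spsp-eta} in the spectra-to-spectra case. Both therefore reduce, after left Kan extension, to the same underlying diagonal construction on $\der_*I$-modules — the one packaged by the norm map $N$ from homotopy orbits to homotopy fixed points — and the square in Proposition~\ref{prop:norm} commutes precisely because $\theta_{r,n}=N\circ(\text{diagonal})$ while $\psi_{r,n}$ is that same diagonal landing in fixed points. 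Composing this with Lemma~\ref{lem:map-comonads}, which says the $\der_*I$-module structure on a $K$-coalgebra factors through $\nu$, pins down $\nu_r$ as $N$.

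The routine-but-necessary step is checking that the equivalences of Propositions~\ref{prop:topsp-KrAn} and~\ref{prop:topsp-KK} are compatible with the map-of-comonads structure, i.e.\ that $\nu$ is genuinely natural and that the identification of source and target above respects the $\Sigma_n$-action on the indexing set of surjections; this is a diagram chase of the kind indicated in the proof of Lemma~\ref{lem:map-comonads}. The main obstacle I anticipate is keeping the bar-construction model for $\der_*I$ (from Definition~\ref{def:bar}) and the cofibrancy hypotheses straight while making the comparison — in particular, verifying that the natural self-map of $\prod_{\un n\epi\un r}\Map(\der_{n_1}I\smsh\cdots,A_n)$ extracted from $\nu_r$ really is the identity up to coherent homotopy, rather than merely some equivalence, since only the identity gives the norm on the nose rather than the norm twisted by an automorphism. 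I would handle this by reducing to representable $F$ (where everything is determined by Yoneda), checking the claim there by the explicit diagonal description exactly as in the $[\finspec,\spectra]$ case, and then extending to general $F$ by the left Kan extension argument used throughout Section~\ref{sec:sp}, since all the functors in sight ($\der_*$, $\Phi$ on bounded inputs, homotopy orbits, homotopy fixed points of finite groups acting on bounded products) preserve the relevant homotopy colimits and equivalences between cofibrant objects.
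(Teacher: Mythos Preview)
Your overall plan --- unwind $\nu$ as $K\to K'K\xrightarrow{K'\epsilon}K'$ and identify each piece --- is the right one, but the middle of your argument contains a mis-framing. You write that $\nu_r$, being a natural map from homotopy orbits to homotopy fixed points of the same $\Sigma_n$-object, ``is determined by a single $\Sigma_n$-equivariant endomorphism'' of that object, and that the content is that this endomorphism is the identity. That is not how the norm map arises: the norm $N\colon X_{h\Sigma_n}\to X^{h\Sigma_n}$ is a transfer-type construction, not the result of passing an equivariant self-map of $X$ through an ``orbits-then-fixed-points'' machine (there is no natural map $X_{h\Sigma_n}\to X$ to compose with). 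So the verification you propose --- checking that some extracted self-map is the identity by reducing to representables --- is chasing something that is not there.

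The paper's proof avoids this entirely by observing that Proposition~\ref{prop:norm} is already proved and can be \emph{applied} to the functor $F=\Phi A_n$. For this $F$ one has $\der_*F=KA_n$, and Proposition~\ref{prop:norm} says the $K'$-coalgebra map $\psi$ on $KA_n$ factors as $N\circ\theta$, where $\theta$ is the comultiplication $\delta$ of $K$. Hence $\nu = (K'\epsilon)\circ N\circ\delta$. Now Proposition~\ref{prop:delta-topsp} gives $\delta_{r,s}$ explicitly in terms of operad composition on the indexing surjections, and the counit picks out the $s=n$ terms; by naturality of $N$ with respect to these maps the composite collapses to $N$ itself. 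This is a two-line argument once Propositions~\ref{prop:norm} and~\ref{prop:delta-topsp} are in hand, with no need to revisit representables or left Kan extensions.
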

\begin{proof}
We apply Proposition \ref{prop:norm} to the functor $F = \Phi A_n$. This tells us that the $K'$-coalgebra structure on $KA_n$ is given, up to homotopy, by the composite
\[ KA_n \to KKA_n \to K'KA_n \]
where the first map is the $K$-coalgebra structure on $KA_n$ (i.e. the comonad structure map for $K$) followed by the norm map $N$. By definition then, $\nu$ is given by following the above composite with the counit for $K$. This gives the composite
\[ KA_n \to KKA_n \to K'KA_n \to K'A_n. \]
Now Proposition \ref{prop:delta-topsp} tells us what the first map is (up to homotopy), the second map is $N$ and the third is projection on to the terms with $s = n$. That is, we get
\[ \begin{split}
  \left[ \prod_{\alpha: \un{n} \epi \un{r}} \Map(\der_{n'_1}I \smsh \dots \smsh \der_{n'_r}I, A_n) \right]_{h\Sigma_n} &\arrow{e,t}{\delta_{r,s}}
  \left[ \prod_{[\un{n} \epi \un{s} \epi \un{r}]} \Map(\der_{s_1}I \smsh \dots \smsh \der_{n_1}I \smsh \dots \smsh \der_{n_s}I, A_n) \right]_{h\Sigma_n} \\
    &\arrow{e,t}{N}
  \left[ \prod_{[\un{n} \epi \un{s} \epi \un{r}]} \Map(\der_{s_1}I \smsh \dots \smsh \der_{n_1}I \smsh \dots \smsh \der_{n_s}I, A_n) \right]^{h\Sigma_n} \\
    &\arrow{e}
  \left[ \prod_{\alpha: \un{n} \epi \un{r}} \Map(\der_{n'_1}I \smsh \dots \smsh \der_{n'_r}I, A_n) \right]^{h\Sigma_n}
\end{split} \]
By naturality of $N$, this composite is just the norm map $N$ as claimed.
\end{proof}

We now turn to the proofs of Propositions \ref{prop:topsp-KrAn}, \ref{prop:norm} and \ref{prop:delta-topsp}. The first step is to understand the derivatives of the representable functors $\Sigma^\infty \Hom_{\based}(X,-)$ for $X \in \finbased$. These were calculated by the first author in \cite{arone:1999}. The right $\der_*I$-module structure was calculated in \cite{arone/ching:2011}. We recall both descriptions here.

\begin{definition} \label{def:fat-diagonal}
For $X \in \finbased$ we have the \emph{fat diagonal} $\Delta^n X \subseteq X^{\smsh n}$ consisting of $n$-tuples $(x_1,\dots,x_n)$ of points in $X$ with some $x_i = x_j$ for $i \neq j$. In \cite{arone:1999}, the first author showed that
\[ \der_n(\Sigma^\infty \Hom_{\based}(X,-)) \homeq \dual (X^{\smsh n}/\Delta^n X) \]
where $\dual$ denotes the Spanier-Whitehead dual.
\end{definition}

\begin{definition} \label{def:bar}
To describe the $\der_*I$-module structure on these derivatives, we recall some facts about operadic bar constructions. For $X \in \finbased$, the symmetric sequence $\Sigma^\infty X^{\smsh *}$ forms a right module over the commutative operad $\mathsf{Com}$. (This is the operad of spectra all of whose terms are the sphere spectrum.) By \cite{ching:2005}, the one-sided bar construction
\[ B(\Sigma^\infty X^{\smsh *},\mathsf{Com},1) \]
has the structure of a right comodule over the cooperad $B(1,\mathsf{Com},1)$. Applying Spanier-Whitehead duality, we get a right module
\[ \dual B(X^{\smsh *},\mathsf{Com},1) \]
over the operad $\dual B(1,\mathsf{Com},1)$. The terms of this operad are equivalent to the derivatives of the identity functor on based spaces.

Here we write $\der_*I$ for a cofibrant model of the operad $\dual B(1,\mathsf{Com},1)$ (in the projective model structure), and $M(X)$ for a cofibrant replacement of $\dual B(X^{\smsh *},\mathsf{Com},1)$ in the projective model structure on right $\der_*I$-modules. In fact, we choose $M(-)$ to be a cofibrant object in the category of simplicially-enriched functors from $(\finbased)^{op}$ to right $\der_*I$-modules (with its projective model structure).
\end{definition}

\begin{lemma} \label{lem:der-topsp}
For $X \in \finbased$, the right $\der_*I$-module $M(X)$ is equivalent to the right $\der_*I$-module formed by the derivatives of the representable functor $\Sigma^\infty \Hom_{\based}(X,-)$.
\end{lemma}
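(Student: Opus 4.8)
The statement to prove is Lemma~\ref{lem:der-topsp}: that the right $\der_*I$-module $M(X)$ (a cofibrant replacement of $\dual B(X^{\smsh *},\mathsf{Com},1)$) is equivalent, as a right $\der_*I$-module, to the derivatives $\der_*(\Sigma^\infty \Hom_{\based}(X,-))$. The strategy is to factor this into two already-available inputs and to track the module structure through each. First, by Definition~\ref{def:fat-diagonal} (the first author's calculation in \cite{arone:1999}), there is a weak equivalence of symmetric sequences
\[ \der_n(\Sigma^\infty \Hom_{\based}(X,-)) \homeq \dual(X^{\smsh n}/\Delta^n X). \]
Second, I would recall from \cite{ching:2005} that the operadic bar construction $B(\Sigma^\infty X^{\smsh *},\mathsf{Com},1)$ computes, in degree $n$, exactly the cofiber $\Sigma^\infty(X^{\smsh n}/\Delta^n X)$ — this is the standard identification of the $\mathsf{Com}$-bar construction on a "pointed" right module with the quotient by the fat diagonal. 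Dualizing, $\dual B(X^{\smsh *},\mathsf{Com},1)$ has $n$th term $\dual(X^{\smsh n}/\Delta^n X)$, matching the symmetric sequence of derivatives. So on the level of symmetric sequences both sides agree with $\dual B(X^{\smsh *},\mathsf{Com},1)$.

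**The module structure.** The content beyond symmetric sequences is that these equivalences respect the right $\der_*I$-action. Here I would invoke \cite{arone/ching:2011} directly: that paper constructs the right $\der_*I$-module structure on $\der_*(\Sigma^\infty \Hom_{\based}(X,-))$ precisely by identifying it (naturally in $X$, and compatibly with the module structure) with $\dual B(X^{\smsh *},\mathsf{Com},1)$ as a right module over $\dual B(1,\mathsf{Com},1) \simeq \der_*I$. Thus the comparison is essentially a matter of citing the construction in \cite{arone/ching:2011} and observing that it factors through the bar construction as stated in Definition~\ref{def:bar}. The only additional point is to pass from the (possibly non-cofibrant) dual bar construction to the cofibrant replacement $M(X)$: since $M(X)$ is by construction a cofibrant replacement of $\dual B(X^{\smsh *},\mathsf{Com},1)$ in the projective model structure on right $\der_*I$-modules, the structure map $M(X) \weq \dual B(X^{\smsh *},\mathsf{Com},1)$ is a weak equivalence of right $\der_*I$-modules, and weak equivalences of modules are detected on underlying symmetric sequences. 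Composing with the equivalences above gives the claimed equivalence of right $\der_*I$-modules.

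**The main obstacle.** The genuinely delicate point is \emph{naturality in $X$ together with compatibility of the two module structures} — one coming from the Goodwillie-calculus construction of \cite{arone/ching:2011} (via cross-effects and the bar construction on the identity functor's derivatives), the other being the tautological comodule-dualized-to-module structure on $\dual B(X^{\smsh *},\mathsf{Com},1)$. Verifying that these agree is exactly the theorem proved in \cite{arone/ching:2011}, so in this paper the step is a citation rather than a re-derivation; the work here is only to phrase it correctly and to insert the cofibrant replacement $M(X)$ so that $M(-)$ is a cofibrant object in the functor category $[(\finbased)^{op}, \text{right } \der_*I\text{-modules}]$, which is needed in the sequel for the coend computations. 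I would also note that the choice of $M(-)$ as a cofibrant \emph{functor} (not just pointwise cofibrant) is what makes the later left Kan extension / coend arguments homotopically meaningful, so it is worth stating explicitly that such a cofibrant functor exists (by the small object argument in the projective model structure on $[(\finbased)^{op}, \text{right }\der_*I\text{-mod}]$) and receives a natural weak equivalence to $\dual B(X^{\smsh*},\mathsf{Com},1)$.
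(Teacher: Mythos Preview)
Your proposal is correct and takes essentially the same approach as the paper: both reduce the lemma to a citation of \cite{arone/ching:2011} (the paper points specifically to [4.2.28] there), together with the $\Sigma_n$-equivariant identification $B(\Sigma^\infty X^{\smsh *},\mathsf{Com},1)_n \homeq \Sigma^\infty X^{\smsh n}/\Delta^n X$, which is exactly the ingredient you single out. Your write-up is more expansive about the cofibrant replacement and naturality in $X$, but the substance is the same.
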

\begin{proof}
This calculation is done in \cite[4.2.28]{arone/ching:2011}. Note that it relies on a $\Sigma_n$-equivariant equivalence
\[ B(\Sigma^\infty X^{\smsh *},\mathsf{Com},1)_n \homeq \Sigma^\infty X^{\smsh n}/\Delta^n X \]
which we use later in this section.
\end{proof}

We now follow the usual pattern to construct the required adjunction $(\der_*,\Phi)$ from our choice of derivatives for the representable functors.

\begin{definition} \label{def:module-der-topsp}
We define
\[ \der_*: [\finbased,\spectra] \to \symseq \]
by
\[ \der_*F := M(X) \smsh_{X \in \finbased} F(X) \]
and its right adjoint
\[ \Phi: \symseq \to [\finbased, \spectra] \]
is then given by
\[ \Phi(A): X \mapsto \Map_{\Sigma}(M(X),A) = \prod_{n} \Map(M(X)_n,A_n)^{\Sigma_n}. \]
\end{definition}

\begin{corollary} \label{cor:topsp-phi}
The right adjoint $\Phi: \symseq \to [\finbased,\spectra]$ satisfies
\[ \Phi(A)(X) \homeq \prod_{n} (A_n \smsh X^{\smsh n}/\Delta^n X)^{h\Sigma_n} \]
where $\Delta^n X$ denotes the fat diagonal inside $X^{\smsh n}$.
\end{corollary}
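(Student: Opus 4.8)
The plan is to compute $\Phi(A)(X)$ directly from its definition and reduce everything to Spanier--Whitehead duality for finite complexes. By Definition~\ref{def:module-der-topsp},
\[ \Phi(A)(X) = \prod_{n} \Map(M(X)_n, A_n)^{\Sigma_n}, \]
so it suffices to identify each factor, naturally in $X$ and $A_n$, with $(A_n \smsh X^{\smsh n}/\Delta^n X)^{h\Sigma_n}$.

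First I would identify the \ord{n} term $M(X)_n$ as a spectrum with $\Sigma_n$-action. Combining Lemma~\ref{lem:der-topsp} with the description of $\der_n(\Sigma^\infty \Hom_{\based}(X,-))$ in Definition~\ref{def:fat-diagonal} (and the $\Sigma_n$-equivariant equivalence $B(\Sigma^\infty X^{\smsh *},\mathsf{Com},1)_n \homeq \Sigma^\infty X^{\smsh n}/\Delta^n X$ used in the proof of that lemma), $M(X)_n$ is $\Sigma_n$-equivariantly equivalent to $\dual \Sigma^\infty(X^{\smsh n}/\Delta^n X)$, where $\Sigma_n$ acts by permuting the smash factors of $X$. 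Since $X \in \finbased$, the quotient $X^{\smsh n}/\Delta^n X$ is again a finite based cell complex, so $\Sigma^\infty(X^{\smsh n}/\Delta^n X)$ is dualizable; applying duality a second time gives a natural $\Sigma_n$-equivariant equivalence
\[ \Map(M(X)_n, A_n) \homeq \Map\bigl(\dual \Sigma^\infty(X^{\smsh n}/\Delta^n X), A_n\bigr) \homeq A_n \smsh X^{\smsh n}/\Delta^n X. \]

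It then remains to pass from strict to homotopy fixed points. Here I would invoke the fact that $M(X)$ was chosen in Definition~\ref{def:bar} to be a cofibrant object in the projective model structure on the category of simplicial functors from $(\finbased)^{op}$ to right $\der_*I$-modules, over a cofibrant operad $\der_*I$; this ensures that $M(X)_n$ is suitably cofibrant as a $\Sigma_n$-spectrum, so that the strict fixed-point spectrum $\Map(M(X)_n, A_n)^{\Sigma_n}$ computes the homotopy fixed points $\Map(M(X)_n, A_n)^{h\Sigma_n}$ (recall that every object of $\symseq$ is fibrant). Together with the displayed equivalence and the homotopy invariance of homotopy fixed points, this identifies the \ord{n} factor with $(A_n \smsh X^{\smsh n}/\Delta^n X)^{h\Sigma_n}$, and taking the product over $n$ proves the corollary.

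The main obstacle, and the step I would take the most care over, is the $\Sigma_n$-equivariance: Spanier--Whitehead duality produces only a zigzag of weak equivalences, and one must check that at every stage the symmetric group acts by the permutation action on $X^{\smsh n}$ appearing in the statement, rather than by some twisted action. This, together with the verification that the chosen cofibrant model for $M(X)$ genuinely forces the strict $\Sigma_n$-fixed points to agree with the homotopy fixed points (so that no further equivariant cofibrant replacement of $M(X)_n$ is needed), are the same equivariance issues handled in \cite{arone/ching:2011}, and I would quote them from there.
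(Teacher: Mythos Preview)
Your proposal is correct and follows essentially the same route as the paper: the paper's one-line proof simply invokes the $\Sigma_n$-equivariant equivalence $B(\Sigma^\infty X^{\smsh *},\mathsf{Com},1)_n \homeq \Sigma^\infty X^{\smsh n}/\Delta^n X$ from the proof of Lemma~\ref{lem:der-topsp}, and you have unpacked exactly what that entails (identifying $M(X)_n$ with the dual, using finiteness of $X$ to dualize back, and replacing strict by homotopy fixed points via cofibrancy of $M(X)_n$). There is no substantive difference in strategy.
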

\begin{proof}
This follows from the equivalence mentioned in the proof of Lemma \ref{lem:der-topsp}.
\end{proof}

\begin{remark}
The category of right $\der_*I$-modules is enriched, tensored and cotensored over $\spectra$ with the tensoring given by termwise smash product. Since colimits are also calculated at the symmetric sequence level, it follows that $\der_*F$, as defined above, inherits a right $\der_*I$-module structure from that on $M(X)$. This is equivalent to the right $\der_*I$-module structure constructed in \cite{arone/ching:2011} because that construction preserves homotopy colimits.
\end{remark}

The pieces of the comonad $K$ can now be identified as the spectra
\[ K_r A_n := M(X)_r \smsh_{X \in \finbased} \Map(M(X)_n,A_n)^{\Sigma_n} \]
and since $M(X)_n$ is a cofibrant $\Sigma_n$-spectrum, the fixed points here are equivalent to the homotopy fixed points.

Our next step is to use the above expression for $K_r A_n$ to prove Proposition \ref{prop:topsp-KrAn}. The required equivalence arises from the following commutative diagram:
\begin{equation} \label{eq:norm}
  \begin{diagram}
    \node{M(X)_r \smsh_{X} [\Map(M(X)_n,A_n)_{h\Sigma_n}]} \arrow{e,tb}{m}{\sim} \arrow{s,lr}{N}{\sim}
      \node{\left[\prod_{\alpha: \un{n} \epi \un{r}} \Map(\der_{n_1}I \smsh \dots \smsh \der_{n_r}I, A_n) \right]_{h\Sigma_n}} \arrow{s,r}{N} \\
    \node{M(X)_r \smsh_{X} [\Map(M(X)_n,A_n)^{h\Sigma_n}]} \arrow{e,t}{m}
      \node{\left[\prod_{\alpha: \un{n} \epi \un{r}} \Map(\der_{n_1}I \smsh \dots \smsh \der_{n_r}I, A_n) \right]^{h\Sigma_n}}
  \end{diagram}
\end{equation}

The maps marked $N$ are the norm maps from homotopy orbits to homotopy fixed points and those marked $m$ are determined by the composite
\begin{equation} \begin{split} \label{eq:m} M(X)_r \smsh \Map(M(X)_n,A_n) &\to M(X)_r \smsh \Map(M(X)_r \smsh \der_{n_1}I \smsh \dots \smsh \der_{n_r}I, A_n) \\
    &\to \Map(\der_{n_1}I \smsh \dots \smsh \der_{n_r}I, A_n) \end{split}
\end{equation}
where the first map is the right module composition map associated to a surjection $\alpha: \un{n} \epi \un{r}$ and the second is the canonical evaluation map.

Proposition \ref{prop:topsp-KrAn} follows from the claim that the top and left-hand maps in diagram (\ref{eq:norm}) are equivalences. We prove these facts now.

\begin{lemma} \label{lem:free-finite}
For any $\Sigma_n$-spectrum $A_n$ and $X \in \finbased$, the norm map
\[ N: \Map(M(X)_n,A_n)_{h\Sigma_n} \to \Map(M(X)_n,A_n)^{h\Sigma_n} \]
is a weak equivalence.
\end{lemma}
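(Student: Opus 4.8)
The plan is to reduce the lemma to a standard principle about finite groups, namely: for a finite group $G$, the norm map $Y_{hG}\to Y^{hG}$ is a weak equivalence whenever $Y$ is equivalent to a $G$-spectrum built from finitely many free $G$-cells. (This is exactly the principle invoked in the proof of Corollary~\ref{cor:universal}.) We apply it to $Y=\Map(M(X)_n,A_n)$ with $G=\Sigma_n$.

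First I would recall, from the proof of Lemma~\ref{lem:der-topsp}, the $\Sigma_n$-equivariant equivalence $M(X)_n\homeq\Sigma^\infty X^{\smsh n}/\Delta^n X$. When $X$ is a finite based CW complex, $X^{\smsh n}$ is a finite $\Sigma_n$-CW complex and $\Delta^n X$ is a $\Sigma_n$-subcomplex, so $X^{\smsh n}/\Delta^n X$ carries a based $\Sigma_n$-CW structure with finitely many cells, all free away from the basepoint: two points of $X^{\smsh n}\setminus\Delta^n X$ in a common $\Sigma_n$-orbit are distinct (they are $n$-tuples of pairwise distinct points of $X$, permuted nontrivially). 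Hence $M(X)_n$ is, up to equivariant equivalence, a finite free $\Sigma_n$-CW spectrum.

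Next, a finite $\Sigma_n$-CW spectrum is dualizable, so $\Map(M(X)_n,A_n)\homeq \dual(M(X)_n)\smsh A_n$, with $\Sigma_n$ acting diagonally on the right. The dual $\dual(M(X)_n)$ is again a finite free $\Sigma_n$-CW spectrum: by self-duality of the finite group $\Sigma_n$ the dual of a free cell $(\Sigma_n)_+\smsh S^k$ is equivariantly equivalent to $(\Sigma_n)_+\smsh S^{-k}$, and an induction over the finitely many cells of $M(X)_n$ (using that duality preserves cofiber sequences of finite spectra) gives the claim. The ``shearing'' isomorphism $(\Sigma_n)_+\smsh A_n\isom(\Sigma_n)_+\smsh A_n^{\mathrm{triv}}$ then identifies each free cell of $\dual(M(X)_n)\smsh A_n$ with an induced spectrum $(\Sigma_n)_+\smsh \Sigma^{-k}A_n^{\mathrm{triv}}$. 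For such an induced spectrum, both $((\Sigma_n)_+\smsh Z)_{h\Sigma_n}$ and $((\Sigma_n)_+\smsh Z)^{h\Sigma_n}$ are naturally equivalent to $Z$, and the norm realizes this equivalence (again via self-duality of $\Sigma_n$, which identifies $(\Sigma_n)_+\smsh Z$ with the cofree spectrum $\Map((\Sigma_n)_+,Z)$; cf.\ Greenlees--May~\cite{greenlees/may:1995}). Since homotopy orbits and homotopy fixed points both preserve cofiber sequences and the norm is a natural transformation between them, an induction over the finitely many cells of $\dual(M(X)_n)\smsh A_n$ shows its norm map is an equivalence, hence so is that of $\Map(M(X)_n,A_n)$.

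The one step requiring care — the main, if modest, obstacle — is the dualizability argument: one must genuinely ensure that $\dual(M(X)_n)$ is built from \emph{finitely many} free $\Sigma_n$-cells, not merely that it becomes free after forgetting the $\Sigma_n$-action, so that the closing induction runs over a finite cell structure and no issues with infinite homotopy (co)limits arise. This is precisely where the hypothesis that $X$ is a finite complex enters.
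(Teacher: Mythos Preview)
Your argument is correct and is essentially the paper's approach: both reduce to the fact that $X^{\smsh n}/\Delta^n X$ is built from finitely many free $\Sigma_n$-cells, so that the norm map for the corresponding smash with $A_n$ is an equivalence. One small slip worth fixing: by Definition~\ref{def:bar}, $M(X)_n\homeq\dual(\Sigma^\infty X^{\smsh n}/\Delta^n X)$, not $\Sigma^\infty X^{\smsh n}/\Delta^n X$ itself---with this corrected your dualization detour becomes superfluous, and the paper simply writes $\Map(M(X)_n,A_n)\homeq A_n\smsh X^{\smsh n}/\Delta^n X$ directly.
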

\begin{proof}
By the equivalence of $M(X)_n \homeq \dual (X^n/\Delta^n X)$ and the finiteness of $X$, this map can be rewritten as
\[ N: (A_n \smsh X^{\smsh n}/\Delta^n X)_{h\Sigma_n} \to (A_n \smsh X^{\smsh n}/\Delta^n X)^{h\Sigma_n} \]
But $X^{\smsh n}/\Delta^n X$ is a finite free cell-$\Sigma_n$-space in the sense that it is built from finitely many free $\Sigma_n$-cells. It follows that the norm map $N$ above is an equivalence as claimed.
\end{proof}

\begin{notation} \label{not:Tn}
To analyze the top horizontal map $m$ in (\ref{eq:norm}) we introduce the following notation. We write
\[ T_n := B(1,\mathsf{Com},1)_n. \]
This is the suspension spectrum of the \ord{n} partition poset complex. Recall that the symmetric sequence $T_*$ forms a cooperad whose Spanier-Whitehead dual is $\der_*I$, and that we have a comodule structure map
\[ B(\Sigma^\infty X^{\smsh *},\mathsf{Com},1)_n \to B(\Sigma^\infty X^{\smsh *},\mathsf{Com},1)_r \smsh T_{n_1} \smsh \dots \smsh T_{n_r} \]
for each surjection $\un{n} \epi \un{r}$. These maps define the right $\der_*I$-module structure on $M(X)$.
\end{notation}

\begin{lemma} \label{lem:topsp1}
The map
\[  B(\Sigma^\infty X^{\smsh *},\mathsf{Com},1)_n \to \left[ \prod_{\un{n} \epi \un{r}} B(\Sigma^\infty X^{\smsh *},\mathsf{Com},1)_r \smsh T_{n_1} \smsh \dots \smsh T_{n_r} \right]^{h\Sigma_r} \]
induced by the comodule structure maps, determines an equivalence of \ord{r} derivatives.
\end{lemma}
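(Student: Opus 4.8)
The plan is to apply the $r$-th derivative functor $\der_r$ (in the variable $X$) to the displayed map and show directly that the result is a weak equivalence, by computing both sides via cross-effects. Throughout I use the identification $B(\Sigma^\infty X^{\smsh *},\mathsf{Com},1)_m \homeq \Sigma^\infty(X^{\smsh m}/\Delta^m X)$ recorded in the proof of Lemma~\ref{lem:der-topsp}, and I write $G_m$ for the resulting functor of $X$. The first step is to handle the target. Since $\Sigma_r$ acts freely on the set of surjections $\un{n} \epi \un{r}$, the functor $X \mapsto \bigvee_{\un{n} \epi \un{r}} G_r(X) \smsh T_{n_1} \smsh \dots \smsh T_{n_r}$, with its diagonal $\Sigma_r$-action, takes values in free $\Sigma_r$-spectra; hence its homotopy fixed points and homotopy orbits agree, and $\der_r$ --- which commutes with finite wedges, with smashing against the constant spectra $T_{n_i}$, and, on free $\Sigma_r$-valued functors, with passage to $\Sigma_r$-orbits --- carries the target to $\bigl[ \bigvee_{\un{n} \epi \un{r}} (\der_r G_r) \smsh T_{n_1} \smsh \dots \smsh T_{n_r} \bigr]_{h\Sigma_r}$. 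Now $\der_r G_r \homeq \sphere[\Sigma_r]$, the free rank-one $\Sigma_r$-spectrum: the cofibre sequence $\Sigma^\infty \Delta^r X \to \Sigma^\infty X^{\smsh r} \to G_r(X)$ has first term $(r-1)$-excisive (being a finite homotopy colimit of the $(r-1)$-excisive functors $\Sigma^\infty X^{\smsh j}$, $j < r$), so $\der_r G_r \homeq \der_r(\Sigma^\infty X^{\smsh r})$, and the $r$-th cross-effect $\bigvee_{\sigma \in \Sigma_r} \Sigma^\infty X_{\sigma(1)} \smsh \dots \smsh \Sigma^\infty X_{\sigma(r)}$ of $\Sigma^\infty X^{\smsh r}$ is already multilinear. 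Substituting this and performing the free-$\Sigma_r$ bookkeeping once more, the $r$-th derivative of the target becomes $\bigvee_{\un{n} \epi \un{r}} T_{n_1} \smsh \dots \smsh T_{n_r}$, with $\Sigma_r$ permuting the summands and smash factors.

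For the source, $\der_r G_n$ is the multilinearization of the $r$-th cross-effect of $\Sigma^\infty(X^{\smsh n}/\Delta^n X)$. A surjection $\alpha$ picks out exactly the summand $\bigwedge_{i=1}^r \Sigma^\infty(X_i^{\smsh n_i}/\Delta^{n_i}X_i)$ of $\creff_r(\Sigma^\infty X^{\smsh n})$ that survives the fat-diagonal quotient, so $\creff_r(G_n)(X_1,\dots,X_r) \homeq \bigvee_{\un{n} \epi \un{r}}\bigwedge_i \Sigma^\infty(X_i^{\smsh n_i}/\Delta^{n_i}X_i)$ and hence $\der_r G_n \homeq \bigvee_{\un{n} \epi \un{r}}\bigwedge_i \der_1 G_{n_i}$, again with $\Sigma_r$ permuting. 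It then remains to identify the comodule coaction under these descriptions: tracking it through the cross-effect computations and using its compatibility with cross-effects, one sees that on the $\alpha$-summand it is the smash product over $i$ of the natural linearization maps $\der_1 G_{n_i} \to T_{n_i}$. This reduces the lemma to the case $r = 1$, namely the assertion that $\der_1 G_m \to T_m$ is an equivalence for every $m$. That last fact is essentially Arone's computation of the derivatives of $\Sigma^\infty \Map_{\based}(X,-)$ in \cite{arone:1999} (together with the cooperad description of $\der_*I$ from \cite{ching:2005}); alternatively it follows because $\der_1$ commutes with the bar construction and $\der_1(X \mapsto \Sigma^\infty X^{\smsh *})$ is, as a symmetric sequence, the unit $1$, so that $\der_1 G_m \homeq B(1,\mathsf{Com},1)_m = T_m$.

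I expect the main obstacle to be the middle step of the second paragraph: verifying that the abstract comodule coaction, once pushed through the $r$-th cross-effect and multilinearization, really is the term-by-term linearization map --- that no extra monodromy or sign is introduced either by the free-$\Sigma_r$ untwisting in the first paragraph or by iterating the bar-construction structure maps. Everything else is a routine cross-effect calculation modulo the cited input. A secondary point requiring care is to keep the two roles of $\Sigma_r$ straight --- the group over which one takes homotopy orbits or fixed points, and the group whose action is produced by applying $\der_r$ to a functor --- so that the final identification is genuinely $\Sigma_r$-equivariant.
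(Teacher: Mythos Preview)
Your proposal is correct and follows essentially the same overall strategy as the paper --- apply $\der_r$ to both sides, use that $\Sigma_r$ acts freely on the indexing set of surjections, and reduce to a direct computation --- but the execution differs in a way worth noting. The paper's proof pushes $\der_r$ \emph{inside} the bar construction in one step: since $B(\Sigma^\infty X^{\smsh *},\mathsf{Com},1)$ is a homotopy colimit in $X$, applying $\der_r$ (which commutes with such colimits) yields $B(\der_r(\Sigma^\infty X^{\smsh *}),\mathsf{Com},1)$; the right $\mathsf{Com}$-module $\der_r(\Sigma^\infty X^{\smsh *})$ is trivial and concentrated in degree $r$ where it is $\Sigma^\infty(\Sigma_r)_+$, and the claim becomes a direct calculation with the bar construction on that simple module. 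Your route instead works with the explicit model $G_m(X)=\Sigma^\infty(X^{\smsh m}/\Delta^m X)$, computes $\creff_r G_n$ by hand, and reduces to the $r=1$ statement $\der_1 G_m \homeq T_m$. Both computations land on the same wedge $\bigvee_{\un{n}\epi\un{r}} T_{n_1}\smsh\dots\smsh T_{n_r}$, but the paper's bar-construction formalism carries the comodule structure map along for free, which is exactly the compatibility you identify as your main obstacle. So your argument works, but the paper's reorganization sidesteps precisely the bookkeeping you were worried about; it may be worth rephrasing your reduction in those terms.
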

\begin{proof}
Since taking derivatives commutes with the various colimit operations performed (and since $\Sigma_r$ is acting freely on the product here), the result can be reduced to the claim that the map
\[  B(\der_r(\Sigma^\infty X^{\smsh *}),\mathsf{Com},1)_n \to \left[ \prod_{\un{n} \epi \un{r}} B(\der_r(\Sigma^\infty X^{\smsh *}),\mathsf{Com},1)_r \smsh T_{n_1} \smsh \dots \smsh T_{n_r} \right]^{h\Sigma_r} \]
is an equivalence. But the right $\mathsf{Com}$-module $\der_r(\Sigma^\infty X^{\smsh *})$ is trivial and concentrated in degree $r$ where it equals $\Sigma^\infty (\Sigma_r)_+$. The claim then follows by a straightforward calculation with the bar construction.
\end{proof}

\begin{lemma} \label{lem:topsp2}
There is a $\Sigma_r$-equivariant equivalence
\[ \epsilon: M(X)_r \smsh_{X \in \finbased} B(X^{\smsh *},\mathsf{Com},1)_r \weq \prod_{\Sigma_r} S   \]
where the target has the regular $\Sigma_r$-action. The component corresponding to the identity element in $\Sigma_r$ is made up of the canonical evaluation maps
\[ \dual B(X^{\smsh *},\mathsf{Com},1)_r \smsh B(X^{\smsh *},\mathsf{Com},1)_r \to S. \]
\end{lemma}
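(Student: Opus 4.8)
The plan is to recognize the coend as a Goodwillie derivative and then compute it off the bar construction, following the template used to prove Lemma~\ref{lem:topsp1}. By Definition~\ref{def:module-der-topsp} we have $\der_r F = M(X)_r \smsh_X F(X)$ for any $F \in [\finbased,\spectra]$, so the coend in question is precisely $\der_r$ of the functor $F\colon X \mapsto B(\Sigma^\infty X^{\smsh *},\mathsf{Com},1)_r$; since $M$ was chosen to be a cofibrant functor, $M(X)_r \smsh_X(-)$ is homotopy invariant and computes the genuine $r$-th derivative. Using Lemma~\ref{lem:der-topsp} (equivalently the $\Sigma_n$-equivariant equivalence $B(\Sigma^\infty X^{\smsh *},\mathsf{Com},1)_n \homeq \Sigma^\infty X^{\smsh n}/\Delta^n X$), I would replace $F$ by $X \mapsto \Sigma^\infty X^{\smsh r}/\Delta^r X$ and identify $M(X)_r$ with $\dual B(\Sigma^\infty X^{\smsh *},\mathsf{Com},1)_r$ up to the usual inverse equivalences.

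Next I would compute $\der_r$ of this functor directly. Writing $B(\Sigma^\infty X^{\smsh *},\mathsf{Com},1)_r$ as the realization of the simplicial spectrum whose $q$-simplices form the $r$-th term of $\Sigma^\infty X^{\smsh *} \circ \mathsf{Com}^{\circ q} \circ 1$, and using that $M(X)_r \smsh_X(-)$ commutes with realization and with the wedge decomposition of each simplicial level, the whole computation reduces to $M(X)_r \smsh_X \Sigma^\infty X^{\smsh k} = \der_r(X \mapsto \Sigma^\infty X^{\smsh k})$ for $1 \le k \le r$. This is the same input that drives the proof of Lemma~\ref{lem:topsp1}: the functor $X \mapsto \Sigma^\infty X^{\smsh k}$ is $k$-homogeneous, so its $r$-th derivative is contractible unless $k = r$, in which case it is $\Sigma^\infty(\Sigma_r)_+$ with the regular $\Sigma_r$-action (read off from the $r$-th cross effect $\bigvee_{\Sigma_r}$). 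Hence in each simplicial degree only the ``top'' summands survive, and since $\mathsf{Com}_1 = S$ the resulting simplicial spectrum is constantly $\Sigma^\infty(\Sigma_r)_+$ with identity faces and degeneracies; its realization is $\Sigma^\infty(\Sigma_r)_+ \cong \prod_{\Sigma_r} S$ with the regular action. (As a cross-check, one can instead observe that $X \mapsto \Sigma^\infty\Delta^r X$ is assembled by finitely many cofiber sequences from $(r-1)$-excisive functors, hence is itself $(r-1)$-excisive with trivial $r$-th derivative, reducing directly to $\der_r(X \mapsto \Sigma^\infty X^{\smsh r})$.)

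Finally I would pin down $\epsilon$ and its identity component. The $q=0$ level of the simplicial spectrum above is $M(X)_r \smsh_X \Sigma^\infty X^{\smsh r}$, and the composite of its inclusion into the realization with the projection onto the surviving summand is induced by the dinatural evaluation pairing $\dual B(\Sigma^\infty X^{\smsh *},\mathsf{Com},1)_r \smsh B(\Sigma^\infty X^{\smsh *},\mathsf{Com},1)_r \to S$; this is exactly the factor of $\epsilon$ indexed by the identity of $\Sigma_r$, with the remaining factors its $\Sigma_r$-translates. I expect the only real work here to be equivariant bookkeeping: checking that the $\Sigma_r$-action transported onto the surviving summand $\der_r(X \mapsto \Sigma^\infty X^{\smsh r}) \homeq \Sigma^\infty(\Sigma_r)_+$ is the regular representation \emph{compatibly with} the evaluation pairing, so that the identity-indexed factor is the untwisted evaluation and the target carries the regular action. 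Beyond this compatibility check and making sure the reduction through the explicit bar model does not disturb naturality in $X$, there is no conceptual obstacle.
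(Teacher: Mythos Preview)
Your proof is correct and follows the same overall strategy as the paper: commute the coend inside the bar construction and use that only the top term $X^{\smsh r}$ contributes. The difference lies in how the vanishing of the lower terms is argued and how the result is packaged. The paper uses an enriched Yoneda computation: writing $X^{\smsh s} \cong \tcof_{J \subseteq [s]} \Hom(J_+,X)$ gives $M(X)_r \smsh_X X^{\smsh s} \cong \tcof_{J} M(J_+)_r$, which vanishes for $s<r$ because $M(J_+)_r \homeq *$ when $|J|<r$ (no configurations of $r$ points in a smaller set); this produces an explicit equivalence $\phi\colon M([r])_r \weq M(X)_r \smsh_X B(X^{\smsh *},\mathsf{Com},1)_r$, and one then checks directly that $\epsilon\phi$ is the known identification $M([r])_r \homeq \dual([r]^{\smsh r}/\Delta^r[r]) \cong \prod_{\Sigma_r} S$. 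Your route, reading the coend as $\der_r$ of the functor and invoking the $k$-homogeneity of $\Sigma^\infty X^{\smsh k}$ (or the $(r-1)$-excisiveness of $\Sigma^\infty\Delta^r X$), is more conceptual and avoids the detour through finite sets; the paper's route has the advantage of producing an explicit inverse $\phi$ against which to test $\epsilon$, so the identification of the identity component with the evaluation pairing is immediate rather than requiring the equivariant bookkeeping you flag at the end.
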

\begin{proof}
Let $[r]$ denote the finite pointed set $\{0,1,\dots,r\}$ viewed as an object of $\finbased$. Our strategy is to construct an equivalence
\[ \phi: M([r])_r \weq M(X)_r \smsh_{X \in \finbased} B(X^{\smsh *},\mathsf{Com},1)_r \]
and show that the composite $\epsilon \phi$ is an equivalence $M([r])_r \weq \prod_{\Sigma_r} S$.

First note that since the bar construction preserves colimits in its right module variable, so the target of $\phi$ is isomorphic to
\[ B(M(X)_r \smsh_{X} X^{\smsh *},\mathsf{Com},1)_r. \]
Here $M(X)_r \smsh_{X} X^{\smsh *}$ inherits a right $\mathsf{Com}$-module structure from that on $X^{\smsh *}$. Notice also that to calculate the \ord{r} term in the bar construction we only care about the $r$-truncated part of this right $\mathsf{Com}$-module.

We now observe that the smash product $X^{\smsh s}$ can be written (as a colimit of representable functors) as:
\[ X^{\smsh s} \isom \tcof_{J \subseteq [s]} \Hom(J_+,X). \]
The right-hand side is the total cofibre of an $s$-dimensional cube whose morphisms are given by extending functions to the basepoint in $X$. It follows by an enriched dual Yoneda Lemma that
\[ M(X)_r \smsh_{X} X^{\smsh s} \isom \tcof_{J \subseteq [s]} M(J_+)_r. \]
The key observation is now that
\[ M(J_+)_r \homeq * \quad \text{if $|J| < r$}. \]
(To see this recall that $M(X)_r$ is equivalent to the dual of $X^{\smsh r}/\Delta^r X$ which is the set of configurations of $r$ points in $J$ (plus a disjoint basepoint) when $X$ is the finite set $J_+$. There are no such configurations when $|J| < r$.)

It follows that there is a natural equivalence of $r$-truncated right $\mathsf{Com}$-modules
\[ M([r])_r \weq M(X)_r \smsh_{X} X^{\smsh *} \]
where the source here is considered to be an $r$-truncated symmetric sequence that is trivial except in the \ord{r} term. Applying the bar construction to this we get an equivalence
\[ B(M([r])_r, \mathsf{Com},1)_r \weq B(M(X)_r \smsh_{X} X^{\smsh *}, \mathsf{Com},1)_r \]
but the left-hand side is again just isomorphic to $M([r])_r$.

Altogether then we have constructed an equivalence
\[ \phi: M([r])_r \weq M(X)_r \smsh_{X} B(X^{\smsh *},\mathsf{Com},1)_r. \]
The map $\phi$ can also be expressed as the composite
\[ M([r])_r \to M([r])_r \smsh B([r]^{\smsh *},\mathsf{Com},1)_r \to M(X)_r \smsh_{X} B(X^{\smsh *},\mathsf{Com},1)_r \]
where the first map is inclusion via the identity map on $\{1,\dots,r\}$ viewed as a point $e$ in $[r]^{\smsh r}$ included in the $0$-simplices of the bar construction, and the second is the natural map into the coend.

It remains to analyze the composite of $\phi$ with $\epsilon$:
\[ M([r])_r \to \prod_{\Sigma_r} S. \]
But this composite is precisely the composite
\[ M([r])_r \weq \dual B([r]^{\smsh *},\mathsf{Com},1)_r \weq \dual([r]^{\smsh r}/\Delta^r [r]) \isom \prod_{\Sigma_r} S \]
so is an equivalence.
\end{proof}

The heavy lifting is now done and we can complete the proofs of our main results.

\begin{proof}[Proof of \ref{prop:topsp-KrAn}]
Lemmas \ref{lem:topsp1} and \ref{lem:topsp2} together imply that the map top horizontal map in (\ref{eq:norm}) is a weak equivalence. Lemma \ref{lem:free-finite} implies that the left-hand vertical map in that diagram is an equivalence. Together these yield the claimed formula.
\end{proof}

\begin{proof}[Proof of \ref{prop:norm}]
We have just seen that the equivalence of Proposition \ref{prop:topsp-KrAn} is based on the diagram in (\ref{eq:norm}). Consider now the following diagram for some fixed surjection $\un{n} \epi \un{r}$.
\[ \begin{diagram} \dgARROWLENGTH=1em
  \node{M(X)_r \smsh_{X} FX} \arrow{e} \arrow{s}
    \node{M(X)_r \smsh_{X} \Map(M(X)_n, M(Y)_n \smsh_{Y} FY)} \arrow{s} \\
  \node{M(X)_r \smsh_{X} \Map(M(X)_r, M(Y)_r \smsh_{Y} FY)} \arrow{e} \arrow{s}
    \node{M(X)_r \smsh_{X} \Map(M(X)_r \smsh \der_{n_1}I \smsh \dots, M(Y)_n \smsh_{Y} FY)} \arrow{s} \\
  \node{M(Y)_r \smsh_{Y} FY} \arrow{e}
    \node{\Map(\der_{n_1}I \smsh \dots \smsh \der_{n_r}I, M(Y)_n \smsh_{Y} FY)}
\end{diagram} \]
where
\begin{itemize}
  \item the top horizontal and top-left vertical maps are the unit of the $(\der_*,\Phi)$-adjunction;
  \item the top-right vertical and middle/bottom horizontal maps are the $\der_*I$-module structure on $M(X)$ and $M(Y)$ respectively;
  \item the bottom-left and bottom-right vertical maps are canonical evaluations.
\end{itemize}
The top square commutes because a map $X \to Y$ of spaces induces a map $M(X) \to M(Y)$ of right $\der_*I$-modules, and the bottom square commutes by naturality of the maps involved.

The composite $N \circ \theta_{r,n}$ appearing in the statement of Proposition \ref{prop:norm} is essentially the composite of the top and right-hand maps. (Strictly speaking, to get $N \circ \theta_{r,n}$ we consider all surjections $\un{n} \epi \un{r}$ and map into the $\Sigma_n$-homotopy fixed points of the product over these.) On the other hand the composite of the left-hand maps is the identity (by a triangle identity for the $(\der_*,\Phi)$ adjunction), and the bottom map is the definition of the right $\der_*I$-module structure on $\der_*F$. The commutativity of this diagram thus implies the Proposition.
\end{proof}

\begin{proof}[Proof of \ref{prop:delta-topsp}]
The comonad structure map $\delta_{r,s}$ is exactly the map $\theta_{r,s}$ associated to the functor $F(X) = \Map(M(X)_n,A_n)_{\Sigma_n}$. By naturality this is given by applying the $\Sigma_n$-homotopy orbits to the map $\theta_{r,s}$ associated to $\Map(M(X)_n,A_n)$. The proof of Proposition \ref{prop:topsp-KrAn} shows us that the derivatives of this functor are given by
\begin{equation} \label{eq:ds} \der_s \Map(M(X)_n,A_n) \homeq \prod_{\un{n} \epi \un{s}} \Map(\der_{n_1}I \smsh \dots \smsh \der_{n_s}I, A_n) \end{equation}
and $\Sigma_s$ acts freely on this. The norm map in Proposition \ref{prop:norm} is therefore an equivalence and so the map $\theta_{r,s}$ for this functor is given by the right $\der_*I$-module structure on these derivatives. Now recall that the equivalence (\ref{eq:ds}) is built from the maps $m$ of (\ref{eq:m}). From this it follows that the right $\der_*I$-module structure on these derivatives is determined by the operad structure on $\der_*I$ in the claimed manner.
\end{proof}

We now turn to calculations of the $K$-coalgebra structure maps $\theta_{r,n}$ for specific functors $F: \finbased \to \spectra$.

\subsubsection*{$2$-excisive functors}
A $2$-excisive pointed simplicial functor $F: \finbased \to \spectra$ is determined by a $2$-term symmetric sequence $A_1,A_2$ together with a single map of spectra
\[ \theta_{1,2}: A_1 \to (T_2 \smsh A_2)_{h\Sigma_2} = (\Sigma A_2)_{h\Sigma_2} \]
where $\Sigma_2$ acts trivially on the suspension coordinate and $T_n$ denotes the \ord{n} partition poset complex. According to Proposition \ref{prop:norm}, $\theta_{1,2}$ is determined by a right $\der_*I$-module structure map
\[ A_1 \smsh \der_2I \to A_2 \]
together with a nullhomotopy of the composite
\[ A_1 \to \Map(\der_2I,A_2)^{h\Sigma_2} \to \Tate_{\Sigma_2}\Map(\der_2I,A_2) = \Tate_{\Sigma_2}(\Sigma A_2) \]
that yields a lift up the norm map.

Notice that this structure map can also be recovered directly form the fibre sequence
\[ D_2F \to P_2F \to P_1F. \]
Delooping this we see that $P_2F$ is the fibre of a map
\[ (A_1 \smsh X) = P_1F \to \Sigma D_2F = (\Sigma A_2 \smsh X^{\smsh 2})_{h\Sigma_2}. \]
Evaluating at $X = S^0$ we get the map $\theta_{1,2}$.

\subsubsection*{$3$-excisive functors}
A $3$-excisive pointed simplicial functor $F: \finbased \to \spectra$ is determined by a $3$-term symmetric sequence $A_1,A_2,A_3$, the map $\theta_{1,2}$ described above, and two further maps
\[ \theta_{2,3}: A_2 \to \left[ \prod_{\un{3} \epi \un{2}} T_1 \smsh T_2 \smsh A_3 \right]_{h\Sigma_3} \]
and
\[ \theta_{1,3}: A_1 \to (T_3 \smsh A_3)_{h\Sigma_3}. \]
These maps make the following diagram commute
\[ \begin{diagram}
  \node{A_1} \arrow{e,t}{\theta_{1,2}} \arrow{s,l}{\theta_{1,3}} \node{(T_2 \smsh A_2)_{h\Sigma_2}} \arrow{s,r}{\theta_{2,3}} \\
  \node{(T_3 \smsh A_3)_{h\Sigma_3}} \arrow{e,t}{\delta_{2,3}} \node{ \left[ \prod_{[\un{3} \epi \un{2} \epi \un{1}]} T_2 \smsh T_1 \smsh T_2 \smsh A_3 \right]_{h\Sigma_3}}
\end{diagram} \]
where the bottom horizontal map is given by the cooperad structure maps $T_3 \to T_2 \smsh T_1 \smsh T_2$.

\subsubsection*{Representable functors}

For $X \in \finbased$, the derivatives of the functor $\Sigma^\infty \Hom_{\based}(X,-)$ are given by the right $\der_*I$-module
\[ M(X) \homeq \dual B(X^{\smsh *},\mathsf{Com},1) \homeq \dual X^{\smsh *}/\Delta^* X. \]
By Proposition \ref{prop:norm} the $K$-coalgebra structure on these coefficients is a lift up the norm map of this right $\der_*I$-module structure. In this case, however, the norm map
\[ \left[ \prod_{\un{n} \epi \un{r}} \Map(\der_{n_1}I \smsh \dots \smsh \der_{n_r}I, M(X)_n) \right]_{h\Sigma_n} \arrow{e,t}{N} \left[ \prod_{\un{n} \epi \un{r}} \Map(\der_{n_1}I \smsh \dots \smsh \der_{n_r}I, M(X)_n) \right]^{h\Sigma_n} \]
is an equivalence by a similar argument to that of Lemma \ref{lem:free-finite}. The $K$-coalgebra structure is therefore completely determined by the right $\der_*I$-module structure on $M(X)$.

\subsubsection*{Functors of the form $F\Sigma^\infty$}

A pointed simplicial functor $F: \finspec \to \spectra$ determines a pointed simplicial functor $\finbased \to \spectra$ by precomposing with $\Sigma^\infty$. The functor $F\Sigma^\infty$ has the same derivatives as $F$. The symmetric sequence $\der_*F \homeq \der_*(F\Sigma^\infty)$ therefore has actions by both the comonads described in this and the previous section. To avoid confusion we denote these here by $K^{\based}$ and $K^{\spectra}$ respectively.

\begin{example} (First observed by Bill Dwyer)
For any finite spectrum $Y$, we described in (\ref{eq:diag}) the $K^{\spectra}$-coalgebra structure on the derivatives of the functor
\[ \Sigma^\infty \Omega^\infty (Y \smsh -) \homeq \Sigma^\infty \Hom_{\spectra}(\dual Y,-). \]
Considering this now as a functor $\finbased \to \spectra$, these derivatives inherit a $K^{\based}$-coalgebra structure. This coalgebra structure consists of maps
\[ \theta_{r,n}: Y^{\smsh r} \to \Map(\der_{n_1}I \smsh \dots \smsh \der_{n_r}I, Y^{\smsh n})_{h\Sigma_{n_1} \times \dots \times \dots \Sigma_{n_r}}. \]
and so, in particular, we have
\[ \theta_{1,n}: Y \to \Map(\der_nI,Y^{\smsh n})_{h\Sigma_n}. \]
Comparing with the terminology of Remark \ref{rem:norm}, we can refer to this structure as making any finite spectrum $Y$ into a \emph{divided power right $\der_*I$-coalgebra}.
\end{example}

\subsubsection*{Functors with vanishing Tate data}

Suppose $G$ is a functor whose derivatives have the following property: the $\Sigma_n$-spectrum $\der_nG$ can be built from finitely many free $\Sigma_n$-cells. It follows that the norm maps of Proposition \ref{prop:norm} are equivalences and so the $K$-coalgebra structure on $\der_*G$ is determined by the right $\der_*I$-module structure. The Taylor tower is thus also determined by this information. The following theorem gives an explicit expression for $P_nG$ in terms of the right $\der_*I$-module $\der_*G$.

\begin{theorem} \label{thm:tate-topsp}
Let $\Map_{\der_*I}(-,-)$ denote the derived mapping spectrum for right $\der_*I$-modules. Suppose $G: \finbased \to \spectra$ is a pointed simplicial functor such that $\der_nG$ can be built from finitely many free $\Sigma_n$-cells. Then
\[ P_nG(X) \homeq \Map_{\der_*I}(M(X),\der_{\leq n}G) \]
with the maps in the Taylor tower given by the sequence of truncation maps
\[ \dots \to \der_{\leq n}G \to \der_{\leq (n-1)}G \to \dots . \]
Moreover, if the Taylor tower of $G$ converges, then there is an equivalence of spectra
\[ \Nat_{X \in \finbased}(F,G) \homeq \Map_{\der_*I}(\der_*F,\der_*G). \]
The left-hand side is the \emph{spectrum} of natural transformations for two functors $\finbased \to \spectra$.
\end{theorem}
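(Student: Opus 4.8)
The plan is to prove the two assertions separately, the first by an induction on $n$ comparing the Taylor tower of $G$ with the truncation tower of the right $\der_*I$-module $\der_*G$. Recall (Example~\ref{ex:d_*I} and Definition~\ref{def:module-der-topsp}) that for functors $F\colon\finbased\to\spectra$ the derivatives functor refines to $\der_*^R\colon[\finbased,\spectra]\to\mathrm{RMod}_{\der_*I}$, and by the usual left-Kan-extension argument this has a simplicial right adjoint $\Phi^R$ with $\Phi^R(B)(X)=\Map_{\der_*I}(M(X),B)$. So the first claim is that $P_nG\homeq\Phi^R(\der_{\leq n}G)$. I would construct a natural transformation $\lambda_n\colon P_nG\to\Phi^R(\der_{\leq n}G)$ as the unit $P_nG\to\Phi^R\der_*^R(P_nG)$ of this adjunction followed by the identification $\der_*^R(P_nG)\homeq\der_{\leq n}G$ (the module-level analogue of $\der_*(P_nF)\homeq\der_{\leq n}F$); by naturality the $\lambda_n$ are compatible with the tower maps $P_nG\to P_{n-1}G$ and with the module truncations $\der_{\leq n}G\to\der_{\leq(n-1)}G$. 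For $n=1$ both sides evaluate to $\der_1G\smsh\Sigma^\infty X$ (using $\der_1I\homeq S$ and that $\Delta^1X$ is trivial), so $\lambda_1$ is an equivalence.

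For the inductive step I would compare the fibration sequence $D_nG\to P_nG\to P_{n-1}G$ with the image under the right Quillen functor $\Phi^R$ of the fibration sequence of modules $\der_nG[n]\to\der_{\leq n}G\to\der_{\leq(n-1)}G$, where $\der_nG[n]$ denotes $\der_nG$ regarded as a module concentrated in degree $n$ (hence with trivial module structure). Since the freeness hypothesis on $G$ is inherited by $P_{n-1}G$, the inductive hypothesis makes $\lambda_{n-1}$ an equivalence, so it suffices to show that the induced map of fibres $D_nG(X)\to\Phi^R(\der_nG[n])(X)$ is an equivalence. Maps of right $\der_*I$-modules out of $M(X)$ into a trivial module concentrated in degree $n$ are computed by the derived module-indecomposables of $M(X)$ in degree $n$; these indecomposables may be identified with the derivatives of the stabilized representable functor $\Sigma^\infty\Hom_{\spectra}(\Sigma^\infty X,-)$ on spectra, so by Lemma~\ref{lem:Rx-spsp} they are $\dual(\Sigma^\infty X^{\smsh n})$ with $\Sigma_n$ permuting the factors. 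Hence $\Phi^R(\der_nG[n])(X)\homeq(\der_nG\smsh\Sigma^\infty X^{\smsh n})^{h\Sigma_n}$, and by (the homogeneous case of) Proposition~\ref{prop:norm} the fibre map is, up to equivalence, the Greenlees--May norm map from $D_nG(X)=(\der_nG\smsh\Sigma^\infty X^{\smsh n})_{h\Sigma_n}$. Because $\der_nG$ is built from finitely many free $\Sigma_n$-cells this norm map is an equivalence (exactly as in Lemma~\ref{lem:free-finite}), completing the induction; the identification of the tower maps with the module truncations is built into the construction of the $\lambda_n$.

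For the convergence statement I would use $\Phi^R$ once more. Since it is right adjoint to $\der_*^R$ and $\der_*^R(\Sigma^\infty R_X)=M(X)$, there is a natural equivalence of mapping spectra $\Nat_{X\in\finbased}(F,\Phi^R B)\homeq\Map_{\der_*I}(\der_*F,B)$ (using the spectral enrichment, which is available since $\D=\spectra$), whence $\Nat_X(F,P_nG)\homeq\Map_{\der_*I}(\der_*F,\der_{\leq n}G)$ by the first part. When the Taylor tower of $G$ converges, $G\homeq\holim_nP_nG$, and since $\Nat_X(F,-)$ commutes with homotopy limits we obtain $\Nat_X(F,G)\homeq\holim_n\Map_{\der_*I}(\der_*F,\der_{\leq n}G)\homeq\Map_{\der_*I}(\der_*F,\der_*G)$, the last step using the termwise fact $\holim_n\der_{\leq n}G\homeq\der_*G$.

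The step I expect to be the main obstacle is the fibre identification in the inductive step: both computing the degree-$n$ module-indecomposables of $M(X)$ and, crucially, checking that the fibre map induced by $\lambda_n$ is genuinely \emph{the} norm map and not merely \emph{some} comparison of homotopy orbits with homotopy fixed points. This calls for careful bookkeeping with the bar-construction model $M(X)\homeq\dual B(\Sigma^\infty X^{\smsh *},\mathsf{Com},1)$ and its $B(1,\mathsf{Com},1)$-comodule structure, in the spirit of the proofs of Lemmas~\ref{lem:der-topsp} and~\ref{lem:topsp2} and Proposition~\ref{prop:norm}; once that identification is in place, the freeness hypothesis on $\der_nG$ finishes the argument as in Lemma~\ref{lem:free-finite}.
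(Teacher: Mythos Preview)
Your approach is correct and takes a genuinely different route from the paper's. The paper stays with the symmetric-sequence adjunction $(\der_*,\Phi)$ and its comonad $K$: it interprets $\Map_{\der_*I}(M(X),\der_{\leq n}G)$ as the totalization of $\Map_{\Sigma}(M(X),(K')^{\bullet}\der_{\leq n}G)$, then reduces to showing that the comonad map $\nu:K\to K'$ of Definition~\ref{def:module-comonad} induces equivalences $K^s\der_{\leq n}G\to (K')^s\der_{\leq n}G$. The freeness hypothesis enters twice: first to show that $K$ preserves the property ``each term built from finitely many free cells'', and second to make each instance of $\nu$ (identified with a norm map by Proposition~\ref{prop:comonad-norm}) an equivalence. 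You instead switch to the module-valued adjunction $(\der_*^R,\Phi^R)$ and run a direct induction up the Taylor tower, avoiding the cobar construction entirely. This is more elementary and makes the role of the hypothesis transparent at each stage; the paper's argument has the advantage that the hard computations (Propositions~\ref{prop:topsp-KrAn} and~\ref{prop:comonad-norm}) are already in hand.

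Two remarks on your inductive step. First, you do not actually need to identify the fibre map as \emph{the} norm map: once you know $\Phi^R(\der_nG[n])(X)\homeq(\der_nG\smsh X^{\smsh n})^{h\Sigma_n}$, the freeness hypothesis makes this functor $n$-homogeneous (its $P_{n-1}$ is a Tate construction that vanishes), and Proposition~\ref{prop:unit} applied to the module adjunction shows the fibre map is a $D_n$-equivalence between $n$-homogeneous functors, hence an equivalence. Second, the genuine content is therefore the indecomposables calculation $B(M(X),\der_*I,1)_n\homeq\dual(X^{\smsh n})$. Your heuristic via $\Sigma^\infty\Hom_{\spectra}(\Sigma^\infty X,-)$ is the right intuition, but the clean proof is Koszul duality: $M(X)\homeq\dual B(\Sigma^\infty X^{\smsh *},\mathsf{Com},1)$ as a right $\der_*I$-module, and the bar-cobar equivalence $B(B(R,\mathsf{Com},1),\der_*I,1)\homeq R$ (for $R$ the right $\mathsf{Com}$-module $\Sigma^\infty X^{\smsh *}$) gives exactly this after dualizing. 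This is the analogue, on the right-module side, of what the paper does with Lemmas~\ref{lem:topsp1} and~\ref{lem:topsp2}.
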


\begin{remark}
Proposition~\ref{prop:splits} and Theorem~\ref{thm:tate-topsp} are examples of a general phenomenon: if $G$ is a functor whose derivatives have vanishing Tate homology, then the Taylor tower of $G$ is determined by the $\der_*I$-module structure on $\der_*G$. This observation also applies to space-valued functors and we intend to develop this remark in another paper.
\end{remark}

\begin{example}
For any $Y \in \finbased$, the representable functor $G = \Sigma^\infty R_Y$ satisfies the hypothesis of Theorem \ref{thm:tate-topsp}. It follows that
\[ P_n(\Sigma^\infty \Hom_{\based}(Y,-))(X) \homeq \Map_{\der_*I}(M(X),M(Y)_{\leq n}) \]
and, if the connectivity of $X$ is larger than the dimension of $Y$:
\[ \Sigma^\infty \Hom_{\based}(Y,X) \homeq \Map_{\der_*I}(M(X),M(Y)). \]
This description of the Taylor tower of the stable mapping functors is Koszul dual to that given in \cite{arone:1999} in which the stages of this tower are described as mapping objects for the right $\mathsf{Com}$-modules $X^{\smsh *}$.
\end{example}

\begin{proof}[Proof of \ref{thm:tate-topsp}]
Recall the comonad $K'$ of Definition \ref{def:module-comonad} whose coalgebras are precisely the right $\der_*I$-modules. The derived mapping spectrum $\Map_{\der_*I}(M(X),\der_{\leq n}G)$ can then by calculated as the derived mapping spectrum of $K'$-coalgebras, namely
\[ \widetilde{\Map}_{K'}(M(X),\der_{\leq n}G) \]
in the sense of \ref{def:derived-ss-hom} (but a spectrum not just a simplicial set because $K'$ is enriched over spectra).

It is now sufficient to construct a levelwise equivalence of cosimplicial spectra
\[ \Phi K^\bullet \der_{\leq n}G \to \Map_{\mathsf{\Sigma}}(M(X), (K')^\bullet \der_{\leq n}G) \]
since the corresponding totalization will then be the required equivalence
\[ P_nG \to \Map_{\der_*I}(M(X),\der_{\leq n}G) \]
by Corollary~\ref{cor:truncated}.

Since the right adjoint $\Phi$ is given by $\Map_{\mathsf{\Sigma}}(M(X),-)$, it is sufficient to show that the given condition on $\der_*G$ implies that, for each $s \geq 1$, the comonad map $\nu: K \to K'$ of Definition~\ref{def:module-comonad} induces equivalences
\[ K^s \der_{\leq n}G \to (K')^s \der_{\leq n}G. \]

We first show that if the bounded symmetric sequence $A$ has each term $A_n$ built from finitely many free $\Sigma_n$-cells, then the same is true of the symmetric sequence $K(A)$. We have a $\Sigma_n \times \Sigma_r$-equivariant equivalence
\[ \prod_{\un{n} \epi \un{r}} \Map(\der_{n_1}I \smsh \dots \smsh \der_{n_r}I, A_n) \homeq \left( \Wdge_{\un{n} \epi \un{r}} T_{n_1} \smsh \dots \smsh T_{n_r} \right) \smsh A_n. \]
The right-hand side here can be built from finitely many free $\Sigma_n \times \Sigma_r$-cells because $A_n$ can be built from finitely many free $\Sigma_n$-cells and the wedge can be built from finitely many free $\Sigma_r$-cells. Taking $\Sigma_n$ homotopy orbits and using the fact that $A$ is truncated, we see that $K(A)_r$ can be built from finitely many free $\Sigma_r$-cells as required.

Now we observe that if $A$ has the finiteness property above, then the map
\[ \nu: K(A) \to K'(A) \]
is a weak equivalence. Again, this follows from the fact that
\[ \prod_{\un{n} \epi \un{r}} \Map(\der_{n_1}I \smsh \dots \smsh \der_{n_r}I, A_n) \]
can be built from finitely many free $\Sigma_n$-cells. Therefore the norm map for this $\Sigma_n$-spectrum (which by Proposition \ref{prop:comonad-norm} is equivalent to $\nu$) is an equivalence.

It now follows by induction that the map
\[ K^s \der_{\leq n}G \to (K')^s \der_{\leq n}G \]
induced by $\nu$ is a weak equivalence for all $s \geq 1$, as required.
\end{proof}

\section{Functors with values in based spaces} \label{sec:top}

We now apply the general theory of section \ref{sec:taylor} to pointed simplicial functors $\Cfin \to \based$, where $\based$ is, as before, the category of based compactly-generated topological spaces, and $\C$ is either $\based$ or $\spectra$. As with spectrum-valued functors, our models for the derivatives are left Kan extended from those of representable functors. The key difference is that these derivatives can be endowed with the structure of a left module over the operad $\der_*I$. The left Kan extension is performed in the category of left $\der_*I$-modules.

\begin{definition} \label{def:der-top}
Let $\cat{M}$ denote the category of left $\der_*I$-modules. We fix a simplicially-enriched functor
\[ \der_*(R_\bullet): (\Cfin)^{op} \to \cat{M} \]
such that, for each $X \in \Cfin$, $\der_*(R_X)$ is a model for the derivatives of the representable functor $\Hom_{\C}(X,-): \Cfin \to \based$ together with the left module structure on those derivatives as described in \cite{arone/ching:2011}. We also ensure that $\der_*(R_X)$ is a cofibrant left $\der_*I$-module for each $X$.

For an arbitrary pointed simplicial functor $F: \Cfin \to \based$ we now define
\[ \der_*F := F(X) \otimes_{X \in \Cfin} \der_*(R_X). \]
This is an enriched coend over the simplicial category $\Cfin$ calculated in the category $\cat{M}$ of left $\der_*I$-modules, and $\otimes$ denotes the tensoring of $\cat{M}$ over based spaces. We thus obtain a simplicial functor
\[ \der_* : [\Cfin,\based] \to \cat{M}. \]
\end{definition}

\begin{proposition} \label{prop:der-top}
For cofibrant $F \in [\Cfin,\based]$, the left $\der_*I$-module $\der_*F$ is equivalent, as a left module, to the derivatives of $F$ with the module structure defined in \cite{arone/ching:2011}.
\end{proposition}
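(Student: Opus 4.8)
The plan is to mirror exactly the argument already given for the spectrum-valued case in Proposition~\ref{prop:der-sp}, since the left Kan extension formalism is formally identical; the only new ingredient is that everything must be tracked in the category $\cat{M}$ of left $\der_*I$-modules rather than in $\symseq$, and one must check that the module structure coming out of the coend agrees with the one constructed in \cite{arone/ching:2011}.

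First I would set up the comparison map. Let $\der_*^G F$ denote a natural model for the actual Goodwillie derivatives of $F$, equipped with the left $\der_*I$-module structure of \cite{arone/ching:2011}. The universal property of the enriched coend (equivalently, of the left Kan extension of $\der_*(R_\bullet)$ along $X \mapsto R_X$) produces a natural assembly map $\der_*F \to \der_*^G F$ in $\cat{M}$; as in the proof of Proposition~\ref{prop:der-sp} this may in reality be a zigzag, but that causes no difficulty. By the enriched Yoneda lemma this map is an isomorphism (by construction) when $F = R_X$ is representable, and in particular it is then an equivalence of left $\der_*I$-modules, using here the compatibility of the two module structures on $\der_*(R_X)$ built into Definition~\ref{def:der-top}.

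Next I would extend from representables to all cofibrant functors by the usual homotopy-colimit argument. Every cofibrant object of $[\Cfin,\based]$ can be built, up to weak equivalence, as a homotopy colimit (a cell complex) of representable functors $R_X$, by the small object argument in the cofibrantly generated model category $[\Cfin,\based]$. The functor $\der_*$ of Definition~\ref{def:der-top}, being defined by an enriched coend (hence a colimit) in $\cat{M}$, preserves homotopy colimits of, and weak equivalences between, cofibrant functors --- here one uses that $\der_*(R_X)$ is cofibrant in $\cat{M}$ for each $X$, and that the tensoring of $\cat{M}$ over based spaces is a left Quillen bifunctor. Likewise $\der_*^G$ preserves such homotopy colimits and weak equivalences (this is a standard property of Goodwillie derivatives, and the module structure of \cite{arone/ching:2011} was constructed compatibly with homotopy colimits, a point already invoked in the Remark following Definition~\ref{def:module-der-topsp}). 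Since the assembly map is a natural transformation between two such homotopy-colimit-preserving functors that is an equivalence on the generators $R_X$, it is an equivalence on all cofibrant $F$.

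The main obstacle is not the homotopy theory --- that is essentially a transcription of the spectrum-valued argument --- but rather the bookkeeping needed to see that the left $\der_*I$-module structure produced by the coend in $\cat{M}$ really is the one from \cite{arone/ching:2011}. This requires knowing that the comparison maps of \cite{arone/ching:2011} are themselves natural with respect to morphisms $X \to Y$ in $\Cfin$ as maps of left $\der_*I$-modules, and that they are compatible with homotopy colimits, so that the equivalence $\der_* R_X \homeq \der_*^G R_X$ upgrades functorially to an equivalence of the Kan-extended functors. This is exactly the role of the hypothesis in Definition~\ref{def:der-top} that $\der_*(R_\bullet)$ be chosen as a simplicially-enriched functor valued in $\cat{M}$ agreeing with \cite{arone/ching:2011}; granting that, the argument goes through as above.
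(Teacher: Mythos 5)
Your proposal follows the same general template as the spectrum-valued Proposition~\ref{prop:der-sp} --- assembly map, agreement on representables, extension by homotopy colimits --- but this is precisely where the paper's proof departs, and for a substantive reason. The step you assert without justification, namely that $\der_*^G$ (equipped with the left $\der_*I$-module structure from \cite{arone/ching:2011}) preserves homotopy colimits of functors, is the actual content of the proposition, not a side remark. In \cite{arone/ching:2011} the model for the derivatives of a space-valued functor $F$ is the Koszul dual $\der^G_*(F) = \dual B(1,\mathsf{Com},\der^*(\Sigma^\infty F))$, where $\der^*(\Sigma^\infty F)$ is itself a Spanier--Whitehead dual. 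Dualization converts colimits to limits, so there is no obvious reason for the resulting left module structure to be compatible with homotopy colimits of $F$, and nothing in your argument supplies one. The remark you cite (following Definition~\ref{def:module-der-topsp}) concerns right $\der_*I$-modules for functors $\finbased \to \spectra$, where the module structure lives on the underlying symmetric sequence and the tensoring is termwise smash product; the left-module situation for space-valued functors is genuinely different, and the same reasoning does not transfer.

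The paper's actual argument avoids the issue: it never claims that $\der_*^G$ preserves homotopy colimits. Instead, restricting first to finite cell functors, it invokes the Koszul duality criterion of \cite[20.2]{arone/ching:2011}: to verify that the assembly map $f\colon F(X)\otimes_{X}\der^G_*(R_X)\to\der^G_*(F)$ is an equivalence of left $\der_*I$-modules, it suffices to check after applying $\dual B(1,\der_*I,-)$. Since the bar construction models derived indecomposables (so preserves the tensoring and coends), this unwinds the outer dualization, and the resulting map is identified with the dual of the already-established spectrum-valued equivalence $F(X)\smsh_{X}\der_*(\Sigma^\infty R_X)\weq\der_*(\Sigma^\infty F)$ of Proposition~\ref{prop:der-sp}, i.e.\ with the known equivalence $\der^*(\Sigma^\infty F)\homeq \Map_{X}(F(X),\der^*(\Sigma^\infty R_X))$ of left $\mathsf{Com}$-modules. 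The general cofibrant case then follows by filtered homotopy colimits and retracts. This detection-after-Koszul-duality step is the ingredient your proof is missing, and without it the extension from representables to general cofibrant $F$ does not go through.
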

\begin{proof}
For the purposes of this proof, we write $\der^G_*F$ for the model for the derivatives of $F$ constructed in \cite{arone/ching:2011}. We focus first on the case where $F$ is a finite cell functor, that is a finite cell object with respect to the generating cofibrations in $[\Cfin,\based]$.

From Proposition \ref{prop:der-sp} we have an equivalence
\[ F(X) \smsh_{X \in \Cfin} \der_*(\Sigma^\infty R_X) \weq \der_*(\Sigma^\infty F). \]
Taking Spanier-Whitehead duals, we get
\begin{equation} \label{eq:kos} \der^*(\Sigma^\infty F) \weq \Map_{X \in \Cfin}(F(X),\der^*(\Sigma^\infty R_X)) \end{equation}
where $\der^*$ denotes the Spanier-Whitehead dual of the derivatives for a spectrum-valued functor. In \cite{arone/ching:2011} it is shown that this is in fact an equivalence of left $\mathsf{Com}$-modules where $\mathsf{Com}$ is the commutative operad of spectra. The right-hand side in (\ref{eq:kos}) involves the cotensoring of left $\mathsf{Com}$-modules over $\based$.

According to \cite{arone/ching:2011}, the derivatives of $F$ are given by the `Koszul dual' of the left $\mathsf{Com}$-module $\der^*(\Sigma^\infty F)$, that is
\[ \der^G_*(F) := \dual B(1,\mathsf{Com},\der^*(\Sigma^\infty F)). \]
Now we have a natural assembly map
\[ f: F(X) \otimes_{X \in \Cfin} \der^G_*(R_X) \to \der^G_*(F) \]
and our claim is that $f$ is a weak equivalence of left $\der_*(I)$-modules. To show this, it is sufficient by \cite[20.2]{arone/ching:2011} to show that
it induces a weak equivalence on taking Koszul duals again. The bar construction $B(1,\der_*(I),-)$ is equivalent to the left derived functor of the indecomposables of a left $\der_*(I)$-module, so preserves the tensoring and coends. Thus the Koszul dual of $f$ is equivalent to the natural map
\[ \dual B(1,\der_*(I),\der'_*(F)) \to \Map_{X \in \Cfin}(F(X),\dual B(1,\der_*(I),\der'(R_X))). \]
But this is equivalent, again by \cite[20.2]{arone/ching:2011}, to the map (\ref{eq:kos}) above, so is a weak equivalence. Thus $f$ is a weak equivalence.

The result for an arbitrary cell functor $F$ now follows by taking a filtered homotopy colimit, and for an arbitrary cofibrant functor by taking retracts.
\end{proof}

\begin{definition} \label{def:phi-top}
The right adjoint to $\der_*: [\Cfin,\based] \to \cat{M}$ is the simplicial functor $\Phi: \cat{M} \to [\Cfin,\based]$ given by
\[ \Phi A(X) := \Hom_{\der_*I}(\der_*(R_X),A) \]
where $\Hom_{\der_*I}(-,-)$ denotes the enrichment of the category $\cat{M}$ of left $\der_*I$-modules over $\based$.
\end{definition}

\begin{proposition} \label{prop:derphitop}
The adjunction $(\der_*,\Phi)$ satisfies the condition of \ref{hyp:coefficients} with $\cat{M}$ equal to the category of left $\der_*I$-modules.
\end{proposition}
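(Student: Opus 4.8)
The plan is to verify the two conditions of Hypothesis~\ref{hyp:coefficients}: that $(\der_*,\Phi)$ is a simplicially-enriched Quillen adjunction, and that $\der_*F$ is a natural model for the Goodwillie derivatives of every cofibrant $F$. The second condition is already done: it is precisely Proposition~\ref{prop:der-top}, which identifies $\der_*F$, for cofibrant $F$, with the derivatives of $F$ as a left $\der_*I$-module and hence in particular as a symmetric sequence. For the adjunction itself, note that by construction (Definitions~\ref{def:der-top} and~\ref{def:phi-top}) $\der_*$ is the enriched left Kan extension of $\der_*(R_\bullet)$ along $X \mapsto R_X$, so it automatically has the simplicial right adjoint $\Phi A(X) = \Hom_{\der_*I}(\der_*(R_X),A)$; the enrichment of the adjunction is formal.

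The substantive point is that $(\der_*,\Phi)$ is a Quillen pair, and I would argue this exactly as in the proof of Proposition~\ref{prop:derphisp}, by checking that the right adjoint $\Phi$ preserves fibrations and trivial fibrations. Fibrations and trivial fibrations in the projective model structure on $[\Cfin,\based]$ are detected objectwise, so it suffices to show that for each fixed $X \in \Cfin$ the functor
\[ \Hom_{\der_*I}(\der_*(R_X),-) \colon \cat{M} \longrightarrow \based \]
preserves fibrations and trivial fibrations. Since $\cat{M}$ (the category of left $\der_*I$-modules) is a $\based$-model category — this is part of the package recalled in Definition~\ref{def:operads}, from the Appendix of \cite{arone/ching:2011} — and since $\der_*(R_X)$ was chosen in Definition~\ref{def:der-top} to be a \emph{cofibrant} left $\der_*I$-module, the pushout-product (SM7) axiom gives exactly that $\Hom_{\der_*I}(\der_*(R_X),-)$ is a right Quillen functor from $\cat{M}$ to $\based$. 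Hence $\Phi$ preserves fibrations and trivial fibrations, and the pair $(\der_*,\Phi)$ is a Quillen adjunction satisfying Hypothesis~\ref{hyp:coefficients}.

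I do not expect a genuine obstacle here — the argument is a routine transcription of the spectrum-valued case — but the one place to be careful is the interaction between the model structure on left $\der_*I$-modules and the objectwise model structure on functors. Specifically, one should confirm that fibrations (and trivial fibrations) in $\cat{M}$ are created on the underlying symmetric sequences, so that verifying the lifting condition objectwise in $X$ and using cofibrancy of $\der_*(R_X)$ genuinely certifies $\Phi A$ as a (trivial) fibration in $[\Cfin,\based]$ with respect to the correct notion of weak equivalence. This is built into the definitions, so the proof should be short once these model-categorical facts are cited.
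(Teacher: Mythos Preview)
Your proposal is correct and takes essentially the same approach as the paper. The paper's proof is a one-liner—``Since $\der_*(R_X)$ is a cofibrant left $\der_*I$-module, the right adjoint $\Phi$ preserves fibrations and trivial fibrations''—and you have simply unpacked this, citing the SM7 axiom and the objectwise nature of the projective model structure on $[\Cfin,\based]$, together with Proposition~\ref{prop:der-top} for the derivatives condition.
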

\begin{proof}
Since $\der_*(R_X)$ is a cofibrant left $\der_*I$-module, the right adjoint $\Phi$ preserves fibrations and trivial fibrations. Therefore $(\der_*,\Phi)$ is a Quillen adjunction.
\end{proof}

The general theory of Section \ref{sec:taylor} then gives us the following result.

\begin{theorem} \label{thm:Ktop}
Let $K: \cat{M} \to \cat{M}$ denote the comonad $\der_* \Phi$ on the category of left $\der_*I$-modules associated to the adjunction $(\der_*,\Phi)$. Let $F: \Cfin \to \based$ be a pointed simplicial functor. Then the derivatives $\der_*F$ have the structure of a $K$-coalgebra, and the Taylor tower of $F$ can be reconstructed from this coalgebra by the cobar constructions of Corollary \ref{cor:truncated}.
\end{theorem}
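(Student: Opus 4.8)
The plan is to deduce this as a formal consequence of the general machinery of Section~\ref{sec:taylor}, whose only hypothesis is the existence of an adjunction $(\der_*,\Phi)$ satisfying Hypothesis~\ref{hyp:coefficients}. So the first step is to verify that hypothesis in the present setting. For this I would cite Proposition~\ref{prop:der-top}, which shows that the left Kan extension $\der_*F = F(X)\otimes_{X\in\Cfin}\der_*(R_X)$ of Definition~\ref{def:der-top} is a natural model for the Goodwillie derivatives of a cofibrant $F$ \emph{together with} their left $\der_*I$-module structure, and Proposition~\ref{prop:derphitop}, which shows that $(\der_*,\Phi)$ is a simplicial Quillen adjunction onto $\cat{M}$, the category of left $\der_*I$-modules. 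Combined with the fact (Definition~\ref{def:functors}) that $[\Cfin,\based]$ is a cofibrantly generated pointed simplicial model category Quillen equivalent to a combinatorial one, and that every object of $\cat{M}$ is fibrant, this also checks Hypothesis~\ref{hyp:adjunction}.

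Next I would apply the descent formalism of Section~\ref{sec:descent} and Section~\ref{sec:taylor} essentially verbatim: replace $[\Cfin,\based]$ by a Quillen equivalent model category in which every object is cofibrant (Theorem~\ref{thm:models}), form the comonad $K = \der_*uc\Phi$ on $\cat{M}$ (abbreviated $\der_*\Phi$), and observe that the unit of the adjunction $(\der_*,\Phi)$ endows $\der_*F$ with a canonical $K$-coalgebra structure for every pointed simplicial functor $F\colon\Cfin\to\based$, exactly as in Definition~\ref{def:K}. Then Theorem~\ref{thm:main} and Corollary~\ref{cor:truncated} give $P_nF \homeq \Tot(\Phi K^\bullet\der_{\leq n}F)$, with the maps in the Taylor tower induced by the truncation maps $\der_{\leq n}F\to\der_{\leq(n-1)}F$, and $F \homeq \Tot(\Phi K^\bullet\der_*F)$ whenever the Taylor tower of $F$ converges. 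This is precisely the claimed reconstruction of the Taylor tower from the $K$-coalgebra $\der_*F$.

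From this vantage point there is no remaining obstacle internal to this section: the theorem is a formal corollary of Propositions~\ref{prop:der-top} and~\ref{prop:derphitop} together with the results of Section~\ref{sec:taylor}. The one place where real work is hidden is Proposition~\ref{prop:der-top} itself --- the assertion that the left Kan extended $\der_*$ computes the derivatives \emph{as a left $\der_*I$-module} rather than merely as a symmetric sequence --- and that in turn rests on the Koszul duality results of~\cite{arone/ching:2011}, via the identification of $\der_*^G(F)$ with $\dual B(1,\mathsf{Com},\der^*(\Sigma^\infty F))$ and the fact that $B(1,\der_*I,-)$ preserves the relevant tensorings and coends. Once that input is in hand, the proof of Theorem~\ref{thm:Ktop} is a one-line appeal to the general theory.
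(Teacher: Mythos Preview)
Your proposal is correct and matches the paper's approach exactly. The paper states Theorem~\ref{thm:Ktop} immediately after Proposition~\ref{prop:derphitop} with only the sentence ``The general theory of Section~\ref{sec:taylor} then gives us the following result,'' so your more explicit unpacking of which propositions verify which hypotheses is, if anything, more detailed than what the paper itself provides.
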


An analysis of the structure of the comonad $K$, for a given source category $\mathsf{C}$, is much harder than in the case of spectrum-valued functors. The pleasant properties of symmetric sequences, that they are equivalent to the products of their individual terms, and that finite products are equivalent to finite coproducts, do not apply to left $\der_*I$-modules. We are therefore unable to get as explicit descriptions of the comonad $K$ as we did in the previous sections.

\subsection{Functors from spectra to based spaces} \label{sec:sptop}

For a finite cell spectrum $X$, the representable functor
\[ R_X: \finspec \to \based \]
is equivalent to the linear functor
\[ \Omega^\infty \dual X \smsh - \]
whose only non-trivial derivative is
\[ \der_1(R_X) \homeq \dual X. \]
The left $\der_*I$-module structure on $\der_*(R_X)$ is of course trivial. It follows that the comonad $K$ on the category of left $\der_*I$-modules controlling Taylor towers of functors $\spectra \to \based$ is given by
\[ K(A) = \der_* \left[ X \mapsto \widetilde{\Hom}_{\der_*I}(\dual X, A) \right] \]
where $\dual X$ is a left $\der_*I$-module concentrated in its first term, and $\widetilde{\Hom}$ denotes the derived mapping space for left $\der_*I$-modules. We can describe this derived mapping space as the totalization of a cosimplicial space
\[ \widetilde{\Hom}_{\der_*I}(\dual X, A) \homeq \Tot \Hom_{\mathsf{\Sigma}}(\der_*I \circ \dots \circ \der_*I \circ \dual X, A). \]
In general, this is hard to calculate, but for $2$-truncated $A$ we can use this description to understand $2$-excisive functors from spectra to based spaces.

\subsubsection*{$2$-excisive functors}

Let $A$ be a $2$-truncated left $\der_*I$-module. Thus $A$ consists of spectra $A_1$ and $A_2$, with a $\Sigma_2$-action on $A_2$, and a $\Sigma_2$-equivariant map
\[ m: \der_2I \smsh A_1 \smsh A_1 \to A_2. \]
In this case the cosimplicial space that calculates the derived mapping space
\[ \widetilde{\Hom}_{\der_*I}(\dual X, A) \]
is degenerate above degree $2$ and its totalization is equivalent to the homotopy fibre of the map
\[ \Hom_{\spectra}(\dual X,A_1) \to \Hom_{\spectra}(\der_2I \smsh \dual X \smsh \dual X, A_2)^{h\Sigma_2} \]
given by the diagonal map
\[ \Hom_{\spectra}(\dual X,A_1) \to \Hom_{\spectra}(\dual X,A_1)^{\smsh 2} \homeq \Hom_{\spectra}(\dual X \smsh \dual X, A_1 \smsh A_1) \]
followed by the canonical map that smashes source and target with $\der_2I$, and the module structure map $m$. We can write this instead as
\[ \Omega^\infty(A_1 \smsh X) \to \Omega^\infty (\Sigma A_2 \smsh X^{\smsh 2})^{h\Sigma_2} \]
where $\Sigma_2$ acts trivially on the suspension coordinate.

We know how to calculate the derivatives of these terms from our analysis of functors from spectra to spectra. Thus we get
\[ K(A)_2 \homeq A_2 \]
and $K(A)_1$ is the homotopy fibre of the composite
\[ A_1 \arrow{e,t}{\delta} \mathsf{Tate}_{\Sigma_2}(A_1 \smsh A_1) \arrow{e,t}{m} \mathsf{Tate}_{\Sigma_2}(\Sigma A_2) \]
where $\delta$ is the generalized diagonal map $\theta_{1,2}$ of (\ref{eq:diag}) for the spectrum $A_1$, and $m$ is induced by the module structure map for $A$. The left module structure on $K(A)$ is determined by the fact that the counit $K(A) \to A$ must be a map of modules.

A $K$-coalgebra structure on $A$ consists of an appropriate map of left modules $\theta: A \to KA$. The unit condition implies that the composite
\[ A_1 \to K(A)_1 \to A_1 \]
is the identity and it follows that the composite map $m\delta$ must be nullhomotopic. Conversely, a choice of nullhomotopy for this composite determines the required map $\theta$. Any such map is automatically a module map.

In conclusion, therefore, our classification of $2$-excisive functors $\finspec \to \based$ is as follows. Such a functor corresponds to spectra $A_1$, $A_2$ and
\[ m: A_1 \smsh A_1 \to \Sigma A_2 \]
as above, together with a nullhomotopy of the composite
\[ A_1 \arrow{e,t}{\delta} \mathsf{Tate}_{\Sigma_2}(A_1 \smsh A_1) \arrow{e,t}{m} \mathsf{Tate}_{\Sigma_2}(\Sigma A_2). \]

\subsubsection*{Functors of the form $\Omega^\infty F$}

Now consider $F: \finspec \to \spectra$. Then the derivatives $\der_*(\Omega^\infty F)$ are equivalent to those of $F$. It follows that any $K^{\spectra}$-coalgebra (in the sense of Section \ref{sec:specspec}) possesses a canonical $K$-coalgebra structure (in the sense of this section).

To see this, first note that the left $\der_*I$-module structure on $\der_*(\Omega^\infty F)$ is trivial. We therefore have
\[ \widetilde{\Hom}_{\der_*I}(\dual X, \der_*F) \homeq \Hom_{\mathsf{\Sigma}}(B(1,\der_*I,\dual X),\der_*F) \homeq \prod_{n} \Omega^\infty (\der_nF \smsh X^{\smsh n})^{h\Sigma_n}. \]
It follows that
\[ K(\der_*F) \homeq K^{\spectra}(\der_*F) \]
and that $K(\der_*F)$ also has a trivial left $\der_*I$-module structure. The $K$-coalgebra structure map is then given precisely by the $K^{\spectra}$-coalgebra structure map associated to $F$.

\subsection{Functors from based spaces to based spaces} \label{sec:toptop}

For a finite cell complex $X$, the representable functor $\Hom_{\based}(X,-)$ has derivatives
\[ \Map(X,\der_*I) \]
with left $\der_*I$-module structure given by the diagonal of $X$ together with the operad structure on $\der_*I$. (This is the cotensoring of the left $\der_*I$-module $\der_*I$ by the space $X$.)

The comonad $K$ on the category of left $\der_*I$-modules that controls Taylor towers of functors $\finbased \to \based$ is therefore given by
\[ K(A) = \der_* \left[ X \mapsto \widetilde{\Hom}_{\der_*I}(\Map(X,\der_*I),A) \right]. \]

\subsubsection*{$2$-excisive functors}

Let $A$ be a $2$-truncated left $\der_*I$-module with structure map
\[ m: A_1 \smsh A_1 \to \Sigma A_2. \]
Then the cosimplicial object that calculates
\[ \widetilde{\Hom}_{\der_*I}(\Map(X,\der_*I),A) \]
is again degenerate above degree $2$. The totalization can be identified with the homotopy pullback
\[ \begin{diagram}
   \node[2]{\Omega^\infty(\Sigma A_2 \smsh X)^{h\Sigma_2}} \arrow{s} \\
   \node{\Omega^\infty(A_1 \smsh X)} \arrow{e} \node{\Omega^\infty(\Sigma A_2 \smsh X^{\smsh 2})^{h\Sigma_2}}
\end{diagram} \]
Here the vertical map is the diagonal on the space $X$, and the horizontal map is the same as that described in Section \ref{sec:sptop}. We deduce that
\[ K(A)_2 \homeq A_2 \]
and that $K(A)_1$ is equivalent to the homotopy pullback of the diagram
\[ \begin{diagram}
  \node[2]{\Sigma A_2^{h\Sigma_2}} \arrow{s} \\
  \node{A_1} \arrow{e} \node{\mathsf{Tate}_{\Sigma_2}(\Sigma A_2)}
\end{diagram} \]
where the vertical map is the canonical map from the homotopy fixed points to the Tate construction, and the bottom map is as in Section \ref{sec:sptop}.

A $K$-coalgebra structure consists of an appropriate map $A \to K(A)$. This amounts to a map $m': A_1 \to \Sigma A_2^{h\Sigma_2}$ together with a homotopy between the two composites of
\[ \begin{diagram}
  \node{A_1} \arrow{s,=} \arrow{e,t}{m'} \node{\Sigma A_2^{h\Sigma_2}} \arrow{s} \\
  \node{A_1} \arrow{e} \node{\mathsf{Tate}_{\Sigma_2}(\Sigma A_2)}
\end{diagram} \]
Note that the map $m'$ can be interpreted as a right $\der_*I$-module structure on the symmetric sequence $A$ (thus making it into a $\der_*I$-bimodule). This diagram thus gives a certain additional compatibility between the right and left module structures.

Our classification of $2$-excisive functors $\finbased \to \based$ is therefore as follows. Such a functor corresponds to a symmetric sequence $A_1,A_2$ together with $\Sigma_2$-equivariant maps
\[ m: A_1 \smsh A_1 \to \Sigma A_2, \quad m': A_1 \to \Sigma A_2 \]
and a homotopy between the two composites in the diagram
\begin{equation} \label{eq:diag-tate} \begin{diagram}
  \node{A_1} \arrow{s,l}{\delta} \arrow{e,t}{m'} \node{\Sigma A_2^{h\Sigma_2}} \arrow{s} \\
  \node{\mathsf{Tate}_{\Sigma_2}(A_1 \smsh A_1)} \arrow{e,t}{m} \node{\mathsf{Tate}_{\Sigma_2}(\Sigma A_2).}
\end{diagram} \end{equation}

\subsubsection*{Functors of the form $\Omega^\infty F$}

As in Section \ref{sec:sptop}, we can easily understand the $K$-coalgebra structure on the derivatives of a functor of the form $\Omega^\infty F$ for $F: \based \to \spectra$. For such a functor, the left module structure on $\der_*(\Omega^\infty F)$ is trivial and we have
\[ K(\der_*(\Omega^\infty F)) \homeq K^{\based}(\der_*F) \]
where $K^{\based}$ is the comonad on symmetric sequences associated to functors $\based \to \spectra$. The $K$-coalgebra structure map for $\Omega^\infty F$ can then be identified with the $K^{\based}$-coalgebra structure map for $F$.

\subsubsection*{Functors of the form $F \Sigma^\infty$}

Finally, consider a $2$-excisive functor $F: \spectra \to \based$. Then $F \Sigma^\infty$ is a $2$-excisive functor $\based \to \based$ with the same derivatives as $F$. We saw in Section \ref{sec:sptop} that $F$ is classified by a left $\der_*I$-module $A$ with terms $A_1,A_2$ together with a nullhomotopy of the composite
\[ A_1 \to \mathsf{Tate}_{\Sigma_2}(\Sigma A_2). \]
The functor $F \Sigma^\infty$ is classified by this same information with the map
\[ m': A_1 \to \Sigma A_2 \]
taken to be trivial (i.e. the right $\der_*I$-module structure is trivial). The nullhomotopy then gives the required homotopy between the two composites of the diagram (\ref{eq:diag-tate}).

\bibliographystyle{amsplain}
\bibliography{mcching}

\providecommand{\bysame}{\leavevmode\hbox to3em{\hrulefill}\thinspace}
\providecommand{\MR}{\relax\ifhmode\unskip\space\fi MR }
\providecommand{\MRhref}[2]{%
  \href{http://www.ams.org/mathscinet-getitem?mr=#1}{#2}
}
\providecommand{\href}[2]{#2}
\begin{thebibliography}{10}

\bibitem{arone:1999}
Greg Arone, \emph{A generalization of {S}naith-type filtration}, Trans. Amer.
  Math. Soc. \textbf{351} (1999), no.~3, 1123--1150. \MR{MR1638238 (99i:55011)}

\bibitem{arone/ching:2011}
Greg Arone and Michael Ching, \emph{Operads and chain rules for the calculus of
  functors}, Ast\'erisque (2011), no.~338, vi+158. \MR{2840569 (2012i:55012)}

\bibitem{arone/dwyer/lesh:2008}
Gregory~Z. Arone, William~G. Dwyer, and Kathryn Lesh, \emph{Loop structures in
  {T}aylor towers}, Algebr. Geom. Topol. \textbf{8} (2008), no.~1, 173--210.
  \MR{2377281 (2008m:55025)}

\bibitem{batanin:1998}
Mikhail~A. Batanin, \emph{Homotopy coherent category theory and
  {$A_\infty$}-structures in monoidal categories}, J. Pure Appl. Algebra
  \textbf{123} (1998), no.~1-3, 67--103. \MR{1492896 (99c:18007)}

\bibitem{chaoha:2004}
Phichet Chaoha, \emph{Splitting criteria for homotopy functors of spectra},
  Trans. Amer. Math. Soc. \textbf{356} (2004), no.~4, 1271--1280 (electronic).
  \MR{MR2034308 (2005a:55008)}

\bibitem{ching:2005}
Michael Ching, \emph{Bar constructions for topological operads and the
  {G}oodwillie derivatives of the identity}, Geom. Topol. \textbf{9} (2005),
  833--933 (electronic). \MR{MR2140994}

\bibitem{ching:2010}
\bysame, \emph{{A chain rule for Goodwillie derivatives of functors from
  spectra to spectra}}, Trans. Amer. Math. Soc. \textbf{362} (2010), no.~1,
  399--426.

\bibitem{ching/riehl:2014}
Michael Ching and Emily Riehl, \emph{Coalgebraic models for combinatorial model
  categories}, 2014.

\bibitem{elmendorf/kriz/mandell/may:1997}
A.~D. Elmendorf, I.~Kriz, M.~A. Mandell, and J.~P. May, \emph{Rings, modules,
  and algebras in stable homotopy theory}, Mathematical Surveys and Monographs,
  vol.~47, American Mathematical Society, Providence, RI, 1997, With an
  appendix by M. Cole. \MR{97h:55006}

\bibitem{fresse:2000}
Benoit Fresse, \emph{On the homotopy of simplicial algebras over an operad},
  Trans. Amer. Math. Soc. \textbf{352} (2000), no.~9, 4113--4141. \MR{MR1665330
  (2000m:18015)}

\bibitem{goodwillie:1990}
Thomas~G. Goodwillie, \emph{Calculus. {I}. {T}he first derivative of
  pseudoisotopy theory}, $K$-Theory \textbf{4} (1990), no.~1, 1--27.
  \MR{92m:57027}

\bibitem{goodwillie:1991}
\bysame, \emph{Calculus. {II}. {A}nalytic functors}, $K$-Theory \textbf{5}
  (1991/92), no.~4, 295--332. \MR{93i:55015}

\bibitem{goodwillie:2003}
\bysame, \emph{Calculus. {III}. {T}aylor series}, Geom. Topol. \textbf{7}
  (2003), 645--711 (electronic). \MR{2 026 544}

\bibitem{greenlees/may:1995}
J.~P.~C. Greenlees and J.~P. May, \emph{Generalized {T}ate cohomology}, Mem.
  Amer. Math. Soc. \textbf{113} (1995), no.~543, viii+178. \MR{MR1230773
  (96e:55006)}

\bibitem{hess:2010}
Kathryn Hess, \emph{A general framework for homotopic descent and codescent},
  2010.

\bibitem{hess/shipley:2014}
Kathryn Hess and Brooke Shipley, \emph{The homotopy theory of coalgebras over a
  comonad}, Proceedings of the London Mathematical Society \textbf{108} (2014),
  no.~2, 484--516.

\bibitem{kuhn:2004}
Nicholas~J. Kuhn, \emph{Tate cohomology and periodic localization of polynomial
  functors}, Invent. Math. \textbf{157} (2004), no.~2, 345--370. \MR{MR2076926
  (2005f:55008)}

\bibitem{lurie:2014}
Jacob Lurie, \emph{Higher algebra}, 2014.

\bibitem{mccarthy:2001}
Randy McCarthy, \emph{Dual calculus for functors to spectra}, Homotopy methods
  in algebraic topology (Boulder, CO, 1999), Contemp. Math., vol. 271, Amer.
  Math. Soc., Providence, RI, 2001, pp.~183--215. \MR{MR1831354 (2002c:18009)}

\bibitem{mcclure/smith:2002}
James~E. McClure and Jeffrey~H. Smith, \emph{A solution of {D}eligne's
  {H}ochschild cohomology conjecture}, Recent progress in homotopy theory
  ({B}altimore, {MD}, 2000), Contemp. Math., vol. 293, Amer. Math. Soc.,
  Providence, RI, 2002, pp.~153--193. \MR{1890736 (2003f:55013)}

\bibitem{mcclure/smith:2004}
\bysame, \emph{Cosimplicial objects and little {$n$}-cubes. {I}}, Amer. J.
  Math. \textbf{126} (2004), no.~5, 1109--1153. \MR{MR2089084 (2005g:55011)}

\bibitem{nikolaus:2011}
Thomas Nikolaus, \emph{Algebraic models for higher categories}, Indag. Math.
  (N.S.) \textbf{21} (2011), no.~1-2, 52--75. \MR{2832482 (2012g:55029)}

\bibitem{oman:2010}
Peter~J. Oman, \emph{{Models for the Maclaurin tower of a simplicial functor
  via a derived Yoneda embedding}}, Journal of Pure and Applied Algebra
  \textbf{214} (2010), no.~12, 2148 -- 2158.

\bibitem{quillen:1969}
Daniel Quillen, \emph{Rational homotopy theory}, Ann. of Math. (2) \textbf{90}
  (1969), 205--295. \MR{MR0258031 (41 \#2678)}

\bibitem{riehl/verity:2013}
Emily {Riehl} and Dominic {Verity}, \emph{{Homotopy coherent adjunctions and
  the formal theory of monads}}, ArXiv e-prints (2013), 1--86.

\bibitem{rosicky:2009}
J.~Rosicky, \emph{Are all cofibrantly generated model categories
  combinatorial?}, Cah. Topol. G\'eom. Diff\'er. Cat\'eg. \textbf{50} (2009),
  no.~3, 233--238. \MR{2553541 (2010k:18005)}

\end{thebibliography}

\end{document}